\def\moverlay{\mathpalette\mov@rlay}
\def\mov@rlay#1#2{\leavevmode\vtop{%
   \baselineskip\z@skip \lineskiplimit-\maxdimen
   \ialign{\hfil$\m@th#1##$\hfil\cr#2\crcr}}}
\newcommand{\charfusion}[3][\mathord]{
    #1{\ifx#1\mathop\vphantom{#2}\fi
        \mathpalette\mov@rlay{#2\cr#3}
      }
    \ifx#1\mathop\expandafter\displaylimits\fi}
\newcommand{\cupdot}{\charfusion[\mathbin]{\cup}{\cdot}}
\newcommand{\bigcupdot}{\charfusion[\mathop]{\bigcup}{\cdot}}
\newtheorem{problem}{Problem}
\renewcommand{\top}{\mathrm{gr\,} \ideal}
\newcommand{\ideal}{I}
\newcommand{\topdeg}{\tau}
\newcommand{\la}{\lambda}
\newcommand{\x}{\mathbf{x}}
\newcommand{\y}{\mathbf{y}}
\newcommand{\rk}{\mathrm{rk}}
\newcommand{\cO}{O}
\newcommand{\ft}{\mathfrak{t}}
\newcommand{\Frob}{\mathrm{Frob}}
\newcommand{\Frobq}{\mathrm{Frob}_q}
\newcommand{\Hilbq}{\mathrm{Hilb}_q}
\newcommand{\revq}{\mathrm{rev}_q}
\newcommand{\bx}{\mathbf{x}}
\newcommand{\by}{\mathbf{y}}
\newcommand{\bz}{\mathbf{z}}
\newcommand{\sort}{\mathrm{sort}}
\newcommand{\coinv}{\mathrm{coinv}}
\newcommand{\finv}{\mathrm{inv}}
\newcommand{\OP}{\mathcal{OP}}
\newcommand{\trunc}{\mathrm{trunc}}
\newcommand{\dominatedby}{\leq_{\text{dom}}}
\newcommand{\inv}{\mathrm{inv}}
\newcommand{\rw}{\mathrm{rw}}
\newcommand{\irw}{\mathrm{irw}}
\newcommand{\CI}{\mathrm{SCI}}
\newcommand{\Stir}{\mathrm{Stir}}
\newcommand{\Fl}{\mathrm{Fl}}
\newcommand{\Par}{\mathrm{Par}}
\newcommand{\Comp}{\mathrm{Comp}}
\newcommand{\dinv}{\mathrm{dinv}}
\newcommand{\ci}{\mathrm{CI}}
\renewcommand{\dinv}{\mathrm{dinv}}
\newcommand{\fdinv}{\mathrm{dinv}}
\newcommand{\iDes}{\mathrm{iDes}}
\newcommand{\FCI}{\mathrm{SECI}}
\newcommand{\fci}{\mathrm{ECI}}
\newcommand{\float}{\varphi}
\newcommand{\standard}{\mathrm{std}}
\newcommand{\invcode}{\mathrm{invcode}}
\newcommand{\dinvcode}{\mathrm{dinvcode}}
\newcommand{\rev}{\mathrm{rev}}
\newcommand{\SSYT}{\mathrm{SSYT}}
\newcommand{\dgprime}{\mathrm{dg}'}
\newcommand{\fgl}{\mathfrak{gl}}
\newcommand{\basement}{\mathrm{B}}
\newcommand{\diagram}{\mathrm{D}}
\newcommand{\hide}[1]{\iffalse #1 \fi}
\newcommand{\st}{\,:\,}
\newcommand{\multibinom}[2]{
  \left[\!\genfrac{}{}{0pt}{}{#1}{#2}\!\right]
}
\newcommand{\qbinom}{\genfrac{[}{]}{0pt}{}}
\begin{document}

\title{Ordered set partitions, Garsia-Procesi modules, \\ and rank varieties}
\author{Sean T.~Griffin}
\thanks{Partially supported by NSF Grant DMS-1764012.\\
 \,2010 \emph{Mathematics Subject Classification}. Primary 05E05, 20C30, 05E10. Secondary 05A19, 05A18, 13D40.\\
 \emph{Key words and phrases.} Symmetric function, rank variety, coinvariant algebra, Springer fiber, ordered set partition.}
\address{Department of Mathematics, University of Washington, Seattle, WA 98195, USA}
\email{\href{mailto:stgriff@uw.edu}{stgriff@uw.edu}}
\date{\today}

\maketitle

\begin{abstract}
We introduce a family of ideals $I_{n,\la, s}$ in $\bQ[x_1,\dots, x_n]$ for $\lambda$ a partition of $k\leq n$ and an integer $s \geq \ell(\la)$. This family contains both the Tanisaki ideals $I_\lambda$ and the ideals $I_{n,k}$ of Haglund-Rhoades-Shimozono as special cases. We study the corresponding quotient rings $R_{n,\lambda, s}$ as symmetric group modules. When $n=k$ and $s$ is arbitrary, we recover the Garsia-Procesi modules, and when $\lambda=(1^k)$ and $s=k$, we recover the generalized coinvariant algebras of Haglund-Rhoades-Shimozono. 

We give a monomial basis for $R_{n,\lambda, s}$ in terms of $(n,\lambda, s)$-staircases, unifying the monomial bases studied by Garsia-Procesi and Haglund-Rhoades-Shimozono. We realize the $S_n$-module structure of $R_{n,\lambda, s}$ in terms of an action on $(n,\la, s)$-ordered set partitions. We find a formula for the Hilbert series of $R_{n,\la, s}$ in terms of inversion and diagonal inversion statistics on a set of fillings in bijection with $(n,\la, s)$-ordered set partitions. Furthermore, we prove an expansion of the graded Frobenius characteristic of our rings into Gessel's fundamental quasisymmetric basis.

We connect our work with Eisenbud-Saltman rank varieties using results of Weyman. As an application of our results on $R_{n,\la, s}$, we give a monomial basis, Hilbert series formula, and graded Frobenius characteristic formula for the coordinate ring of the scheme-theoretic intersection of a rank variety with diagonal matrices.
\end{abstract}

\section{Introduction}\label{sec:Intro}

The goal of this paper is to unify the representation theory and combinatorics of the \emph{generalized coinvariant algebras} $R_{n,k}$ introduced by Haglund, Rhoades, and Shimozono \cite{HRS1}, and the singular cohomology rings $R_\la$ of the \emph{Springer fibers}  introduced by T.~A.~Springer \cite{Springer-TrigSum,Springer-WeylGrpReps}. On the one hand, the generalized coinvariant algebras are graded modules of the symmetric group whose combinatorics are controlled by ordered set partitions. On the other hand, the cohomology rings of Springer fibers are graded modules of the symmetric group whose combinatorics are controlled by tabloids. We introduce a family of rings $R_{n,\la, s}$ which are graded modules of the symmetric group whose combinatorics are controlled by $(n,\la, s)$-ordered set partitions. We recover the rings $R_{n,k}$ and $R_{\la}$ as special cases of our rings. Furthermore, we show that the rings $R_{n,\la, s}$ have connections to the geometry of rank varieties defined by Eisenbud and Saltman \cite{Eisenbud-Saltman}. These rank varieties are not to be confused with the rank varieties of Billey and Coskun~\cite{Billey-Coskun}. In particular, we obtain a formula for the Hilbert series and graded Frobenius characteristic of $\mathbb{Q}[\overline{O}_{n,\la}\cap \mathfrak{t}]$, the coordinate ring of the scheme-theoretic intersection of a rank variety with diagonal matrices.

Let us recall the generalized coinvariant algebras $R_{n,k}$. Fix positive integers $k\leq n$, and let $\bx_n = \{x_1,\dots, x_n\}$ be a set of $n$ commuting variables. Let $\bQ[\bx_n]$ be the polynomial ring on the variables $\bx_n$ with rational coefficients, and let $S_n$ be the symmetric group of permutations of $1,2,\dots, n$. We consider $\bQ[\bx_n]$ as a $S_n$-module, where $S_n$ acts by permuting the variables. For $1\leq d\leq n$, let $e_d(\bx_n)$ be the 
\emph{elementary symmetric polynomial of degree $d$} in the variables $\bx_n$, defined by $e_d(\bx_n) = \sum_{1\leq i_1< \cdots < i_d\leq n} x_{i_1}x_{i_2}\cdots x_{i_d}$. The ideal $I_{n,k}$ is defined to be
\begin{align}
I_{n,k} \coloneqq \langle x_1^{k},x^{k}_2,\dots, x^{k}_n, e_n(\bx_n), e_{n-1}(\bx_n),\dots, e_{n-k+1}(\bx_n)\rangle\subseteq \bQ[\bx_n].
\end{align}
Haglund, Rhoades and Shimozono defined the \emph{generalized coinvariant algebra} $R_{n,k}$ to be the quotient ring $R_{n,k} \coloneqq \bQ[\bx_n]/I_{n,k}$. Since $I_{n,k}$ is homogeneous and stable under the action of $S_n$, the quotient ring $R_{n,k}$ has the structure of a graded $S_n$-module.  When $k=n$, then it can be shown that (see \cite[Section 1]{HRS1})
\begin{align}
I_{n,n} = \langle e_1(\bx_n),\dots,e_n(\bx_n)\rangle = \langle \bQ[\bx_n]^{S_n}_+\rangle,
\end{align}
which is the ideal generated by the homogeneous positive degree invariants of $\bQ[\bx_n]$. Hence, $R_{n,n}$ is the well-known \emph{coinvariant algebra}.

We also recall some standard terminology in order to state our main results. A \emph{weak ordered set partition of $[n]$} is a partitioning of the set $[n]$ into an ordered list of subsets $B_1,\dots, B_k$, where we allow $B_i$ to be empty in general. We denote such an ordered set partition by $(B_1|B_2|\cdots | B_k)$.  Let $\OP_{n,k}$ be the collection of ordered set partitions of $[n]$ into $k$ nonempty blocks. The size of $\OP_{n,k}$ is easy to compute in terms of Stirling numbers of the second kind,
\begin{align}
|\OP_{n,k}| = k!\cdot \Stir(n,k).
\end{align}
The group $S_n$ acts on $\OP_{n,k}$ by permuting the letters $1,2,\dots, n$. Define the usual $q$-analogues of numbers, factorials, and multinomial coefficients,
\begin{align}
[n]_q \coloneqq 1 + q + \cdots + q^{n-1}, & & [n]!_q \coloneqq [n]_q[n-1]_q\cdots [1]_q,\\
\multibinom{n}{a_1,\dots,a_r}_q \coloneqq \frac{[n]!_q}{[a_1]!_q \cdots [a_r]!_q}, & & \qbinom{n}{a}_q \coloneqq \frac{[n]!_q}{[a]!_q [n-a]!_q}.
\end{align}
Let $\bx = (x_1,x_2,\dots)$ be an infinite set of variables, and let $\bQ[[\bx]]$ be the formal power series ring over the rational numbers in the variables $\bx$. Given $f\in \bQ[[\bx]][q]$, let $f = a_0 + a_1q + \cdots + a_n q^n$ be its expansion as a polynomial in $q$ with coefficients in $\bQ[[\bx]]$. Define $\revq(f) = a_n + a_{n-1}q + \cdots + a_0q^n$.

Given two sequences of nonnegative integers $(a_1,\dots, a_r)$ and $(b_1,\dots, b_s)$, a \emph{shuffle} of these two sequences is an interleaving $(c_1,\dots, c_{r+s})$ of the two sequences  such that the $a_i$ appear in order from left to right and the $b_i$ appear in order from left to right. An \emph{$(n,k)$-staircase} is a shuffle of the sequence $(0,1,\dots, k-1)$ and the sequence $((k-1)^{n-k})$ consisting of $k-1$ repeated $n-k$ many times~\cite{HRS1}.

Haglund, Rhoades, and Shimozono proved that $R_{n,k}$ has the following properties which generalize the well-known properties of the coinvariant algebra \cite{HRS1}.
\begin{itemize}[itemsep=4pt]
\item The dimension of $R_{n,k}$ is given by $\dim_\bQ(R_{n,k}) = |\OP_{n,k}| =  k!\cdot \Stir(n,k)$. The Hilbert series is 
\begin{align}
\Hilbq(R_{n,k}) &= \revq([k]!_q\cdot \Stir_q(n,k)) = \sum_{\sigma\in \OP_{n,k}} q^{\mathrm{coinv}(\sigma)},
\end{align} 
where $\Stir_q(n,k)$ is a well-known $q$-analogue of the Stirling number of the second kind, and where $\mathrm{coinv}$ is the coinversion statistic on ordered set partitions, respectively. See \cite{Wilson} for more details on ordered set partition statistics. 
\item The set of monomials
\begin{align}
\cA_{n,k} = \{x_1^{a_1}\cdots x_n^{a_n} \st (a_1,\dots,a_n) \text{ is component-wise }\leq \text{ some }(n,k)\text{-staircase}\}
\end{align}
represents a basis of $R_{n,k}$, generalizing the Artin basis of the coinvariant algebra. As a consequence, we have $|\cA_{n,k}| = |\OP_{n,k}|$.
\item As $S_n$-modules,
\begin{align}
R_{n,k} \cong_{S_n} \bQ \OP_{n,k},
\end{align}
where $\bQ \OP_{n,k}$ is the vector space over $\bQ$ whose basis is indexed by $\OP_{n,k}$ and whose $S_n$-module structure is induced from the natural action of $S_n$ on $\OP_{n,k}$ permuting the letters $1,\dots, n$.
\item The graded $S_n$-module structure of $R_{n,k}$ can be expressed in terms of the dual Hall-Littlewood functions $Q_\mu'(\bx;q)$ as follows,
\begin{align}\label{eq:HRSGrFrob}
\Frobq(R_{n,k}) = \revq\left[ \sum_{\mu} q^{\sum_{i= 1}^k (i-1)(\mu_i-1)} \multibinom{k}{m_1(\mu),\dots,m_n(\mu)}_q Q_\mu'(\bx;q)\right],
\end{align}
where the sum is over partitions $\mu$ of $n$ into $k$ parts.
See Section~\ref{sec:Background} for all undefined terminology.
\item The $S_n$-module $R_{n,k}$ is related to the \emph{Delta Conjecture} of Haglund, Remmel, and Wilson \cite{HRW}. Precisely, Haglund, Rhoades, and Shimozono~\cite{HRS1} proved that
\begin{align}
\Frobq(R_{n,k}) = (\revq\circ \omega) C_{n,k}(\bx;q),
\end{align}
where $C_{n,k}(\bx;q)$ is the expression in the Delta Conjecture at $t=0$.
\item More recently, Pawlowski and Rhoades~\cite{Pawlowski-Rhoades} proved that $R_{n,k}$ is isomorphic to the rational cohomology ring for the space of spanning line arrangements $X_{n,k}$. Their result also holds with integral coefficients.
\end{itemize}

Next, we describe certain quotient rings coming from the geometry of \emph{Springer fibers}. Let $\la \vdash n$, and let the conjugate partition be $\lambda' = (\lambda_1'\geq \lambda_{2}'\geq\dots \geq \lambda_n'\geq 0)$. Here, we are padding the conjugate partition by $0$s to make it length $n$. Let $p^n_m(\lambda) \coloneqq \lambda_n'+\lambda_{n-1}'+\dots + \lambda_{n-m+1}'$ for $1\leq m\leq n$. 
Given a subset of variables $S\subseteq \bx_n$ and a positive integer $d$, define $e_d(S)$ to be the sum over all squarefree monomials of degree $d$ in the set of variables $S$. For example, we have $e_2(\{x_1,x_3,x_5\}) = x_1x_3 + x_1x_5 + x_3x_5$. The \emph{Tanisaki ideal} $I_\la$ is defined by
\begin{align}
I_\la \coloneqq \langle e_d(S) \st S\subseteq\bx_n,\, d > |S| - p^n_{|S|}(\la)\rangle,
\end{align}
and the ring $R_\la$ is defined by
\begin{align}
R_\la \coloneqq \bQ[\bx_n]/I_\la.
\end{align}

By work of De Concini and Procesi \cite{dCP}, the ring $R_{\lambda}$ is isomorphic to the singular cohomology ring with rational coefficients of the \emph{Springer fiber} corresponding to a nilpotent matrix with Jordan type $\lambda$. This particular presentation for the cohomology ring in terms of partial elementary symmetric polynomials is due to Tanisaki \cite{Tanisaki}. When $\la = (1^n)$, the Springer fiber corresponding to $(1^n)$ is the \emph{complete flag variety} whose cohomology ring is the coinvariant algebra. Indeed, we have $R_{(1^n)} = \bQ[\bx_n]/\langle e_1(\bx_n),\dots, e_n(\bx_n)\rangle$, which is the coinvariant algebra. See~\cite{CG} for more background on Springer fibers and~\cite{Tymoczko} for more details on their combinatorial properties.

We refer to the graded $S_n$-module $R_{\la}$ as the \emph{Garsia-Procesi module} based on their seminal work in~\cite{Garsia-Procesi} on the $S_n$-module structure of $R_\la$. The ring $R_\la$ has the following properties.
\begin{itemize}[itemsep=4pt]
\item The dimension of $R_\la$ is the multinomial coefficient
\begin{align}
\dim_\bQ(R_\la) = \binom{n}{\la_1,\dots,\la_\ell},
\end{align}
where $\la = (\la_1\geq \la_2\geq \cdots \geq \la_\ell>0)$. The Hilbert series of $R_\la$ is given by the generating function for the \emph{cocharge} statistic on a certain set of words, see \cite[Remark 1.2]{Garsia-Procesi}. Alternatively, we have the following characterization of the Hilbert series which follows from work of Haglund, Haiman, and Loehr~\cite{HHL},
\begin{align}
\Hilbq(R_\la) = \sum_{\sigma} q^{\inv(\sigma)},
\end{align}
where the sum is over standard fillings of the Young diagram of $\la'$ which increase down each column, and $\inv$ is the number of attacking pairs which form an inversion of $\sigma$.
\item There is a monomial basis $\cA_\la$ of $R_\la$ which specializes to the Artin basis of the coinvariant algebra when $\la = (1^n)$. In \cite{Garsia-Procesi}, this basis is denoted by $\mathscr{B}(\la)$.
\item As $S_n$-modules, we have
\begin{align}
R_\la \cong_{S_n} \bQ (S_n / S_{\la_1}\times\cdots \times S_{\la_k}),
\end{align}
where $S_{\la_1}\times \cdots \times S_{\la_k}$ is the Young subgroup of $S_n$ permuting $1,\dots,\la_1$ among themselves, $\la_1+1,\dots,\la_1+\la_2$ among themselves, and so on. Equivalently, $R_\la$ is isomorphic to the $S_n$-module given by the action of $S_n$ on tabloids of shape $\la$.
\item The graded $S_n$-module structure of $R_\la$ is given by the reversal of the dual Hall-Littlewood function,
\begin{align}
\Frobq(R_\la) = \revq(Q_\la'(\bx;q)).
\end{align}
\item If $\la,\mu\vdash n$ such that $\la\dominatedby \mu$, we have the monotonicity property
\begin{align}
[s_\nu]\Frob_q(R_\la) \geq [s_\nu] \Frob_q(R_\mu),
\end{align}
for all $\nu\vdash n$, where $[s_\nu]f$ stands for the coefficient of $s_\nu$ in the Schur function expansion of $f$, and the inequality is a coefficient-wise comparison of two polynomials in $q$.
\end{itemize}

Fix positive integers $k\leq n$, a partition $\la$ of $k$, and an integer $s\geq \ell(\la)$, where $\ell(\la)$ is the length of $\la$. Let the conjugate of $\la$ be $\la' = (\la_1' \geq \la_{2}'\geq \cdots \geq \la_n'\geq 0)$, where we pad the conjugate partition by $0$s to make it length $n$, and define $p^n_m(\la) \coloneqq \la_n' + \la_{n-1}'+\cdots +\la_{n-m+1}'$ for $1\leq m\leq n$. We introduce the ring $R_{n,\la, s}$, defined as follows.
\begin{definition}\label{def:TheIdeal}
Define the ideal $I_{n,\la, s}$ and quotient ring $R_{n,\la, s}$ by
\begin{align}
I_{n,\la, s} &\coloneqq \langle x_i^{s} \st 1\leq i \leq n\rangle + \langle e_d(S) \st \, S\subseteq \bx_n,\, d > |S|-p^n_{|S|}(\lambda)\rangle,\\
 R_{n,\lambda, s} &\coloneqq \bQ[\bx_n]/I_{n,\lambda, s}. 
 \end{align}
\end{definition}
Since the ideal $I_{n,\la, s}$ is generated by homogeneous polynomials, it is a homogeneous ideal. Furthermore, since the generating set is closed under the action of $S_n$, the ideal $I_{n,\la, s}$ is closed under the action of $S_n$. Therefore, the quotient ring $R_{n,\la, s}$ inherits the structure of a graded $S_n$-module.
\noindent For example, the ideal $I_{6,(3,2), 3}$ is generated by the set of homogeneous polynomials
\begin{gather}
\{x_1^3,\dots,x_6^3\}\cup\{ e_2(\bx_{6}),e_3(\bx_6),e_4(\bx_6),e_5(\bx_6),e_6(\bx_6)\} \cup\{e_3(S)\,|\, S\subseteq \bx_6, |S|=5\} \\[1em]
\cup\{e_4(S)\,|\, S\subseteq \bx_6, |S|=5\}
\cup\{e_5(S)\,|\, S\subseteq \bx_6, |S|=5\}
\cup\{e_4(S)\,|\, S\subseteq \bx_6, |S|=4\},
\end{gather}
which is closed under the action of $S_6$.

The generalized coinvariant algebras $R_{n,k}$ and the rings $R_\la$ are special cases of the rings $R_{n,\la, s}$. We have
\begin{align}
R_{n,k} & = R_{n,\,(1^k),\, k}&&\text{ for }k\leq n \label{eq:RnkSpecial},  \\
R_{\la} &= R_{n,\,\la,\,\ell(\la)} &&\text{ for }\la\vdash n\label{eq:RlaSpecial},
\end{align}
where~\eqref{eq:RnkSpecial} follows from Definition~\ref{def:TheIdeal}. See Remark~\ref{rmk:Specialization} for the justification of~\eqref{eq:RlaSpecial}. As a bonus, we also have $R_{n, k, s} = R_{n,(1^s),k}$, where $R_{n, k, s}$ is the ring defined in \cite[Section 6]{HRS1}.

Let a \emph{$(n,\la, s)$-ordered set partition} be a weak ordered set partition $(B_1|\cdots|B_s)$ of $[n]$ into $s$ blocks such that $|B_i|\geq \lambda_i$ for $i\leq \ell(\la)$.  Here, we allow $B_i$ to be empty for $\ell(\la) < i\leq s$. Let $\mathcal{OP}_{n,\lambda, s}$ be the set of $(n,\la, s)$-ordered set partitions. The group $S_n$ acts on $\OP_{n,\la, s}$ by permuting the letters $1,2,\dots, n$.

We prove the following properties of $R_{n,\la, s}$, generalizing many of the properties of $R_{n,k}$ and $R_\la$. All terminology not defined here is defined in Section~\ref{sec:Background} and in the section corresponding to each theorem. 
\begin{itemize}[itemsep=4pt]
\item The dimension of $R_{n,\la, s}$ is given by $\dim_\bQ(R_{n,\la, s}) = |\OP_{n,\la, s}|$ (see Theorem~\ref{thm:UngradedFrobChar}). We give a formula for the Hilbert polynomial (see Corollary~\ref{cor:FakeInvFormulaHilb}),
\begin{align}\label{eq:HilbEquation}
\Hilbq(R_{n,\la, s})  =  \sum_{\float \in \FCI_{n,\la, s}} q^{\fdinv(\float)}.
\end{align}
The indexing set $\FCI_{n,\la, s}$ in the sum on the right-hand side of~\eqref{eq:HilbEquation} is a set of \emph{standard extended column-increasing fillings}, which is in bijection with $\OP_{n,\la, s}$. See Subsection~\ref{subsec:Fcoinv} for the definition of a standard extended column-increasing filling.
\item During the preparation of this article, an alternative formula for the Hilbert series was obtained by Rhoades, Yu, and Zhao on ordered set partitions~\cite[Corollary 4.9]{Rhoades-Yu-Zhao} involving a statistic they call $\mathrm{coinv}$. They also characterize the harmonic spaces corresponding to the ideals $I_{n,\la, s}$ introduced in this paper. Motivated by their work, we define a statistic $\inv$ on $\FCI_{n,\la, s}$ which is similar to, but not the same as, their $\mathrm{coinv}$ statistic. We prove that the $\mathrm{inv}$ statistic also gives a formula for the Hilbert series (see Corollary~\ref{cor:FakeInvFormulaHilb}),
\begin{align}\label{eq:HilbEquation2}
\Hilbq(R_{n,\la, s})  =  \sum_{\float \in \FCI_{n,\la, s}} q^{\finv(\float)}.
\end{align}
The formula~\eqref{eq:HilbEquation2} follows naturally as a corollary of our graded Frobenius characteristic formula. Since~\cite{Rhoades-Yu-Zhao} uses our work to prove their results, we have been careful to give independent proofs.

\item In Section~\ref{sec:genset}, we define an \emph{$(n,\lambda, s)$-staircase} as a shuffle of a certain set of compositions depending on $n$, $\lambda$, and $s$. We have that
\begin{align}
\cA_{n,\la, s} = \{x_1^{a_1}\cdots x_n^{a_n} \st (a_1,\dots,a_n) \text{ is component-wise }\leq \text{ some }(n,\la, s)\text{-staircase} \}
\end{align}
represents a monomial basis of $R_{n,\la, s}$ (see Theorem~\ref{thm:MonomialBasis}).
\item We have the following chain of equalities and $S_n$-module isomorphisms (see Theorem~\ref{thm:TopIdealEquality}, Corollary~\ref{cor:SymmModuleIso}, and Theorem~\ref{thm:UngradedFrobChar})
\begin{align}
R_{n,\la, s} = \frac{\bQ[\bx_n]}{\top(X_{n,\la, s})} \cong_{S_n} \bQ X_{n,\la, s} \cong_{S_n} \bQ \OP_{n,\la, s},
\end{align}
where $X_{n,\la, s}$ is a finite set of points in $\bQ^n$ which is stable under the $S_n$-action permuting coordinates.
\item The graded Frobenius characteristic can be expressed as a sum of monomials (see Theorem~\ref{thm:MonomialTheorem}), or equivalently in terms of Gessel's fundamental quasisymmetric functions (see Corollary~\ref{cor:FBasisExpansion}),
\begin{align}\label{eq:MonomialExpansion}
\Frobq(R_{n,\la, s}) &= \sum_{\float\in \fci_{n,\la, s}} q^{\finv(\float)} \bx^{\float} = \sum_{\float\in \FCI_{n,\la, s}} q^{\finv(\float)} F_{n, \iDes(\rw(\float))}(\bx),\\
& = \sum_{\float\in \fci_{n,\la, s}} q^{\fdinv(\float)} \bx^{\float} = \sum_{\float \in \FCI_{n,\la, s}} q^{\fdinv(\float)} F_{n,\iDes(\rw(\float))}(\bx), 
\end{align}
where $\bx^\float$ is the monomial whose powers record the number of occurrences of each label in $\float$.

\item In forthcoming work, we prove $\Frobq(R_{n,\la, s})$ can be expressed in terms of the dual Hall-Littlewood functions,
\begin{align}\label{eq:IntroGradedFrobFormula}
\Frobq(R_{n,\lambda, s}) = \revq\left[ \sum_{\substack{\mu\in \Par(n, s),\\ \mu\supseteq\lambda}} \, q^{n(\mu,\lambda)} \, \prod_{i\geq 0}\multibinom{\mu_i' - \la_{i+1}'}{\mu_i'-\mu_{i+1}'}_q \, Q_{\mu}'(\bx;q)\right],
\end{align}
where $\mu_0' \coloneqq s$, and $n(\mu,\la) \coloneqq \sum_{i\geq 1} \binom{\mu_i' - \la_i'}{2}$. The proof of this formula has been moved to a forthcoming article, since the techniques for proving it are different than the techniques used to prove~\eqref{eq:MonomialExpansion}.
\item Let $h\leq k\leq n$ be positive integers, let $\la\in \Par(h, s)$, and let $\mu\in \Par(k, s)$ such that either $h=k$ and $\la\dominatedby\mu$ or $h<k$ and $\la\subseteq \mu$. We have the monotonicity property (see Theorem~\ref{thm:Monotonicity})
\begin{align}
[s_{\nu}] \Frobq(R_{n,\la, s})\geq [s_\nu] \Frobq(R_{n,\mu, s}),
\end{align}
for all $\nu\vdash n$. 
\end{itemize}

One of our main tools for proving these results is Theorem~\ref{thm:TopIdealEquality} which identifies the ring $R_{n,\la, s}$ as the associated graded ring of the coordinate ring of a finite set of points in $\bQ^n$. This identification extends results of Garsia and Procesi \cite[Proposition 3.1, Remark 3.1]{Garsia-Procesi} and Haglund, Rhoades, and Shimozono \cite[Equation 4.28]{HRS1}. See also the work of Kraft~\cite[Proof of Proposition 4]{Kraft} from 1981 for a proof in the case of $R_\la$ using associated cones.

Haglund, Rhoades, and Shimozono use Gr\"obner bases to prove their results. In particular, they find Gr\"obner bases of the ideals $I_{n,k}$ in terms of \emph{Demazure characters}. To the author's knowledge, such explicit Gr\"obner bases for the ideals $I_\la$ are not known. Therefore, different techniques are required to prove our results. Indeed, we prove the above results without the use of Gr\"obner bases using techniques similar to those of Garsia and Procesi. In particular, we use a straightening algorithm which expresses any element of $R_{n,\la, s}$ in terms of our monomial basis $\cA_{n,\la, s}$. It is an open problem to find explicit Gr\"obner bases for the ideals $I_{n,\la, s}$.

Let $\la\vdash n$, and let $\cO_\la$ be the conjugacy class of nilpotent $n\times n$ matrices over $\bQ$ whose Jordan blocks are of sizes recorded by $\la$. Let $\overline{\cO}_{\la}$ be its closure in the space of $n\times n$ matrices. Let $\mathfrak{t}$ be the set of diagonal matrices. De Concini and Procesi~\cite{dCP}, extending work of Kostant~\cite{Kostant} on the coinvariant algebras, proved that $R_\la$ is isomorphic to the coordinate ring of the scheme-theoretic intersection $\overline{\cO}_{\la'}\cap \mathfrak{t}$.

We connect the rings $R_{n,\la, s}$ to a generalization of these scheme-theoretic intersections as follows. Given $k\leq n$ and $\la\vdash k$, define $I_{n,\la}$ to be the ideal
\begin{align}
I_{n,\la} \coloneqq \langle e_d(S) \st S\subseteq \bx_n,\, d > |S|-p^n_{|S|}(\lambda)\rangle.
\end{align}
 Define the quotient ring $R_{n,\la} \coloneqq \bQ[\bx_n]/I_{n,\la}$. When $k<n$, the ring $R_{n,\la}$ has positive Krull dimension, and hence it is infinite-dimensional as a $\bQ$-vector space. Observe that for fixed $n$, $\la$, and $d$, the $d$th degree components of $R_{n,\la, s}$ stabilize to the $d$th degree component of $R_{n,\la}$ as $s\to \infty$.
 
In Section~\ref{sec:Geometry}, using work of Weyman \cite{Weyman} we prove that $R_{n,\la}$ is isomorphic to the coordinate ring of the scheme-theoretic intersection $\overline{\cO}_{n,\la'}\cap \mathfrak{t}$, where $\overline{\cO}_{n,\la}$ is the rank variety of Eisenbud and Saltman~\cite{Eisenbud-Saltman} (see Corollary~\ref{cor:WeymanCorollary}),
\begin{align}
R_{n,\la} \cong \bQ[\overline{\cO}_{n,\la'}\cap \mathfrak{t}].
\end{align}
We use our results on the finite-dimensional rings $R_{n,\la, s}$ to find monomial bases of these coordinate rings by allowing $s$ to approach infinity in our combinatorial formulas. We also prove the following formula for the graded Frobenius characteristic of these coordinate rings (see Theorem~\ref{thm:RankVarFrob} and Corollary~\ref{cor:RankVarFrobCor}),
\begin{align}
\Frobq(\bQ[\overline{\cO}_{n,\la'}\cap \mathfrak{t}]) =  \sum_{\float\in \fci_{n,\la}} q^{\finv(\float)} \bx^\float = \sum_{\float \in \FCI_{n,\la}} q^{\finv(\float)} F_{n,\iDes(\rw(\float))}(\bx).
\end{align}

In \cite[Question 5.3.1]{Church-Ellenberg-Farb}, Church, Ellenberg, and Farb ask for the dimensions of the graded pieces of the ring of polynomial functions on a certain scheme supported on a rank variety. We solve a related question by giving a formula for the Hilbert series of $\bQ[\overline{O}_{n,\la}\cap \ft]$ (see Corollary~\ref{cor:RankHilb}),
\begin{align}
\Hilbq(\bQ[\overline{\cO}_{n,\la'}\cap \mathfrak{t}]) =  \sum_{\float\in \FCI_{n,\la}} q^{\finv(\float)}.
\end{align}

This article is the full version of the extended abstract~\cite{Griffin-GPMod-FPSAC}.

The rest of the paper is structured as follows. In Section~\ref{sec:Background}, we review relevant definitions and results from the literature. In Section~\ref{sec:genset}, we construct a monomial basis of $R_{n,\la, s}$ and prove that $R_{n,\la, s}$ is isomorphic to $\bQ \OP_{n,\la, s}$ as an $S_n$-module. In Section~\ref{sec:Skew}, we give algebraic tools for analyzing the graded Frobenius characteristic of $R_{n,\la, s}$, including a skewing formula. We also prove that the rings $R_{n,\la, s}$ fit into certain exact sequences. In Section~\ref{sec:Statistics}, we define the inversion and diagonal inversion statistics and use them to provide formulas for the graded Frobenius characteristic of $R_{n,\la, s}$. In Section~\ref{sec:Geometry}, we relate the rings $R_{n,\la}$ to the geometry of the rank varieties of Eisenbud and Saltman. We then prove our formula for the graded Frobenius characteristic of the scheme-theoretic intersection of a rank variety with diagonal matrices. 
In Section~\ref{sec:FinalRemarks}, we give final remarks and state some open problems related to the rings $R_{n,\la, s}$ and $R_{n,\la}$.

\section{Background}\label{sec:Background}

\subsection{Partitions, compositions, and permutations}

A \emph{partition of $n$ into $\ell$ parts} is a weakly decreasing sequence of positive integers $\la = (\la_1\geq \la_2\geq \cdots \geq \la_\ell > 0)$ such that $\sum_{i=1}^\ell \la_i = n$. We sometimes write $\la\vdash n$ or $|\la| = n$ to denote that $\la$ has size $n$. Let $\ell(\la)\coloneqq \ell$ be the \emph{length} of $\la$. Let $\Par(n, s)$ be the set of partitions of $n$ into at most $s$ parts. For example, we have
\begin{align}
\Par(4,2) = \{(4),(3,1),(2,2)\}.
\end{align}
Given integers $a$ and $b$ with $b\geq 0$, we denote by $(a^b)$ the partition $(a, a,\dots, a)$ where $a$ appears $b$ many times. When $b=0$, then $(a^b)$ is the empty partition.

Given two partitions $\la$ and $\mu$ of $n$, we say that $\la$ \emph{is dominated by} $\mu$, denoted by $\la\dominatedby \mu$, if and only if for all $i\leq \ell(\la)$, we have that $\la_1+\dots +\la_i\leq \mu_1+\dots+\mu_i$. The partial ordering $\dominatedby$ on the partitions of $n$ is called \emph{dominance order}.
Let $n(\la)$ be the statistic
\begin{align}\label{eq:nOfLa}
n(\la) \coloneqq \sum_{i\geq 1} (i-1)\la_i = \sum_{i=1}^{\la_1} \binom{\la_i'}{2}.
\end{align}

For $\la\vdash n$, we draw its \emph{Young diagram} as rows of boxes, called \emph{cells}, with $\la_i$ cells in row $i$. We follow the French convention where the rows are numbered from bottom to top. We also number the columns of the diagram from left to right. Let $\la'$ be the \emph{conjugate partition} of $\la$, which is the partition of $n$ whose $i$th entry records the number of cells in the $i$th column of the Young diagram of $\la$. See Figure~\ref{fig:Partition} for the Young diagrams of $\la = (3,2,1,1,1)$ and its conjugate $\la' = (5,2,1)$.

\begin{figure}[t]
\centering
\includegraphics[scale=1.5]{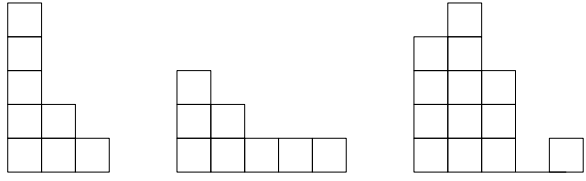}
\caption{From left to right, the Young diagram of $\la = (3,2,1,1,1)$, the Young diagram of $\la' = (5,2,1)$, and the conjugate diagram $\dgprime(4,5,3,0,1)$. \label{fig:Partition}}
\end{figure}

Given $\la\vdash n$ and an index $0\leq i < \ell(\la)$, let $\la^{(i)}$ be the partition of $k-1$ obtained by sorting the parts of
\begin{align}\label{eq:ReductionDef}
(\la_1, \dots,\la_i, \la_{i+1}-1, \la_{i+2},\dots, \la_{\ell(\la)})
\end{align}
into decreasing order. We say that $\la^{(i)}$ is the \emph{$i$th reduction} of $\la$. Note that our $\la^{(i)}$ is the same as $\la^{(i+1)}$ defined in \cite{Garsia-Procesi}.
For example, if $\la = (6,6,4,4,1)$, then $\la^{(2)} = (6,6,4,3,1)$.
Observe that if $j$ is maximal such that $i < \la_j'$, then
\begin{align}
\la^{(i)} = (\la_1,\dots,\la_{\la_j'}-1,\dots,\la_\ell).
\end{align}

Let $\Comp(n, s)$ be the set of weak compositions of $n$ into $s$ nonnegative parts. 
Given $\alpha\in \Comp(n, s)$, the \emph{conjugate diagram}, denoted by $\dgprime(\alpha)$, is the diagram consisting of $\alpha_i$ cells in the $i$th column from the left.
For $\la\in \Par(n, s)$, then $\dgprime(\la)$ is the Young diagram of $\la'$.
If $\alpha_i=0$ for some $i$, we simply draw a horizontal bar at the bottom of column $i$ to signify an empty column of cells. Index the cells $(i,j)$ of the diagram of $\alpha$ in Cartesian coordinates starting with $(1,1)$ in the lower left corner, so that $(i,j)$ is the cell in the $i$th column and $j$th row.

Let $\sort(\alpha)\in \Par(n, s)$ be the partition obtained by sorting the entries of $\alpha$ from greatest to smallest and then deleting trailing $0$s. We denote by $\trunc(\alpha) \coloneqq (\alpha_1,\dots,\alpha_{n-1})$ the \emph{truncation} of $\alpha$. Let $\rev(\alpha) \coloneqq (\alpha_s, \alpha_{s-1}, \dots, \alpha_1)$ be the reversed composition.
Given $\alpha \in \Comp(n, s)$ and $\beta\in \Comp(m, s')$, we denote by $\alpha\cdot \beta \coloneqq (\alpha_1,\dots,\alpha_s,\beta_1,\dots,\beta_{s'})$ the \emph{concatenation} of $\alpha$ and $\beta$. 

If $\alpha\in \Comp(n, s)$ and $\beta\in \Comp(m, s)$, we say that $\alpha$ \emph{is contained in} $\beta$, denoted by $\alpha\subseteq \beta$, if $\alpha_i\leq \beta_i$ for all $i\leq s$. If $\alpha\subseteq \beta$, let $\dgprime(\beta)/\dgprime(\alpha)$ be the set difference of the conjugate diagrams of $\beta$ and $\alpha$. When $\alpha = \la\subseteq \beta = \mu$ are partitions, then this is the \emph{skew diagram} $\mu'/\la'$.

Denote the symmetric group of permutations of $1,\dots, n$ by $S_n$. For $1\leq i < n$, let $s_i\in S_n$ be the adjacent transposition $s_i = (i,i+1)$. For a weak composition of length $n$, let $s_i(\alpha)$ be the composition obtained by swapping the $i$th and $(i+1)$th entries of $\alpha$. For any word on positive integers $w\in \mathbb{N}^n$, let $\standard(w)$ be the \emph{standardization} of $w$, which is the unique permutation in $S_n$ obtained by replacing the entries of $w$ with $1, 2,\dots, n$, keeping the same relative order, where repeated letters are considered as increasing from left to right. Given $\pi\in S_n$, the \emph{descent set} of $\pi$ is $\Des(\pi) \coloneqq \{i\st 1\leq i\leq n-1,\, \pi_i > \pi_{i+1}\}$. The \emph{inverse descent set} of $\pi$ is defined by $\iDes(\pi) \coloneqq \Des(\pi^{-1})$.

\subsection{Symmetric functions}\label{subsec:SymmFunctions}

In this subsection, we introduce our notation for symmetric functions. See \cite{Macdonald} for a comprehensive treatment of symmetric functions.

Let $\bx = \{x_1,x_2,\dots\}$ be an infinite set of variables. Let $\Lambda_{\bQ(q)}$ be the ring of symmetric functions with coefficients in the field $\bQ(q)$ of rational functions in $q$. For $\la\vdash n$, let $m_\la(\bx)$, $h_\la(\bx)$, $e_\la(\bx)$, and $s_\la(\bx)$ denote the \emph{monomial, complete homogeneous, elementary}, and \emph{Schur symmetric functions}, respectively. Let $\omega$ be the involutory automorphism of $\Lambda_{\bQ(q)}$ such that
\begin{align}
\omega(e_\la) = h_\la, \quad \omega(h_\la) = e_\la,\quad \omega(s_\la) = s_{\la'}.
\end{align}
Let $\langle \cdot,\cdot \rangle$ be the \emph{Hall inner product} on symmetric functions with the property that
\begin{align}
\langle s_\la(\bx),s_\mu(\bx)\rangle = \delta_{\la,\mu},
\end{align}
where $\delta_{\la,\mu}$ takes the value $1$ if $\la=\mu$, and $0$ otherwise. Given a symmetric function $F(\bx)$, let $F(\bx)^\perp$ be the linear operator on $\Lambda$ which is adjoint to multiplication by $F(\bx)$ with respect to the Hall inner product. Precisely, given a symmetric function $G(\bx)$, then $F(\bx)^\perp G(\bx)$ is the unique symmetric function such that
\begin{align}
\langle F(\bx)^\perp G(\bx),H(\bx)\rangle = \langle G(\bx), F(\bx) H(\bx)\rangle
\end{align}
for all symmetric functions $H(\bx)$. We primarily work with the operators $e_j(\bx)^\perp$, which we refer to as the \emph{$j$th skewing operator}.

Symmetric functions have a close connection to representations of the symmetric group via the Frobenius characteristic map. Given $\la\vdash n$, let $S^\la$ be the irreducible $S_n$-module indexed by $\la$. Given a finite-dimensional vector space $V$ over $\bQ$ which has the structure of a $S_n$-module, it decomposes as a direct sum as
\begin{align}
V\cong \bigoplus_{\la\vdash n}(S^\la)^{c_\la}
\end{align}
for some nonnegative integers $c_\la$. The \emph{Frobenius characteristic} of $V$ is defined to be the symmetric function
\begin{align}
\Frob(V) = \sum_{\la\vdash n} c_\la s_\la(\bx).
\end{align}
Given a graded $S_n$-module $V = \bigoplus_{i = 0}^m V_i$ with finite-dimensional direct summands $V_i$, the \emph{graded Frobenius characteristic} of $V$ is defined to be
\begin{align}
\Frobq(V) = \sum_{i=0}^m \Frob(V_i)q^i \in \Lambda_{\bQ(q)}.
\end{align}
The \emph{Hilbert series} of $V$ is defined to be
\begin{align}
\Hilbq(V) = \sum_{i=0}^m \dim_\bQ(V_i)q^i.
\end{align}
The graded Frobenius characteristic and the Hilbert series of $V$ are related by
\begin{align}
\Hilbq(V) = \langle h_{(1^n)},\Frobq(V)\rangle = [x_1\cdots x_n] \Frobq(V),\label{eq:HilbFromFrobq}
\end{align}
where if $F(\bx;q)\in \Lambda_{\bQ(q)}$, then $[x_1\cdots x_n] F(\bx;q)$ denotes the coefficient of the monomial $x_1\cdots x_n$ in $F(\bx;q)$.


We sometimes expand symmetric functions in terms of \emph{Gessel's fundamental quasisymmetric functions}, defined as follows. Given a subset $D\subseteq [n-1]$, $F_{n,D}(\bx)$ is defined by
\begin{align}\label{eq:DefOfF}
F_{n,D}(\bx) \coloneqq \sum x_{a_1}x_{a_2}\cdots x_{a_n},
\end{align}
where the sum is over all sequences of positive integers $1\leq a_1\leq a_2\leq \cdots \leq a_n$ such that $a_i < a_{i+1}$ for all $i\in D$. Alternatively, Gessel's fundamental quasisymmetric function \cite{GesselQSym} can be written as
\begin{align}
F_{n,D}(\bx) = \sum_{\substack{w\in \mathbb{N}^n,\\ \standard(w) = \pi}} \prod_{i\geq 1} x_i^{\# i\text{'s in }w},\label{eq:GesselAlt}
\end{align}
where $\pi\in S_n$ is a fixed permutation such that $\iDes(\pi) = D$. We also have the \emph{monomial quasisymmetric function},
\begin{align}\label{eq:DefOfM}
M_{n,D}(\bx) \coloneqq \sum x_{a_1}x_{a_2}\cdots x_{a_n},
\end{align}
where the sum is over all sequences of positive integers $1\leq a_1\leq a_2\leq \cdots \leq a_n$ such that $a_i < a_{i+1}$ for all $i\in D$ and $a_i = a_{i+1}$ for all $i\in [n-1]\setminus D$.

For $1\leq j\leq n$, let $S_{n-j} \times S_{j}\subseteq S_n$ be the subgroup of $S_n$ of permutations which permute $1,\dots, n-j$ among themselves and permute $n-j+1,\dots,n$ among themselves. Let $\bQ S_n$ be the group algebra of $S_n$, and let $\epsilon_j\in \bQ S_n$ be the idempotent element
\begin{align}
\epsilon_j = \frac{1}{j!}\sum_{\pi \in S_{\{n-j+1,\dots, n\}}} \mathrm{sgn}(\pi) \pi.
\end{align}
If $V$ is an $S_n$-module, then $\epsilon_j V$ is an $S_{n-j}$-module. It is well known (see e.g.\ \cite[Equation 6.20]{HRS1}) that
\begin{align}
\Frobq(\epsilon_j V) = e_j(\bx)^{\perp} \Frobq(V).\label{eq:AntisymmFrob}
\end{align}
Furthermore, the symmetric function $\Frobq(V)$ is uniquely defined by its images under the $e_j(\bx)^\perp$ operators for $j>1$.
\begin{lemma}[\cite{Garsia-Procesi},\cite{HRS1} Lemma 3.6]\label{lem:AntisymmIdentification}
Let $F(\bx)$ and $G(\bx)$ be symmetric functions with equal constant terms. We have that $F(\bx)=G(\bx)$ if and only if $e_j(\bx)^{\perp}F(\bx) = e_j(\bx)^{\perp} G(\bx)$ for all $j\geq 1$.
\end{lemma}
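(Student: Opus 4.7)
The forward direction is immediate from linearity of the adjoint operators, so the content is in the reverse direction. The plan is to reduce to a single claim about the augmentation ideal $\Lambda_+ \subset \Lambda_{\bQ(q)}$ of symmetric functions with zero constant term.

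First I would set $H(\bx) = F(\bx) - G(\bx)$. By hypothesis, $H$ has zero constant term and $e_j(\bx)^\perp H(\bx) = 0$ for every $j \geq 1$. The goal is to show $H = 0$. Using the defining adjointness of skewing operators, for any $j \geq 1$ and any symmetric function $K(\bx)$ one gets
\begin{align}
\langle H(\bx),\, e_j(\bx)\, K(\bx)\rangle \;=\; \langle e_j(\bx)^\perp H(\bx),\, K(\bx)\rangle \;=\; 0,
\end{align}
so $H$ is orthogonal to the ideal $J \coloneqq \langle e_1,e_2,e_3,\dots\rangle$ of $\Lambda_{\bQ(q)}$.

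The key step is to identify this ideal with $\Lambda_+$. Here I would invoke the fundamental theorem of symmetric functions: $\Lambda_{\bQ(q)} = \bQ(q)[e_1,e_2,\dots]$ as a polynomial ring. Consequently $J$ is exactly the span of all symmetric functions of positive degree, i.e.\ $J = \Lambda_+$. Thus $H$ is orthogonal to every element of $\Lambda_+$.

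Finally, since the Schur basis is orthonormal under the Hall inner product, the form $\langle\cdot,\cdot\rangle$ is nondegenerate. For every nonempty partition $\mu$, we have $s_\mu \in \Lambda_+$, so $\langle H, s_\mu\rangle = 0$; and $\langle H, s_\emptyset\rangle = \langle H, 1\rangle$ is the constant term of $H$, which is also $0$. Hence $H$ is orthogonal to every Schur function, forcing $H = 0$ and thus $F = G$. The only place where care is needed is the identification $J = \Lambda_+$, which is really just unwinding the polynomial generation of $\Lambda$ by the $e_i$; no serious obstacle is expected.
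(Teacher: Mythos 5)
Your proof is correct and is essentially the standard argument for this lemma: pass to $H = F - G$, use adjointness to see that $H$ is orthogonal to the ideal generated by $e_1, e_2, \dots$, observe that this ideal is all of $\Lambda_+$ since the $e_i$ freely generate $\Lambda$, and conclude by nondegeneracy of the Hall inner product together with the constant-term hypothesis. This matches the approach in the cited references.
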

We need the following basic fact about the $\epsilon_j$ idempotent elements. We include a proof for the sake of completeness.
\begin{lemma}\label{lem:EjLemma}
Given a finite set $Y$ with an $S_n$-action, let $y_1,\dots, y_m$ be a distinct set of representatives of the $S_{\{n-j+1,\dots,n\}}$-orbits of $Y$. Then $\dim_\bQ(\epsilon_j \bQ Y)$ is equal to the number of $y_i$ which have trivial $S_{\{n-j+1,\dots,n\}}$-stabilizer.
\end{lemma}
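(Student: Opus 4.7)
The plan is to decompose the permutation module $\bQ Y$ into its $H$-orbit submodules and analyze each piece individually, where $H = S_{\{n-j+1,\dots,n\}}$. Let $V_i \coloneqq \bQ[H\cdot y_i]$, so that $\bQ Y = \bigoplus_{i=1}^m V_i$ as $H$-modules. Because $\epsilon_j \in \bQ H$ preserves each $V_i$ under left multiplication, this decomposition gives
\[
\dim_\bQ(\epsilon_j \bQ Y) \;=\; \sum_{i=1}^m \dim_\bQ(\epsilon_j V_i),
\]
so it suffices to compute $\dim_\bQ(\epsilon_j V_i)$ for each orbit representative $y_i$.

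The entire computation rests on the identity $\epsilon_j \sigma = \mathrm{sgn}(\sigma)\,\epsilon_j$ for all $\sigma \in H$, obtained by reindexing the defining sum of $\epsilon_j$. Suppose first that $\mathrm{Stab}_H(y_i) = \{e\}$. Then $\{\pi y_i : \pi \in H\}$ is a basis of $V_i$, so $V_i$ is isomorphic to the regular representation $\bQ H$. The sign representation occurs with multiplicity one in $\bQ H$, so $\dim_\bQ(\epsilon_j V_i) = 1$; concretely, $\epsilon_j y_i = \tfrac{1}{j!}\sum_{\pi\in H}\mathrm{sgn}(\pi)\,\pi y_i$ is a nonzero element spanning $\epsilon_j V_i$. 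Suppose instead that $\mathrm{Stab}_H(y_i)$ is nontrivial and contains an odd permutation $\sigma$. Then $\epsilon_j y_i = \epsilon_j(\sigma y_i) = \mathrm{sgn}(\sigma)\,\epsilon_j y_i = -\epsilon_j y_i$, forcing $\epsilon_j y_i = 0$; applying the same identity to an arbitrary element $\tau y_i \in V_i$ yields $\epsilon_j(\tau y_i) = \mathrm{sgn}(\tau)\,\epsilon_j y_i = 0$, so $\epsilon_j V_i = 0$. Summing over the orbits gives the claimed count.

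The main subtlety to confront is the intermediate case in which $\mathrm{Stab}_H(y_i)$ is a nontrivial subgroup contained entirely in the alternating subgroup of $H$: there the sign character restricts trivially, and Frobenius reciprocity would give $\dim_\bQ(\epsilon_j V_i)=1$ even though the stabilizer is not trivial. In every application of this lemma in the paper, however, $Y$ is a set of combinatorial objects (ordered set partitions, tabloids, or similar) on which $H$ acts by permuting the labels $\{n-j+1,\dots,n\}$, and the $H$-stabilizer of any such object is a Young subgroup of $H$. Any nontrivial Young subgroup contains an adjacent transposition, which is odd, so this intermediate case never arises and the dichotomy established above is exhaustive, completing the proof.
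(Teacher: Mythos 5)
Your argument follows the same route as the paper's proof: decompose $\bQ Y$ into $H$-orbit submodules (where $H = S_{\{n-j+1,\dots,n\}}$), apply $\epsilon_j$ to each summand, and use the identity $\epsilon_j\sigma = \mathrm{sgn}(\sigma)\epsilon_j$ to kill orbits whose stabilizer contains an odd permutation. What you do differently, and better, is to isolate the case that the paper's proof silently passes over: a nontrivial stabilizer contained entirely in the alternating subgroup of $H$. The paper's proof asserts flatly that ``if $y$ has nontrivial stabilizer $\dots$ then $\epsilon_j y = 0$,'' which is false in general --- for instance, if $n = j = 3$ and $Y$ is a two-element $S_3$-set acted on through the sign map, there is one orbit with stabilizer $A_3$, yet $\bQ Y \cong \mathbf{1}\oplus\mathrm{sgn}$ and $\dim_\bQ(\epsilon_3\bQ Y) = 1$, not $0$. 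So the lemma as stated (for arbitrary finite $S_n$-sets $Y$) is not quite correct, and you have put your finger on the missing hypothesis: the $H$-stabilizers must not be contained in the alternating group. Your final paragraph correctly closes this gap for every actual application in the paper (the lemma is invoked only for $Y = \OP_{n,\la,s}$ in the proof of the dimension formula for $\epsilon_j R_{n,\la,s}$), since there the stabilizer of an ordered set partition is the Young subgroup $\prod_i S_{B_i\cap\{n-j+1,\dots,n\}}$, which, when nontrivial, contains a transposition. In short: the approach matches the paper's, but your version is more careful and identifies a genuine imprecision in the paper's statement and proof that the paper does not acknowledge.
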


\begin{proof}
 We have a direct sum decomposition
\begin{align}
\epsilon_j\bQ Y = \bigoplus_{i=1}^m \epsilon_j\bQ S_{\{n-j+1,\dots,n\}} y_i.
\end{align}
Observe that if $y$ and $y'$ are in the same $S_{\{n-j+1,\dots, n\}}$-orbit, then $\epsilon_j y = \pm \epsilon_j y'$. Furthermore, if $y$ has nontrivial stabilizer under the $S_{\{n-j+1,\dots, n\}}$-action, then $\epsilon_j y = 0$. Hence, $\epsilon_j\bQ S_{\{n-j+1,\dots,n\}} y_i$ is either one-dimensional if $y_i$ has trivial $S_{\{n-j+1,\dots,n\}}$-stabilizer or zero-dimensional otherwise. Hence, $\dim_\bQ(\epsilon_j\bQ Y)$ is equal to the number of $y_i$ which have trivial $S_{\{n-j+1,\dots,n\}}$-stabilizer.
\end{proof}

\subsection{Hall-Littlewood symmetric functions}\label{subsec:HLFunctions}

The algebra $\Lambda_{\bQ(q)}$ of symmetric functions has a basis given by the \emph{Hall-Littlewood symmetric functions} $P_\la(\bx;q)$ which have the property that
\begin{align}
s_\la(\bx) = \sum_{\mu\vdash n}K_{\la,\mu}(q) P_{\mu}(\bx;q),
\end{align}
where $K_{\la,\mu}(q)$ is the \emph{Kostka-Foulkes polynomial}, see~\cite{Macdonald}. The \emph{dual Hall-Littlewood symmetric functions} $Q_\la'(\bx;q)$ are given by
\begin{align}
Q_\mu'(\bx;q) = \sum_{\la\vdash n} K_{\la,\mu}(q) s_\la(\bx).
\end{align}
The degree of $Q_\la'(\bx;q)$ is given by $\deg(Q_\la'(\bx;q)) = n(\la)$, defined in~\eqref{eq:nOfLa}. The reversal of these symmetric functions are sometimes denoted by $\widetilde{H}_\la(\bx;q) \coloneqq \revq(Q_\la'(\bx;q)) = q^{n(\la)} Q_\la'(\bx; 1/q)$.
\begin{theorem}[\cite{Garsia-Procesi,Springer-TrigSum}]\label{thm:GPTheorem}
The Frobenius characteristic of the $S_n$-module $R_\la$ is the reversal of the dual Hall-Littlewood symmetric function,
\begin{align}
\Frobq(R_\la) = \widetilde{H}_{\la}(\bx;q) = \revq(Q_\la'(\bx;q)).
\end{align}
\end{theorem}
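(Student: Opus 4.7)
The plan is to proceed by induction on $n = |\la|$ and apply Lemma~\ref{lem:AntisymmIdentification}, reducing the identity $\Frobq(R_\la) = \widetilde{H}_\la(\bx;q)$ to verifying (a) the constant terms agree and (b) $e_j(\bx)^\perp \Frobq(R_\la) = e_j(\bx)^\perp \widetilde{H}_\la(\bx;q)$ for every $j \geq 1$. The base case $n=1$ is immediate since $R_{(1)} = \bQ$ carries the trivial $S_1$-action and $\widetilde{H}_{(1)}(\bx;q) = s_{(1)}(\bx)$.

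For (a), the degree-zero component of $R_\la$ is $\bQ$ with trivial $S_n$-action, so the constant term of $\Frobq(R_\la)$ is $s_{(n)}(\bx)$. On the Hall-Littlewood side, the Kostka-Foulkes polynomial $K_{\nu,\la}(q)$ attains its maximal degree $n(\la)$ only at $\nu = (n)$, where $K_{(n),\la}(q) = q^{n(\la)}$, so after applying $\revq$ the constant term of $\widetilde{H}_\la(\bx;q)$ is also $s_{(n)}(\bx)$.

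For (b), I would use \eqref{eq:AntisymmFrob} to write $e_j(\bx)^\perp \Frobq(R_\la) = \Frobq(\epsilon_j R_\la)$, where $\epsilon_j R_\la$ is regarded as a graded $S_{n-j}$-module. The crux of the argument is a direct sum decomposition of graded $S_{n-j}$-modules
\begin{equation*}
\epsilon_j R_\la \;\cong\; \bigoplus_{\nu} q^{a(\la,\nu)} \, R_\nu,
\end{equation*}
where $\nu$ ranges over a specific family of partitions of $n-j$ obtained from $j$-fold iterated reductions (as in \eqref{eq:ReductionDef}) of $\la$, and the exponents $a(\la,\nu)$ record explicit grading shifts. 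Heuristically, antisymmetrizing over the variables $x_{n-j+1},\ldots,x_n$ forces these variables to occupy $j$ distinct columns of the diagram of $\la$, and this antisymmetrization turns the Tanisaki relations of $I_\la$ into the Tanisaki relations of $I_\nu$ on the remaining variables, shifted in degree by $a(\la,\nu)$. Granting this decomposition and applying the induction hypothesis to each $R_\nu$, one obtains $e_j(\bx)^\perp \Frobq(R_\la) = \sum_\nu q^{a(\la,\nu)} \widetilde{H}_\nu(\bx;q)$.

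The final step is to recognize this same expansion on the Hall-Littlewood side via a classical Pieri-type identity for the action of $e_j(\bx)^\perp$ on the dual Hall-Littlewood functions $Q_\la'(\bx;q)$. After applying the reversal $\revq$, the coefficients of that identity match the $q^{a(\la,\nu)}$ appearing above, and the sums run over the same family of partitions $\nu$, closing the induction. I expect the main obstacle to be establishing the decomposition of $\epsilon_j R_\la$ with the correct grading: one must exhibit explicit intertwiners between the antisymmetrized Tanisaki ideal and the Tanisaki ideals of the reductions, and carefully track degree shifts. Matching the Hall-Littlewood Pieri identity to the module decomposition is technical but well-known from the classical theory.
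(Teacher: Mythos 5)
Theorem~\ref{thm:GPTheorem} is cited from \cite{Garsia-Procesi,Springer-TrigSum} and not re-proved in the paper, so there is no internal proof to compare against; your outline, however, correctly reconstructs the Garsia-Procesi argument, and the paper's Section~\ref{sec:Skew} machinery validates its key step in greater generality. Specializing Theorem~\ref{thm:AntisymmFrobqRecursion} and Lemma~\ref{lem:AntisymmIso} to $n=|\la|$, $s=\ell(\la)$ gives precisely the graded $S_{n-j}$-module decomposition you posit, $\epsilon_j R_\la \cong \bigoplus_{I} q^{\Sigma(I)} R_{\la^{(I)}}$, where $I$ runs over increasing $j$-tuples in $\{0,\dots,\ell(\la)-1\}$. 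Take care to index this sum by the tuples $I$ rather than by the resulting partitions $\nu=\la^{(I)}$: distinct $I$ can produce the same partition with different degree shifts (e.g.\ for $\la=(2,2)$ and $j=1$ one has $\la^{(0)}=\la^{(1)}=(2,1)$ with shifts $0$ and $1$). Your final step, the Hall-Littlewood Pieri identity for $e_j^\perp\widetilde{H}_\la$, is the classical Garsia-Procesi closing move but is \emph{not} what the present paper does: for its main Theorem~\ref{thm:MonomialTheorem}, the paper matches the skewing recursion against the combinatorial generating function $D_{n,\la,s}$ rather than against Hall-Littlewood theory, which it deliberately avoids, with the dual Hall-Littlewood expansion deferred to forthcoming work. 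Finally, a small misreading in your step (a): in Lemma~\ref{lem:AntisymmIdentification}, ``constant term'' means the $\bx$-degree-zero component of the symmetric function, which vanishes on both sides since $\Frobq(R_\la)$ and $\widetilde{H}_\la$ are homogeneous of $\bx$-degree $n\geq 1$; your step (a) is therefore automatic, and the Kostka-Foulkes computation you supply, while correct, addresses the $q^0$-coefficient rather than what the lemma actually requires.
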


Next, we recall the $t=0$ specialization of the Haglund-Haiman-Loehr formula for Macdonald polynomials, which gives an expansion for $\widetilde H_\la(\bx;q) = \revq(Q'_\la(\bx;q))$ in terms of inversions in labelings of the Young diagram of $\la'$. Given $\alpha\in \Comp(n, s)$, a pair of cells $((i,j),(i',j'))$ in $\dgprime(\alpha)$ are said to be an \emph{attacking pair} if either $j=j'$ and $i < i'$, or if $j = j'+1$ and $i>i'$. See Figure~\ref{fig:Attacking} for two examples of attacking cells in $\dgprime(4,5,3,0,1)$, where the attacking cells are indicated by dots.

\begin{figure}[t]
\centering
\includegraphics[scale=0.75]{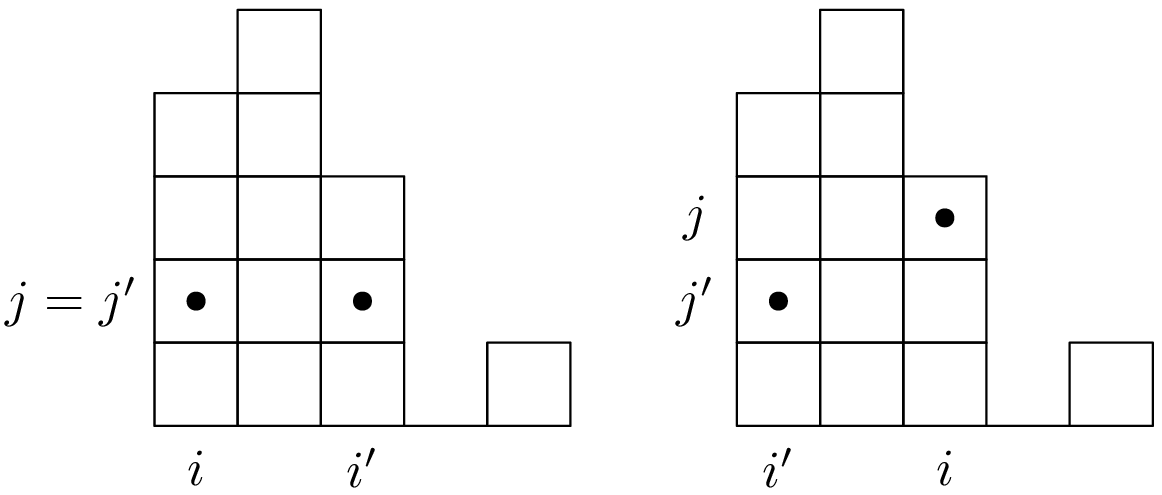}
\caption{Two examples of attacking pairs in $\dgprime(4,5,2,0,3)$.\label{fig:Attacking}}
\end{figure}

Let $\sigma$ be a filling of $\dgprime(\alpha)$. Given a cell $(i,j)$ of the diagram, let $\sigma_{i,j}$ be the label of the cell $(i,j)$ in $\sigma$. The \emph{reading word} of $\sigma$, denoted $\mathrm{rw(\sigma)}$, is the word obtained by reading the filling across each row from left to right, starting with the top-most row and ending with row $1$. We say that the corresponding ordering of the cells of $\dgprime(\alpha)$ is the \emph{reading order} of the diagram.  See Figure~\ref{fig:ReadingOrder} for an example of a filling $\sigma$ of $\dgprime(4,5,3,0,1)$ with reading word $\rw(\sigma) = 1\, 4\, 2\, 5\, 8\, 3\, 6\, 9\, 10\, 7\, 11\, 12\, 13$.

\begin{figure}[t]
\centering
\includegraphics[scale=0.75]{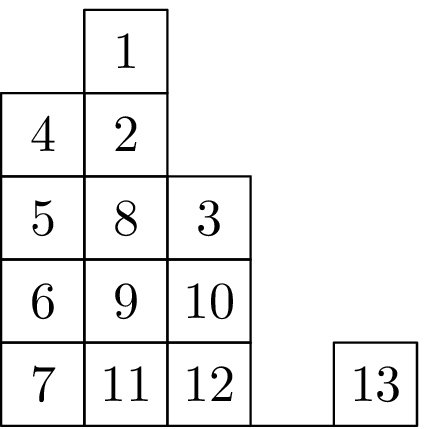}
\caption{A column-increasing filling $\sigma\in \CI_{10,\alpha,5}$ where $\alpha = (4,5,3,0,1)$.\label{fig:ReadingOrder}}
\end{figure}

A filling $\sigma$ with positive integers is said to be \emph{column-increasing} if the labeling weakly increases down each column. Let a \emph{standard} filling be a filling with the integers $1,\dots, n$, such that each integer $1\leq i\leq n$ is used exactly once. Given $\la\in \Par(n, s)$, let $\ci_{n,\la, s}$ be the set of column-increasing fillings of $\la'$, and let $\CI_{n,\la, s}$ be the subset of $\ci_{n,\la, s}$ of standard column-increasing fillings. Similarly, given a composition $\alpha\in \Comp(n, s)$, let $\ci_{n,\alpha, s}$ be the set of column-increasing fillings of $\dgprime(\alpha)$, and let $\CI_{n,\alpha, s}$ be the subset of $\ci_{n,\alpha, s}$ of standard column-increasing fillings. See Figure~\ref{fig:SCIFillings} for all fillings in $\CI_{5,\la, 2}$ where $\la = (3,2)$.

\begin{figure}[t]
\centering
\includegraphics[scale=0.75]{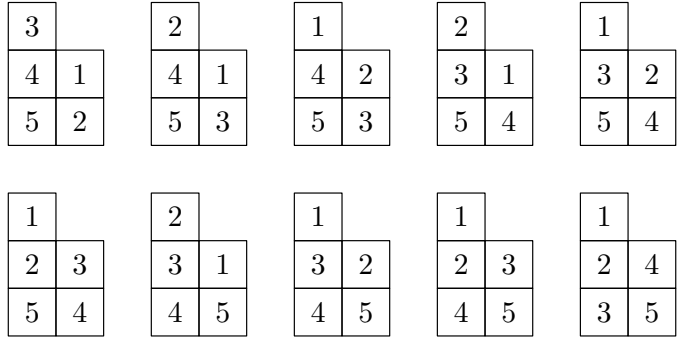}
\caption{All standard column-increasing fillings in $\CI_{5,\la, 2}$ where $\la = (3,2)$.\label{fig:SCIFillings}}
\end{figure}

 Define a \emph{diagonal inversion} of $\sigma$ to be an attacking pair $((i,j),(i',j'))$ of cells of $\dgprime(\alpha)$ with $(i,j)$ appearing earlier in the reading order, such that $\sigma_{i,j} > \sigma_{i',j'}$. Denote by $\dinv(\sigma)$ the number of diagonal inversions of $\sigma$. Letting $\sigma$ be the filling in Figure~\ref{fig:ReadingOrder}, then the cells $(1,3)$ and $(3,3)$ form a diagonal inversion in $\sigma$, and the cells $(2,3)$ and $(1,2)$ also form a diagonal inversion in $\sigma$. The reader can check that $\dinv(\sigma) = 6$. 
 
The diagonal inversion statistic defined above appears in the following corollary of the beautiful fundamental quasisymmetric function expansion for Macdonald symmetric functions proven by Haglund, Haiman, and Loehr \cite{HHL}. Setting $t=0$ in \cite[Equation 36]{HHL}, we have the following expansion for the reversal of the dual Hall-Littlewood function in our notation.
\begin{corollary}
For $\la\vdash n$, we have
\begin{align}\label{eq:HHLFormula}
\Frobq(R_{\la}) = \revq(Q_\la'(\mathbf{x};q)) = \sum_{\sigma \in \CI_{n,\la,\ell(\la)}} q^{\dinv(\sigma)} F_{n,\mathrm{iDes}(\rw(\sigma))}(\bx).
\end{align}
\end{corollary}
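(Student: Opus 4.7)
The plan is to invoke Haglund, Haiman, and Loehr's combinatorial formula for the modified Macdonald polynomial $\widetilde{H}_\la(\bx;q,t)$, specialize $t=0$, and then identify the result with $\Frobq(R_\la)$ via Theorem~\ref{thm:GPTheorem}.

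HHL express $\widetilde{H}_\la(\bx;q,t)$ as a sum, over all fillings $\sigma$ of $\dgprime(\la)$ with positive integer entries, of $q^{\inv(\sigma)}t^{\mathrm{maj}(\sigma)}F_{n,\iDes(\rw(\sigma))}(\bx)$, where $\mathrm{maj}(\sigma)$ measures the descents appearing in each column. Setting $t=0$ kills every term in which some column of $\sigma$ has a descent, so only fillings whose entries weakly increase down each column survive; after standardising, these are exactly the elements of $\CI_{n,\la,\ell(\la)}$.

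The remaining step is to check that HHL's $\inv$ statistic restricted to column-increasing fillings agrees with the paper's $\dinv$. In general HHL's $\inv$ counts inversion triples of cells, but once column descents are forbidden each triple is determined by its ``attacking pair'', and a direct bookkeeping identifies $\inv(\sigma)$ with the number of attacking pairs $((i,j),(i',j'))$ of $\dgprime(\la)$ whose earlier cell (in reading order) carries the larger label, which is precisely $\dinv(\sigma)$. Since the reading word, and hence $\iDes(\rw(\sigma))$, is the same on both sides, the $t=0$ specialisation of HHL reads
\[
\widetilde{H}_\la(\bx;q) \;=\; \sum_{\sigma\in \CI_{n,\la,\ell(\la)}} q^{\dinv(\sigma)}\, F_{n,\iDes(\rw(\sigma))}(\bx),
\]
and Theorem~\ref{thm:GPTheorem} identifies the left-hand side with $\Frobq(R_\la)$.

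The main obstacle is purely notational: HHL's formula is usually stated in the English convention with inversion triples, whereas the paper uses the French row-numbering and phrases everything in terms of attacking pairs. One therefore has to verify that the triple count degenerates correctly to the pair count of this paper, and that the reading order defined here (top row to bottom, left to right) matches HHL's reading order, so that no directions are inadvertently reversed. Once those conventions are aligned the corollary follows immediately from HHL combined with Garsia-Procesi.
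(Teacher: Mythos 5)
Your proposal is correct and matches the paper's approach: the paper states this as an immediate consequence of setting $t=0$ in HHL's fundamental quasisymmetric function expansion (their Equation 36) together with Theorem~\ref{thm:GPTheorem}, exactly as you describe. Your additional bookkeeping on why $\mathrm{maj}=0$ forces column-increasing fillings and why HHL's $\inv$ reduces to the attacking-pair count $\dinv$ on such fillings is the content the paper leaves implicit in the phrase ``in our notation.''
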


A family of symmetric functions generalizing the dual Hall-Littlewood functions are the \emph{LLT polynomials} introduced by Lascoux, Leclerc, and Thibon~\cite{LLT}. We use here a variant introduced in \cite{HHLplus}. Let $\la\subseteq \mu$, and let $\nu = \mu/\la$ be the skew diagram. Let $c = (i,j)$ be the cell of $\nu$ in column $i$ and row $j$. The \emph{content} of $u$ is $c(u) \coloneqq j-i$. A \emph{semistandard Young tableau} of $\nu$ is a labeling of the cells of the skew diagram with positive integers which weakly increases left to right across each row and strictly increases up each column. Let $\SSYT(\nu)$ be the set of such fillings. Given $T\in \SSYT(\nu)$, let $\bx^T$ be the product of the variables corresponding to the labels appearing in $T$, counting multiplicity. 

Given a tuple of skew diagrams $\boldsymbol{\nu} = (\nu^{1},\dots, \nu^{m})$, let $\SSYT(\boldsymbol{\nu}) \coloneqq \SSYT(\nu^{1})\times \SSYT(\nu^{2})\times\cdots\times \SSYT(\nu^{m})$. Given $\mathbf{T} = (T^{1},\dots, T^{m})\in \SSYT(\boldsymbol{\nu})$, let $\bx^{\mathbf{T}} = \bx^{T^{1}}\cdots \bx^{T^{m}}$. Given $u = (i,j)$ a cell in $T^{a}$ and $v = (i',j')$ a cell of $T^{b}$, we say $(u, v)$ form an \emph{inversion} if we have the inequality of entries $T^{a}_{u} > T^{b}_{v}$, and either $a < b$ and $c(u) = c(v)$, or $a > b$ and $c(u) = c(v) + 1$. Denote by $\inv(\mathbf{T})$ the number of inversions of $\mathbf{T}$. The \emph{LLT polynomial indexed by $\boldsymbol{\nu}$} is
\begin{align}
G_{\boldsymbol{\nu}}(\bx;q) \coloneqq \sum_{\mathbf{T}\in \SSYT(\boldsymbol{\nu})} q^{\inv(\mathbf{T})} \bx^{\mathbf{T}}.
\end{align}
We need the following theorem, which has several algebraic and combinatorial proofs.
\begin{theorem}[\cite{HHL,HHLplus,LLT}]\label{thm:LLTSymmetry}
The LLT polynomial $G_{\boldsymbol{\nu}}(\bx;q)$ is symmetric in the variables $\bx$.
\end{theorem}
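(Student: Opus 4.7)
The plan is to reduce to showing invariance under each adjacent transposition $s_i$ swapping $x_i$ and $x_{i+1}$, for $i \geq 1$, and then to construct an explicit involution on $\SSYT(\boldsymbol{\nu})$ that witnesses this invariance. Since a formal power series in $\bx$ is symmetric if and only if it is invariant under every such $s_i$, it suffices to exhibit, for each $i$, a bijection $\phi_i$ on $\SSYT(\boldsymbol{\nu})$ sending a filling $\mathbf{T}$ of content $\alpha$ to one of content $s_i\alpha$, while preserving $\inv(\mathbf{T})$.

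The map $\phi_i$ I would use is a modification of the classical Bender--Knuth involution. First I freeze every cell labeled $i$ or $i+1$ whose label is forced by a column-strict neighbor in the same component, namely a cell labeled $i$ that has a cell labeled $i+1$ directly above it in the same column of $T^a$, and symmetrically a cell labeled $i+1$ that has a cell labeled $i$ directly below it. The remaining \emph{free} cells labeled $i$ or $i+1$ form, within each row of each component $T^a$, a contiguous block of $i$'s followed by a contiguous block of $(i+1)$'s. To respect $\inv$, rather than performing the row-by-row swap of the classical Bender--Knuth involution, one groups free cells across the tuple by their content $c(u)$ and performs a coordinated swap within each content class that changes only the multiset of labels on the free cells of that content.

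The main obstacle is verifying that $\inv(\phi_i(\mathbf{T})) = \inv(\mathbf{T})$. Attacking pairs split into three types: two frozen cells (whose labels are unchanged, so the inversion count is automatic), one frozen cell and one free cell (for which a case analysis on the forced label of the frozen cell shows that the contribution is preserved), and two free cells. The third case is delicate. By definition, an attacking pair $(u,v)$ with $u\in T^a$ and $v\in T^b$ requires either $a<b$ and $c(u)=c(v)$ or $a>b$ and $c(u)=c(v)+1$; hence inversions decompose across content classes. Within each content class, a direct count shows that the number of inversions between free cells depends only on the multiset of labels on those cells and the component indices of their host rows, not on which specific cell carries which label. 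This multiset-invariance is what legitimizes the coordinated swap and is the heart of the argument.

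Alternatively, one can prove the theorem by the original method of Lascoux, Leclerc, and Thibon, who realized $G_{\boldsymbol{\nu}}$ as the image of a canonical basis element in the level-$m$ Fock space representation of $U_q(\widehat{\mathfrak{sl}}_n)$; symmetry then follows from the fact that the Heisenberg action on Fock space is implemented by multiplication and skewing by power-sum symmetric functions. The Bender--Knuth-type approach sketched above is more elementary and parallels the standard symmetry proof for ordinary Schur functions, which is why I would favor it here.
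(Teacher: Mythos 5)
The paper does not prove this statement at all; it cites Theorem~\ref{thm:LLTSymmetry} as a known result from \cite{HHL,HHLplus,LLT} and uses it as a black box. So there is no ``paper's own proof'' to compare against, only the published arguments.

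Your Bender--Knuth sketch is in the spirit of the combinatorial proof in the HHL circle of papers, and the alternative Fock-space route you mention is indeed the original LLT argument. However, the sketch as written has a concrete gap in the reduction to content classes. From the definition you quote --- an attacking pair $(u,v)$ with $u\in T^a$, $v\in T^b$ requires either $a<b$ and $c(u)=c(v)$, \emph{or} $a>b$ and $c(u)=c(v)+1$ --- you conclude that ``inversions decompose across content classes.'' That does not follow: the second alternative produces attacking pairs whose two cells lie in \emph{different} content classes ($c$ and $c-1$). If you perform a ``coordinated swap within each content class,'' you must therefore still verify that the number of inversions between a free cell of content $c$ and a free cell of content $c\pm 1$ is preserved, and your multiset-invariance claim only covers inversions internal to a single content class. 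This is precisely the delicate bookkeeping that makes the published combinatorial proof nontrivial, and it is not handled by the argument you give. To repair the sketch you would need either a joint invariant over adjacent content classes, or a reformulation (e.g.\ totally ordering cells by shifted content $c(u)+a\varepsilon$ and running a single global Bender--Knuth move on the resulting word) that makes the cross-content attacking pairs tractable.
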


\section{Frobenius characteristic of $R_{n,\la, s}$}\label{sec:genset}

In this section, we identify $R_{n,\lambda, s}$ as a symmetric group module. Our main strategy, used by Garsia-Procesi~\cite{Garsia-Procesi} and formalized by Haglund-Rhoades-Shimozono~\cite[Section 4.1]{HRS1}, is to show that $R_{n,\la, s}$ is the associated graded ring of the coordinate ring of a finite set of points in $\bQ^n$. We then prove Theorem~\ref{thm:MonomialBasis}, which identifies a monomial basis of $R_{n,\la, s}$.

	\subsection{Associated graded rings and point orbits}\label{subsec:AssociatedGraded}

Throughout this section, we fix positive integers $k\leq n$ and $s$, and a partition $\lambda \in \Par(k, s)$. Fix $s$ distinct rational numbers $\alpha_{1},\dots, \alpha_{s}\in \bQ$. Let $X_{n,\la, s}$ be the set of points $p = (p_{1},\dots, p_{n})\in \bQ^{n}$ such that for each $1\leq i\leq n$, $p_i = \alpha_j$ for some $j$, and for each $1\leq i\leq s$, $\alpha_i$ appears as a coordinate in $p$ at least $\lambda_i$ many times. The \emph{defining ideal} of $X_{n,\la, s}$ is
\begin{align}
\ideal(X_{n,\la, s}) \coloneqq \{f\in \bQ[\bx_n] \st f(p) = 0 \text{ for }p\in X_{n,\la, s}\} \subseteq \bQ[\bx_n].
\end{align}
The quotient ring $\bQ[\bx_n]/\ideal(X_{n,\la, s})$ is the \emph{coordinate ring} of the set $X_{n,\la, s}$. It is isomorphic to the ring of polynomial functions $X_{n,\la, s}\to \bQ$. See \cite{CoxLittleOshea} for more background on the defining ideal and the coordinate ring of a variety.

For a degree $d$ polynomial $f = f_d + f_{d-1}+\dots +f_0\in \bQ[\bx_n]$ where $f_i$ is the degree $i$ homogenous summand of $f$, define $\topdeg(f) = f_d$ to be the top homogenous component of $f$. For example, if $f=x_1^2 x_2 + 2x_1^2 x_3 + x_2 x_3 + x_1 + 3$, then $\topdeg(f) = x_1^2x_2 + 2x_1^2 x_3$. 

The \emph{associated graded ideal} of $\ideal(X_{n,\la, s})$ with respect to the filtration by degree is
\begin{align}
\top(X_{n,\la, s}) = \langle\topdeg(f) \st f\in \ideal(X_{n,\la, s})\rangle,
\end{align}
which is a homogenous ideal since each of the generators is homogeneous. See~\cite{Eisenbud} for more details.

It is well known that the corresponding quotient ring $\bQ[\bx_n]/\top(X_{n,\la, s})$ is isomorphic to the associated graded ring of $\bQ[\bx_n]/\ideal(X_{n,\la, s})$ with respect to the filtration by degree. This is true more generally for the associated graded ideal of any ideal in $\bQ[\bx_n]$. See, e.g.\ \cite[Remark 3.1]{Garsia-Procesi}, for a proof of this fact in the case of the ideal $\ideal(X_{n,\la, s})$ when $k=n$. However, the proof easily extends to any ideal.

Since $X_{n,\la, s}$ is a finite set, we have
\begin{align}
|X_{n,\la, s}| = \dim_\bQ \frac{\bQ[\bx_n]}{\ideal(X_{n,\la, s})} = \dim_\bQ\frac{\bQ[\bx_n]}{\top(X_{n,\la, s})},\label{eq:DimEqualsX}
\end{align}
where all dimensions are as $\bQ$-vector spaces.
Since $X_{n,\la, s}$ is stable under the action of $S_n$ given by permuting coordinates, we have an $S_n$-action on the rings $\bQ[\bx_n]/\ideal(X_{n,\la, s})$ and $\bQ[\bx_n]/\top(X_{n,\la, s})$ given by permuting the variables $\bx_n$. As $S_n$-modules,
\begin{align}\label{eq:UngradedModuleIso}
\bQ X_{n,\la, s}\cong_{S_n} \frac{\bQ[\bx_n]}{\ideal(X_{n,\la, s})} \cong_{S_n} \frac{\bQ[\bx_n]}{\top(X_{n,\la, s})},
\end{align}
where $\bQ X_{n,\la, s}$ is the $S_n$-module of formal $\bQ$-linear combinations of points in $X_{n,\la, s}$. See~\cite[Section 4.1]{HRS1} for more details.

\begin{theorem}\label{thm:TopIdealEquality}
We have $I_{n,\la, s} = \top(X_{n,\la, s})$. Hence, we have the equality of rings
\begin{align}
R_{n,\la, s} = \frac{\bQ[\bx_n]}{\top(X_{n,\la, s})}.
\end{align} 
\end{theorem}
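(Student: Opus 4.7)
My plan is a dimension squeeze. I will first prove the containment $I_{n,\la, s} \subseteq \top(X_{n,\la, s})$, which yields a graded surjection $R_{n,\la, s} \twoheadrightarrow \bQ[\bx_n]/\top(X_{n,\la, s})$ and hence $\dim_\bQ R_{n,\la, s} \geq |X_{n,\la, s}|$ via \eqref{eq:DimEqualsX}. I will then bound $\dim_\bQ R_{n,\la, s}$ from above by $|\OP_{n,\la, s}| = |X_{n,\la, s}|$, collapsing the surjection to an isomorphism.

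For the containment, each generator of $I_{n,\la, s}$ must be realized as $\topdeg(f)$ for some $f \in \ideal(X_{n,\la, s})$. The generator $x_i^s$ is the top-degree component of $\prod_{j=1}^s (x_i - \alpha_j)$, which vanishes on $X_{n,\la, s}$. For $e_d(S)$ with $m \coloneqq |S|$ and $d > m - p^n_m(\la)$, I would introduce the auxiliary polynomial $F_S(\bx, t) \coloneqq \prod_{y \in S}(t + y) = \sum_{d=0}^m e_d(S)\, t^{m-d}$. A pigeonhole count shows that at any $p \in X_{n,\la, s}$, the value $\alpha_j$ appears among the coordinates indexed by $S$ at least $\max(\la_j - (n-m), 0)$ times, so $F_S(p, t)$ is divisible in $\bQ[t]$ by the monic polynomial $H(t) \coloneqq \prod_j (t + \alpha_j)^{\max(\la_j - (n-m), 0)}$; the conjugate-partition identity $\sum_j \max(\la_j - (n-m), 0) = p^n_m(\la)$ ensures $\deg H = p^n_m(\la)$. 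Polynomial long division in $\bQ[\bx_n][t]$ gives $F_S = H \cdot Q + R$ with $\deg_t R < p^n_m(\la)$, and every $t$-coefficient of $R$ lies in $\ideal(X_{n,\la, s})$. A descending induction on the $t$-degree shows the $t^b$-coefficient $q_b(\bx)$ of $Q$ has $\bx$-degree $m - p^n_m(\la) - b$; combining this with the fact that $H(t) - t^{p^n_m(\la)}$ has $t$-degree strictly less than $p^n_m(\la)$ shows that the coefficient $[t^{m-d}] R$ has top $\bx$-degree component exactly $e_d(S)$. Hence $e_d(S) \in \top(X_{n,\la, s})$.

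The harder direction is the upper bound $\dim_\bQ R_{n,\la, s} \leq |X_{n,\la, s}|$. My strategy is to produce the monomial spanning set $\cA_{n,\la, s}$ of $(n,\la, s)$-staircase monomials developed in Theorem~\ref{thm:MonomialBasis}, together with a straightening algorithm in the spirit of Garsia--Procesi. The generators $x_i^s$ cap each exponent at $s-1$, and the partial elementary symmetric relations $e_d(S)$ are then iteratively applied in a carefully chosen term order to rewrite any monomial modulo $I_{n,\la, s}$ as a $\bQ$-linear combination of staircase monomials. The main obstacle is proving termination and correctness of this algorithm, which I expect to manage by a double induction on $n$ and on $\la$ via the reductions $\la^{(i)}$ defined in \eqref{eq:ReductionDef}. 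The equality $|\cA_{n,\la, s}| = |\OP_{n,\la, s}|$ is a combinatorial count, and the natural bijection $X_{n,\la, s} \to \OP_{n,\la, s}$ sending $p = (p_1,\dots,p_n)$ to the ordered set partition whose $j$-th block is $\{i \st p_i = \alpha_j\}$ gives $|\OP_{n,\la, s}| = |X_{n,\la, s}|$. Combining everything, the squeeze $|X_{n,\la, s}| \leq \dim_\bQ R_{n,\la, s} \leq |\cA_{n,\la, s}| = |X_{n,\la, s}|$ forces the surjection above to be an isomorphism of graded $\bQ$-vector spaces, establishing the equality $I_{n,\la, s} = \top(X_{n,\la, s})$.
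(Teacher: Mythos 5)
Your proposal follows the same overall architecture as the paper's proof: containment $I_{n,\la,s} \subseteq \top(X_{n,\la,s})$, a staircase-monomial spanning set via a straightening algorithm inducting on $n$ and the reductions $\la^{(i)}$, the count $|\cA_{n,\la,s}| = |X_{n,\la,s}|$, and the final dimension squeeze. The one genuine (and in my view slightly cleaner) variation is your containment argument for the generators $e_d(S)$. The paper introduces a second alphabet $\by_s$, divides $\prod_{i\leq m}(t+x_i)$ by $\prod_i(t+y_i)^{c_{i,m}}$ over $\bQ[\bx_m,\by_s]$, and uses homogeneity of the remainder coefficients in $\bQ[\bx_m,\by_s]$ to conclude that setting $\by_s \mapsto 0$ recovers the top $\bx$-degree part. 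You instead divide $F_S(\bx,t)$ directly by the monic numerical polynomial $H(t)=\prod_j(t+\alpha_j)^{\max(\la_j-(n-m),0)}$ over $\bQ$ and track $\bx$-degrees of the quotient coefficients $q_b$ by descending induction. Your conclusion is sound: since $\deg_\bx q_b \leq m-p_m^n(\la)-b$ and the coefficients of $H$ are scalars, every correction term in $r_{m-d}$ has $\bx$-degree at most $m-p_m^n(\la) < d$, so $\topdeg(r_{m-d})=e_d(S)$. This dispenses with the auxiliary variables at the cost of a short degree-bookkeeping argument; neither approach is obviously better. The remainder of your sketch (the straightening algorithm, whose "termination and correctness" you reasonably defer, and the counting identity $|\cA_{n,\la,s}|=|\OP_{n,\la,s}|=|X_{n,\la,s}|$) matches the content of the paper's Lemma~\ref{lem:SpanningSet}, Lemma~\ref{lem:TechnicalLemma}, and Lemma~\ref{lem:BasisBijection}, so the overall proof strategy is the same.
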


\begin{corollary}\label{cor:SymmModuleIso}
As $S_n$-modules, $R_{n,\la, s} \cong_{S_n} \bQ X_{n,\la, s}$.
\end{corollary}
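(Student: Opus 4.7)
The plan is to derive this as an essentially immediate consequence of Theorem~\ref{thm:TopIdealEquality} together with the chain of $S_n$-isomorphisms already recorded in equation~\eqref{eq:UngradedModuleIso}. First, I would invoke Theorem~\ref{thm:TopIdealEquality} to replace $I_{n,\la, s}$ with $\top(X_{n,\la, s})$ on the nose, so that $R_{n,\la, s} = \bQ[\bx_n]/\top(X_{n,\la, s})$ as rings. This equality is automatically $S_n$-equivariant because both ideals are $S_n$-stable: $I_{n,\la, s}$ is generated by a set closed under permutation of the variables (as noted after Definition~\ref{def:TheIdeal}), and $\top(X_{n,\la, s})$ is $S_n$-stable since $X_{n,\la, s}$ is.

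Next, equation~\eqref{eq:UngradedModuleIso} already provides the ungraded $S_n$-isomorphism
$$\bQ X_{n,\la, s} \;\cong_{S_n}\; \bQ[\bx_n]/\ideal(X_{n,\la, s}) \;\cong_{S_n}\; \bQ[\bx_n]/\top(X_{n,\la, s}),$$
where the first isomorphism identifies $\bQ[\bx_n]/\ideal(X_{n,\la, s})$ with the ring of polynomial functions on the finite $S_n$-stable set $X_{n,\la, s}$ (which has basis given by evaluation at the points of $X_{n,\la, s}$), and the second records the standard fact that taking the associated graded ring with respect to the degree filtration preserves the ungraded $S_n$-module structure, since the filtration is $S_n$-stable with finite-dimensional quotients. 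Composing this chain with the identification of the previous paragraph yields $R_{n,\la, s} \cong_{S_n} \bQ X_{n,\la, s}$, which is the desired conclusion.

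The main obstacle is not in this corollary itself but in its input, Theorem~\ref{thm:TopIdealEquality}; once that containment-plus-dimension argument is in hand, the present statement is a one-line consequence of machinery already developed in Subsection~\ref{subsec:AssociatedGraded}.
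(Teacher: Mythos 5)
Your proposal is correct and follows exactly the paper's argument: the paper's proof of this corollary is literally "Combine Theorem~\ref{thm:TopIdealEquality} with \eqref{eq:UngradedModuleIso}." The extra remarks you add about $S_n$-stability of the ideals and of the degree filtration are accurate but already implicit in the setup of Subsection~\ref{subsec:AssociatedGraded}.
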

\begin{proof}
Combine Theorem~\ref{thm:TopIdealEquality} with \eqref{eq:UngradedModuleIso}.
\end{proof}

We prove Theorem~\ref{thm:TopIdealEquality} in three parts. Recall the definition of $I_{n,\la, s}$ in Definition~\ref{def:TheIdeal}. First, we show that all of the generators of $I_{n,\la, s}$ are in $\top(X_{n,\la, s})$ in Lemma~\ref{lem:PolysInTop} by adapting the proof of Garsia-Procesi \cite[Proposition 3.1]{Garsia-Procesi}. Second, we find a monomial spanning set of the quotient $R_{n,\la, s} = \bQ[\bx_n]/I_{n,\la, s}$ of size $|X_{n,\la, s}|$. Finally, we finish the proof of Theorem~\ref{thm:TopIdealEquality} using a dimension counting argument. As a consequence, we see that our monomial spanning set is a monomial basis of $R_{n,\la, s}$.

\begin{lemma}\label{lem:PolysInTop}
We have the containment of ideals
\[
I_{n,\la, s}\subseteq \top(X_{n,\la, s}).
\]
\end{lemma}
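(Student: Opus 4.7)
The plan is to show, for each generating polynomial $g$ of $I_{n,\la,s}$, that there exists $f \in \ideal(X_{n,\la,s})$ with $\topdeg(f) = g$ up to a sign; this gives $g \in \top(X_{n,\la,s})$ by definition. The two families of generators are handled separately.

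For the power generators $x_i^s$, the polynomial $\prod_{j=1}^s (x_i - \alpha_j)$ lies in $\ideal(X_{n,\la,s})$ because every coordinate of every point in $X_{n,\la,s}$ is some $\alpha_j$, and its top-degree component is exactly $x_i^s$.

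For the generators $e_d(S)$ with $|S| = m$ and $d > m - p^n_m(\la)$, I would adapt the Garsia--Procesi polynomial-division strategy using the auxiliary polynomials
\begin{align*}
F_S(y) = \prod_{x_i \in S}(y - x_i) = \sum_{d=0}^m (-1)^d e_d(S)\, y^{m-d}, \qquad Q_S(y) = \prod_{j=1}^s (y - \alpha_j)^{b_j(S)},
\end{align*}
in a new variable $y$, where $b_j(S) = \max(0, \la_j - (n-m))$. Switching the order of summation in $p^n_m(\la) = \sum_{i=n-m+1}^n \la_i'$ gives $\deg_y Q_S = \sum_j b_j(S) = p^n_m(\la)$, and a pigeonhole argument (at most $n-m$ coordinates of $p$ lie outside $S$) shows that the multiplicity of $\alpha_j$ among $\{p_i : x_i \in S\}$ is at least $b_j(S)$ for every $p \in X_{n,\la,s}$; hence $Q_S(y)$ divides $F_S(y)(p)$ as polynomials in $y$. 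Performing polynomial division over $\bQ[\bx_n]$, which is valid since $Q_S(y)$ is monic in $y$, yields $F_S(y) = Q_S(y) A(y) + r(y)$ with $\deg_y r < p^n_m(\la)$; evaluating at any $p \in X_{n,\la,s}$ and using the uniqueness of division forces $r(y)(p) = 0$ as a polynomial in $y$, so every coefficient of $r(y)$ lies in $\ideal(X_{n,\la,s})$.

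A short induction on $k$ using the coefficients of $y^{m-k}$ for $k \le m - p^n_m(\la)$ shows that the coefficients $a_k$ of $A(y)$ have degree $k$ with top-degree component $(-1)^k e_k(S)$. Then for $d > m - p^n_m(\la)$, extracting the coefficient of $y^{m-d}$ from $F_S(y) - Q_S(y) A(y) = r(y)$ produces an element of $\ideal(X_{n,\la,s})$ of the form $(-1)^d e_d(S) + (\text{terms of degree strictly less than } d)$, whose top-degree component is $(-1)^d e_d(S)$. This gives $e_d(S) \in \top(X_{n,\la,s})$ and completes the proof. The main obstacle is purely bookkeeping: tracking the degree and leading term of each $a_k$, and verifying the pigeonhole divisibility at each point carefully enough that the top-degree term survives through the division identity.
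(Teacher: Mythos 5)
Your proof is correct and follows the same Garsia--Procesi division strategy as the paper, but it differs in the specific mechanism used to extract the top-degree component. The paper introduces a second set of indeterminates $\by_s = \{y_1,\dots,y_s\}$ and divides $\prod_{i=1}^m(t+x_i)$ by $\prod_{i=1}^s(t+y_i)^{c_{i,m}}$ in $\bQ[\bx_m,\by_s][t]$; the payoff is that each remainder coefficient $r_d(\bx_m,\by_s)$ is \emph{homogeneous}, so setting $\by_s = 0$ extracts the top-degree component in $\bx_m$ in one stroke, immediately identifying it with $e_{m-d}(\bx_m)$. You instead perform the division directly with the scalars $\alpha_j$ substituted, so your remainder coefficients are not homogeneous, and you recover the same conclusion by an explicit induction tracking the degree and leading term of each quotient coefficient $a_k$. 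I verified that this induction goes through: from $(-1)^k e_k(S) = a_k + \sum_{j=1}^k q_j a_{k-j}$ for $k \leq m - p^n_m(\la)$ (where the $q_j$ are the constant coefficients of $Q_S$), one gets $\deg a_k = k$ with leading term $(-1)^k e_k(S)$, and then for $d > m - p^n_m(\la)$ the coefficient of $y^{m-d}$ in $r(y)$ is $(-1)^d e_d(S)$ plus terms of strictly lower degree, which is exactly what is needed. Both arguments are sound; the paper's auxiliary-indeterminate trick buys a cleaner proof with no induction, while yours is more hands-on but avoids introducing the extra variables. One small stylistic difference: the paper first reduces to the case $S = \bx_m$ by $S_n$-stability of $\top(X_{n,\la,s})$, whereas you keep $S$ general, which is also fine since $b_j(S)$ depends only on $|S|$.
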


\begin{proof}
First we show that $x_i^s\in \top(X_{n,\la, s})$ for all $i$.
For any $p\in X_{n,\la, s}$, the coordinates of $p$ are in the set $\{\alpha_1,\dots,\alpha_s\}$. Therefore, for each $i$, the polynomial function 
\begin{align}\label{eq:ProductFunction}
(x_i-\alpha_1)(x_i-\alpha_2)\cdots (x_i-\alpha_s) 
\end{align}
is in $\ideal(X_{n,\la, s})$. Since the top degree component of \eqref{eq:ProductFunction} is $x_i^s$, we have $x_i^s\in \top(X_{n,\la, s})$.

Second, we show that for any $d$ and $S\subseteq \bx_n$ such that $|S|\geq d > |S| - p_{|S|}^n(\la)$, we have $e_d(S)\in \top(X_{n,\la, s})$. For $m\leq n$, let $\bx_m\coloneqq \{x_1,\dots, x_m\}$. Since the ideal $\top(X_{n,\la, s})$ is closed under the action of $S_n$, it suffices to prove that $e_d(\bx_m)\in \top(X_{n,\la, s})$ for $d$ and $m$ such that $m\geq d > m-p^n_m(\la)$. 

Observe that $p_{m}^n(\la)$ is the number of cells of $\la$ weakly to the right of column $n-m+1$. For each $i\leq s$, let $c_{i, m}$ be the number of cells of the Young diagram of $\la$ which are in the $i$th row and are weakly to the right of column $n-m+1$. Observe that for any $p\in X_{n,\la, s}$, at least $c_{i, m}$ many $\alpha_i$'s must appear among the coordinates $p_1,\dots, p_m$. 
 Therefore, $\prod_{i=1}^m (t+p_i)$ is divisible by $\prod_{i=1}^s (t+\alpha_i)^{c_{i, m}}$. Hence, there exists a polynomial $z(t)$ such that 
\begin{align}\label{eq:EasyQuotient}
\prod_{i=1}^m (t+p_i) = z(t) \prod_{i=1}^s (t+\alpha_i)^{c_{i, m}}.
\end{align}

Let $\by_s = \{y_1,\dots, y_s\}$ be a second set of indeterminates. To show that $e_{d}(\bx_m) \in \top(X_{n,\la, s})$ for $d>m-p_m^n(\la)$, we consider the more general division problem of dividing $\prod_{i=1}^m (t+x_i)$ by $\prod_{i=1}^s (t+y_i)^{c_{i,m}}$ as polynomials in $t$ with coefficients in $\bQ[\bx_m,\by_s]$. Since $\sum_i c_{i, m} = p^n_m(\la)$, the remainder upon dividing $\prod_{i=1}^m (t+x_i)$ by $\prod_{i=1}^s (t+y_i)^{c_{i, m}}$ as polynomials in $t$ will be degree at most $p^n_m(\la)-1$ in $t$. Therefore, there exist polynomials $q(t,\bx_m,\by_s)$ and $r_d(\x_m,\y_s)$ for $0\leq d\leq p^n_m(\la)-1$ such that
\begin{align}\label{eq:quotientm}
\prod_{i=1}^m (t+x_i) = q(t,\x_m,\y_s) \prod_{i=1}^s (t+y_i)^{c_{i,m}} + \sum_{d=0}^{p^n_m(\la)-1} r_d(\x_m,\y_s) t^d.
\end{align}
Observe $r_d(\bx_m,\by_s)$ is homogeneous as a polynomial in $\bQ[\bx_m,\by_s]$, so $r_d(\bx_m,0^s)$ is the top degree component of $r_d(\bx_m,\by_s)$ as a polynomial in $\bx_m$ with coefficients in $\bQ[\by_s]$. Hence, $r_d(\bx_m,0^s) = \topdeg(r_d(\bx_n,\alpha_1,\dots,\alpha_s))$.
Plugging $y_i=0$ into \eqref{eq:quotientm} for $1\leq i\leq s$, we have
\begin{align}
\prod_{i=1}^m (t+x_i) = \sum_{i=0}^m e_{m-i}(\x_m)t^i = q(t,\x_m,0^s) t^{p^n_m(\la)} + \sum_{d=0}^{p^n_m(\la)-1} r_d(\x_m,0^s)t^d.
\end{align}
Hence, for $0\leq d\leq p^n_m(\la)-1$, we have $e_{m-d}(\x_m) = r_d(\x_m,0^s)$. 

By~\eqref{eq:EasyQuotient}, we have $r_d(p_1,\dots,p_m,\alpha_1,\dots,\alpha_s) = 0$ for all $p\in X_{n,\la, s}$ and $0\leq d\leq p^n_m(\la)-1$, and hence $r_d(\bx_m,\alpha_1,\alpha_2,\dots,\alpha_s)\in I(X_{n,\la, s})$. Hence, we have
\begin{align}
e_{m-d}(\bx_m) = r_d(\bx_m,0^s) = \topdeg(r_d(\bx_m,\alpha_1,\alpha_2,\dots,\alpha_s)) \in \top(X_{n,\la, s})
\end{align}
for all $0\leq d \leq p^n_m(\la) - 1$.
Replacing $d$ with $m-d$ yields $e_d(\x_m)\in \top(X_{n,\la, s})$ for all $m\geq d\geq m-p^n_m(\la)+1$. Hence, all of the generators of $I_{n,\la, s}$ are in $\top(X_{n,\la, s})$, so $I_{n,\la, s}\subseteq \top(X_{n,\la, s})$.
\end{proof}

\begin{remark}\label{rmk:Specialization}
When $k=n$, by \cite[Remark 3.1]{Garsia-Procesi} we have $I_\la = \top(X_{n,\la, \ell(\la)})$, which contains $x_i^{\ell(\la)}$ by the proof of Lemma~\ref{lem:PolysInTop}. Hence, we have $R_{n,\la, s} = R_\la$ for all $s\geq \ell(\la)$ in this case. 
\end{remark}

	\subsection{Shuffles and $(n,\la, s)$-staircases}\label{subsec:Shuffles}
	
	Before we proceed, we prove a couple of combinatorial lemmata concerning shuffles of compositions. We use these lemmata to construct a monomial basis of $R_{n,\la, s}$ in the next subsection.
	
\begin{lemma}\label{lem:ShuffleLemma}
Let $1\leq a\leq b$, and let $n=a+b$. Let $\gamma$ be a shuffle of the compositions $(0,1,\dots, a-1)$ and $(0,1,\dots, b-1)$. Then there exists a shuffle $\delta$ of $(0,1,\dots, a-1)$ and $(0,1,\dots, b-1)$ such that $\delta_n=a-1$ and $\trunc(\gamma)\subseteq \trunc(\delta)$.
\end{lemma}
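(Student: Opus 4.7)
The plan is to proceed by induction on $n = a + b$, with base case $n = 2$: here $a = b = 1$, and the only shuffle $(0,0)$ already satisfies $\gamma_n = a - 1 = 0$, so $\delta = \gamma$ works.

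For the inductive step, I will split on the value of $\gamma_n$. Since $\gamma_n$ must be the last entry of one of the two sequences, we have $\gamma_n \in \{a - 1,\, b - 1\}$. If $\gamma_n = a - 1$, take $\delta = \gamma$. Otherwise $\gamma_n = b - 1$, which, combined with $a \leq b$, forces the strict inequality $a < b$ (for if $a = b$ the two candidate values would coincide and the first case would apply).

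In the remaining case the idea is to strip off the trailing $b - 1$, apply the inductive hypothesis to the smaller pair $(a,\, b - 1)$, and then reattach. Writing $\gamma = \gamma' \cdot (b - 1)$ with $\gamma' = \trunc(\gamma)$, the composition $\gamma'$ is a shuffle of $(0, 1, \dots, a-1)$ and $(0, 1, \dots, b - 2)$; since $a \leq b - 1$ and $a + (b-1) < n$, the inductive hypothesis produces a shuffle $\delta'$ of these same two sequences ending in $a - 1$ and with $\trunc(\gamma') \subseteq \trunc(\delta')$. Writing $\delta' = \delta'' \cdot (a - 1)$, I will define
\[
\delta \;\coloneqq\; \delta'' \cdot (b - 1) \cdot (a - 1).
\]
By construction this is a shuffle of $(0, 1, \dots, a - 1)$ and $(0, 1, \dots, b - 1)$ ending in $a - 1$, and the required containment $\trunc(\gamma) \subseteq \trunc(\delta)$ splits neatly into two pieces: on the first $n - 2$ coordinates it is exactly $\trunc(\gamma') \subseteq \trunc(\delta')$ from the induction, and at coordinate $n - 1$ it reduces to $\gamma_{n-1} \leq b - 1$, which is automatic since every entry of $\gamma'$ lies in $\{0, 1, \dots, b - 1\}$.

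The only real conceptual point is noticing that the awkward case $\gamma_n = b - 1$ can only occur when $a < b$, which is precisely the hypothesis needed to apply the lemma to the reduced pair $(a, b - 1)$. Beyond this observation, the argument is routine index bookkeeping, and I do not anticipate any further obstacles.
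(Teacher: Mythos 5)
Your proof is correct, and it takes a genuinely different and more elementary route than the paper's. The paper encodes a shuffle of $(0,1,\dots,a-1)$ and $(0,1,\dots,b-1)$ as a labeled $NE$-lattice path $P$ from $(0,0)$ to $(a,b)$, and then explicitly constructs a path $Q$ that stays weakly above $P$ and ends in an east step, splitting into two cases according to whether the last east step of $P$ lies weakly above or strictly below the diagonal $y=x$ (the second case requires reflecting a subpath across the diagonal). Your argument instead peels off the last entry: when $\gamma_n=b-1$ this forces $a<b$, so $\trunc(\gamma)$ is a shuffle of $(0,\dots,a-1)$ and $(0,\dots,b-2)$ with $a\leq b-1$, the inductive hypothesis applies to the pair $(a,b-1)$, and the resulting $\delta'$ ending in $a-1$ can be re-expanded to a shuffle $\delta=\delta''\cdot(b-1)\cdot(a-1)$ of the original two sequences. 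The containment on the first $n-2$ coordinates is inherited from the induction, and on coordinate $n-1$ it is the trivial bound $\gamma_{n-1}\leq b-1$. Both arguments establish the lemma; yours trades the paper's visual path surgery for a short structural induction, avoids the two-case geometric analysis entirely, and is arguably easier to verify. One small point worth noting in a final write-up: when $a=b-1$ the values $a-1$ and $b-2$ coincide, so the last entry of $\delta'$ could a priori be regarded as the terminal element of either underlying sequence; since in that case the two sequences are identical, one can always adopt the interpretation in which it terminates the $(0,\dots,a-1)$ copy, and the reassembly of $\delta$ goes through unchanged.
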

\begin{proof}
If $\gamma_n=a-1$, then we may take $\delta=\gamma$, and we are done. Therefore, we assume $\gamma_n=b-1$ for the remainder of the proof. 

Let $P$ be the labeled lattice path whose corresponding shuffle is $\gamma$ as follows.
Suppose we have a lattice path in the plane starting at $(0,0)$ and ending at $(a,b)$ and taking only east steps $E=(1,0)$ and north steps $N=(0,1)$. Label the $i$th east step with $i-1$ for $i\leq a$, and label the $i$th north step of the path with $i-1$ for all $i\leq b$. To such a labeled lattice path, we associate the shuffle of $(0,1,\dots, a-1)$ and $(0,1,\dots, b-1)$ obtained by reading off the labels of the steps from left to right.

\begin{figure}[t]
\centering
\hfill
	\begin{minipage}{0.45\textwidth}
	\centering
	\includegraphics[scale=0.75]{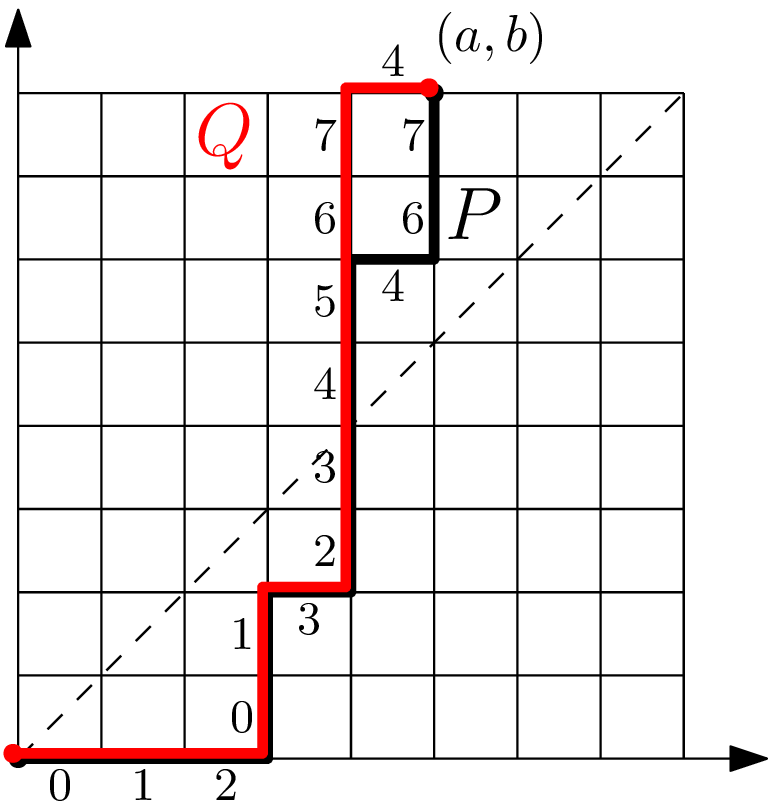}
	\end{minipage}
\hfill
	\begin{minipage}{0.45\textwidth}
	\centering
	\includegraphics[scale=0.75]{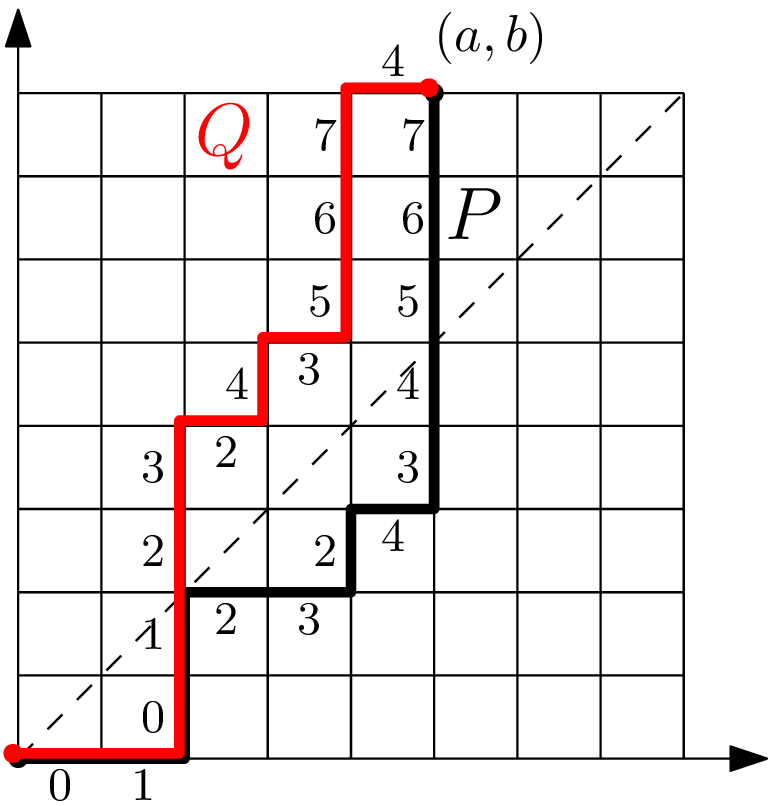}
	\end{minipage}
\hfill
\caption{Two examples of the path $Q$ constructed from the path $P$ in the proof of Lemma~\ref{lem:ShuffleLemma} for the case where $a=5$ and $b=8$. On the left, we have $\gamma= (0,1,2,0,1,3,2,3,4,5,4,6,7)$. On the right, $\gamma= (0,1,0,1,2,3,2,4,3,4,5,6,7)$.\label{fig:PathShuffles}}
\end{figure}

We construct a lattice path $Q$ starting at $(0,0)$ and ending at $(a,b)$ which stays weakly above $P$ as follows. 
See Figure~\ref{fig:PathShuffles} for examples of the path $Q$ we construct from the path $P$.
First, suppose that the last east step of $P$ lies weakly above the diagonal $y=x$, and suppose it is the $m$th step of $P$. Then the starting point of the last east step is at $(a-1,m-a)$, and we must have $m-a>a-1$. Define the first $m-1$ steps of $Q$ to be identical to the first $m-1$ steps of $P$. Define the rest of the path $Q$ to be $N^{n-m}E$. Since the $m$th step in $P$ is the last east step of $P$, then $Q$ stays weakly above $P$ and ends at $(a,b)$. 
Furthermore, the label of the $i$th step of $Q$ is greater than or equal to the $i$th step of $P$ for $i<n$. Letting $\delta$ be the composition read from the labels of $Q$ from left to right, then we have $\trunc(\gamma)\subseteq \trunc(\delta)$. Furthermore, since $Q$ ends in an east step we have $\delta_n = a-1$.

Second, suppose the last east step of $P$ lies below the diagonal $y=x$. In this case, the path $P$ must cross the point $(a,a)$ and then end with $b-a$ many north steps. In addition, $P$ must touch the diagonal $y=x$ in at least one other point. Suppose the point $(c,c)$ is the second to last time $P$ touches the diagonal. We define $Q$ in three segments, as follows. Define the first segment of $Q$ to be identical to the subpath of $P$ between the points $(0,0)$ and $(c,c)$. Let $P'$ be the subpath of $P$ from the point $(c,c)$ to the point $(a,a-1)$. Define the second segment of $Q$ to be the reflection of $P'$ across the diagonal $y=x$, which is a path from $(c,c)$ to $(a-1,a)$. Define the third segment of $Q$ to be $N^{b-a}E$, so that $Q$ ends at $(a,b)$. By construction, $Q$ lies weakly above $P$. Observe that the label of the $i$th step of $P$ equals the label of the $i$th step of $Q$ for all $i\leq 2a-1$. Furthermore, the label of the $i$th step of $Q$ is greater than or equal to the $i$th step of $P$ for $i<n$. Letting $\delta$ be the composition read from the labels of $Q$ from left to right, we have $\delta_n = a-1$ and $\trunc(\gamma)\subseteq \trunc(\delta)$. 
\end{proof}

\begin{definition}\label{def:Staircase}
For $1\leq j \leq \la_1$,  let $\beta^j(\la) = (0,1,\dots,\la_j'-1)$. An \emph{$(n,\la, s)$-staircase} is a shuffle $\gamma = (\gamma_1,\dots,\gamma_n)$ of the compositions $\beta^1(\la),\beta^2(\la),\dots,\beta^{\la_1}(\la)$, and $((s-1)^{n-k})$. Let 
\begin{align}
\cC_{n, \la, s} \coloneqq \{\alpha = (\alpha_1,\dots, \alpha_n) \st \alpha\subseteq \gamma\text{ for some }(n,\la, s)\text{-staircase }\gamma\}.
\end{align}
\end{definition}

Observe that when $\la = (1^k)\in \Par(k, k)$ and $s=k$, then $\beta^{1}(1^k) = (0,1,\dots, k-1)$. Hence, an $(n,(1^k),k)$-staircase is an $(n,k)$-staircase, as defined in \cite{HRS1} and in the Introduction.

\begin{lemma}\label{lem:TruncLemma}
Let $\alpha\in \cC_{n, \la, s}$, and suppose $\alpha_n < \ell(\la)$. Then $\trunc(\alpha)\in \cC_{n-1, \la^{(\alpha_n)}, s}$.
\end{lemma}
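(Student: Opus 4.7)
Let $a = \alpha_n$, and since $a < \ell(\la) = \la_1'$, there is a maximal index $j_0 \geq 1$ with $a < \la'_{j_0}$. By the reformulation of reduction following~\eqref{eq:ReductionDef}, $\la^{(a)}$ is obtained from $\la$ by decreasing $\la'_{j_0}$ by one. Consequently, $\beta^j(\la^{(a)}) = \beta^j(\la)$ for $j \neq j_0$ whenever both are defined, while $\beta^{j_0}(\la^{(a)}) = (0, 1, \dots, \la_{j_0}' - 2)$ (empty if $\la_{j_0}' = 1$). Moreover $|\la^{(a)}| = k - 1$, so an $(n-1, \la^{(a)}, s)$-staircase uses $(n-1) - (k-1) = n - k$ copies of $s-1$, the same count as for $\la$.

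The plan is to produce an $(n, \la, s)$-staircase $\gamma'$ with $\gamma'_n = \la'_{j_0} - 1$ and $\trunc(\alpha) \subseteq \trunc(\gamma')$. Once this is done, $\trunc(\gamma')$ is obtained from $\gamma'$ by deleting the final entry of the $\beta^{j_0}(\la)$ piece, which turns the shuffle into an interleaving of $\beta^{j}(\la^{(a)})$ for $j \in \{1, \dots, \la^{(a)}_1\}$ together with $n-k$ copies of $s-1$, hence an $(n-1, \la^{(a)}, s)$-staircase containing $\trunc(\alpha)$. This gives $\trunc(\alpha) \in \cC_{n-1, \la^{(a)}, s}$, as required.

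To build $\gamma'$, start from any $(n, \la, s)$-staircase $\gamma$ with $\alpha \subseteq \gamma$, which exists because $\alpha \in \cC_{n, \la, s}$. The last entry $\gamma_n$ is the final entry of some piece of the shuffle and satisfies $\gamma_n \geq a$, leading to three cases. If $\gamma_n = \la'_{j_0} - 1$, take $\gamma' = \gamma$. If $\gamma_n = \la'_j - 1$ for some $j \neq j_0$, then $\la'_j > a$ combined with maximality of $j_0$ forces $j < j_0$ and hence $\la'_j \geq \la'_{j_0}$; restrict $\gamma$ to the positions occupied by $\beta^{j_0}(\la)$ and $\beta^j(\la)$ and apply Lemma~\ref{lem:ShuffleLemma} (with parameters $\la'_{j_0}$ and $\la'_j$ in the roles of $a$ and $b$) to replace this sub-shuffle by one ending in $\la'_{j_0} - 1$ whose truncation componentwise dominates the original sub-shuffle's truncation; substituting back yields $\gamma'$. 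If $\gamma_n = s - 1$, let $q < n$ be the position of the final entry of $\beta^{j_0}(\la)$ in $\gamma$ and form $\gamma'$ by swapping the entries at positions $q$ and $n$: the copies of $s-1$ are interchangeable and the $\beta^{j_0}(\la)$ entries remain in increasing order, so $\gamma'$ is still a valid shuffle, while $\gamma'_q = s-1 \geq \la'_{j_0} - 1 \geq \alpha_q$ holds using $s \geq \ell(\la) \geq \la'_{j_0}$. The main combinatorial obstacle is the second case, in which the entry $\la'_{j_0} - 1$ that must reach position $n$ lives in a different $\beta^j$ piece from $\beta^{j_0}$, but this is exactly the setting handled by Lemma~\ref{lem:ShuffleLemma}.
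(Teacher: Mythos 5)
Your proof is correct and takes essentially the same approach as the paper's: start from an $(n,\la,s)$-staircase $\gamma$ containing $\alpha$, modify it so the last entry is the final entry of $\beta^{j_0}(\la)$, handling the case $\gamma_n = s-1$ by a direct swap and the case where $\gamma_n$ ends a different $\beta^j$ piece via Lemma~\ref{lem:ShuffleLemma}. The only cosmetic difference is that you split out the trivial case $\gamma_n = \la'_{j_0}-1$ explicitly, whereas the paper absorbs it into the case $\gamma_n = \la'_h - 1$ with $h \leq j$ (allowing $h = j$).
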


\begin{proof}
Since $\alpha\in \cC_{n,\la, s}$, then $\alpha\subseteq \gamma$, for some $(n,\la, s)$-staircase $\gamma$. Let $j$ be maximal such that $\alpha_n < \la_j'$.
It suffices to prove that $\trunc(\alpha)$ is contained in a shuffle of the compositions 
\begin{align}\label{eq:RecursiveShuffles}
\beta^{1}(\la),\dots,\trunc(\beta^{j}(\la)),\dots,\beta^{\la_1}(\la),\text{ and }((s-1)^{n-k}).
\end{align}
We have two cases: either $\gamma_n = s-1$, or $\gamma_n = \la'_h-1$ for some $h$. 

In the first case when $\gamma_n=s-1$, let $q$ be the index such that $\gamma_q = \la_j'-1$, corresponding to the last part of the composition $\beta^j(\la)$ in the shuffle $\gamma$. Let $\overline{\gamma}$ be the composition obtained by swapping the $q$th and $n$th entries of $\gamma$. Then $\overline{\gamma}$ is still an $(n,\la, s)$-staircase. Furthermore, since $\alpha_q \leq \gamma_q= \la_j'-1\leq s-1$, we have $\trunc(\alpha)\subseteq \trunc(\overline{\gamma})$, where $\trunc(\overline{\gamma})$ is a shuffle of the compositions listed in \eqref{eq:RecursiveShuffles}, hence $\trunc(\alpha)\in \cC_{n-1,\la^{(\alpha_n)}, s}$.

In the second case, we have $\gamma_n = \la'_h-1$ for some $h$. Since $\alpha_n \leq \gamma_n = \la'_h-1$, then $\alpha_n < \la'_h$, so we must have that $h\leq j$ by maximality of $j$. Let $\delta$ be the restriction of the composition $\gamma$ to the parts corresponding to $\beta^h(\la)$ and $\beta^j(\la)$ in the shuffle, so that $\delta_{\la'_h+\la'_j} = \la'_h-1$. By Lemma~\ref{lem:ShuffleLemma} with $a=\la'_j$ and $b=\la'_h$, there exists a shuffle $\epsilon$ of $\beta^h(\la)$ and $\beta^j(\la)$ such that $\epsilon_{\la'_h+\la'_j} = \la'_j-1$ and $\trunc(\delta)\subseteq \trunc(\epsilon)$. Let $\overline\gamma$ be the composition obtained by replacing the parts of $\gamma$ corresponding to $\delta$ with the parts of the composition $\epsilon$, in order from left to right. Then $\trunc(\overline{\gamma})$ is a shuffle of the compositions in \eqref{eq:RecursiveShuffles}. Furthermore, since $\trunc(\alpha)\subseteq \trunc(\gamma)$ as compositions of length $n$ and $\trunc(\delta)\subseteq\trunc(\epsilon)$ as compositions of length $\la'_h+\la'_j$, we have $\trunc(\alpha)\subseteq \trunc(\gamma)\subseteq \trunc(\overline\gamma)$, hence $\trunc(\alpha)\in \cC_{n-1, \la^{(\alpha_n)}, s}$.
\end{proof}

For a collection $\cC$ of compositions and an integer $i$, let us denote by $\cC\cdot (i)$ the collection of compositions $\alpha\cdot (i)$ for $\alpha\in \cC$. The symbol $\cupdot$ denotes a union of sets which are pairwise disjoint.
\begin{lemma}\label{lem:ShuffleRecursion}
We have the following decomposition of the set $\cC_{n, \la, s}$,
\begin{align}\label{eq:ShuffleRecursion}
\cC_{n,\la, s} = \bigcupdot_{i=0}^{\ell(\la)-1} \cC_{n-1, \la^{(i)}, s}\cdot (i) \,\cupdot\, \bigcupdot_{i=\ell(\la)}^{s-1} \cC_{n-1,\la, s}\cdot (i),
\end{align}
where on the right-hand side we interpret $\cC_{n-1,\la, s} = \emptyset$ if $n=|\la|$.
\end{lemma}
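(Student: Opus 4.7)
The plan is to establish the stated decomposition by proving disjointness and both containments. Disjointness is immediate: the last coordinate of any composition in the right-hand union equals the label $i$ of the block it sits in, and the labels $0,1,\dots,s-1$ are distinct.

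For the forward containment $\cC_{n,\la,s}\subseteq (\text{RHS})$, I would take $\alpha\in \cC_{n,\la,s}$ and split on its last entry. Every $(n,\la,s)$-staircase has entries in $\{0,1,\dots,\ell(\la)-1\}\cup\{s-1\}$, so $0\leq \alpha_n\leq s-1$. When $\alpha_n<\ell(\la)$, Lemma~\ref{lem:TruncLemma} directly gives $\trunc(\alpha)\in \cC_{n-1,\la^{(\alpha_n)},s}$. When $\alpha_n\geq \ell(\la)$, any containing staircase $\gamma$ has $\gamma_n\geq \ell(\la)$; since each $\beta^m(\la)$ contributes only entries bounded by $\ell(\la)-1$, the slot $\gamma_n$ must come from the $((s-1)^{n-k})$ block, which forces $n>k$ and $\gamma_n=s-1$. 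Truncating such a $\gamma$ yields a shuffle of the $\beta^m(\la)$'s with $((s-1)^{n-1-k})$, i.e., an $(n-1,\la,s)$-staircase containing $\trunc(\alpha)$, so $\trunc(\alpha)\in \cC_{n-1,\la,s}$.

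For the reverse containment, I would append the right value to a defining staircase of $\alpha'$. Given $\alpha'\in \cC_{n-1,\la,s}$ and $\ell(\la)\leq i\leq s-1$, appending $s-1$ to a staircase $\gamma'\supseteq \alpha'$ yields an $(n,\la,s)$-staircase containing $\alpha'\cdot(i)$, since $i\leq s-1$. Given $\alpha'\in \cC_{n-1,\la^{(i)},s}$ and $0\leq i<\ell(\la)$, I would set $j\coloneqq \la_{i+1}$, the maximal index with $\la_j'>i$. The reduction $\la\mapsto \la^{(i)}$ removes the cell $(j,i+1)$ from the Young diagram of $\la$, so $(\la^{(i)})_j'=\la_j'-1$ and $(\la^{(i)})_m'=\la_m'$ for $m\neq j$. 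Consequently $\beta^j(\la)=\beta^j(\la^{(i)})\cdot(\la_j'-1)$ and $\beta^m(\la)=\beta^m(\la^{(i)})$ for $m\neq j$, while the count $n-k$ of copies of $s-1$ matches on both sides. Hence $\gamma'\cdot(\la_j'-1)$ is an $(n,\la,s)$-staircase, and since $\la_j'-1\geq i$ it contains $\alpha'\cdot(i)$.

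I expect the main obstacle to be the sub-case $0\leq i<\ell(\la)$ of the reverse containment: identifying the correct value $\la_j'-1$ to append (rather than the naive guess $i$) requires a careful dictionary between the reduction $\la\mapsto\la^{(i)}$, which decrements a row, and its effect on the columns of $\la'$, which govern the components $\beta^m$. Once this correspondence and the maximality inequality $\la_j'\geq i+1$ are in hand, the rest is bookkeeping.
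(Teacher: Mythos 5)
Your proof is correct and follows essentially the same route as the paper's: disjointness via the distinguished last coordinate, the forward containment by truncating (invoking Lemma~\ref{lem:TruncLemma} for $\alpha_n<\ell(\la)$ and observing $\gamma_n=s-1$ otherwise), and the reverse containment by appending $\la_j'-1$ (for $j$ maximal with $\la_j'>i$) or $s-1$ to a containing staircase. Your explicit identification $j=\la_{i+1}$ and the row-versus-column dictionary for the reduction $\la\mapsto\la^{(i)}$ are just a more verbose spelling-out of the same observation the paper makes immediately after defining $\la^{(i)}$.
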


\begin{proof}
Observe that the right-hand side of \eqref{eq:ShuffleRecursion} is indeed a disjoint union of sets, since each set contains compositions with a distinct last coordinate. 
Given $\alpha\in \cC_{n-1,\la^{(i)}, s}$ for $i < \ell(\la)$ with $\alpha = (\alpha_1,\dots,\alpha_{n-1})$, then $\alpha\subseteq \beta$ for some $(n-1,\la^{(i)}, s)$-staircase $\beta$ by definition. Let $j$ be maximal such that $i < \la_j'$. Then we have $\alpha\cdot (i)\subseteq \beta\cdot (\la'_j-1)$, where $\beta\cdot (\la'_j-1)$ is an $(n,\la, s)$-staircase. Hence, we have $\alpha \cdot (i)\in \cC_{n,\la, s}$. Given $\alpha\in \cC_{n-1,\la, s}$ and $\ell(\la) \leq i \leq s -1$, then $\alpha\subseteq \beta$ for some $(n-1,\la, s)$-staircase $\beta$ by definition. Then we have $\alpha\cdot(i) \subseteq \beta\cdot (s-1)$, where $\beta\cdot(s-1)$ is an $(n,\la, s)$-staircase. Hence, we have $\alpha\cdot (i)\in \cC_{n,\la, s}$. Therefore, the disjoint union on the right-hand side of~\eqref{eq:ShuffleRecursion} is contained in the left-hand side as sets.

 Let $\alpha\in \cC_{n,\la, s}$. By definition of $\cC_{n,\la, s}$, there is a shuffle $\beta = (\beta_1,\dots,\beta_n)$ of $\beta^{1}(\la),\dots,\beta^{\la_1}(\la)$, and $((s-1)^{n-k})$ such that $\alpha\subseteq \beta$. We have $\trunc(\alpha)\subseteq \trunc(\beta)$ and $\alpha_n \leq \beta_n$.

If $\alpha_n \geq \ell(\la)$, since each part of the composition $\beta^{j}(\la)$ is at most $\ell(\la)-1$, it must be that $\beta_n=s-1$. Therefore, $\trunc(\beta)$ is an $(n-1,\la, s)$-staircase. Since $\trunc(\alpha)\subseteq \trunc(\beta)$, then $\trunc(\alpha)\in \cC_{n-1,\la, s}$, so that $\alpha = \trunc(\alpha)\cdot (\alpha_n) \in \cC_{n-1,\la, s}\cdot(\alpha_n)$. 

If $\alpha_n < \ell(\la)$, then by Lemma~\ref{lem:TruncLemma}, we have $\trunc(\alpha)\in \cC_{n-1,\la^{(\alpha_n)}, s}$. Therefore, we have $\alpha = \trunc(\alpha)\cdot (\alpha_n)\in \cC_{n-1,\la^{(i)}, s}\cdot(i)$ for $i=\alpha_n$. Hence, we have the equality of sets~\eqref{eq:ShuffleRecursion}.
\end{proof}

\begin{lemma}\label{lem:StaircaseContainment}
Let $h\leq k\leq n$ and $s$ be positive integers. Let $\la\in \Par(h, s)$, and let $\mu\in \Par(k, s)$. If $h=k$ and $\la\dominatedby\mu$, or if $h<k$ and $\la\subseteq \mu$, then $\cC_{n,\mu, s}\subseteq \cC_{n, \la, s}$.
\end{lemma}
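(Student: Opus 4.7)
The plan is to establish the stronger coordinate-wise statement: every $(n,\mu,s)$-staircase $\gamma$ is coordinate-wise dominated by some $(n,\la,s)$-staircase $\delta$. Once this is proved, any $\alpha\subseteq\gamma$ automatically satisfies $\alpha\subseteq\delta$ and so lies in $\cC_{n,\la,s}$. The existence of a dominating $\delta$ is transitive in $(\la,\mu)$, so after chaining through intermediate partitions it suffices to handle two base cases: (1) $h<k$ with $\mu$ differing from $\la$ by a single cell, and (2) $h=k$ with $\mu$ covering $\la$ in dominance order.

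In base case (1), let $(r,c)$ be the cell added to $\la$, so $c=\la_r+1$ and $\beta^c(\mu)=\beta^c(\la)\cdot(\la_c')$. The condition $\mu\in\Par(k,s)$ forces $\la_c'+1=\mu_c'\leq s$, hence $\la_c'\leq s-1$. I would then set $\delta$ equal to $\gamma$ except at the position $p$ occupied by the final entry of $\beta^c(\mu)$: replace $\gamma_p=\la_c'$ by $s-1$. The first $\la_c'$ positions of $\beta^c(\mu)$ in $\gamma$ now carry $\beta^c(\la)$ in correct order, while $p$ is absorbed into the trailing $(s-1)$-block, which has exactly one extra entry in the $\la$-staircase since $n-h=(n-k)+1$. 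This produces a valid $(n,\la,s)$-staircase with $\delta\geq\gamma$.

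In base case (2), write $\mu=\la+\{(r_1,\la_{r_1}+1)\}-\{(r_2,\la_{r_2})\}$ with $r_1<r_2$, and set $c=\la_{r_2}$, $c'=\la_{r_1}+1$. Since the added cell lies in row $r_1$, which is above the highest cell of column $c'$ of $\la$, a direct check gives $c<c'$ and $\la_{c'}'<\la_c'$ strictly. The $\la$- and $\mu$-staircases now agree outside the positions occupied by $\beta^c$ and $\beta^{c'}$, so the problem reduces to a purely combinatorial claim. Setting $x=\la_c'-1$ and $y=\la_{c'}'+1$ (so $x\geq y-1$), given a shuffle $P$ of $(0,\dots,x-1)$ and $(0,\dots,y-1)$ at a fixed set of $N=x+y$ positions, I must find a shuffle $Q$ of $(0,\dots,x)$ and $(0,\dots,y-2)$ at the same positions with $Q\geq P$ coordinate-wise.

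The main obstacle is this subclaim; it is precisely here that Lemma~\ref{lem:ShuffleLemma} enters. I would split on whether $x=y-1$ or $x\geq y$. In the first case the two pairs of sequences to be shuffled agree up to relabeling the tracks, so $Q=P$ works after reinterpreting which positions belong to which sequence. In the second case I would apply Lemma~\ref{lem:ShuffleLemma} with $a=y\leq b=x$ to replace $P$ by a shuffle $\tilde P$ of the same two sequences whose last entry is the shorter maximum $y-1$ and with $\trunc(P)\subseteq\trunc(\tilde P)$. Overwriting this last entry by $x$ produces $Q$; the crucial point is that Lemma~\ref{lem:ShuffleLemma} now places every position of the longer sequence $(0,\dots,x-1)$ strictly before the final position, so the overwritten value validly serves as the new last entry $x$ of $(0,\dots,x)$. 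Coordinate-wise, $Q\geq P$ follows from $\tilde P\geq P$ on the truncations together with $Q_N=x\geq \max(x-1,y-1)\geq P_N$. Reinserting $Q$ into $\gamma$ in place of the subshuffle $P$ yields the desired $\delta$.
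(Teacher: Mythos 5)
Your proof is correct and takes essentially the same route as the paper's: establish the stronger coordinate-wise claim that each $(n,\mu,s)$-staircase lies below an $(n,\la,s)$-staircase, reduce by transitivity to a single added cell (when $h<k$) or a single dominance cover (when $h=k$), and in the second case isolate the two affected columns, invoke Lemma~\ref{lem:ShuffleLemma} to push the shorter maximum to the last position, and then overwrite that last entry. Your base case (1) is exactly the paper's one-cell argument (replace the final entry of $\beta^c(\mu)$ by $s-1$, absorbing it into the enlarged $(s-1)$-block), and your base case (2) matches the paper's $\trunc(\delta)\cdot(b)$ construction. One small redundancy: in base case (2), since $\mu$ is itself a partition and $c<c'$ you automatically have $\mu_c'\geq\mu_{c'}'$, i.e.\ $x\geq y$ rather than the weaker $x\geq y-1$ you derive from the row indices, so your sub-case $x=y-1$ is vacuous. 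Your handling of that sub-case is nevertheless correct (taking $Q=P$ after swapping the roles of the two tracks), so this costs nothing, but noting $x\geq y$ directly would have let you apply Lemma~\ref{lem:ShuffleLemma} without the case split.
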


\begin{proof}
It suffices to show that every $(n,\mu, s)$-staircase is contained in some $(n,\la, s)$-staircase.
First, suppose $h=k$ and $\la\dominatedby \mu$. It suffices to consider the case when $\mu$ covers $\la$ in dominance order. In this case, the Young diagrams of $\mu$ and $\la$ differ in only two columns. Suppose these two columns of $\mu$ are lengths $b$ and $a$ from left to right, so that $a\leq b$ and the two columns of $\la$ are lengths $b+1$ and $a-1$ from left to right.
Given $\beta$ a shuffle of the compositions $(0,1,\dots, b-1)$ and $(0,1,\dots, a-1)$, then by Lemma~\ref{lem:ShuffleLemma} there exists a shuffle $\delta$ of $(0,1,\dots, b-1)$ and $(0,1,\dots, a-1)$ such that $\delta_{a+b} = a-1$ and $\trunc(\beta)\subseteq \trunc(\delta)$. Let $\gamma = \trunc(\delta)\cdot (b)$. Then $\gamma$ is a shuffle of $(0,1,\dots, b)$ and $(0,1,\dots, a-2)$ such that $\beta\subseteq \gamma$.
Therefore, any $(n,\mu, s)$-staircase is contained in a $(n,\la, s)$-staircase, hence $\cC_{n,\mu, s}\subseteq \cC_{n,\la, s}$.

Second, suppose $h<k$ and $\la\subseteq \mu$. It suffices to consider the case when $k=h+1$, hence when the Young diagrams of $\mu$ and $\la$ only differ by one box. In this case, we have $\mu_j' = \la_j' + 1$ for some $j$. Given any $(n,\mu, s)$-staircase $\beta$,  replace the copy of $\mu_j'-1$ in $\beta$ corresponding to the last entry of $\beta^j(\mu)$ with an $s-1$. Then the resulting composition is an $(n,\la, s)$-staircase containing $\beta$, hence $\cC_{n,\mu, s}\subseteq \cC_{n,\la, s}$.
\end{proof}

	\subsection{Monomial basis of $R_{n,\la, s}$}\label{subsec:MonomialBasis}

To each weak composition $\alpha$ of length $n$, we associate a monomial
\begin{align}
\bx_n^\alpha \coloneqq \prod_{i=1}^n x_i^{\alpha_i}.
\end{align}
Let $\cA_{n,\la, s}$ be the following set of monomials in $\bQ[\bx_n]$,
\begin{align}\label{eq:DefOfBasis}
\cA_{n,\la, s} \coloneqq \{\bx_n^\alpha \st \alpha\in \cC_{n,\la, s}\}.
\end{align}

\begin{remark}\label{rmk:PartitionOfN}
Observe that if $n=k$, then $\cA_{n,\la, s} = \cA_{n,\la,\ell(\la)}$. This is consistent with the fact that $I_{n,\la, s}=I_{n,\la, \ell(\la)} = I_\la$ in this case.
\end{remark}

Given a monomial $\bx_n^\alpha$ and a set of monomials $\cA$, we denote by $\bx_n^\alpha \cA$ the set $\{\bx_n^\alpha \bx_n^\beta \st x^\beta\in \cA\}$.  We have the following recursion for the sets of monomials $\cA_{n,\la, s}$, which is an immediate corollary of Lemma~\ref{lem:ShuffleRecursion}.

\begin{corollary}\label{cor:BasisRecursion}
We have the following decomposition of the set $\cA_{n,\la, s}$,
\begin{align}\label{eq:BasisRecursion}
\cA_{n,\la, s} &= \bigcupdot_{i=0}^{\ell(\la)-1} x_n^{i} \cA_{n-1,\la^{(i)}, s} \cupdot \bigcupdot_{i=\ell(\la)}^{s-1} x_n^{i} \cA_{n-1,\la, s}.
\end{align}
\end{corollary}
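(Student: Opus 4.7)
The statement follows by a direct translation of Lemma~\ref{lem:ShuffleRecursion} along the bijection between weak compositions of length $n$ and monomials in $\bQ[x_1,\ldots,x_n]$.

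My plan is the following. First, I would observe that the assignment $\alpha \mapsto \bx_n^\alpha$ is a bijection between weak compositions of length $n$ and monomials in $\bx_n$, and that by definition $\cA_{n,\la,s} = \{\bx_n^\alpha : \alpha \in \cC_{n,\la,s}\}$. Next, I would check the key compatibility: for any $\beta \in \cC_{n-1,\mu,s}$ (where $\mu = \la^{(i)}$ or $\mu = \la$ depending on the range of $i$) and any integer $i\geq 0$, the concatenated composition $\beta \cdot (i) \in \cC_{n,\la,s}$ satisfies
\begin{equation*}
\bx_n^{\beta\cdot(i)} \;=\; x_1^{\beta_1}\cdots x_{n-1}^{\beta_{n-1}}\cdot x_n^{i} \;=\; x_n^{i}\cdot \bx_{n-1}^{\beta}.
\end{equation*}
This shows that applying the bijection $\alpha \mapsto \bx_n^\alpha$ to the subset $\cC_{n-1,\mu,s}\cdot(i)$ of $\cC_{n,\la,s}$ produces exactly the set $x_n^{i}\cA_{n-1,\mu,s}$.

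Next, I would apply this observation term by term to the decomposition~\eqref{eq:ShuffleRecursion} supplied by Lemma~\ref{lem:ShuffleRecursion}. Since the bijection $\alpha\mapsto \bx_n^\alpha$ carries unions to unions and preserves disjointness, the equality of~\eqref{eq:ShuffleRecursion} transfers to an equality of the corresponding sets of monomials, giving the claimed decomposition~\eqref{eq:BasisRecursion}.

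Finally, I would briefly justify the disjointness on the right-hand side independently, as a sanity check: each set $x_n^{i}\cA_{n-1,\mu,s}$ consists of monomials in which $x_n$ occurs with exponent exactly $i$, so the union over distinct values of $i\in\{0,1,\dots,s-1\}$ is automatically disjoint. There is no serious obstacle here; the content of the corollary is entirely contained in Lemma~\ref{lem:ShuffleRecursion}, and the translation to monomials is formal.
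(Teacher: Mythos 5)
Your proof is correct and matches the paper's approach exactly: the paper presents this as an immediate corollary of Lemma~\ref{lem:ShuffleRecursion}, obtained by pushing the disjoint decomposition of $\cC_{n,\la,s}$ through the bijection $\alpha\mapsto\bx_n^\alpha$.
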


\begin{example}\label{ex:TreeOfPartitions}
Let $\la\in \Par(k, s)$. We can obtain the set $\cA_{n,\la, s}$ by iteratively applying the recursion in Corollary~\ref{cor:BasisRecursion}.  
We have that
\begin{equation}\label{eq:ExampleOfTree}
\begin{aligned}
\cA_{4,(2,1), 3} = &\left\{ 1, x_1, x_1^2, x_2, x_2^2, x_3, x_1x_3, x_1^2x_3, x_2x_3, x_2^2x_3, x_3^2,\right. \\
&\left. x_2x_3^2, x_4, x_1x_4, x_1^2x_4, x_2x_4, x_2^2x_4, x_3x_4, x_3^2x_4, x_4^2, x_2x_4^2, x_3x_4^2 \right\}.
\end{aligned}
\end{equation}
\end{example}

\begin{lemma}\label{lem:TechnicalLemma}
Let $d$ and $m$ be positive integers. Let $i$ be a nonnegative integer, and let $S\subseteq \bx_{n-1}$ with $m=|S|$. We have that
\begin{align}
 x_n^{i} e_d(S)\in x_n^{i+1}\bQ[\bx_n]+I_{n,\la, s}\label{eq:TechnicalCondition}
 \end{align}
 in the following cases,
 \begin{enumerate}[(a)]
 \item  $d>m+1-p_{m+1}^n(\la)$,
 \item $d=m+1-p_{m+1}^n(\la)$ and $d+i > m - p^n_{m}(\la)$,
 \item $i < \ell(\la)$ and $e_d(S)$ is in the generating set of $I_{n-1,\la^{(i)}, s}$, 
 \item $\ell(\la) \leq i \leq s-1$, $k < n$, and $e_d(S)$ is in the generating set of $I_{n-1,\la, s}$.
 \end{enumerate}
\end{lemma}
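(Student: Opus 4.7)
The plan is to prove cases (a) and (b) directly via elementary symmetric function identities, and then to deduce (c) and (d) by showing that their hypotheses force either (a) or (b) to apply.

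For case (a), set $T = S \cup \{x_n\}$, so $|T| = m + 1$. The hypothesis $d > m + 1 - p_{m+1}^n(\la)$ places $e_d(T)$ among the generators of $I_{n,\la,s}$. Combining this with the splitting $e_d(T) = e_d(S) + x_n e_{d-1}(S)$ gives $e_d(S) \in I_{n,\la,s} + x_n\bQ[\bx_n]$, and multiplying by $x_n^i$ finishes the case.

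For case (b), the idea is to iterate the identity $x_n e_{d'}(S) = e_{d'+1}(T) - e_{d'+1}(S)$. Since $d = m + 1 - p_{m+1}^n(\la)$, every $e_{d+l}(T)$ for $l \geq 1$ lies in $I_{n,\la,s}$, so $x_n e_{d'}(S) \equiv -e_{d'+1}(S) \pmod{I_{n,\la,s}}$ for each $d' \geq d$. After $i$ iterations, this yields
\[
x_n^i e_d(S) \equiv (-1)^i e_{d+i}(S) \pmod{I_{n,\la,s}},
\]
and the second hypothesis $d + i > m - p_m^n(\la)$ places $e_{d+i}(S)$ in $I_{n,\la,s}$, so even $x_n^i e_d(S) \in I_{n,\la,s}$, which is contained in the desired set.

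For cases (c) and (d), I reduce to (a) or (b) using the numerical identity $p_{m+1}^n(\la) - p_m^n(\la) = \la_{n-m}'$, which rewrites the case (b) hypothesis as $i \geq \la_{n-m}'$. In case (c), letting $j$ be the maximal index with $\la_j' > i$, the partition $(\la^{(i)})'$ differs from $\la'$ by decrementing the $j$-th entry, yielding
\[
p_m^{n-1}(\la^{(i)}) = p_{m+1}^n(\la) - \la_n' - \delta,
\]
where $\delta = 1$ if $n-m \leq j \leq n-1$ and $\delta = 0$ otherwise. The hypothesis becomes $d \geq m + 1 - p_{m+1}^n(\la) + \la_n' + \delta$. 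If $\la_n' + \delta \geq 1$, then $d \geq m + 2 - p_{m+1}^n(\la)$ and case (a) applies. Otherwise $\la_n' = \delta = 0$ and $d = m + 1 - p_{m+1}^n(\la)$; then $j < n - m$ (since $\la_n' = 0$ rules out $j = n$, while $\delta = 0$ rules out $n - m \leq j \leq n - 1$), and the maximality of $j$ gives $\la_{n-m}' \leq \la_{j+1}' \leq i$, so case (b) applies. Case (d) is analogous but simpler: $k < n$ forces $\la_n' = 0$, and $i \geq \ell(\la) = \la_1' \geq \la_{n-m}'$ is automatic. The main obstacle will be this bookkeeping in case (c), where correctly tracking which column of $\la'$ is decremented and whether it contributes to $p_m^{n-1}(\la^{(i)})$ is essential to see that the hypothesis sharpens exactly to case (a) or else slots into the inequality $i \geq \la_{n-m}'$ needed for case (b).
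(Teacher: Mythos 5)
Your proof is correct, and its overall architecture matches the paper's: prove (a) and (b) directly, then reduce (c) and (d) to them via the same bookkeeping with $p_m^n$ and the column index $j$. The one substantive difference is in case (b): the paper works with the single generating-function identity
\[
(1-x_n^{i}t^{i})\prod_{x_j\in S} (1 - x_j t) = (1 + x_n t + \cdots + x_n^{i-1} t^{i-1}) \prod_{x_j\in S\cup \{x_n\}} (1 - x_j t)
\]
and extracts the coefficient of $t^{d+i}$, whereas you iterate the two-term relation $x_n e_{d'}(S) = e_{d'+1}(S\cup\{x_n\}) - e_{d'+1}(S)$ to reach $x_n^{i}e_d(S)\equiv(-1)^{i}e_{d+i}(S)\pmod{I_{n,\la,s}}$; unrolling your iteration is exactly the paper's identity restricted to one coefficient, so the content is the same, but your presentation makes the role of the hypothesis $d+i>m-p_m^n(\la)$ slightly more transparent. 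Your explicit formula $p_m^{n-1}(\la^{(i)}) = p_{m+1}^n(\la) - \la_n' - \delta$ is a correct and slightly more systematic way to organize the case analysis in (c) and (d), subsuming the paper's two-case split on $n-m\leq j$ versus $n-m>j$ (note that $\la_n'+\delta = 1$ exactly when $n-m\leq j$, recovering the paper's formulas).
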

\begin{proof}
In case (a), by our hypothesis and Definition~\ref{def:TheIdeal}, we have $e_d(S\cup \{x_n\})\in I_{n,\la, s}$. Hence, we have
\begin{align}
e_d(S) &= -x_n e_{d-1}(S) + e_d(S\cup\{x_n\}) \equiv -x_n e_{d-1}(S) \mod I_{n,\la, s},\label{eq:myeq1}\\
x_n^{i} e_d(S) &\equiv -x_n^{i+1}e_{d-1}(S) \mod I_{n,\la, s}\label{eq:myeq2},
\end{align}
so \eqref{eq:TechnicalCondition} holds. 

In case (b), we assume $d=m+1-p_{m+1}^n(\la)$. For $u>1$, we have $d+u>m+1-p^n_{m+1}(\la)$, so $e_{d+u}(S\cup\{x_n\})\in I_{n,\la, s}$. Furthermore, since $d+i > m-p_m^n(\la)$ by assumption, we have $e_{d+i}(S)\in I_{n,\la, s}$. Consider the identity
\begin{align}\label{eq:IdentityTrick}
(1-x_n^{i}t^{i})\prod_{x_j\in S} (1 - x_j t) = (1 + x_n t + \cdots + x_n^{i-1} t^{i-1}) \prod_{x_j\in S\cup \{x_n\}} (1 - x_j t).
\end{align}
The coefficient of $t^{d + i}$ on the left-hand side of \eqref{eq:IdentityTrick} is 
\begin{align}
(-1)^{d+i}e_{d+i}(S) + (-1)^{d+1} x_n^{i} e_d(S) \equiv (-1)^{d+1} x_n^{i}e_d(S) \mod I_{n,\la, s},
\end{align}
while the coefficient of $t^{d+i}$ on the right-hand side of \eqref{eq:IdentityTrick} is in $I_{n,\la, s}$ by the fact that $e_{d+u}(S\cup \{x_n\})\in I_{n,\la, s}$ for all $u>0$. Therefore, we have $x_n^{i} e_d(S)\in I_{n,\la, s}$, so \eqref{eq:TechnicalCondition} holds.

In case (c), we have $i < \ell(\la)$ and $d> m - p^{n-1}_m(\la^{(i)})$. Let $j$ be maximal such that $i < \la_j'$, so the Young diagram of $\la^{(i)}$ is obtained from the Young diagram of $\la$ by deleting a cell from the $j$th column from the left.
 If $n-m\leq j$, then $p^{n-1}_m(\la^{(i)}) = p^{n}_{m+1}(\la)-1$. Combining this with the inequality $d>m-p^{n-1}_m(\la^{(i)})$, we have $d>m + 1-p^n_{m+1}(\la)$, and we are done by case (a).

If on the other hand we have $n-m>j$, then $p^{n-1}_m(\la^{(i)}) = p^n_{m+1}(\la)$. Since $d>m-p^{n-1}_m(\la^{(i)})$, we have $d\geq m+1-p^{n-1}_m(\la^{(i)})$. If $d>m+1-p^{n-1}_m(\la^{(i)}) = m+1-p^n_{m+1}(\la)$, then we are again done by case (a). If we have $d=m+1-p^{n-1}_m(\la^{(i)}) = m+1-p^n_{m+1}(\la)$.
Furthermore, 
\begin{align}
p^n_{m+1}(\la) &= p^n_m(\la) + \la_{n-m}' < p^n_m(\la)+ i + 1 \label{eq:myeq3},
\end{align}
which follows from the maximality of $j$ and our assumption that $n-m > j$. Combining our assumption that $d=m+1-p^n_{m+1}(\la)$ with \eqref{eq:myeq3}, we have
\begin{align}
d+i > m-p^n_m(\la).
\end{align}
Hence, we are done by case (b). 

In case (d), we have $d> m - p^{n-1}_m(\la)$.
Since $k<n$ by assumption, we have that $\la'_n = 0$, so $p^n_{m+1}(\la) = p^{n-1}_m(\la)$. Hence, we have $d> m-p^n_{m+1}(\la)$. 
If $d>m+1-p^n_{m+1}(\la)$, then we are done by case (a). Otherwise, we have $d=m+1-p^n_{m+1}(\la)$. Furthermore, since $\la'_{n-m}\leq \ell(\la) < i + 1$, then \eqref{eq:myeq3} continues to hold, and combining it with the equality $d=m+1-p^n_{m+1}(\la)$, we obtain $d+i > m-p^n_m(\la)$. Hence, we are done by case (b).
\end{proof}

\begin{lemma}\label{lem:SpanningSet}
The set $\cA_{n,\la, s}$ represents a $\bQ$-spanning set of $R_{n,\la, s}$.
\end{lemma}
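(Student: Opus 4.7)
The plan is to prove this by induction on $n$. For the base case $n=|\la|=k$, Remark~\ref{rmk:Specialization} identifies $R_{n,\la, s}$ with the Garsia--Procesi module $R_\la$, and Remark~\ref{rmk:PartitionOfN} identifies $\cA_{n,\la, s}$ with $\cA_{n,\la,\ell(\la)}$, which is Garsia and Procesi's monomial basis of $R_\la$ from~\cite{Garsia-Procesi}; its spanning property is established there. For the inductive step, fix $n>k$ and assume $\cA_{n-1,\mu, s}$ spans $R_{n-1,\mu, s}$ for every partition $\mu$ with $|\mu|\leq n-1$ and $\ell(\mu)\leq s$. This hypothesis will be used both with $\mu=\la^{(i)}$ for $0\leq i<\ell(\la)$ and with $\mu=\la$, the second of which is available precisely because $n>k$.

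The heart of the argument is the following reduction statement: for every $0\leq j\leq s-1$ and every monomial $m\in\bQ[\bx_{n-1}]$,
\[
x_n^{j} m \,\equiv\, L \pmod{\,x_n^{j+1}\bQ[\bx_n]+I_{n,\la, s}\,},
\]
where $L$ is a $\bQ$-linear combination of elements of $\cA_{n,\la, s}$. Granting this, an arbitrary monomial $\bx_n^\alpha$ with $a_n\geq s$ already lies in $I_{n,\la, s}$, while if $a_n\leq s-1$ we apply the reduction to get $\bx_n^\alpha\equiv L_0+r_0$ modulo $I_{n,\la, s}$, where $r_0\in x_n^{a_n+1}\bQ[\bx_n]$. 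Expanding $r_0$ into monomials, each has $x_n$-exponent strictly greater than $a_n$; iterating the reduction on each of these, the process terminates in at most $s-a_n$ rounds, by which point every remaining monomial contribution lies in $I_{n,\la, s}$.

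To prove the reduction statement, I split on $j$. When $0\leq j<\ell(\la)$, the inductive hypothesis applied to $R_{n-1,\la^{(j)}, s}$ yields an expression $m=\sum_\beta c_\beta m_\beta+f$ with $m_\beta\in\cA_{n-1,\la^{(j)}, s}$ and $f\in I_{n-1,\la^{(j)}, s}$. Multiplying by $x_n^j$, each $x_n^j m_\beta$ lies in $\cA_{n,\la, s}$ by the first branch of Corollary~\ref{cor:BasisRecursion}. It remains to push $x_n^j f$ into $x_n^{j+1}\bQ[\bx_n]+I_{n,\la, s}$. Writing $f$ as a $\bQ[\bx_{n-1}]$-linear combination of the generators of $I_{n-1,\la^{(j)}, s}$: the generators of the form $x_i^s$ multiply into $I_{n,\la, s}$ directly, while the generators $e_d(S)$ are moved up one layer by Lemma~\ref{lem:TechnicalLemma}(c). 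The case $\ell(\la)\leq j\leq s-1$ is entirely parallel, using the second branch of Corollary~\ref{cor:BasisRecursion}, the inductive hypothesis on $R_{n-1,\la, s}$, and Lemma~\ref{lem:TechnicalLemma}(d) in place of part (c).

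The main conceptual obstacle is bridging the recursion for $\cA_{n,\la, s}$ with the generators of $I_{n-1,\la^{(j)}, s}$ and $I_{n-1,\la, s}$, but this is already handled by Lemma~\ref{lem:TechnicalLemma}, whose four cases were engineered precisely so that $x_n^j$ times each relevant generator lands one step higher in the filtration by powers of $x_n$ modulo $I_{n,\la, s}$. With that lemma in hand, the present proof reduces to the inductive bookkeeping described above.
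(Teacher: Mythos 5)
Your inductive step is correct and uses the same key ingredients as the paper: the recursion $\cA_{n,\la, s} = \bigcupdot_{i=0}^{\ell(\la)-1} x_n^{i} \cA_{n-1,\la^{(i)}, s} \cupdot \bigcupdot_{i=\ell(\la)}^{s-1} x_n^{i} \cA_{n-1,\la, s}$ from Corollary~\ref{cor:BasisRecursion} together with Lemma~\ref{lem:TechnicalLemma}(c) and (d) to push the ideal contributions up one layer in the $x_n$-filtration; your explicit ``iterate the reduction until the $x_n$-exponent reaches $s$'' bookkeeping is just a hands-on reformulation of the paper's use of the decomposition $R_{n,\la,s}\cong\bigoplus_{i=0}^{s-1}x_n^{i}R_{n,\la,s}/x_n^{i+1}R_{n,\la,s}$. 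Where you depart from the paper is in the base of the induction. The paper starts at $n=1$ with a two-line explicit check (covering $\la=(1)$ and $\la=\emptyset$) and then, inside the inductive step for $\ell(\la)\le i\le s-1$, simply notes that when $n=k$ we have $x_n^{i}\in I_\la$ by Remark~\ref{rmk:Specialization}, so there is nothing to reduce; this keeps the argument entirely self-contained in the paper. You instead take all cases with $n=|\la|$ as a black-box base case by appealing to Garsia--Procesi's spanning theorem. That works, and is a legitimate alternative, but it hinges on the assertion that $\cA_{n,\la,\ell(\la)}$ (for $n=|\la|$) literally \emph{is} the Garsia--Procesi basis $\mathscr{B}(\la)$. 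Neither Remark~\ref{rmk:Specialization} nor Remark~\ref{rmk:PartitionOfN} says this; the former only identifies the rings, and the latter only removes the dependence on $s$. The identification is true (both sets satisfy the same recursion in the $n=k$ regime, where the $\cC_{n-1,\la,s}$ branch of Corollary~\ref{cor:BasisRecursion} is empty, and they agree in degree zero), but you should state and check this explicitly rather than treat it as given. The paper's choice of base case $n=1$ avoids having to make this identification at all, and is arguably cleaner for that reason; the trade-off is that you get to reuse a classical result, which the paper deliberately reproves to stay self-contained.
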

\begin{proof}
We proceed by induction on $n$. When $n=1$, either $\la = (1)$ or $\la  = \emptyset$. In the case when $\la = (1)$, then $R_{1,\la, s} = \bQ[\bx_1]/\langle x_1\rangle$ and $\cA_{1,\la, s} = \{1\}$. In the case when $\la = \emptyset$, then $R_{1,\la, s} = \bQ[x_1]/(x_1^s)$ and $\cA_{1,\la, s} = \{1,x_1,\dots, x_1^{s-1}\}$. Therefore, the statement holds for $n=1$.

 Assume $n>1$. Suppose by way of induction that $\cA_{m,\mu, s}$ is a $\bQ$-spanning set of $R_{m,\mu, s}$ for all $\mu$ and $m<n$. Let $\la\in \Par(k, s)$ for some $k\leq n$. Since $x_n^{s}\in I_{n,\la, s}$, we have an isomorphism of $\bQ$-vector spaces,
\begin{align}
R_{n,\la, s} \cong \bigoplus_{i=0}^{s-1} x_n^{i} R_{n,\la, s}/ x_n^{i+1} R_{n,\la, s}.
\end{align}
Therefore, it suffices to show that every polynomial of the form $x_n^{i} p(x_1,\dots, x_{n-1})$ is congruent to a polynomial in $\vecspan_{\bQ}(\cA_{n,\la, s})$ modulo $x_n^{i+1} \bQ[\bx_n] + I_{n,\la, s}$. We have two cases: either $0\leq i < \ell(\la)$ or $\ell(\la) \leq i \leq s-1$.
 
 In the first case when $0\leq i < \ell(\la)$, by our inductive hypothesis we have that $\cA_{n-1,\la^{(i)}, s}$ is a $\bQ$-spanning set of $R_{n-1,\la^{(i)}, s}$, so
 \begin{align}
 p(x_1,\dots, x_{n-1}) = \sum_{\bx_{n-1}^\alpha \in \cA_{n-1,\la^{(i)}, s}} c_\alpha \,\bx_{n-1}^\alpha  + \sum_{e_d(S)\in I_{n-1,\la^{(i)}, s}} A(d,S) e_d(S) + \sum_{j=1}^{n-1} A_j x_j^s,
 \end{align}
 for some constants $c_\alpha \in \bQ$ and some polynomials $A(d,S)$ and $A_j$ in $\bQ[\bx_{n-1}]$, where the second sum is over all generators of $I_{n-1,\la^{(i)}, s}$ of the form $e_d(S)$. Hence,
 \begin{align}
x_n^{i} p(x_1,\dots, x_{n-1}) = \sum_{\bx_{n-1}^\alpha \in \cA_{n-1,\la^{(i)}, s}} c_\alpha \,x_n^{i}\bx_{n-1}^\alpha  + \sum_{e_d(S)\in I_{n-1,\la^{(i)}, s}} A(d,S) x_n^{i}e_d(S) + \sum_{j=1}^{n-1} A_j x_n^{i}x_j^s.
 \end{align}
 Observe that $x_n^{i}x_j^s\in I_{n,\la, s}$ and that $x_n^{i}e_d(S)\in x_n^{i+1}\bQ[\bx_n] + I_{n,\la, s}$ by Lemma~\ref{lem:TechnicalLemma}(c). Furthermore, $x_n^{i}\bx_{n-1}^\alpha \in \cA_{n,\la, s}$ for all $\bx_{n-1}^\alpha \in \cA_{n-1,\la^{(i)}, s}$ by Corollary~\ref{cor:BasisRecursion}. Therefore, $x_n^{i}p$ is congruent to a polynomial in $\vecspan_\bQ(\cA_{n,\la, s})$ modulo $x_n^{i+1}\bQ[\bx_n]+I_{n,\la, s}$, and we are done.
 
In the second case, we have $\ell(\la) \leq i \leq s-1$. If $n=k$, then $x_n^i\in I_\la = I_{n,\la, s}$ by Remark~\ref{rmk:Specialization}, so $x_n^i p \in I_{n,\la, s}$. Otherwise, we have $k<n$, and by our inductive hypothesis we have that $\cA_{n-1,\la, s}$ is a $\bQ$-spanning set of $R_{n-1,\la, s}$. A similar application of Lemma~\ref{lem:TechnicalLemma}(d) completes the induction. Hence, $\cA_{n,\la, s}$ is a $\bQ$-spanning set of $R_{n,\la, s}$.
\end{proof}

\begin{example}
To illustrate the proof of Lemma~\ref{lem:SpanningSet}, take $n=4$, $\la = (2,1)$, and $s=3$. Let us show that $x_1x_4^2$ is in the span of $\cA_{4,(2,1),3}$ modulo the ideal
\begin{align}
I_{4,(2,1),3} =& \langle x_1^3,x_2^3,x_3^3,x_4^3,e_2(x_1,x_2,x_3,x_4), e_3(x_1,x_2,x_3,x_4),e_4(x_1,x_2,x_3,x_4),\\ &e_3(x_1,x_2,x_3),e_3(x_1,x_2,x_4),e_3(x_1,x_3,x_4),e_3(x_2,x_3,x_4)\rangle.
\end{align}
In this case, the power of $x_4$ is $i=2= \ell(\la)$ and $p(x_1,x_2,x_3) = x_1$, so we are in the second case of the proof of Lemma~\ref{lem:SpanningSet}. By induction, we know that $p = x_1$ is in the span of $\cA_{3,(2,1),3} = \{1,x_2,x_3\}$ modulo $I_{3,(2,1),3}$. Indeed,
\begin{align}
x_1 = &-x_2 -x_3 + e_1(x_1,x_2,x_3),
\end{align}
where $x_2,x_3\in \cA_{3,(2,1),3}$ and $e_1(x_1,x_2,x_3)\in I_{3,(2,1),3}$.
Multiplying through by $x_4^2$,
\begin{align}
x_1x_4^2 = -x_2x_4^2 - x_3x_4^2 + x_4^2e_1(x_1,x_2,x_3).
\end{align}

From~\eqref{eq:ExampleOfTree}, we see that $x_2x_4^2,x_3x_4^2 \in \cA_{4,(2,1),3}$, so it suffices to show that $x_4^2 e_1(x_1,x_2,x_3)\in I_{4,(2,1),3}$. Indeed, as in the proof of Lemma~\ref{lem:TechnicalLemma}(b) for $d=1$, $n=4$, $i=2$, $S = \{x_1,x_2,x_3\}$, we have the identity
\begin{align}
-e_3(x_1,x_2,x_3) + x_4^2e_1(x_1,x_2,x_3) = -e_3(x_1,x_2,x_3,x_4) + x_4 e_2(x_1,x_2,x_3,x_4).
\end{align}
Since $e_3(x_1,x_2,x_3)$, $e_3(x_1,x_2,x_3,x_4)$, and $e_2(x_1,x_2,x_3,x_4)$ are in $I_{4,(2,1),3}$, then $x_4^2 e_1(x_1,x_2,x_3)\in I_{4,(2,1),3}$, and we are done.
\end{example}

\begin{lemma}\label{lem:BasisBijection}
We have the equality of cardinalities $|X_{n,\la, s}| = |\cA_{n,\la, s}|$.
\end{lemma}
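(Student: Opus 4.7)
The plan is to show by induction on $n$ that $|X_{n,\la,s}|$ satisfies the same recursion as $|\cA_{n,\la,s}|$ in Corollary~\ref{cor:BasisRecursion}, and then handle the base case directly.

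The key preliminary observation I would establish is that for any weak composition $\mu = (\mu_1,\dots,\mu_s)$ of nonnegative integers (extending the definition of $X_{n,\mu,s}$ verbatim), the cardinality $|X_{n,\mu,s}|$ depends only on the multiset of parts of $\mu$: permuting the $\mu_i$'s can be compensated by a corresponding relabeling of $\alpha_1,\dots,\alpha_s$. In particular, $|X_{n,\mu,s}| = |X_{n,\sort(\mu),s}|$.

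Next I would condition on the value of $p_n$. If $p_n = \alpha_j$ with $1 \leq j \leq \ell(\la)$, then $(p_1,\dots,p_{n-1})$ lies in $X_{n-1,\mu,s}$ where $\mu = (\la_1,\dots,\la_{j-1},\la_j - 1, \la_{j+1},\dots,\la_{\ell(\la)})$, and by the previous observation $|X_{n-1,\mu,s}| = |X_{n-1,\la^{(j-1)},s}|$. If instead $\ell(\la) < j \leq s$, then $(p_1,\dots,p_{n-1})\in X_{n-1,\la,s}$. Summing over $j$ gives
\begin{align*}
|X_{n,\la,s}| = \sum_{i=0}^{\ell(\la)-1} |X_{n-1,\la^{(i)},s}| + (s-\ell(\la))\,|X_{n-1,\la,s}|,
\end{align*}
which is identical to the recursion for $|\cA_{n,\la,s}|$ given by Corollary~\ref{cor:BasisRecursion}.

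The base case $n=1$ forces $\la \in \{(1),\emptyset\}$, and in both cases direct verification gives cardinalities of $1$ and $s$ respectively, matching the description of $\cA_{1,\la,s}$ in the proof of Lemma~\ref{lem:SpanningSet}. The one edge case to track is $|\la| = n$, where Corollary~\ref{cor:BasisRecursion} declares $\cA_{n-1,\la,s}$ empty; this is consistent with $X_{n-1,\la,s} = \emptyset$, since the constraints demand at least $\sum_i \la_i = n > n-1$ coordinates. I do not anticipate any serious obstacle beyond the bookkeeping between the indices $j \in [s]$ appearing in the point-set recursion and the indices $i \in \{0,\dots,\ell(\la)-1\}$ used in the definition of the reductions $\la^{(i)}$.
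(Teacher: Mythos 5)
Your proof is correct and follows essentially the same strategy as the paper: verify the base case $n=1$, establish that $|X_{n,\la,s}|$ satisfies the same recursion as $|\cA_{n,\la,s}|$ from Corollary~\ref{cor:BasisRecursion} by conditioning on the last coordinate $p_n$, and induct. Your explicit preliminary step isolating the permutation-invariance of $|X_{n,\mu,s}|$ in the parts of $\mu$ is a slightly more careful articulation of a point the paper's bijection statement glosses over, but the underlying argument is the same.
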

\begin{proof}
Recall that $X_{n,\la, s}$ is the set of points $p = (p_{1},\dots, p_{n})\in \bQ^{n}$ such that for each $1\leq i\leq n$, $p_i = \alpha_j$ for some $j$, and for each $1\leq i\leq s$, $\alpha_i$ appears as a coordinate in $p$ at least $\lambda_i$ many times. Observe that the size of $X_{n,\la, s}$ does not depend on our choice of the distinct rational numbers $\alpha_i$. The statement of the lemma holds when $n=1$. Indeed, if $\la = (1)$ then $X_{1,\la, s} = \{(\alpha_1)\}$ and $\cA_{1,\la, s} = \{1\}$. Otherwise, we have that $\la = \emptyset$, in which case $X_{1,\la, s} = \{(\alpha_1),(\alpha_2),\dots,(\alpha_s)\}$ and $\cA_{1,\la, s} = \{1,x_1,\dots, x_1^{s-1}\}$, which are equinumerous. In light of Corollary~\ref{cor:BasisRecursion}, it suffices to prove that the cardinalities $|X_{n,\la, s}|$ satisfy the same recursion as $|\cA_{n,\la, s}|$, namely that
\begin{align}
|X_{n,\la, s}| = \sum_{i=0}^{\ell(\la)-1} |X_{n-1,\la^{(i)}, s}| + \sum_{i=\ell(\la)}^{s-1} |X_{n-1,\la, s}|.\label{eq:XRecursion}
\end{align}
The identity \eqref{eq:XRecursion} follows by observing that for $\ell(\la)\leq i\leq s-1$, the set $X_{n-1,\la, s}$ is in bijection with the set
\begin{align}
\{p\in X_{n,\la, s} \st (p_1,\dots, p_{n-1})\in X_{n-1,\la, s}, \,p_n = \alpha_{i+1}\}
\end{align}
and for $0\leq i < \ell(\la)$, the set $X_{n-1,\la^{(i)}, s}$ is in bijection with the set
\begin{align}
\{p\in X_{n,\la, s}\st (p_1,\dots, p_{n-1})\in X_{n-1,\la^{(i)}, s},\, p_n=\alpha_{i+1}\},
\end{align}
which completes the proof.
\end{proof}
\begin{remark}
The proof of Lemma~\ref{lem:BasisBijection} naturally leads to a recursively constructed bijection between the sets $X_{n,\la, s}$ and $\cA_{n,\la, s}$, though we will have no use for this bijection.
\end{remark}

\begin{proof}[Proof of Theorem~\ref{thm:TopIdealEquality}]
Recall that by Lemma~\ref{lem:PolysInTop}, we have the containment of ideals $I_{n,\la, s}\subseteq \top(X_{n,\la, s})$. Combining this with \eqref{eq:DimEqualsX}, we have
\begin{align}
\dim_\bQ (R_{n,\la, s}) \geq \dim_\bQ\frac{\bQ[\bx_n]}{\top(X_{n,\la, s})} = |X_{n,\la, s}|.\label{eq:DimEq1}
\end{align}
Furthermore, by Lemma~\ref{lem:SpanningSet} and Lemma~\ref{lem:BasisBijection}, we have
\begin{align}
|X_{n,\la, s}| = |\cA_{n,\la, s}| \geq \dim_\bQ (R_{n,\la, s}).\label{eq:DimEq2}
\end{align}
Stringing together \eqref{eq:DimEq1} and \eqref{eq:DimEq2}, we see that all inequalities must be equalities. In particular, we have the equality
\begin{align}
\dim_\bQ (R_{n,\la, s}) = \dim_\bQ \frac{\bQ[\bx_n]}{\top(X_{n,\la, s})}.
\end{align}
As a consequence, we also have $I_{n,\la, s} = \top(X_{n,\la, s})$, hence $R_{n,\la, s} = \bQ[\bx_n]/\top(X_{n,\la, s})$.
\end{proof}

\begin{theorem}\label{thm:MonomialBasis}
The set of monomials $\cA_{n,\la, s}$ represents a basis of $R_{n,\lambda, s}$.
\end{theorem}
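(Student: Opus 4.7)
The proof essentially falls out of the machinery already assembled in the proof of Theorem~\ref{thm:TopIdealEquality}, and my plan is to make this explicit as a dimension-counting argument. By Lemma~\ref{lem:SpanningSet}, the set $\cA_{n,\la, s}$ spans $R_{n,\la, s}$ as a $\bQ$-vector space, so to conclude it is a basis it suffices to check that $|\cA_{n,\la, s}| = \dim_\bQ(R_{n,\la, s})$.

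For the dimension count, I would string together three identities established earlier. First, by Theorem~\ref{thm:TopIdealEquality}, we have $R_{n,\la, s} = \bQ[\bx_n]/\top(X_{n,\la, s})$, and by \eqref{eq:DimEqualsX} the dimension of this quotient equals $|X_{n,\la, s}|$. Second, by Lemma~\ref{lem:BasisBijection}, we have $|X_{n,\la, s}| = |\cA_{n,\la, s}|$. Combining these gives
\begin{equation*}
\dim_\bQ(R_{n,\la, s}) = |X_{n,\la, s}| = |\cA_{n,\la, s}|.
\end{equation*}

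Since a spanning set whose cardinality equals the dimension must be linearly independent, this forces $\cA_{n,\la, s}$ to be a basis of $R_{n,\la, s}$. There is no real obstacle at this stage, since all the heavy lifting—bounding the dimension from above via a spanning set (Lemma~\ref{lem:SpanningSet}), bounding it from below via the identification of $R_{n,\la, s}$ with the associated graded ring of $\bQ[\bx_n]/I(X_{n,\la, s})$ (Lemma~\ref{lem:PolysInTop} and Theorem~\ref{thm:TopIdealEquality}), and matching up the cardinalities via the recursion in Corollary~\ref{cor:BasisRecursion} (Lemma~\ref{lem:BasisBijection})—has already been done. The proof is therefore a one-line assembly of these inputs.
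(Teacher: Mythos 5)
Your proposal is correct and follows essentially the same approach as the paper: both combine Lemma~\ref{lem:SpanningSet} (spanning) with the dimension count $\dim_\bQ(R_{n,\la, s}) = |\cA_{n,\la, s}|$ coming out of Theorem~\ref{thm:TopIdealEquality} and Lemma~\ref{lem:BasisBijection}. You simply unpack the dimension equality into its constituent steps (\eqref{eq:DimEqualsX} and Lemma~\ref{lem:BasisBijection}) rather than citing ``the proof of Theorem~\ref{thm:TopIdealEquality}'' as a whole.
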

\begin{proof}
By the proof of Theorem~\ref{thm:TopIdealEquality}, we have that $|\cA_{n,\la, s}| = \dim_\bQ (R_{n,\la, s})$. By Lemma~\ref{lem:SpanningSet}, $\cA_{n,\la, s}$ is a $\bQ$-spanning set of $R_{n,\la, s}$. Hence, $\cA_{n,\la, s}$ is a basis of $R_{n,\la, s}$.
\end{proof}

\begin{lemma}\label{lem:Containment}
Let $h\leq k\leq n$ be positive integers, let $\la\in \Par(h, s)$, and let $\mu\in \Par(k, s)$. If $h=k$ and $\la\dominatedby\mu$, or if $h<k$ and $\la\subseteq \mu$, then $\cA_{n,\mu, s}\subseteq \cA_{n,\la, s}$ and $I_{n,\la, s}\subseteq I_{n,\mu, s}$.
\end{lemma}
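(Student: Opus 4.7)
The plan is to decompose the lemma into its two assertions, since they are essentially independent. For the monomial containment $\cA_{n,\mu,s}\subseteq \cA_{n,\la,s}$, this is nothing more than a restatement of Lemma~\ref{lem:StaircaseContainment}: by definition $\cA_{n,\la,s}=\{\bx_n^\alpha\st\alpha\in\cC_{n,\la,s}\}$, so $\cC_{n,\mu,s}\subseteq\cC_{n,\la,s}$ immediately gives $\cA_{n,\mu,s}\subseteq\cA_{n,\la,s}$. No additional work is required.

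For the ideal containment $I_{n,\la,s}\subseteq I_{n,\mu,s}$, I would argue generator by generator. The generators $x_i^s$ lie in both ideals, so only the partial elementary symmetric generators $e_d(S)$ need attention. By Definition~\ref{def:TheIdeal}, if $e_d(S)$ is a generator of $I_{n,\la,s}$ then $d>|S|-p^n_{|S|}(\la)$. It therefore suffices to prove the inequality
\begin{equation*}
p^n_m(\la)\leq p^n_m(\mu)\qquad\text{for all }1\leq m\leq n,
\end{equation*}
because then $d>|S|-p^n_{|S|}(\la)\geq |S|-p^n_{|S|}(\mu)$, giving $e_d(S)\in I_{n,\mu,s}$.

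The key observation is just a conjugation-swap for dominance, together with monotonicity of conjugates under inclusion. In the case $h<k$ with $\la\subseteq\mu$, containment of Young diagrams implies $\la'_i\leq\mu'_i$ for every $i$, so the partial tail sums $p^n_m(\la)=\la'_{n-m+1}+\cdots+\la'_n$ are termwise bounded above by those of $\mu$. In the case $h=k$ with $\la\dominatedby\mu$, the standard fact that conjugation reverses dominance order gives $\mu'\dominatedby\la'$, i.e. $\mu'_1+\cdots+\mu'_i\leq\la'_1+\cdots+\la'_i$ for all $i$; since $|\la'|=|\mu'|=k$, taking complements yields $\la'_{n-m+1}+\cdots+\la'_n\leq\mu'_{n-m+1}+\cdots+\mu'_n$, which is again $p^n_m(\la)\leq p^n_m(\mu)$. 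Neither step poses a real obstacle; the only thing to be careful about is invoking the correct direction of the dominance-conjugation reversal, and remembering that the definition of $p^n_m$ tracks the \emph{last} $m$ parts of the conjugate padded by zeros, so the inequalities have to be taken in the tail rather than the head.
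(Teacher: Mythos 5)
Your proof is correct and follows the same route as the paper: deduce the monomial containment from Lemma~\ref{lem:StaircaseContainment} via the definition $\cA_{n,\la,s}=\{\bx_n^\alpha : \alpha\in\cC_{n,\la,s}\}$, and deduce the ideal containment generator-by-generator from the inequality $p^n_m(\la)\leq p^n_m(\mu)$. The paper asserts this inequality without proof; you spell out both the $\la\subseteq\mu$ case (termwise comparison of conjugates) and the $\la\dominatedby\mu$ case (dominance reversal under conjugation plus complementation), and both of those arguments are correct.
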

\begin{proof}
If $h=k$ and $\la\dominatedby \mu$ or if $h<k$ and $\la\subseteq\mu$, then $p^n_m(\la)\leq p^n_m(\mu)$ for all $m$. Therefore, the generating set of $I_{n,\la, s}$ is contained in the generating set of $I_{n,\mu, s}$, so $I_{n,\la, s}\subseteq I_{n,\mu, s}$. By Lemma~\ref{lem:StaircaseContainment}, we have $\cC_{n,\mu, s}\subseteq \cC_{n, \la, s}$, so the containment $\cA_{n,\mu, s}\subseteq \cA_{n,\la, s}$ follows.
\end{proof}

As a consequence of the containment of ideals in Lemma~\ref{lem:Containment}, we have a monotonicity property of the multiplicities of irreducible representations contained in the ring $R_{n,\la, s}$. We state it next in terms of graded Frobenius characteristics.
\begin{theorem}\label{thm:Monotonicity}
Let $h\leq k\leq n$ be positive integers, let $\la\in \Par(h, s)$, and let $\mu\in \Par(k, s)$ such that either $h=k$ and $\la\dominatedby\mu$, or $h<k$ and $\la\subseteq \mu$. For each partition $\nu\vdash n$, we have the inequality
\begin{align}
[s_{\nu}] \Frobq(R_{n,\la, s})\geq [s_\nu] \Frobq(R_{n,\mu, s}),
\end{align}
where $[s_{\nu}]f$ stands for the coefficient of $s_\nu$ in the Schur function expansion of $f$, and the inequality is a coefficient-wise comparison of two polynomials in $q$.
\end{theorem}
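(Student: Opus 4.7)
The plan is to exploit the inclusion of ideals provided by Lemma~\ref{lem:Containment} together with the semisimplicity of $S_n$-representations in characteristic zero. Under the hypotheses of the theorem, Lemma~\ref{lem:Containment} gives $I_{n,\la,s} \subseteq I_{n,\mu,s}$, and since both ideals are homogeneous and $S_n$-stable, the quotient by the larger ideal yields a surjection of graded $S_n$-modules
\[
\pi \colon R_{n,\la,s} \twoheadrightarrow R_{n,\mu,s}.
\]
Letting $K \coloneqq I_{n,\mu,s}/I_{n,\la,s}$, this sits in a short exact sequence $0 \to K \to R_{n,\la,s} \to R_{n,\mu,s} \to 0$ of graded $S_n$-modules.

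Next I would invoke Maschke's theorem: because we work over $\bQ$, the category of $S_n$-modules is semisimple, so the exact sequence splits in each graded degree. Hence there is an isomorphism of graded $S_n$-modules $R_{n,\la,s} \cong K \oplus R_{n,\mu,s}$, and the graded Frobenius characteristic is additive on direct sums, giving
\[
\Frobq(R_{n,\la,s}) = \Frobq(K) + \Frobq(R_{n,\mu,s}).
\]

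Finally, $K$ is itself a genuine graded $S_n$-module, so each graded piece decomposes as a nonnegative-integer combination of irreducibles $S^\nu$. By definition of the Frobenius characteristic, this means $\Frobq(K) = \sum_{\nu\vdash n} c_\nu(q)\,s_\nu(\bx)$ with each $c_\nu(q)\in \bZ_{\geq 0}[q]$. Extracting the Schur coefficient of each side of the displayed equation yields
\[
[s_\nu]\Frobq(R_{n,\la,s}) - [s_\nu]\Frobq(R_{n,\mu,s}) = c_\nu(q) \geq 0
\]
coefficient-wise in $q$, which is the desired monotonicity. The argument is essentially formal once Lemma~\ref{lem:Containment} is in hand; no obstacle is anticipated beyond making sure the semisimple splitting is applied degree-by-degree so that it respects the grading.
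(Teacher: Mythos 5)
Your proof is correct and takes essentially the same route the paper implicitly intends: the ideal containment $I_{n,\la,s}\subseteq I_{n,\mu,s}$ from Lemma~\ref{lem:Containment} gives a graded $S_n$-equivariant surjection, and semisimplicity in characteristic zero makes the kernel's graded Frobenius characteristic a Schur-nonnegative summand. The paper simply states the theorem as a consequence of Lemma~\ref{lem:Containment} without spelling out these intermediate steps, so your writeup is a faithful expansion of that argument.
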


	\subsection{Ordered set partitions}
In this subsection, we relate the $S_n$-module $R_{n,\la, s}$ to an action on $(n,\la, s)$-ordered set partitions. We then find the Frobenius characteristic of $R_{n,\la, s}$. 

\begin{theorem}\label{thm:UngradedFrobChar}
We have that $\dim_\bQ(R_{n,\la, s}) = |\OP_{n,\la, s}|$. Furthermore, we have the isomorphism of $S_n$-modules
\begin{align}
R_{n,\la, s} \cong_{S_n}\mathbb{Q}\mathcal{OP}_{n,\la, s}.
\end{align}
\end{theorem}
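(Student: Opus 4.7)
The proof should be a short bijective argument combined with the $S_n$-module isomorphism already established in Corollary~\ref{cor:SymmModuleIso}. The plan is to construct a natural $S_n$-equivariant bijection $\phi \colon X_{n,\la, s} \to \OP_{n,\la, s}$ and then invoke Corollary~\ref{cor:SymmModuleIso}.

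First I would define $\phi$ as follows. Given a point $p = (p_1, \ldots, p_n) \in X_{n,\la, s}$, for each $1 \leq i \leq s$ let
\[
B_i = \{\, j \in [n] \st p_j = \alpha_i \,\},
\]
and set $\phi(p) = (B_1 | B_2 | \cdots | B_s)$. By definition of $X_{n,\la, s}$, each coordinate $p_j$ equals some $\alpha_i$, so the $B_i$ form a weak ordered set partition of $[n]$; and since $\alpha_i$ appears at least $\la_i$ times in $p$ for $i \leq \ell(\la)$, we have $|B_i| \geq \la_i$ for such $i$. Hence $\phi(p) \in \OP_{n,\la, s}$. The inverse map is equally clear: given $(B_1|\cdots|B_s) \in \OP_{n,\la, s}$, define the point $p$ by $p_j = \alpha_i$ whenever $j \in B_i$; the distinctness of the $\alpha_i$ makes this well-defined and lands in $X_{n,\la, s}$.

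Next I would verify $S_n$-equivariance. The group $S_n$ acts on $X_{n,\la, s}$ by permuting coordinates, $(\sigma \cdot p)_j = p_{\sigma^{-1}(j)}$, and on $\OP_{n,\la, s}$ by applying $\sigma$ to the letters of each block. For $p \in X_{n,\la, s}$ and $\sigma \in S_n$, the block $B_i$ of $\phi(\sigma \cdot p)$ consists of those $j$ with $p_{\sigma^{-1}(j)} = \alpha_i$, i.e.\ those $j$ with $\sigma^{-1}(j) \in B_i(\phi(p))$; equivalently $j \in \sigma(B_i(\phi(p)))$. Thus $\phi(\sigma \cdot p) = \sigma \cdot \phi(p)$, so $\phi$ is $S_n$-equivariant.

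This immediately gives the isomorphism of $S_n$-modules $\mathbb{Q} X_{n,\la, s} \cong_{S_n} \mathbb{Q} \OP_{n,\la, s}$ and the equality of cardinalities $|X_{n,\la, s}| = |\OP_{n,\la, s}|$. Combining with Corollary~\ref{cor:SymmModuleIso}, which says $R_{n,\la, s} \cong_{S_n} \mathbb{Q} X_{n,\la, s}$, and with the dimension computation $\dim_\bQ(R_{n,\la, s}) = |X_{n,\la, s}|$ established in the proof of Theorem~\ref{thm:TopIdealEquality}, we conclude both $\dim_\bQ(R_{n,\la, s}) = |\OP_{n,\la, s}|$ and $R_{n,\la, s} \cong_{S_n} \mathbb{Q} \OP_{n,\la, s}$. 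There is no serious obstacle here; all the work has already been done in establishing Theorem~\ref{thm:TopIdealEquality} and Corollary~\ref{cor:SymmModuleIso}, and the present statement is essentially a repackaging through the obvious bijection between points with values in $\{\alpha_1, \ldots, \alpha_s\}$ and ordered set partitions indexed by those values.
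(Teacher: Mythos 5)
Your proof is correct and follows exactly the paper's approach: it defines the same $S_n$-equivariant bijection $\varphi\colon X_{n,\la,s}\to\OP_{n,\la,s}$ by reading off which coordinates equal each $\alpha_i$, and combines it with Corollary~\ref{cor:SymmModuleIso}. You spell out the equivariance check that the paper labels as "clearly," but the argument is the same.
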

\begin{proof}
By Corollary~\ref{cor:SymmModuleIso}, we have that $R_{n,\la, s}\cong_{S_n} \bQ X_{n,\la, s}$ as $S_n$-modules. Therefore, it suffices to show there is an $S_n$-equivariant bijection between $X_{n,\la, s}$ and $\OP_{n,\la, s}$. Define a map $\varphi: X_{n,\la, s}\to \OP_{n,\la, s}$ as follows. Given $p\in X_{n,\la, s}$, define $\varphi(p) = (B_1|\cdots|B_s)$ where $B_i$ is the set of indices $1\leq j\leq n$ such that $p_j = \alpha_i$. By the definition of $X_{n,\la, s}$, we have that $\alpha_i$ appears at least $\la_i$ many times as a coordinate in $p$. Therefore, we have $|B_i|\geq \la_i$ for all $i\leq s$, so $\varphi(p)\in \OP_{n,\la, s}$. The map $\varphi$ is clearly the desired $S_n$-equivariant bijection, which completes the proof.
\end{proof}

\begin{corollary} The Frobenius characteristic of $R_{n,\la, s}$ is
\begin{align}\label{eq:FrobFormula}
\Frob(R_{n,\la, s}) =  \sum_{\substack{\mu\in \Par(n, s),\\ \mu\supseteq \la}} \, h_{\mu} \prod_{i\geq 0} \binom{\mu_i' - \la_{i+1}'}{\mu_i'-\mu_{i+1}'}  ,
\end{align}
where $\mu'_0\coloneqq s$.
\end{corollary}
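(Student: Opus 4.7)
The plan is to combine Theorem~\ref{thm:UngradedFrobChar} with the standard orbit decomposition of a permutation module. Since Theorem~\ref{thm:UngradedFrobChar} gives $R_{n,\la, s}\cong_{S_n} \bQ\OP_{n,\la, s}$, it suffices to compute the Frobenius characteristic of $\bQ\OP_{n,\la, s}$. The $S_n$-orbits on $\OP_{n,\la, s}$ are indexed by weak compositions $\alpha\in \Comp(n, s)$ with $\alpha_i\geq \la_i$ for all $i\leq s$ (padding $\la$ with zeros to length $s$); the orbit indexed by $\alpha$ consists of $(B_1|\cdots|B_s)$ with $|B_i|=\alpha_i$, and the stabilizer of each such point is a Young subgroup isomorphic to $S_{\alpha_1}\times\cdots\times S_{\alpha_s}$. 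By Young's rule, the Frobenius characteristic of this orbit equals $h_\alpha=h_{\sort(\alpha)}$. Grouping the orbits by $\mu\coloneqq \sort(\alpha)$, we obtain
\begin{align*}
\Frob(R_{n,\la, s}) = \sum_{\mu\in \Par(n, s)} N(\mu, \la, s)\, h_\mu,
\end{align*}
where $N(\mu, \la, s)$ is the number of weak compositions $\alpha\in \Comp(n, s)$ with $\sort(\alpha)=\mu$ and $\alpha_i\geq \la_i$ for all $i\leq s$. The proof then reduces to showing that $N(\mu, \la, s)=\prod_{i\geq 0}\binom{\mu'_i-\la'_{i+1}}{\mu'_i-\mu'_{i+1}}$ with $\mu'_0\coloneqq s$.

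The key combinatorial step is to bijectivize compositions with descending chains of subsets. To each valid $\alpha$, associate the chain of subsets $S_i\coloneqq \{j\in[s]\st \alpha_j\geq i\}$ of $[s]$ for $i\geq 1$; the conditions $\sort(\alpha)=\mu$ and $\alpha_i\geq \la_i$ translate exactly to $|S_i|=\mu'_i$ and $[\la'_i]\subseteq S_i$ for $i\geq 1$, and $\alpha$ is recovered from its chain by $\alpha_j=\#\{i\geq 1\st j\in S_i\}$. Setting $S_0\coloneqq [s]$, for each $i\geq 0$ the choice of $S_{i+1}\subseteq S_i$ amounts to keeping the forced elements $[\la'_{i+1}]\subseteq S_i$ (valid since $\la'_{i+1}\leq \la'_i$ ensures $[\la'_{i+1}]\subseteq [\la'_i]\subseteq S_i$) and picking $\mu'_{i+1}-\la'_{i+1}$ additional elements from the $\mu'_i-\la'_{i+1}$ elements of $S_i\setminus[\la'_{i+1}]$. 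This yields $\binom{\mu'_i-\la'_{i+1}}{\mu'_{i+1}-\la'_{i+1}}=\binom{\mu'_i-\la'_{i+1}}{\mu'_i-\mu'_{i+1}}$ choices, a number independent of the particular $S_i$; multiplying over $i\geq 0$ gives the product, with all factors equal to $1$ once $\mu'_{i+1}=\la'_{i+1}=0$.

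Finally, when $\mu\supseteq\la$ all binomials are nonnegative integers since $\la'_{i+1}\leq \mu'_{i+1}\leq \mu'_i$; when $\mu\not\supseteq \la$, some factor vanishes, so restricting the outer sum to $\mu\supseteq \la$ does not change its value. The main obstacle is purely bookkeeping — carefully tracking the conjugate-partition indices and verifying that the count at each level of the chain depends only on $\mu$ and $\la$ and not on the specific earlier choices, so that the total multiplies to a product.
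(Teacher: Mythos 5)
Your proof is correct and follows essentially the same approach as the paper: both deduce from Theorem~\ref{thm:UngradedFrobChar} that $R_{n,\la,s}\cong_{S_n}\bQ\OP_{n,\la,s}$, decompose into $S_n$-orbits indexed by compositions $\alpha\supseteq\la$ each contributing $h_{\sort(\alpha)}$, and then count the $\alpha$ with $\sort(\alpha)=\mu$. The one thing you add is an explicit derivation (via the chain $S_i=\{j\st\alpha_j\geq i\}$) of the product formula for this count, which the paper dismisses as ``an easy exercise''; your derivation is correct.
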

\begin{proof}
By Theorem~\ref{thm:UngradedFrobChar}, we have $R_{n,\la, s} \cong_{S_n} \bQ\OP_{n,\la, s}$. We can partition the set $\OP_{n,\la, s}$ into $S_n$-orbits, where an orbit is determined by the tuple of block sizes $\alpha = (|B_1|,\dots,|B_s|)$. This correspondence sets up a bijection between the set of $S_n$-orbits of $\OP_{n,\la, s}$ and the set of all weak compositions $\alpha = (\alpha_1,\dots,\alpha_s)$ of $n$ such that $\alpha\supseteq \la$. Given such a composition $\alpha$, let $\mathscr{O}$ be the corresponding $S_n$ orbit of $\OP_{n,\la, s}$. Letting $\mu = \sort(\alpha)$, then $\mathscr{O}$ is isomorphic as an $S_n$-module to the set of tabloids with $\mu_i$ boxes in the $i$th row. Hence, the Frobenius characteristic of the action of $S_n$ on the submodule $\bQ \mathscr{O}$ of $\bQ \OP_{n,\la, s}$ is equal to $h_\mu$. 

Since $\bQ\OP_{n,\la, s}$ is the direct sum over all $\bQ\mathscr{O}$ where $\mathscr{O}$ is an $S_n$-orbit, we have
\begin{align}
\Frob(R_{n,\la, s}) = \sum_{\substack{\mu\in \Par(n, s),\\ \mu\supseteq \la}} \, a_{\la,\mu}^{(s)} h_{\mu},
\end{align}
where $a_{\la,\mu}^{(s)}$ is the number of $\alpha\in \Comp(n, s)$ such that $\alpha\supseteq \la$ and $\sort(\alpha) = \mu$. It is then an easy exercise to verify $a_{\la,\mu}^{(s)}$ is equal to the coefficient of $h_\mu$ in the right-hand side of \eqref{eq:FrobFormula}.
\end{proof}

\section{Skewing formulas and exact sequences for $R_{n,\la, s}$}\label{sec:Skew}
In this section, we develop algebraic tools for analyzing the graded Frobenius characteristic of $\Frobq(R_{n,\la, s})$. Our main tool is a recursive formula for the image of $\Frobq(R_{n,\la, s})$ under the skewing operator $e_j(\bx)^\perp$ from Subsection~\ref{subsec:SymmFunctions}. We also show that the rings $R_{n,\la, s}$ fit into certain exact sequences.

	\subsection{Skewing formula}

Fix $j$, $k$, $s$, and $n$ positive integers with $k\leq n$, and fix $\la\in \Par(k, s)$ throughout the subsection. 
In order to simplify notation, let $z_i \coloneqq x_{n-j+i}$ for $1\leq i \leq j$ and $\bz_j \coloneqq \{z_1,\dots, z_j\} = \{x_{n-j+1},\dots, x_n\}$. Given a polynomial $f(\bx_{n-j},\bz_j)$, then $\sigma\in S_{n-j}$ acts on the $\bx_{n-j}$ variables, and $\epsilon_j$ acts on the $\bz_j$ variables. 
We have the following definitions, which we need for our formulas for $e_j^\perp \Frobq(R_{n,\la, s})$.
\begin{definition}
Let $I = (i_1,\dots, i_j) \in [0,s-1]^j$. Construct a partition $\la^{(I)}$ recursively using the reduction operations defined in \eqref{eq:ReductionDef} as follows. Let $\la_{(j)} \coloneqq \la$, and for $1\leq h \leq j$, let 
\[
\la_{(h-1)}  \coloneqq 
\begin{cases} 
 \la_{(h)}^{(i_h)} & \text{if }i_h < \ell(\la_{(h)}),\\
 \la_{(h)} & \text{if } i_h \geq \ell(\la_{(h)}).
 \end{cases}
\]
Define $\la^{(I)} \coloneqq \la_{(0)}$.
\end{definition}

Let $\cI_s^j$ be the set of increasing sequences $(i_1 < \cdots < i_j)$ of nonnegative integers with $i_j < s$. For $I \in \cI_s^j$, let $m$ be maximal such that $i_m < \ell(\la)$. In this case, observe that $\la^{(I)}\in \Par(k-m, s)$ is the partition obtained from $\la$ by deleting one box from the end of the rows $i_1+1,\dots, i_m+1$ and then sorting the rows. Further observe that for $I=(i)$, if $0\leq i < \ell(\la)$, then $\la^{(I)} = \la^{(i)}$, and if $ \ell(\la) \leq i < s$, then $\la^{(I)} = \la$.

Given a sequence $I = (i_1,\dots, i_j)$ of distinct nonnegative integers which is not necessarily increasing, let $\bz_j^{I} \coloneqq z_1^{i_1}\cdots z_j^{i_j}$. Furthermore, let $\Sigma(I) \coloneqq i_1 + \cdots + i_j$, and let $\sort_{\leq}(I)$ be the increasing sequence obtained by sorting the entries of $I$.

\begin{theorem}\label{thm:AntisymmFrobqRecursion}
We have
\begin{align}
e_j^{\perp} \Frobq(R_{n,\la, s}) = \Frobq(\epsilon_j R_{n,\la, s})  = \sum_{I \in \cI_s^j} q^{\Sigma(I)} \,\Frobq(R_{n-j,\la^{(I)}, s}).
\end{align}
\end{theorem}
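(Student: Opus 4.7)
The first equality is immediate from \eqref{eq:AntisymmFrob}. For the second, the strategy is to establish an isomorphism of graded $S_{n-j}$-modules
\[
\epsilon_j R_{n,\la,s} \;\cong\; \bigoplus_{I \in \cI_s^j} q^{\Sigma(I)}\, R_{n-j, \la^{(I)}, s}
\]
and then take graded Frobenius characteristics. The starting ingredient is the $\bQ$-basis decomposition obtained by iterating Corollary~\ref{cor:BasisRecursion} $j$ times, successively splitting off $x_n = z_j$, $x_{n-1} = z_{j-1}$, down to $x_{n-j+1} = z_1$:
\[
\cA_{n,\la,s} \;=\; \bigsqcup_{I = (i_1, \dots, i_j) \in [0, s-1]^j} \bz_j^{I}\, \cA_{n-j, \la^{(I)}, s},
\]
where $\bz_j^{I} := z_1^{i_1} \cdots z_j^{i_j}$ and the intermediate partitions produced at each step are exactly the $\la_{(h)}$ from the definition preceding the theorem, so that the resulting partition is $\la^{(I)}$.

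The next step is to upgrade this into a graded $S_{n-j}$-module decomposition of $R_{n,\la,s}$. For each $I$, consider the multiplication map $\mu_I : R_{n-j, \la^{(I)}, s} \to R_{n,\la,s}$, $p \mapsto \bz_j^{I} p$, which is $S_{n-j}$-equivariant (since $\bz_j^I$ is fixed by $S_{n-j}$) and shifts degree by $\Sigma(I)$. I expect the main technical obstacle to be verifying well-definedness of $\mu_I$, i.e.\ $\bz_j^I \cdot I_{n-j, \la^{(I)}, s} \subseteq I_{n,\la,s}$. I would prove this by induction on $j$, iterating Lemma~\ref{lem:TechnicalLemma}(c)--(d) in parallel with the basis recursion: at each stage, multiplying a generator of the next-stage ideal by $z_k^{i_k}$ and reducing via the lemma eventually bumps the exponent of $z_k$ up to $s$, at which point $z_k^s \in I_{n,\la,s}$. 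Injectivity of $\mu_I$ is immediate from the basis decomposition (distinct images lie in distinct pieces of the disjoint union), so a dimension count yields $R_{n,\la,s} = \bigoplus_{I \in [0,s-1]^j} \bz_j^I R_{n-j, \la^{(I)}, s}$ as graded $S_{n-j}$-modules.

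Finally, to access $\epsilon_j R_{n,\la,s}$, I would analyze the action of $S_j := S_{\{n-j+1, \dots, n\}}$, which permutes the summands above by permuting the exponents of $I$. A second key step is the combinatorial symmetry $\la^{(\sigma \cdot I)} = \la^{(I)}$ for all $\sigma \in S_j$, i.e.\ that $\la^{(I)}$ depends only on the multiset of entries of $I$; this can be established inductively by checking that two distinct reductions commute, using the cell-removal description $\la^{(i)} = (\la_1, \dots, \la_{\la'_j} - 1, \dots, \la_\ell)$ with $j$ maximal such that $i < \la'_j$. Granting this, for each multiset $M \subseteq [0, s-1]$ of size $j$, the subspace $V_M := \bigoplus_{\text{entries of }I = M} \bz_j^I R_{n-j, \la^{(M)}, s}$ is $S_j$-stable, and as an $S_j \times S_{n-j}$-module factors as a permutation representation of $S_j$ on the orderings of $M$ tensored with $R_{n-j, \la^{(M)}, s}$. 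Applying $\epsilon_j$, the sign-isotypic component is one-dimensional precisely when the entries of $M$ are pairwise distinct (equivalently, $M$ corresponds to a unique $I_0 \in \cI_s^j$), spanned by $\epsilon_j(\bz_j^{I_0})$ of degree $\Sigma(I_0)$, and vanishes otherwise. Summing over $I_0 \in \cI_s^j$ produces the claimed isomorphism, and taking graded Frobenius characteristics completes the proof.
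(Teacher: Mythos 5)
The first equality and the basis decomposition $\cA_{n,\la,s} = \bigcupdot_{I\in[0,s-1]^j}\bz_j^I\cA_{n-j,\la^{(I)},s}$ are fine, but the central step of your proposal — the well-definedness of the multiplication maps $\mu_I$, i.e.\ the containment $\bz_j^I\cdot I_{n-j,\la^{(I)},s}\subseteq I_{n,\la,s}$ — is false, and the paper in fact singles this out as the main pitfall. Take $n=4$, $\la=(1^3)$, $s=3$, $j=2$, $I=(0,1)$, so $\bz_2^I = x_4$ and $\la^{(I)}=(1)$. The generator $e_2(\{x_1,x_2\})=x_1x_2$ lies in $I_{2,(1),3}$, yet $x_1x_2x_4\notin I_{4,(1^3),3}$: reducing by $e_2(\bx_4)\equiv 0$ gives $x_1x_2x_4\equiv -x_1x_3x_4 - x_1x_4^2 - x_2x_3x_4 - x_2x_4^2 - x_3x_4^2$, a nonzero combination of monomials from $\cA_{4,(1^3),3}$, so it is nonzero in $R_{4,(1^3),3}$. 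Your sketch of a proof via Lemma~\ref{lem:TechnicalLemma} only yields $z_k^{i_k}e_d(S)\in z_k^{i_k+1}\bQ[\cdot]+I_{\cdots}$; the "error term'' $z_k^{i_k+1}(\cdots)$ does not disappear and is precisely why a direct-sum decomposition of $R_{n,\la,s}$ itself is not available. The paper instead filters $\epsilon_j R_{n,\la,s}$ by total $\bz_j$-degree ($W_{\geq r}$) and proves the multiplication map descends only to the associated graded $W_r = W_{\geq r}/W_{\geq r+1}$, where these error terms die (Lemma~\ref{lem:MultWellDefined}). The filtration is essential, not an optimization.

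There is a second, independent error: your claimed symmetry $\la^{(\sigma\cdot I)}=\la^{(I)}$ for $\sigma\in S_j$ is also false. Take $\la=(3,3,1)$ and compare $I=(0,1)$ with $H=(1,0)$: one finds $\la^{(0,1)}=(2,2,1)$ while $\la^{(1,0)}=(3,1,1)$. Lemma~\ref{lem:RemovingBoxes} only asserts the dominance relation $\la^{(\sort_\leq(H))}\dominatedby\la^{(H)}$, and the paper's Lemma~\ref{lem:AltBasis} exploits this one-sided containment (via $\cA_{n-j,\la^{(H)},s}\subseteq\cA_{n-j,\la^{(\sort_\leq(H))},s}$ from Lemma~\ref{lem:Containment}) to run the spanning argument; your $S_j$-orbit decomposition of $R_{n,\la,s}$ into permutation modules tensored with a fixed $R_{n-j,\la^{(M)},s}$ does not exist. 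Both issues need to be repaired before the $\epsilon_j$-projection step can be carried out, and the repair (filtration plus dominance) is exactly what the paper does.
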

\noindent The first equality in Theorem~\ref{thm:AntisymmFrobqRecursion} follows immediately by~\eqref{eq:AntisymmFrob}, so it suffices to prove the second equality. 

Define
\begin{align}
V_r &\coloneqq \bigoplus_{\substack{I\in \cI_s^j,\\\Sigma(I) = r}} R_{n-j,\la^{(I)}, s} \otimes \bQ\{\epsilon_j \bz_j^{I}\}.\label{eq:VDef}
\end{align}
The vector space $V_r$ has the structure of a $S_{n-j}$-module, where $S_{n-j}$ acts on the first factor of each tensor product. It is also graded, where the degree of a nonzero simple tensor $f\otimes \epsilon_j \bz_j^I$ is $\deg(f) + \Sigma(I)$.
Observe that if we could directly show that $\epsilon_j R_{n,\la, s} \cong \bigoplus_{r\geq 0} V_r$ as graded $S_{n-j}$-modules, then the second equality in Theorem~\ref{thm:AntisymmFrobqRecursion} would follow. A natural choice for this isomorphism would be the map from $\bigoplus_{r\geq 0}V_r$ to $\epsilon_j R_{n,\la, s}$ induced by multiplication, if it is well-defined. Unfortunately, this map is not well-defined in general, as the next example illustrates. 

\begin{example}
Let $n = 4$, $\la = (1^3)$, $s=3$, and $j=2$. In order to have a well-defined map $V_r\to  \epsilon_j R_{n,\la, s}$ induced by multiplication, then in particular we would need a well-defined map $R_{n-j,\la^{(I)}, s}\otimes \bQ\{\epsilon_j \bz_j^I\}\to \epsilon_j R_{n,\la, s}$ for each $I\in \cI_s^j$ induced by multiplication. Letting $I = (0,1)$, then we would need a map
\begin{align}
R_{2,(1), 3} \otimes \bQ\{x_3-x_4\}\to \epsilon_2 R_{4,(1^3), 3}.
\end{align}
However, in order for this map to be induced from multiplication, we would need the containment \mbox{$I_{2,(1), 3}\cdot (x_3-x_4)\subseteq \epsilon_2 I_{4,(1^3), 3}$}. In particular, we would need 
\begin{align}
e_2(x_1,x_2)(x_3-x_4) = x_1x_2(x_3-x_4)\in \epsilon_2 I_{4,(1^3), 3},
\end{align} 
which is not true. However, observe that we do have 
\begin{align}
x_1x_2 (x_3-x_4) + x_1(x_3^2-x_4^2) + x_2(x_3^2-x_4^2) + x_3x_4(x_3-x_4)\in \epsilon_2 I_{4,(1^3), 3},
\end{align}
which contains $x_1x_2(x_3-x_4)$ as a term. Further observe that all other terms in this element have higher degree in the variables $z_1=x_3$ and $z_2 = x_4$. This suggests that we may be able to define a map from $V_r$ induced by multiplication if we first filter the codomain $\epsilon_j R_{n,\la, s}$ by total degree in the $\bz_j$ variables.
\end{example}

Indeed, we prove the second equality in Theorem~\ref{thm:AntisymmFrobqRecursion} by filtering $\epsilon_j R_{n,\la, s}$ by total degree in the $\bz_j = \{x_{n-j+1},\dots, x_n\}$ variables and then constructing an explicit isomorphism of $S_{n-j}$-modules which corresponds to the equality of symmetric functions in Theorem~\ref{thm:AntisymmFrobqRecursion}. Before giving the proof, we need a few lemmata.

\begin{lemma}\label{lem:DimOfAntisymm} We have
\begin{align}\label{eq:DimOfAntisymm}
\dim_\bQ(\epsilon_j R_{n,\la, s}) = \dim_\bQ(\epsilon_j\bQ \OP_{n,\la, s}) = \sum_{I \in \cI_s^j} |\OP_{n-j,\la^{(I)}, s}|.
\end{align}
\end{lemma}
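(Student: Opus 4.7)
My plan is to apply Lemma~\ref{lem:EjLemma} to $Y = \OP_{n,\la, s}$. The first equality $\dim_\bQ(\epsilon_j R_{n,\la, s}) = \dim_\bQ(\epsilon_j\bQ\OP_{n,\la, s})$ is immediate from the $S_n$-module isomorphism $R_{n,\la, s}\cong_{S_n}\bQ\OP_{n,\la, s}$ of Theorem~\ref{thm:UngradedFrobChar}, so I focus on the second equality. By Lemma~\ref{lem:EjLemma}, this dimension equals the number of $S_{\{n-j+1,\dots,n\}}$-orbits in $\OP_{n,\la, s}$ whose elements have trivial stabilizer. Since the subgroup permutes only the labels $n-j+1,\dots,n$ among the blocks they occupy, such triviality is equivalent to these $j$ labels lying in $j$ distinct blocks.

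I would index such orbits by strictly increasing sequences $I = (i_1 < \cdots < i_j)\in \cI_s^j$, choosing as canonical representative the unique orbit element in which label $n-j+h$ lies in block $i_h+1$ for every $h$. This gives
\begin{align*}
\dim_\bQ(\epsilon_j\bQ\OP_{n,\la, s}) = \sum_{I\in \cI_s^j} N_I,
\end{align*}
where $N_I$ counts these canonical representatives for fixed $I$. Deleting the labels $n-j+1,\dots,n$ produces an ordered set partition $(B'_1|\cdots|B'_s)$ of $[n-j]$ into $s$ blocks, whose block-size lower bounds become $\la_{i+1}$ for $i\notin I$ and $\max(0, \la_{i+1}-1)$ for $i\in I$.

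The number of ordered set partitions of $[n-j]$ with prescribed lower bounds on block sizes depends only on the multiset of those bounds, since permuting the block positions via $S_s$ sets up a bijection between the constraint systems. Thus $N_I = |\OP_{n-j,\mu, s}|$, where $\mu$ is the partition obtained by sorting this multiset and discarding trailing zeros. The final step is to verify $\mu = \la^{(I)}$: a short induction on $j$ using that $I$ is strictly increasing shows that the unreduced entries $\la_1,\dots,\la_{i_h}$ remain above any reduced entry in $\la_{(h)}$, so whenever $i_h < \ell(\la)$ the value $\la_{i_h+1}\geq 1$ occupies some position $\leq i_h+1$ in $\la_{(h)}$, forcing $\ell(\la_{(h)}) > i_h$. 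Consequently the iterative skip clause $i_h\geq \ell(\la_{(h)})$ triggers exactly when $i_h\geq \ell(\la)$, and the iterative procedure yields the same multiset of parts as the direct pointwise subtraction.

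The main obstacle I anticipate is this identification $\mu = \la^{(I)}$: the iterative definition of $\la^{(I)}$ interleaves sorting with single-row reductions and carries a branching clause, whereas the bijection produces a clean pointwise-subtraction description. The reconciliation relies on the monotonicity of $I$ to guarantee no row is effectively reduced twice or past zero in an inconsistent way between the two descriptions.
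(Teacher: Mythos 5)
Your proof is correct and follows the same core strategy as the paper: apply Lemma~\ref{lem:EjLemma} to $Y = \OP_{n,\la,s}$, identify the orbit representatives with trivial $S_{\{n-j+1,\dots,n\}}$-stabilizer as those $\sigma$ with $n-j+1,\dots,n$ in distinct blocks appearing in increasing order, and partition these by the block-position sequence $I$. Where you differ (for the better) is in justifying $N_I = |\OP_{n-j,\la^{(I)},s}|$: the paper phrases this as a direct bijection obtained by simply deleting the labels $n-j+1,\dots,n$, but the deleted ordered set partition has pointwise block-size constraints $|B'_i| \geq \la_i - \mathbb{1}[\,i-1 \in I\,]$, which is generally a permutation of the constraint tuple for $\la^{(I)}$ rather than equal to it (e.g.\ $\la = (3,3)$, $I = (0)$ gives constraints $(2,3)$ while $\la^{(I)} = (3,2)$), so your cardinality argument by permuting block positions is the clean way to close the gap. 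Your sketch of $\mu = \la^{(I)}$ via strict monotonicity of $I$ (positions $1,\dots,i_h$ of $\la_{(h)}$ are untouched, so the skip clause fires precisely when $i_h \geq \ell(\la)$, and the multiset of parts agrees with pointwise deletion from rows $i_1+1,\dots,i_m+1$) is also correct and matches the paper's stated observation following the definition of $\la^{(I)}$.
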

\begin{proof}
The first equality in \eqref{eq:DimOfAntisymm} follows by Theorem~\ref{thm:UngradedFrobChar}, so it suffices to prove the second equality. Given $\sigma\in \OP_{n,\la, s}$, we may find a unique representative of its $S_{\{n-j+1,\dots, n\}}$-orbit by sorting the letters $n-j+1,\dots, n$ so that they appear in order from left to right in $\sigma = (B_1|B_2|\cdots |B_s)$. By Lemma~\ref{lem:EjLemma} applied to $Y= \OP_{n,\la, s}$, we have that $\dim_\bQ(\epsilon_j\bQ\OP_{n,\la, s})$ is the number of such representatives with trivial $S_{\{n-j+1,\dots, n\}}$-stabilizer. Since $\sigma\in \OP_{n,\la, s}$ has trivial $S_{\{n-j+1,\dots, n\}}$-stabilizer if and only if $n-j+1,\dots, n$ are in distinct blocks of $\sigma$, then $\dim_\bQ(\epsilon_j\bQ\OP_{n,\la, s})$ is the number of $\sigma\in \OP_{n,\la, s}$ such that $n-j+1,\dots, n$ are in distinct blocks and appear in order from left to right. 

Define a bijection between the set of $\sigma\in \OP_{n,\la, s}$ such that $n-j+1,\dots, n$ are in distinct blocks and appear in order from left to right and the formal disjoint union of sets
\begin{align}\label{eq:FormalDisjUnion}
\bigsqcup_{I \in \cI_s^j} \OP_{n-j,\la^{(I)}, s}
\end{align}
as follows. Given such a $\sigma$, let $I = (i_1<\dots<i_j)\in \cI_s^j$ be such that $n-j+1, \dots, n$ are in $B_{i_1+1},\dots, B_{i_j+1}$, respectively. Map $\sigma$ to the ordered set partition obtained by removing $n-j+1,\dots, n$ from $\sigma$, considered as an element of $\OP_{n-j,\la^{(I)}, s}$ in the formal disjoint union~\eqref{eq:FormalDisjUnion}. This defines a bijection. Indeed, given $\sigma' = (B'_1|\cdots |B'_s) \in \OP_{n-j,\la^{(I)}, s}$ for $I = (i_1<\cdots< i_j)\in \cI_s^j$, then the corresponding $\sigma$ may be recovered by adding $n-j+h$ to block $B'_{i_h+1}$ for $1\leq h\leq j$. Hence, the second equality in \eqref{eq:DimOfAntisymm} follows.
\end{proof}

\begin{lemma}\label{lem:RemovingBoxes}
Let $H \in [0,s-1]^j$ with distinct entries, and let $I = \sort_{\leq}(H)$. We have $\la^{(I)}\dominatedby \la^{(H)}$.
\end{lemma}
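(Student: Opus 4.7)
The strategy is to unfold the recursive definition of $\la^{(I)}$ (respectively $\la^{(H)}$) as applying the reductions $i_j, i_{j-1}, \ldots, i_1$ (respectively $h_j, h_{j-1}, \ldots, h_1$) to $\la$ in sequence, where each step either decreases a part of the current partition by one and resorts, or does nothing if the index exceeds the current length. Since $I = \sort_\leq(H)$, the two application sequences are permutations of each other and the $I$-sequence is strictly decreasing. I plan to transform the $H$-sequence into the $I$-sequence via a series of adjacent transpositions, each swapping an adjacent pair $(a,b)$ with $a<b$ into $(b,a)$, strictly decreasing the number of out-of-decreasing-order pairs at each step.

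The heart of the argument will be the following adjacent swap claim: for any partition $\mu$ and integers $0 \leq a < b$, one has $(\mu^{(b)})^{(a)} \dominatedby (\mu^{(a)})^{(b)}$, under the convention $\mu^{(i)} = \mu$ whenever $i \geq \ell(\mu)$ and with $\dominatedby$ interpreted as the partial sums inequality (so that the notion makes sense even when the two sides remove different numbers of cells). The plan is to case-analyze on whether $a \geq \ell(\mu)$, $a < \ell(\mu) \leq b$, or $a,b < \ell(\mu)$, with the last case further subdivided according to whether $\mu_{a+1} = \mu_{b+1}$ and whether $\mu_{a+1}=\mu_{a+2}$. In the easy cases (one or both reductions vacuous, or both reductions merely remove rows of length one) the two sides are equal or differ in a directly controllable way. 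For the principal case, I would pass to the conjugate partition: the operation $\mu \mapsto \mu^{(i)}$ corresponds to decreasing the entry $\mu'_{\mu_{i+1}}$ by one. A short computation identifies the pair of indices decreased by each ordering; the two pairs share one index, and the other indices either agree or differ by exactly one, in a direction that produces the required dominance inequality via a comparison of indicator functions $[m \leq i]$ versus $[m-1 \leq i]$.

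To propagate this local comparison through the remaining (common) reductions applied after the swap, I would prove the auxiliary lemma that $\nu \dominatedby \nu'$ implies $\nu^{(c)} \dominatedby \nu'^{(c)}$ for every $c \geq 0$. This also reduces, via the same conjugate description, to comparing indicator functions of the form $[\nu_{c+1}\leq k]$ against $[\nu'_{c+1}\leq k]$ inside the partial sums of the conjugates, where the hypothesis provides the necessary slack. Combining the adjacent swap claim with this propagation lemma, each bubble-sort step yields a new partition that is dominance-smaller than or equal to the previous one, and transitivity of $\dominatedby$ then gives $\la^{(I)} \dominatedby \la^{(H)}$.

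The main obstacle will be the case analysis in the adjacent swap claim, particularly the subcases in which a reduction causes the partition length to drop so that a subsequent reduction becomes vacuous. In these edge cases the two orderings can produce partitions of unequal sizes, and one must carefully verify that the partial sums inequality persists; this is exactly where the generalized notion of dominance becomes essential rather than incidental.
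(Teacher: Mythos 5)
Your proposal is correct, and it takes a genuinely different route from the paper's. The paper first reduces to the case $s=\ell(\la)$ by noting that every entry of $H$ (or $I$) that is $\geq\ell(\la)$ corresponds to a reduction which is always vacuous (intermediate partition lengths can only shrink), so those entries may be deleted from both tuples without changing $\la^{(H)}$ or $\la^{(I)}$; it then disposes of the case $s=\ell(\la)$ by citing a bijective cell-matching argument from Garsia--Procesi. You instead bubble-sort the application sequence and propagate a local adjacent-swap inequality, giving a self-contained proof. Both the local claim $(\mu^{(b)})^{(a)}\dominatedby(\mu^{(a)})^{(b)}$ for $a<b$ and the propagation lemma $\nu\dominatedby\nu'\Rightarrow\nu^{(c)}\dominatedby\nu'^{(c)}$ are true, and combined with the standard fact that adjacent swaps of ascents strictly reduce the inversion count of the application sequence, they do yield the lemma. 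A real advantage of your framing is that it handles head-on the fact that $\la^{(H)}$ and $\la^{(I)}$ can have different sizes even when $s=\ell(\la)$: for $\la=(1,1)$ and $H=(1,0)$ one gets $\la^{(H)}=(1)$ but $\la^{(I)}=\emptyset$, so the partial-sums reading of $\dominatedby$ is essential rather than cosmetic, whereas the bijection phrasing requires extra care in exactly this situation.

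One caution on the propagation lemma: you describe passing to conjugates and comparing their partial sums, but conjugation does not interchange sides of the generalized dominance order when the two partitions have different sizes (e.g.\ $\emptyset\dominatedby(1)$ but $(1)\not\dominatedby\emptyset$ after conjugating). The cleaner route is to stay with the partitions themselves and use the identity $P_i(\nu^{(c)})=P_i(\nu)-[\,\nu'_{\nu_{c+1}}\leq i\,]$ when $c<\ell(\nu)$ (and $P_i(\nu^{(c)})=P_i(\nu)$ otherwise), where $P_i$ denotes the $i$th partial sum. The lemma then reduces to showing that whenever the cell-removal row $r'$ for $\nu'$ satisfies $r'\leq i$ while the cell-removal row $r$ for $\nu$ satisfies $i<r$, one already has strict slack $P_i(\nu)<P_i(\nu')$; this follows from $\nu_{c+1}=\cdots=\nu_{i+1}$ forced by $i<r$ together with $\nu'_{c+1}>\nu'_{i+1}$ forced by $i\geq r'$. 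With that adjustment, the outline is sound and the overall bubble-sort strategy works.
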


\begin{proof}
Recall that we draw Young diagrams according to the French convention. If $s = \ell(\la)$, then there is a bijection between the cells of the skew Young diagram $\la/\la^{(H)}$ and the cells of $\la/\la^{(I)}$ such that each cell of $\la/\la^{(H)}$ is weakly above and to the left of its corresponding cell of $\la/\la^{(I)}$. It follows that $\la^{(I)}\dominatedby \la^{(H)}$. See \cite[pp. 128-130]{Garsia-Procesi} for the full proof of the case when $s=\ell(\la)$. In the case when $s>\ell(\la)$, let $m$ be maximal such that $i_m < \ell(\la)$, and let $H'$ be the subsequence of $H$ consisting of elements less than $\ell(\la)$. Then we have 
\begin{align}
\la^{(I)} = \la^{(i_1,\dots, i_m)} \dominatedby \la^{(H')} = \la^{(H)},
\end{align}
where the inequality in the middle follows from the first case, since $H'\in [0,\ell(\la)-1]^m$ and $(i_1,\dots, i_m) = \sort_{\leq}(H')$.
\end{proof}

\begin{remark}\label{rmk:Vandermonde}
The alternating polynomial $\epsilon_j \bz_j^{I}$ is equal to a scalar multiple of a generalized Vandermonde determinant, denoted by  $\Delta_{i_1,i_2,\dots,i_j} = j! \det(z_p^{i_q})_{p, q=1,\dots, j}$ in \cite{Garsia-Procesi}.
\end{remark}

\begin{lemma}\label{lem:AltBasis}
The collection of polynomials $\cA^{j\text{-}\mathrm{alt}}_{n,\la, s}$ defined by
\begin{align}
\cA^{j\text{-}\mathrm{alt}}_{n,\la, s} \coloneqq \bigcupdot_{I \in \cI_s^j} \cA_{n-j,\la^{(I)}, s}\cdot \epsilon_j\bz_j^{I}\label{eq:AntisymmBasis}
\end{align}
represents a basis of $\epsilon_j R_{n,\la, s}$, where $\cA_{n,\la, s}$ is as defined in \eqref{eq:DefOfBasis}.
\end{lemma}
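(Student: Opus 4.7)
The plan is to prove Lemma~\ref{lem:AltBasis} in two parts: first, matching $|\cA^{j\text{-}\mathrm{alt}}_{n,\la, s}|$ with $\dim_\bQ \epsilon_j R_{n,\la, s}$; second, showing $\cA^{j\text{-}\mathrm{alt}}_{n,\la, s}$ spans $\epsilon_j R_{n,\la, s}$. Together these force the set to be a basis, since a spanning set of the right size is automatically linearly independent.

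For the cardinality step, I would first observe that for distinct $I, I' \in \cI_s^j$, the alternants $\epsilon_j\bz_j^I$ and $\epsilon_j\bz_j^{I'}$ involve disjoint sets of monomials in the $\bz_j$ variables, so the union in~\eqref{eq:AntisymmBasis} is genuinely disjoint and
\[
|\cA^{j\text{-}\mathrm{alt}}_{n,\la, s}| \;=\; \sum_{I\in\cI_s^j}|\cA_{n-j,\la^{(I)}, s}|.
\]
By Theorem~\ref{thm:MonomialBasis} this equals $\sum_{I}\dim_\bQ R_{n-j,\la^{(I)}, s}$, which equals $\sum_I |\OP_{n-j,\la^{(I)}, s}|$ by Theorem~\ref{thm:UngradedFrobChar}, which in turn equals $\dim_\bQ\epsilon_j R_{n,\la, s}$ by Lemma~\ref{lem:DimOfAntisymm}.

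For the spanning step, Theorem~\ref{thm:MonomialBasis} implies that $\{\epsilon_j\bx_n^\beta : \bx_n^\beta\in \cA_{n,\la, s}\}$ spans $\epsilon_j R_{n,\la, s}$, so it suffices to show that each such element is either zero or a signed element of $\cA^{j\text{-}\mathrm{alt}}_{n,\la, s}$. Given $\bx_n^\beta\in \cA_{n,\la, s}$, I would decompose $\beta = \gamma\cdot H$ with $\gamma = (\beta_1,\dots,\beta_{n-j})$ and $H = (\beta_{n-j+1},\dots,\beta_n)$. Since $\epsilon_j$ acts only on the $\bz_j$ variables, $\epsilon_j\bx_n^\beta = \bx_{n-j}^\gamma\cdot \epsilon_j\bz_j^H$. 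If $H$ has repeated entries the alternant vanishes; otherwise, letting $I = \sort_\leq(H)\in \cI_s^j$, we have $\epsilon_j\bz_j^H = \pm\,\epsilon_j\bz_j^I$, with sign determined by the sorting permutation.

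It then remains to show $\bx_{n-j}^\gamma \in \cA_{n-j,\la^{(I)}, s}$. Iteratively applying Corollary~\ref{cor:BasisRecursion} $j$ times (truncating the rightmost variable at each step) gives $\bx_{n-j}^\gamma \in \cA_{n-j,\la^{(H)}, s}$, where the iterated reduction applies $\beta_n$ first, then $\beta_{n-1}$, and so on, matching the paper's recursive definition of $\la^{(H)}$ for the tuple $H$. Lemma~\ref{lem:RemovingBoxes} yields $\la^{(I)} \dominatedby \la^{(H)}$, and Lemma~\ref{lem:Containment} upgrades this to $\cA_{n-j,\la^{(H)}, s}\subseteq \cA_{n-j,\la^{(I)}, s}$, so $\bx_{n-j}^\gamma\in \cA_{n-j,\la^{(I)}, s}$ and $\epsilon_j\bx_n^\beta$ is indeed a signed element of $\cA^{j\text{-}\mathrm{alt}}_{n,\la, s}$, closing the argument. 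The main subtlety to verify carefully is the alignment between the iterated-truncation reduction order (reading $\beta$ from right to left) and the paper's recursive definition of $\la^{(H)}$; once this alignment is set up, Lemma~\ref{lem:RemovingBoxes} and Lemma~\ref{lem:Containment} together bridge the unsorted tuple $H$ and the sorted tuple $I$.
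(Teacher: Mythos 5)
Your proof is correct and follows essentially the same route as the paper: you count $|\cA^{j\text{-}\mathrm{alt}}_{n,\la,s}|$ using Theorem~\ref{thm:MonomialBasis}, Theorem~\ref{thm:UngradedFrobChar}, and Lemma~\ref{lem:DimOfAntisymm}, then establish spanning via the $j$-fold iterate of Corollary~\ref{cor:BasisRecursion} followed by the sorting argument through Lemma~\ref{lem:RemovingBoxes} and Lemma~\ref{lem:Containment}. The only stylistic difference is that you phrase the recursion element-by-element on a fixed $\beta$ rather than as the set-level partition $\cA_{n,\la,s} = \bigcupdot_{H}\cA_{n-j,\la^{(H)},s}\cdot\bz_j^H$, which is an equivalent formulation.
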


\begin{proof}
By Theorem~\ref{thm:MonomialBasis}, we have $|\cA_{n,\la, s}| = |\OP_{n,\la, s}|$.  By Lemma~\ref{lem:DimOfAntisymm}, we have
\begin{align}\label{eq:SkewDimEq}
|\cA^{j\text{-alt}}_{n,\la, s}| = \sum_{I\in \cI_s^j} |\cA_{n-j,\la^{(I)}, s}| = \sum_{I\in \cI_s^j} |\OP_{n-j,\la^{(I)}, s}| = \dim_\bQ(\epsilon_j\bQ \OP_{n,\la, s}) = \dim_\bQ(\epsilon_jR_{n,\la, s}).
\end{align}
Hence, it suffices to prove $\cA^{j\text{-alt}}_{n,\la, s}$ spans $\epsilon_j R_{n,\la, s}$.

Recall $R_{n,\la, s}$ is spanned by $\cA_{n,\la, s}$, so $\epsilon_j R_{n,\la, s}$ is spanned by $\epsilon_j \cA_{n,\la, s}$. 
Applying the recursion in Corollary~\ref{cor:BasisRecursion} for $\cA_{n,\la, s}$ exactly $j$ many times shows $\cA_{n,\la, s}$ can be partitioned as
\begin{align}
\cA_{n,\la, s} = \bigcupdot_{H \in [0,s-1]^j} \cA_{n-j,\la^{(H)}, s}\cdot  \bz_j^H.\label{eq:DisjUnionTree}
\end{align}
Applying $\epsilon_j$ to both sides of \eqref{eq:DisjUnionTree}, we see that $\epsilon_j R_{n,\la, s}$ is spanned by the union
\begin{align}\label{eq:AzUnion}
\bigcup_{H \in [0,s-1]^j} \cA_{n-j,\la^{(H)}, s}\cdot  \epsilon_j \bz_j^H.
\end{align}
Given $H\in [0,s-1]^j$, if any two entries of $H$ are equal, then we have $\epsilon_j \bz_j^H = 0$ by Remark~\ref{rmk:Vandermonde}. Therefore, the union~\eqref{eq:AzUnion} is equal to the same union restricted to those $H$ with distinct entries.
In order to prove $\cA^{j\text{-alt}}_{n,\la, s}$ spans $\epsilon_j R_{n,\la, s}$, it suffices to prove that for any $H\in [0,s-1]^j$ with distinct entries and any $\bx_{n-j}^\alpha \in \cA_{n-j,\la^{(H)}, s}$, we have that $\bx_{n-j}^\alpha \cdot \epsilon_j \bz_j^H$ is in the span of $\cA^{j\text{-alt}}_{n,\la, s}$.

Indeed, let $I = \sort_{\leq}(H)\in \cI_s^j$. 
By Lemma~\ref{lem:RemovingBoxes}, we have $\la^{(I)}\dominatedby \la^{(H)}$, so by Lemma~\ref{lem:Containment} we have $\cA_{n-j,\la^{(H)}, s}\subseteq \cA_{n-j,\la^{(I)}, s}$. Hence, we have $\bx_{n-j}^\alpha\in \cA_{n-j,\la^{(I)}, s}$. Furthermore, since $H$ is a permutation of $I$, we have $\epsilon_j \bz_j^H = \pm \epsilon_j\bz_j^{I}$, so $\bx_{n-j}^\alpha\cdot \epsilon_j \bz_j^H$ is in the span of $\cA^{j\text{-alt}}_{n,\la, s}$, and the proof is complete.
\end{proof}

Given a list of distinct variables $W = (w_1,\dots, w_v)\subseteq \bz_j$ and any subset $U = \{u_1<\dots< u_{|U|}\}\subset [v]$, let $W_U \coloneqq (w_{u_1},\dots, w_{u_{|U|}})$. Given a tuple $I \in \cI_s^v$, let $I_U \coloneqq (i_{u_1} <\dots < i_{u_{|U|}}) \in \cI_s^{|U|}$.
For any tuple $H = (h_1 < \dots < h_{|U|}) \in \cI_{s}^{|U|}$, define the monomial $W_U^{H} \coloneqq w_{u_1}^{h_1}\cdots w_{u_{|U|}}^{h_{|U|}}$. 
Recall the following lemma on elementary symmetric polynomials.
\begin{lemma}[\cite{Garsia-Procesi}, Lemma 6.1]\label{lem:GPLemma}
Let $C \subseteq \bx_n$ and $W = (w_1,\dots, w_v) \subseteq \bz_j$ be disjoint sets of variables, where $W$ comes with a total ordering. Let $I = (i_1< \dots < i_v)\in \cI_s^v$. For any $d\geq 1$, we have
\begin{align}
e_d(C)\cdot (-w_1)^{i_1}\cdots(-w_v)^{i_v} = \sum_{U\subseteq [v]} (-1)^{|U|} \sum_{H} (-1)^{\Sigma(H)}  e_{d+\Sigma(I)-\Sigma(H)}(C\cup W_U)\cdot W_U^{H},
\end{align}
where the inner sum on the right-hand side is over all tuples $H = (h_1,\dots, h_{|U|})$ of nonnegative integers, such that $h_p < i_{u_p}$ for $1\leq p\leq |U|$.
\end{lemma}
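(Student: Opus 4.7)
The plan is to prove the identity by induction on $v$, using as base case the standard generating function identity
\begin{align*}
E(t; C \cup \{w\}) = (1 + wt)\,E(t; C), \qquad E(t; S) \coloneqq \prod_{x \in S}(1+xt),
\end{align*}
which, after inverting $(1+wt)$ as a formal power series in $t$ and extracting the coefficient of $t^d$, yields
\begin{align*}
e_d(C) = \sum_{h \geq 0}(-w)^h\, e_{d-h}(C \cup \{w\}).
\end{align*}

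For the base case $v=1$, I will multiply this identity by $(-w_1)^{i_1}$, reindex by $k = h + i_1$, and then complete the sum to start from $k=0$ by adding and subtracting the missing terms. Applying the same identity in reverse (with $d$ replaced by $d+i_1$) collapses the completed sum to $e_{d+i_1}(C)$, yielding exactly
\begin{align*}
e_d(C)(-w_1)^{i_1} = e_{d+i_1}(C) - \sum_{h=0}^{i_1-1}(-w_1)^h\, e_{d+i_1-h}(C\cup\{w_1\}),
\end{align*}
which matches the right-hand side of Lemma~\ref{lem:GPLemma} for $v=1$ after separating the $U=\emptyset$ and $U=\{1\}$ contributions.

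For the inductive step, assuming the lemma holds for $v-1$, I will use the $v=1$ case to peel off the factor $(-w_1)^{i_1}$ and then multiply by $(-w_2)^{i_2}\cdots(-w_v)^{i_v}$. Applying the inductive hypothesis to each resulting piece (to $e_{d+i_1}(C)\cdot(-w_2)^{i_2}\cdots(-w_v)^{i_v}$ with $C$ fixed, and to each $e_{d+i_1-h}(C\cup\{w_1\})\cdot(-w_2)^{i_2}\cdots(-w_v)^{i_v}$ with $C\cup\{w_1\}$ in place of $C$), I will collect the resulting terms indexed by subsets $U' \subseteq \{2,\dots,v\}$ and tuples $H'$. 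The first piece produces exactly the contributions for $U \not\ni 1$ with $U = U'$ and $H = H'$, while the second piece, via the outer summation over $h$ with $0 \leq h \leq i_1 - 1$, produces the contributions for $U \ni 1$ with $U = \{1\}\cup U'$ and $H=(h,H')$; here $h_1 = h$ satisfies $h_1 < i_1$ as required.

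The main obstacle will be careful bookkeeping rather than any conceptual difficulty: I must verify that in each case the elementary symmetric degree $d + \Sigma(I) - \Sigma(H)$, the monomial $W_U^H$, and the sign $(-1)^{|U|+\Sigma(H)}$ all match. The degree matches because $d + i_1 + \Sigma(I') - \Sigma(H') = d + \Sigma(I) - \Sigma(H)$ in both cases (where $I'=(i_2,\dots,i_v)$ and the $h$-shift in the second piece is absorbed into $\Sigma(H)$), the monomial factors combine as $w_1^h \cdot W_{U'}^{H'} = W_U^H$, and the extra minus sign from the $v=1$ peeling step exactly accounts for the change $(-1)^{|U'|} \to (-1)^{|U'|+1} = (-1)^{|U|}$ when $1$ is inserted into $U$.
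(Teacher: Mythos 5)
Your proof is correct. The paper only states this lemma with a citation to Garsia-Procesi (Lemma 6.1) and does not reproduce a proof, so there is no internal argument to compare against; I can only check your reasoning on its own terms. The base case is right: inverting $(1+w_1 t)$ as a geometric series and extracting the $t^d$ coefficient gives $e_d(C)=\sum_{h\ge 0}(-w_1)^h e_{d-h}(C\cup\{w_1\})$, and after multiplying by $(-w_1)^{i_1}$, reindexing to $k=h+i_1$, and completing the resulting sum $\sum_{k\ge i_1}$ down to $k=0$, the completed sum collapses to $e_{d+i_1}(C)$, leaving exactly the finite tail $-\sum_{h=0}^{i_1-1}(-w_1)^h e_{d+i_1-h}(C\cup\{w_1\})$; this matches the $U=\emptyset$ and $U=\{1\}$ contributions on the right-hand side (and correctly degenerates to $e_d(C)=e_d(C)$ when $i_1=0$, where the tail is empty). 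The inductive step also checks out: applying the $v=1$ case in $w_1$ and then the inductive hypothesis with $C$ for the first term and with $C\cup\{w_1\}$ for each $h$-term yields precisely the $1\notin U$ and $1\in U$ contributions, and the bookkeeping you flagged does close — the degree identity $d+i_1+\Sigma(I')-\Sigma(H')=d+\Sigma(I)-\Sigma(H)$ with the $h$-shift absorbed into $\Sigma(H)$, the monomial $w_1^h W_{U'}^{H'}=W_U^H$ (noting $1$ is smallest so it slots in as $u_1$), and the sign $(-1)\cdot(-1)^{|U'|}=(-1)^{|U|}$ all match. This is a clean, self-contained route to the lemma.
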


\begin{lemma}\label{lem:ExpressionInIdeal}
Let $I\in \cI_s^j$, and let $e_d(S)$ be a generator of $I_{n-j,\la^{(I)}, s}$ for some $d$ and $S\subseteq \bx_{n-j}$. We have $e_d(S\cup \bz_{a})\cdot \bz_j^I \in I_{n,\la, s}$, where $a =  p^n_{|S|+j}(\la) - p^{n-j}_{|S|}(\la^{(I)})$ and $\bz_a = \{z_1,\dots, z_a\}$.
\end{lemma}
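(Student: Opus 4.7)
The plan is to expand $e_d(S \cup \bz_a) \cdot \bz_j^I$ using Lemma~\ref{lem:GPLemma} (the Garsia--Procesi identity) so that every resulting term is manifestly in $I_{n,\la, s}$. First I would establish the structural bound $a \leq j$, which is needed for $\bz_a \subseteq \bz_j$ to make sense. Writing
$$a \;=\; \sum_{k=n-j+1}^n \la'_k \;+\; \sum_{k=n-j-|S|+1}^{n-j} \bigl(\la'_k - (\la^{(I)})'_k\bigr),$$
one sees that the first sum counts cells of $\la$ in its last $j$ padded columns, while the second counts cells removed in forming $\la^{(I)}$ from $\la$ that lie in the indicated column range. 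Since $\la^{(I)}$ differs from $\la$ in at most $j$ cells in total and the columns involved are disjoint, this gives $a \leq j$.

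Next I would apply Lemma~\ref{lem:GPLemma} with $C = S \cup \bz_a$, ordered set $W = (z_{a+1}, \ldots, z_j)$ of length $v = j - a$, and the strictly increasing tuple $(i_{a+1}, \ldots, i_j) \in \cI_s^{v}$. Multiplying the resulting identity by $(-1)^{i_{a+1}+\cdots+i_j}\, z_1^{i_1}\cdots z_a^{i_a}$ converts the left side into $e_d(S\cup \bz_a)\cdot \bz_j^I$ and expresses it as a $\bQ$-linear combination of terms of the form
$$e_{d'}(S \cup \bz_a \cup W_U)\cdot W_U^H \cdot z_1^{i_1}\cdots z_a^{i_a},$$
with $d' = d + \sum_{p=a+1}^{j} i_p - \Sigma(H)$, where $U \subseteq [v]$ and $H = (h_1,\ldots,h_{|U|})$ with $h_p < i_{u_p + a}$. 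Since $W_U^H\cdot z_1^{i_1}\cdots z_a^{i_a}$ is a monomial in $\bx_n$, it suffices to show each $e_{d'}(S \cup \bz_a \cup W_U)$ is a generator of $I_{n,\la, s}$, i.e.\ that $d' > |S| + a + |U| - p^n_{|S|+a+|U|}(\la)$. Using the hypothesis $d > |S| - p^{n-j}_{|S|}(\la^{(I)})$ and the identity $p^{n-j}_{|S|}(\la^{(I)}) + a = p^n_{|S|+j}(\la)$, this reduces to the single combinatorial inequality
$$\sum_{p=a+1}^{j} i_p - \Sigma(H) - |U| \;\geq\; p^n_{|S|+j}(\la) - p^n_{|S|+a+|U|}(\la).$$

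The main obstacle is verifying this inequality. The right-hand side equals $\sum_{k=n-|S|-j+1}^{n-|S|-a-|U|} \la'_k$ when $a + |U| < j$ (and is $\leq 0$ otherwise, making the inequality automatic by monotonicity of $p^n_m$). On the left, the constraint $h_p \leq i_{u_p + a} - 1$ yields $\sum_{p=a+1}^j i_p - \Sigma(H) - |U| \geq \sum_{p \in V} i_p$, where $V = \{a+1,\ldots,j\}\setminus (U + a)$ has size $j - a - |U|$, matching the number of columns appearing on the right. I would close the argument by a column-by-column matching: the strict increase $i_{a+1} < \cdots < i_j$ together with the combinatorial meaning of $a$ (cells of $\la$ in the last $j$ padded columns plus removed cells in the specified column range) pairs each column on the right with a distinct $i_p$ on the left dominating its length $\la'_k$. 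This column-matching argument, which relies on the explicit recursive construction of $\la^{(I)}$, is the technical heart of the lemma.
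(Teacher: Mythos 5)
Your outline correctly reproduces the paper's strategy — bound $a \leq j$, apply Lemma~\ref{lem:GPLemma} with $C = S\cup\bz_a$ and $W = (z_{a+1},\ldots,z_j)$, and reduce to the combinatorial inequality whose two sides each have $j - a - |U|$ terms — but you stop exactly where the real work begins. You write that the inequality ``pairs each column on the right with a distinct $i_p$ on the left dominating its length $\la'_k$,'' and then explicitly defer: ``This column-matching argument \ldots is the technical heart of the lemma.'' That is the gap, and it is not a formality.

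The matching rests on a single nonobvious claim which your sketch never states or proves: $i_{a+1} \geq \la'_{n-|S|-j+1}$. Once this is in hand, the matching is actually trivial, because every $i_p$ with $p > a$ satisfies $i_p \geq i_{a+1} \geq \la'_{n-|S|-j+1} \geq \la'_k$ for every index $k$ on the right-hand side; no careful pairing is needed at all. But establishing that one bound requires the combinatorial interpretation of $a$ as the number of boxes of the skew diagram $\la/\la^{(I)}$ lying weakly to the right of column $n-(|S|+j)+1$, plus a case split: if $|\la/\la^{(I)}| = a$ then all of $i_1,\ldots,i_a$ are $< \ell(\la)$ and hence $i_{a+1} \geq \ell(\la) \geq \la'_{n-|S|-j+1}$, whereas if $|\la/\la^{(I)}| \geq a+1$ then the $(a+1)$th box of $\la/\la^{(I)}$ counted from the right sits strictly left of column $n-(|S|+j)+1$, which again forces $i_{a+1}\geq\la'_{n-|S|-j+1}$ because $\la^{(I)}$ is built by removing one box from the end of each row $i_h+1$ with $i_h<\ell(\la)$. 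Neither branch follows merely from ``the explicit recursive construction of $\la^{(I)}$'' as a black box; you would need to spell out this argument for the proof to be complete.

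One smaller remark: your bound $a\leq j$ is reached by a slightly roundabout decomposition of $a$ into two sums. Since $(\la^{(I)})'_t = 0$ for $t > n-j$ (because $|\la^{(I)}|\leq n-j$), the cleaner route is $a = \sum_{t=n-|S|-j+1}^{n}\bigl(\la'_t - (\la^{(I)})'_t\bigr) \leq |\la/\la^{(I)}| \leq j$, which is also the interpretation you then need for the main step. You should also note that the $a = j$ case, where $W$ is empty, degenerates cleanly (only $U=\emptyset$ contributes and $d' = d$), rather than leaving it implicit.
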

\begin{proof}
Let $m = |S|$. Then $a = p^n_{m+j}(\la) - p^{n-j}_m(\la^{(I)})$, which is the number of boxes weakly to the right of column $n-(m+j)+1$ of the skew diagram $\la/\la^{(I)}$. In order to show $e_d(S\cup \bz_a)\cdot \bz_j^I\in I_{n,\la, s}$, it suffices to prove that $e_d(S\cup \bz_{a})\cdot z_{a+1}^{i_{a+1}}\cdots z_j^{i_j}\in I_{n,\la, s}$, where the product $z_{a+1}^{i_{a+1}}\cdots z_j^{i_j}$ is taken to be $1$ in the case $a=j$. By our assumption that $e_d(S)$ is a generator of $I_{n-j,\la^{(I)}, s}$, we have 
\begin{align}\label{eq:GenAssumption}
d> m-p^{n-j}_m(\la^{(I)}).
\end{align}

In the case $a=j$, then by combining \eqref{eq:GenAssumption} and $p^{n-j}_m(\la^{(I)}) = p^n_{m+j}(\la) - a$, we have $d>m+a-p^n_{m+a}(\la)$. Hence, $e_d(S\cup\bz_{a})\in I_{n,\la, s}$ so $e_d(S\cup \bz_a)\bz_j^I\in I_{n,\la, s}$, and we are done. 

In the case $a<j$, then applying Lemma~\ref{lem:GPLemma} with $C = S\cup \bz_{a}$ and $W = (z_{a+1},\dots, z_j)$, we have that
\begin{align}
e_d(S\cup \bz_{a}) z_{{a}+1}^{i_{{a}+1}}\cdots z_j^{i_j}
\end{align}
is an alternating sum of terms of the form
\begin{align}
e_{d+\Sigma(I)-\Sigma(H)}(S\cup \bz_{a} \cup \bz_U) \cdot \bz_U^{H},\label{eq:AlternatingTerms}
\end{align}
where $U\subseteq \{a+1,\dots, j\}$ and $H = (h_1,\dots, h_{|U|}) \in \cI_s^{|U|}$ such that $h_p < i_{u_p}$ for $1\leq p\leq |U|$. 
To complete the proof, we show that in each case $e_{d+\Sigma(I)-\Sigma(H)}(S\cup \bz_{a}\cup \bz_U)\in I_{n,\la, s}$, or equivalently that 
\begin{align}
d+\Sigma(I)-\Sigma(H) > m+{a}+|U| - p^n_{m+{a}+|U|}(\la)\label{eq:SufficientInequality}.
\end{align}

We claim that $i_{a+1} \geq \la'_{n-(m+j)+1}$. If $|\la/\la^{(I)}| = a$, then by the definition of $\la^{(I)}$, we see that $|\la/\la^{(I)}|$ is the number of elements of $I$ which are strictly less than $\ell(\la)$. Under the assumption that $|\la/\la^{(I)}|=a$, we have $i_{a+1}\geq \ell(\la)\geq \la'_{n-(m+j)+1}$. For example, let $\la = (4,3,2,2)$, $j= 4$, $I = (0,2,3,4)$, and $\la^{(I)} = (3,3,1,1)$ as shown in Figure~\ref{fig:CountingBoxes}. Taking $n = 12$, $s=5$, and $m = 7$, then $a=3$, which is the number of boxes of $\la/\la^{(I)}$ weakly to the right of column $n-(m+j)+1 = 2$. In this case, $4=i_{a+1}\geq \la_2' = 4$.

Otherwise, we have $|\la/\la^{(I)}| \geq {a}+1$, so by the definition of $a$, the $({a}+1)$th box of $\la/\la^{(I)}$ from the right must be in a column strictly to the left of column $n-(m+j)+1$ of $\la$. Therefore, the inequality $i_{{a}+1} \geq \la'_{n-(m+j)+1}$ continues to hold. For example, if we take the same $n,\la,s,j$, and $I$ as in our previous example but with $m=6$ and $a=1$, then $3 = |\la/\la^{(I)}| \geq a+1 = 2$ and $2 = i_{a+1} \geq \la_3' = 2$.

We have the string of inequalities
\begin{align}
i_j > \cdots > i_{{a}+2} > i_{{a}+1} \geq \la_{n-(m+j)+1}' \geq \la_{n-(m+j-1)+1}' \geq \cdots.
\end{align}
Therefore, we have
\begin{align}
\sum_{t\in [{a}+1,j] \setminus U} i_t \geq  \sum^{m+j}_{t = m + {a} + |U| + 1} \la_{n-t+1}' = p^n_{m+j}(\la) - p^n_{m+{a}+|U|}(\la),\label{eq:SumIndicesIneq}
\end{align}
since both sides of the inequality sum over $j-a - |U|$ many terms.
By our assumption that $i_{u_p} > h_p$ for $1\leq p\leq |U|$, we have
\begin{align}
\sum_{t\in U} i_t  \geq \Sigma(H) + |U|.\label{eq:AssumptionInequality}
\end{align}
Recalling \eqref{eq:GenAssumption}, we have
\begin{align}\label{eq:DgreaterThanStuff}
d > m- p^{n-j}_m (\la^{(I)}) = m + {a}- p^n_{m+j}(\la).
\end{align}
The inequality \eqref{eq:SufficientInequality} then follows by combining \eqref{eq:SumIndicesIneq}, \eqref{eq:AssumptionInequality}, and \eqref{eq:DgreaterThanStuff} with 
\begin{align}
\Sigma(I) \geq \sum_{t\in [{a}+1,j]\setminus U} i_t + \sum_{t\in U} i_t,
\end{align}
which completes the proof.
\end{proof}

\begin{figure}
\centering
\includegraphics[scale=0.4]{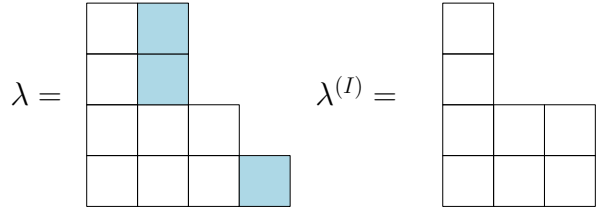}
\caption{The Young diagrams of $\la = (4,3,2,2)$ and $\la^{(I)} = (3,3,1,1)$, where $I = (0,2,3,4)$. The cells of $\la/\la^{(I)}$ are shaded in the diagram of $\la$. \label{fig:CountingBoxes}}
\end{figure}

\begin{lemma}\label{lem:MultWellDefined}
With the same hypotheses as Lemma~\ref{lem:ExpressionInIdeal}, we have
\begin{align}
e_d(S)\cdot \epsilon_j \bz_j^{I}\in \vecspan_\bQ\left( \bigcup_{\substack{H\in \cI_s^j,\\ \Sigma(H)\geq \Sigma(I)+1}} \bQ[\bx_{n-j}]\cdot \epsilon_j \bz_j^{H}\right) + \epsilon_j I_{n,\la, s}.\label{eq:MultWellDefined}
\end{align}
\end{lemma}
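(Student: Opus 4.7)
The plan is to derive this from Lemma~\ref{lem:ExpressionInIdeal} by a direct expansion. By that lemma we have $e_d(S\cup \bz_a)\cdot \bz_j^I \in I_{n,\la, s}$, and applying the idempotent $\epsilon_j$, which commutes with multiplication by elements of $\bQ[\bx_{n-j}]$ since it acts only on the $\bz_j$-variables, yields
\begin{equation*}
\epsilon_j\bigl(e_d(S\cup \bz_a)\cdot \bz_j^I\bigr) \in \epsilon_j I_{n,\la, s}.
\end{equation*}
Now expand $e_d(S\cup \bz_a) = \sum_{b=0}^{d} e_{d-b}(S)\, e_b(\bz_a)$ and pull out the $b=0$ term to obtain
\begin{equation*}
e_d(S)\cdot \epsilon_j \bz_j^I \;=\; \epsilon_j\bigl(e_d(S\cup \bz_a)\bz_j^I\bigr) \;-\; \sum_{b=1}^{d} e_{d-b}(S)\cdot \epsilon_j\bigl(e_b(\bz_a)\,\bz_j^I\bigr).
\end{equation*}
The first term on the right lies in $\epsilon_j I_{n,\la, s}$, so it suffices to show that, for each $b\geq 1$, the element $\epsilon_j(e_b(\bz_a)\bz_j^I)$ lies in $\vecspan_\bQ\bigl(\bigcup_{H\in \cI_s^j,\, \Sigma(H)\geq \Sigma(I)+1} \bQ\cdot \epsilon_j \bz_j^{H}\bigr) + \epsilon_j I_{n,\la, s}$.

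For this, expand $e_b(\bz_a)\bz_j^I = \sum_{T\subseteq [a],\,|T|=b} \bz_j^{I+\mathbf{1}_T}$, where $\mathbf{1}_T\in \{0,1\}^j$ is the indicator vector of $T$. For each $T$ there are three mutually exclusive cases for the exponent sequence $K:=I+\mathbf{1}_T$. First, if some $t\in T$ satisfies $i_t = s-1$, then $z_t^{s}$ divides $\bz_j^K$; since $z_t^s \in I_{n,\la, s}$, this monomial lies in $I_{n,\la, s}$, so $\epsilon_j \bz_j^K \in \epsilon_j I_{n,\la, s}$. Second, if $K$ has two equal entries, then $\epsilon_j \bz_j^K = 0$ by the alternating property. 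Third, if $K$ has all entries distinct and less than $s$, then $H := \sort_\leq(K) \in \cI_s^j$, $\epsilon_j \bz_j^K = \pm \epsilon_j \bz_j^{H}$, and $\Sigma(H) = \Sigma(K) = \Sigma(I) + b \geq \Sigma(I) + 1$. Summing over $T$ and multiplying by $e_{d-b}(S)\in \bQ[\bx_{n-j}]$ shows each term belongs to the required span, completing the proof.

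The only subtlety (and the reason for the mild case analysis above) is ensuring that after increasing exponents via $e_b(\bz_a)$, the resulting index sequence can be normalized into $\cI_s^j$: boundary violations at $s$ are absorbed by the generators $z_t^s\in I_{n,\la, s}$, and repeated-index collisions vanish under $\epsilon_j$. This step replaces any calculation with Gröbner bases that might otherwise be needed.
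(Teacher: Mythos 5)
Your proof is correct and follows essentially the same route as the paper's: both start from Lemma~\ref{lem:ExpressionInIdeal}, rewrite $e_d(S) \cdot \epsilon_j \bz_j^I$ via the expansion $e_d(S\cup \bz_a) = \sum_{b\geq 0} e_{d-b}(S) e_b(\bz_a)$, and then classify the monomials of $e_b(\bz_a)\bz_j^I$ into the same three cases (exponent reaching $s$, a repeated exponent annihilated by $\epsilon_j$, or a rearrangement of some $\bz_j^H$ with $\Sigma(H) > \Sigma(I)$). Your writeup is a bit more explicit — expanding $e_b(\bz_a)\bz_j^I$ as a sum over subsets $T\subseteq [a]$ and tracking $K = I + \mathbf{1}_T$ — but the underlying argument is identical.
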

\begin{proof}
We have that
\begin{align}
e_d(S) = e_d(S\cup\bz_a) - \sum_{t = 1}^d e_{d-t}(S)\cdot e_t(\bz_a),
\end{align}
so multiplying both sides by $\bz_j^{I}$ and applying $\epsilon_j$, we have
\begin{align}
e_d(S)\cdot \epsilon_j \bz_j^{I} = \epsilon_j (e_d(S\cup\bz_a) \cdot \bz_j^{I}) - \sum_{t=1}^d e_{d-t}(S)\cdot \epsilon_j (e_t(\bz_a)\cdot \bz_j^{I}).\label{eq:ETimesZIdentity}
\end{align}
By Lemma~\ref{lem:ExpressionInIdeal}, we have $e_d(S\cup\bz_a)\cdot \bz_j^{I}\in I_{n,\la, s}$. Therefore, by \eqref{eq:ETimesZIdentity}, it suffices to prove that each polynomial of the form $e_{d-t}(S)\cdot \epsilon_j (e_t(\bz_a)\cdot \bz_j^{I})$ is in the set on the right-hand side of \eqref{eq:MultWellDefined}.

For each $t\geq 1$, consider the expansion of $e_t(\bz_a)\cdot \bz_j^{I}$ on the right-hand side of \eqref{eq:ETimesZIdentity} into monomials. Each term in the expansion is of one of the following types: (1) a monomial in $\bz_j$ such that two of the exponents agree, (2) a monomial in $\bz_j$ whose largest exponent is $s$, or (3) a monomial of the form $\pi\cdot \bz_j^{H}$ for some $\pi\in S_{\{n-j+1,\dots, n\}}$ and some $H\in \cI_s^j$ such that $\Sigma(H)\geq \Sigma(I)+1$. Monomials of the first type are sent to 0 by the operator $\epsilon_j$, monomials of the second type are elements of $I_{n,\la, s}$, and for a monomial of the third type, we have $\epsilon_j (\pi \cdot \bz_j^H) = \pm \epsilon_j \bz_j^H$. Therefore, each term in the sum on the right-hand side of \eqref{eq:ETimesZIdentity} is in the set on the right-hand side of \eqref{eq:MultWellDefined}. Hence, $e_d(S)\cdot \epsilon_j\bz_j^I$ is in the set as well.
\end{proof}

\begin{example}
To illustrate the proof of Lemma~\ref{lem:MultWellDefined}, let $n = 12$, $\la = (4,3,2,2)$, $s = 5$, $d = 5$, $S = \{x_1,\dots, x_6\}$, and $I = (0,2,3,4)$. Then $j = 4$, $m = 6$, and $a = 1$. We have the identity
\begin{align}
e_5(S) = e_5(x_1,\dots, x_6,x_9) - e_4(S)\cdot e_1(x_9).
\end{align}
Multiplying through by $\bz_j^I = x_{10}^2 x_{11}^3 x_{12}^4$ and applying $\epsilon_4$,
\begin{align}
e_5(S) \cdot \epsilon_4(x_{10}^2 x_{11}^3 x_{12}^4) = \epsilon_4(e_5(x_1,\dots, x_6,x_9)\cdot x_{10}^2 x_{11}^3 x_{12}^4) - e_4(S)\cdot \epsilon_4(x_9 x_{10}^2 x_{11}^3 x_{12}^4),
\end{align}
where $\epsilon_4(e_5(x_1,\dots, x_6,x_9)\cdot x_{10}^2 x_{11}^3 x_{12}^4) \in \epsilon_4 I_{n,\la,s}$. As in the proof Lemma~\ref{lem:MultWellDefined}, the monomial $x_9 x_{10}^2 x_{11}^3 x_{12}^4$ is of type (3), where $\pi$ is the identity permutation and $H = (1,2,3,4)$, hence $e_4(S)\cdot \epsilon_4(x_9 x_{10}^2 x_{11}^3 x_{12}^4)\in \bQ[\bx_{n-j}]\cdot \epsilon_j \bz_j^H$.
\end{example}

\begin{lemma}\label{lem:AntisymmIso}
We have an isomorphism of graded $S_{n-j}$-modules
\begin{align}
\epsilon_j R_{n,\la, s} \cong \bigoplus_{r\geq 0} V_r,
\end{align}
where $V_r$ is defined in~\eqref{eq:VDef}.
\end{lemma}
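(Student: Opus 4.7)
The plan is to filter $\epsilon_j R_{n,\la,s}$ by total $\bz_j$-degree, show that multiplication by $\epsilon_j \bz_j^I$ induces a well-defined $S_{n-j}$-equivariant graded map from each summand of $V_r$ into the associated graded, and then finish by a dimension count using Lemma~\ref{lem:AltBasis} and Lemma~\ref{lem:DimOfAntisymm}.

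First I would set up the filtration. For $r\geq 0$, let $F_{\geq r}\subseteq \epsilon_j R_{n,\la,s}$ be the $S_{n-j}$-stable, polynomial-graded subspace spanned by classes of polynomials in $\epsilon_j \bQ[\bx_n]$ whose total $\bz_j$-degree is at least $r$. This gives a descending filtration $\epsilon_j R_{n,\la,s} = F_{\geq 0} \supseteq F_{\geq 1}\supseteq \cdots$, and its associated graded $\mathrm{gr}^F(\epsilon_j R_{n,\la,s}) = \bigoplus_{r\geq 0} F_{\geq r}/F_{\geq r+1}$ is isomorphic to $\epsilon_j R_{n,\la,s}$ as a polynomial-graded $S_{n-j}$-module, since we are working with finite-dimensional spaces.

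Next, for each $I\in \cI_s^j$, I would define the multiplication map $\widetilde{\mu}_I\colon \bQ[\bx_{n-j}]\otimes \bQ\{\epsilon_j\bz_j^I\}\longrightarrow F_{\geq \Sigma(I)}/F_{\geq \Sigma(I)+1}$ sending $f\otimes \epsilon_j\bz_j^I$ to the class of $f\cdot \epsilon_j\bz_j^I$. This map is manifestly $S_{n-j}$-equivariant and preserves polynomial degree (when the target is given the grading inherited from $\epsilon_j R_{n,\la,s}$ and the source is graded so that $\epsilon_j\bz_j^I$ has degree $\Sigma(I)$). The key verification is that $\widetilde{\mu}_I$ kills $I_{n-j,\la^{(I)},s}\otimes \bQ\{\epsilon_j\bz_j^I\}$ and therefore factors through $R_{n-j,\la^{(I)},s}\otimes \bQ\{\epsilon_j\bz_j^I\}$. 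For the generators $x_i^s$ with $1\leq i\leq n-j$, we have $x_i^s\cdot \epsilon_j\bz_j^I=\epsilon_j(x_i^s\bz_j^I)\in \epsilon_j I_{n,\la,s}$, which is zero in $\epsilon_j R_{n,\la,s}$. For the partial elementary symmetric generators $e_d(S)$ with $S\subseteq \bx_{n-j}$ and $d>|S|-p^{n-j}_{|S|}(\la^{(I)})$, Lemma~\ref{lem:MultWellDefined} says that $e_d(S)\cdot \epsilon_j\bz_j^I$ lies in $\sum_{\Sigma(H)\geq \Sigma(I)+1}\bQ[\bx_{n-j}]\cdot \epsilon_j\bz_j^H + \epsilon_j I_{n,\la,s}$, which is contained in $F_{\geq \Sigma(I)+1}$; hence it is zero in the quotient $F_{\geq \Sigma(I)}/F_{\geq \Sigma(I)+1}$.

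Summing the resulting well-defined factored maps over $I\in \cI_s^j$ produces a graded $S_{n-j}$-equivariant homomorphism $\phi\colon \bigoplus_{r\geq 0} V_r \longrightarrow \mathrm{gr}^F(\epsilon_j R_{n,\la,s})\cong \epsilon_j R_{n,\la,s}$. Surjectivity is immediate from Lemma~\ref{lem:AltBasis}, since $\cA^{j\text{-}\mathrm{alt}}_{n,\la,s}$ already lies in the image. To conclude that $\phi$ is an isomorphism I would compare total dimensions: by Theorem~\ref{thm:UngradedFrobChar} applied to each $R_{n-j,\la^{(I)},s}$ we have $\dim_\bQ \bigoplus_r V_r = \sum_{I\in \cI_s^j}|\OP_{n-j,\la^{(I)},s}|$, and by Lemma~\ref{lem:DimOfAntisymm} this equals $\dim_\bQ(\epsilon_j R_{n,\la,s})$. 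A surjection between equidimensional spaces is an isomorphism, completing the proof. The main obstacle is the well-definedness step, but Lemma~\ref{lem:MultWellDefined} has been engineered precisely for this purpose and does essentially all of the work; the rest is bookkeeping about filtrations and dimensions.
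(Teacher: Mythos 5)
Your proof is correct and follows essentially the same strategy as the paper: filter $\epsilon_j R_{n,\la,s}$ by $\bz_j$-degree (equivalently by $\Sigma(I)$), use Lemma~\ref{lem:MultWellDefined} to show multiplication by $\epsilon_j\bz_j^I$ descends from $\bQ[\bx_{n-j}]$ to $R_{n-j,\la^{(I)},s}$ on the associated graded, and conclude via the bases of Lemma~\ref{lem:AltBasis}. The only cosmetic difference is in the closing step: the paper checks that each map $\mu_r\colon V_r\to W_r$ carries a basis to a basis, while you establish surjectivity of the total map and then invoke the dimension count of Lemma~\ref{lem:DimOfAntisymm}; since Lemma~\ref{lem:AltBasis} is itself proved by that same dimension count, the two formulations are interchangeable and neither buys anything the other does not.
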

\begin{proof}
Given $r\geq 0$, let
\begin{align}
U_r &\coloneqq \bigoplus_{\substack{I\in \cI_s^j,\\\Sigma(I) = r}} \bQ[\bx_{n-j}]\otimes \bQ\{\epsilon_j\bz_j^{I}\},\label{eq:UDef}\\
W_{\geq r} &\coloneqq \vecspan_{\bQ} \left(\bigcup_{\substack{I\in \cI_s^j,\\\Sigma(I) \geq r}} \bQ[\bx_{n-j}]\cdot \epsilon_j\bz_j^{I}\right)\subseteq \epsilon_j R_{n,\la, s},\label{eq:WDef}\\
W_r &\coloneqq W_{\geq r}/W_{\geq r+1}.
\end{align}
By Lemma~\ref{lem:AltBasis}, we have $W_{\geq 0} = \epsilon_j R_{n,\la, s}$, so the subspaces $W_{\geq r}$ of $\epsilon_j R_{n,\la, s}$ form a descending filtration of the space $\epsilon_j R_{n,\la, s}$. Observe that the spaces $U_r$, $V_r$, $W_{\geq r}$, and $W_r$ are $\bQ[\bx_{n-j}]$-modules. Furthermore, each of them has the structure of a graded $S_{n-j}$-module, where $S_{n-j}$ acts on the variables $\bx_{n-j}$ and the grading is by total degree in the variables $\bx_n = \bx_{n-j}\cup \bz_j$.  
Since $W_{\geq r}$ is a filtration of $\epsilon_j R_{n,\la, s}$ which respects the graded $S_{n-j}$-module structure, we have that $\epsilon_j R_{n,\la, s} \cong \bigoplus_{r\geq 0} W_r$ as graded $S_{n-j}$-modules. 
Therefore, it suffices to prove that $V_r\cong W_r$ as graded $S_{n-j}$-modules.

For $r\geq 0$, let
\begin{align}
\widetilde\mu_r : U_r \to W_r
\end{align}
be the map induced by sending $f(\bx_{n-j})\otimes \epsilon_j \bz_j^{I}$ to the product $f(\bx_{n-j})\cdot \epsilon_j \bz_j^{I}$. Then $\widetilde \mu_r$ is a homomorphism of $\bQ[\bx_{n-j}]$-modules, as well as a homomorphism of graded $S_{n-j}$-modules. For each $I\in \cI_s^j$ such that $\Sigma(I) = r$ and each generator of $I_{n-j,\la^{(I)}, s}$ of the form $e_d(S)$, we have that $\widetilde\mu_r(e_d(S)\otimes \epsilon_j \bz_j^{I})=0$ by Lemma~\ref{lem:MultWellDefined}. We also have that $\widetilde\mu_r(x_i^s \otimes \epsilon_j \bz_j^{I})=0$ for all $i\leq n-j$ since $x_i^s \cdot\epsilon_j\bz_j^I = \epsilon_j(x_i^s \cdot \bz_j^I)\in \epsilon_j I_{n,\la, s}$. Since $\widetilde\mu_r$ is a homomorphism of $\bQ[\bx_{n-j}]$-modules, the map $\widetilde\mu_r$ descends to a $S_{n-j}$-module homomorphism $\mu_r : V_r \to W_r$.
By Theorem~\ref{thm:MonomialBasis}, the set $\cA_{n-j,\la^{(I)}, s}$ is a basis of $R_{n-j,\la^{(I)}, s}$, so 
\begin{align}\label{eq:BasisOfVr}
\bigcupdot_{\substack{I\in \cI_s^j,\\ \Sigma(I)=r}} \cA_{n-j,\la^{(I)}, s}\otimes \epsilon_j \bz_j^{I}
\end{align}
 represents a basis of $V_r$. Furthermore, by Lemma~\ref{lem:AltBasis} we have that $\cA^{j\text{-alt}}_{n,\la, s}$ represents a basis of $\epsilon_j R_{n,\la, s}$, hence
 \begin{align}\label{eq:BasisOfWr}
\bigcupdot_{\substack{I\in \cI_s^j,\\ \Sigma(I)=r}} \cA_{n-j,\la^{(I)}, s}\cdot \epsilon_j \bz_j^{I}
\end{align}
represents a basis of $W_r$. Since $\mu_r$ maps a basis to a basis, it is an isomorphism of graded $S_{n-j}$-modules, hence $V_r\cong W_r$ as graded $S_{n-j}$-modules. This completes the proof.
\end{proof}

\begin{proof}[Proof of Theorem~\ref{thm:AntisymmFrobqRecursion}]
The identity \eqref{eq:AntisymmFrob} implies the first equality in Theorem~\ref{thm:AntisymmFrobqRecursion}. By Lemma~\ref{lem:AntisymmIso}, we have the equality of formal power series
\begin{align}
\Frobq(\epsilon_j R_{n,\la, s}) = \sum_{I\in \cI_s^j} q^{\Sigma(I)} \Frobq(R_{n-j,\la^{(I)}, s}),
\end{align}
which is the second equality in Theorem~\ref{thm:AntisymmFrobqRecursion}.
\end{proof}

	\subsection{Exact sequences}
	
	In this subsection, we show that the rings $R_{n,\la, s}$ fit into certain exact sequences of $S_n$-modules. We use these exact sequences to obtain identities for the graded Frobenius characteristic of $R_{n,\la, s}$.

\begin{lemma}\label{lem:ExactSequence}
Let $k<n$ be distinct positive integers, and let $\la\in \Par(k, s)$ with $\ell(\la)<s$. Then there is an exact sequence of $S_n$-modules
\begin{align}
0 \to R_{n,\la, s-1} \to R_{n,\la, s} \to R_{n,\la\cdot (1), s}\to 0
\end{align}
such that the first map shifts degree by $n-k$ and the second map is degree-preserving. Equivalently, we have the identity
\begin{align}\label{eq:RemovingOneZeroFormula}
\Frobq(R_{n,\la, s}) = \Frobq(R_{n,\la\cdot (1), s}) + q^{n-k}\Frobq(R_{n,\la, s-1}).
\end{align}
\end{lemma}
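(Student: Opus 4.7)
The plan is to realize the claimed sequence from the principal ideal in $R_{n,\la,s}$ generated by $e_{n-k}(\bx_n)$: the second map will be the canonical degree-preserving quotient $R_{n,\la,s}\twoheadrightarrow R_{n,\la\cdot(1),s}$, and the first will be multiplication by $e_{n-k}(\bx_n)$. To identify the kernel of the quotient, I would first compare the generating sets of the two ideals. Since $(\la\cdot(1))_1'=\la_1'+1$ and $(\la\cdot(1))_j'=\la_j'$ for $j\geq 2$, we get $p^n_m(\la\cdot(1))=p^n_m(\la)$ for $m<n$ and $p^n_n(\la\cdot(1))=k+1$. Hence $I_{n,\la,s}$ and $I_{n,\la\cdot(1),s}$ have identical $e_d(S)$ generators except that $I_{n,\la\cdot(1),s}$ acquires the single extra generator $e_{n-k}(\bx_n)$; combined with $I_{n,\la,s}\subseteq I_{n,\la\cdot(1),s}$ from Lemma~\ref{lem:Containment}, this yields $I_{n,\la\cdot(1),s}=I_{n,\la,s}+\langle e_{n-k}(\bx_n)\rangle$, so the kernel of the quotient $R_{n,\la,s}\twoheadrightarrow R_{n,\la\cdot(1),s}$ is the principal ideal generated by the image of $e_{n-k}(\bx_n)$.

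For the first map, multiplication by $e_{n-k}(\bx_n)$ is automatically $S_n$-equivariant and shifts degree by $n-k$, so it descends to a map $\phi: R_{n,\la,s-1}\to R_{n,\la,s}$ as soon as $I_{n,\la,s-1}\cdot e_{n-k}(\bx_n)\subseteq I_{n,\la,s}$. The $e_d(S)$ generators of $I_{n,\la,s-1}$ already lie in $I_{n,\la,s}$, so the only thing to verify is $x_i^{s-1}\,e_{n-k}(\bx_n)\in I_{n,\la,s}$ for each $i$. Splitting $e_{n-k}(\bx_n)=e_{n-k}(\bx_n\setminus\{x_i\})+x_i\,e_{n-k-1}(\bx_n\setminus\{x_i\})$, multiplying the second summand by $x_i^{s-1}$ produces $x_i^s$ times a polynomial, hence an element of $I_{n,\la,s}$. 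For the first summand I would invoke Lemma~\ref{lem:TechnicalLemma}(b) with $m=n-1$, $S=\bx_n\setminus\{x_i\}$ (playing the role of the excluded $x_n$), and exponent $s-1$: since $k<n$ forces $\la_n'=0$, we have $p^n_{n-1}(\la)=k-\ell(\la)$, so the numerical hypotheses reduce exactly to $d=n-k$ and $s-1\geq \ell(\la)$, the latter being our standing assumption $\ell(\la)<s$. The conclusion lands in $x_i^s\,\bQ[\bx_n]+I_{n,\la,s}=I_{n,\la,s}$, as required. By construction, the image of $\phi$ equals the principal ideal $(e_{n-k}(\bx_n))\cdot R_{n,\la,s}$, which is the kernel of the second map, so exactness at the middle is automatic.

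The main remaining hurdle is injectivity of $\phi$, which I would deduce from a dimension count. By Theorem~\ref{thm:UngradedFrobChar} this reduces to the combinatorial identity
\[
|\OP_{n,\la,s}|=|\OP_{n,\la\cdot(1),s}|+|\OP_{n,\la,s-1}|,
\]
which I would establish by splitting $\OP_{n,\la,s}$ according to whether the block $B_{\ell(\la)+1}$ (a legitimate block since $\ell(\la)<s$) is empty: the subset with $|B_{\ell(\la)+1}|\geq 1$ unpacks to $\OP_{n,\la\cdot(1),s}$, while the complementary subset bijects with $\OP_{n,\la,s-1}$ by simply deleting the empty block. Combining this count with the already-established surjectivity of the second map and the equality of image and kernel at the middle forces $\phi$ to be injective. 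The degree-shift statement and the Frobenius-characteristic identity~\eqref{eq:RemovingOneZeroFormula} then follow at once from $\deg e_{n-k}(\bx_n)=n-k$ and the additivity of $\Frobq$ on short exact sequences of graded $S_n$-modules.
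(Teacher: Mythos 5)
Your proposal is correct and follows essentially the same route as the paper: the same short exact sequence with $\pi$ the natural quotient and $\phi$ induced by multiplication by $e_{n-k}(\bx_n)$, the same reduction of well-definedness of $\phi$ to showing $x_i^{s-1}e_{n-k}(\bx_n)\in I_{n,\la,s}$ via Lemma~\ref{lem:TechnicalLemma}, and the same dimension count on $(n,\la,s)$-ordered set partitions (splitting on whether block $B_{\ell(\la)+1}$ is empty) to force injectivity of $\phi$. The only cosmetic difference is that you split off $x_ie_{n-k-1}(\bx_n\setminus\{x_i\})$ by hand and invoke part (b) of the technical lemma directly, whereas the paper reduces to $x_n^{s-1}e_{n-k}(\bx_{n-1})$ and invokes part (d) (which itself routes through part (b)); your spelling out of the verification that $I_{n,\la\cdot(1),s}=I_{n,\la,s}+\langle e_{n-k}(\bx_n)\rangle$, which the paper leaves as ``it can be checked,'' is correct.
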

\begin{proof}
It can be checked that $I_{n,\la\cdot (1), s} = I_{n,\la, s} + \langle e_{n-k}(\bx_n)\rangle$. Hence, we have the quotient map $\pi : R_{n,\la, s}\to R_{n,\la\cdot (1), s}$ with $\ker\pi = \langle e_{n-k}(\bx_n)\rangle$. 
To construct a map $R_{n,\la, s-1}\to R_{n,\la, s}$, first define the map
\begin{align}
\widetilde\phi : \bQ[\bx_n]\to R_{n,\la, s}
\end{align}
of $\bQ[\bx_n]$-modules given by multiplication by $e_{n-k}(\bx_n)$. 

 We claim that $I_{n,\la, s-1}\subseteq \ker(\widetilde\phi)$. Since $I_{n,\la, s}$ and $I_{n,\la, s-1}$ have the same elementary symmetric polynomial generators, it suffices to show that $\widetilde\phi(x_i^{s-1}) = 0$ for all $i\leq n$, or equivalently that $x_i^{s-1}e_{n-k}(\bx_n)\in I_{n,\la, s}$. By symmetry, it suffices to show that $x_n^{s-1}e_{n-k}(\bx_n)\in I_{n,\la, s}$. Observe that $x_n^{s-1} e_{n-k}(\bx_n) \equiv x_n^{s-1} e_{n-k}(\bx_{n-1}) \mod I_{n,\la, s}$. Furthermore, since $e_{n-k}(\bx_{n-1})$ is a generator of $I_{n-1,\la, s}$, then by Lemma~\ref{lem:TechnicalLemma}(d) applied to $S = \bx_{n-1}$ and $i=s$, we have $x_n^{s-1}e_{n-k}(\bx_{n-1})\in  x_n^s\bQ[\bx_n] + I_{n,\la, s} = I_{n,\la, s}$. Hence, $x_n^{s-1} e_{n-k}(\bx_n)\in I_{n,\la, s}$, so $I_{n,\la, s-1} \subseteq \ker(\widetilde\phi)$ as claimed.

Therefore, $\widetilde\phi$ descends to a map
\begin{align}
\phi : R_{n,\la, s-1}\to R_{n,\la, s}
\end{align}
whose image is exactly $\langle e_{n-k}(\bx_n)\rangle = \ker\pi$.  Therefore, the sequence
\begin{align}
R_{n,\la, s-1}\stackrel{\phi}{\to} R_{n,\la, s} \stackrel{\pi}{\to} R_{n,\la\cdot (1), s}\to 0
\end{align}
is exact, where $\phi$  shifts degree by $n-k$. Furthermore, we have
\begin{align}\label{eq:SizeEquality}
|\OP_{n,\la, s}| = |\OP_{n,\la\cdot (1), s}| + |\OP_{n,\la, s-1}|,
\end{align}
which can be seen by partitioning $\OP_{n,\la, s}$ into a disjoint union of two subsets by considering those elements $(B_1|\cdots | B_s)$ such that $B_{\ell(\la)+1} \neq \emptyset$ and those such that $B_{\ell(\la)+1} = \emptyset$. By Theorem~\ref{thm:UngradedFrobChar}, then \eqref{eq:SizeEquality} implies
\begin{align}
\dim_\bQ(R_{n,\la, s}) = \dim_\bQ(R_{n,\la\cdot (1), s}) + \dim_\bQ(R_{n,\la, s-1}).
\end{align}
Hence, $\phi$ is injective, and we have an exact sequence
\begin{align}
0\to R_{n,\la, s-1}\stackrel{\phi}{\to} R_{n,\la, s} \stackrel{\pi}{\to} R_{n,\la\cdot (1), s}\to 0.
\end{align}
To complete the proof, observe that $\phi$ and $\pi$ are $S_n$-module homomorphisms.
\end{proof}

\begin{theorem}\label{thm:RemovingZerosFrobChar}
Let $k\leq n$ be positive integers and let $\la\in \Par(k,s)$ such that $\ell(\la) < s$. We have
\begin{align}\label{eq:RemovingZerosFormula}
\Frobq(R_{n,\la, s}) = \sum_{m\geq 0} q^{(s-\ell(\la)-m)(n-k-m)} \multibinom{s-\ell(\la)}{m}_q \Frobq\left(R_{n,\la\cdot (1^m), \ell(\la) + m}\right),
\end{align}
where $\multibinom{s-\ell(\la)}{m}_q = 0$ for $m>s-\ell(\la)$ and $\Frobq\left(R_{n,\la\cdot (1^m), \ell(\la)+m}\right)=0$ for $m>n-k$.
\end{theorem}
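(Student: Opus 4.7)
The plan is to prove the formula by induction on $r := s - \ell(\la) \geq 1$, using Lemma~\ref{lem:ExactSequence} repeatedly and collapsing the resulting coefficients via the $q$-Pascal identity $\multibinom{a}{m}_q = \multibinom{a-1}{m-1}_q + q^m \multibinom{a-1}{m}_q$.

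First I would dispose of the edge case $k=n$ separately. In this case $R_{n,\la, s} = R_\la$ by Remark~\ref{rmk:Specialization}, while on the right-hand side of~\eqref{eq:RemovingZerosFormula} the convention $\Frobq(R_{n,\la\cdot(1^m), \ell(\la)+m}) = 0$ for $m > n - k = 0$ kills every term except $m=0$, which contributes $\Frobq(R_{n,\la,\ell(\la)}) = \Frobq(R_\la)$. So we may assume $k<n$ and apply Lemma~\ref{lem:ExactSequence} freely.

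For the base $r=1$, Lemma~\ref{lem:ExactSequence} says $\Frobq(R_{n,\la, s}) = \Frobq(R_{n,\la\cdot(1), s}) + q^{n-k}\Frobq(R_{n,\la, s-1})$, and since $\ell(\la\cdot(1)) = \ell(\la)+1 = s$ and $s-1 = \ell(\la)$, these two terms are exactly the $m=1$ and $m=0$ contributions on the right-hand side (the only nonzero ones as $\multibinom{1}{m}_q = 0$ for $m\geq 2$). For the inductive step $r \geq 2$, I would apply Lemma~\ref{lem:ExactSequence} and invoke the inductive hypothesis on $\Frobq(R_{n,\la\cdot(1),s})$ (whose $r$-value is $r-1$; the case $k+1 = n$ being covered by the previous paragraph) and on $\Frobq(R_{n,\la,s-1})$ (whose $r$-value is also $r-1$). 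After reindexing the first expansion by $m \mapsto m+1$ and using the easy identity $(s-1-\ell(\la)-m)(n-k-m)+(n-k) = (s-\ell(\la)-m)(n-k-m)+m$ to align the powers of $q$, the coefficient of $\Frobq(R_{n,\la\cdot(1^m), \ell(\la)+m})$ in the combined expression becomes $\multibinom{s-\ell(\la)-1}{m-1}_q + q^m \multibinom{s-\ell(\la)-1}{m}_q$, which collapses to $\multibinom{s-\ell(\la)}{m}_q$ by $q$-Pascal, yielding~\eqref{eq:RemovingZerosFormula}.

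The main obstacle is essentially bookkeeping: correctly reindexing the inductive expansions and verifying that the $q$-exponents produced by the two inductive applications agree after the shift by $q^{n-k}$, so that $q$-Pascal is applicable. No new conceptual ingredient beyond Lemma~\ref{lem:ExactSequence} and Remark~\ref{rmk:Specialization} is needed.
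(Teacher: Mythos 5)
Your proof is correct and follows essentially the same path as the paper's: induction on $s-\ell(\la)$, splitting via Lemma~\ref{lem:ExactSequence}, reindexing, and collapsing with $q$-Pascal. Your separate treatment of the $k=n$ edge case (where Lemma~\ref{lem:ExactSequence} does not apply but the identity is trivial by Remark~\ref{rmk:Specialization}) is a small but genuine improvement in rigor over the paper's write-up, which leaves this implicit.
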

\begin{proof}
Proceed by induction on $s-\ell(\la)$. The base case where $s -\ell(\la)=1$ holds by Lemma~\ref{lem:ExactSequence}. Fix $a>1$, and assume by way of induction that \eqref{eq:RemovingZerosFormula} holds for $s$ and $\la$ such that $0< s - \ell(\la) < a$. Let $k\leq n$ be positive integers, and let $\la\in \Par(k, s)$ such that $s - \ell(\la) = a$. By \eqref{eq:RemovingOneZeroFormula}, we have
\begin{align}
\Frobq(R_{n,\la, s}) 	= \Frobq\left(R_{n,\la\cdot (1), s}\right) + q^{n-k}\Frobq\left(R_{n,\la, s-1}\right).
\end{align}
Applying our inductive hypothesis, we have 
\begin{multline}
\Frobq(R_{n,\la, s}) = \sum_{m\geq 0} q^{(s-\ell(\la)-1-m)(n-k-1-m)}\multibinom{s-\ell(\la)-1}{m}_q \Frobq\left(R_{n,\la\cdot (1^{m+1}), \ell(\la)+m+1}\right) \\
+ q^{n-k}\left(\sum_{p\geq 0} q^{(s-1-\ell(\la)-p)(n-k-p)}\multibinom{s-\ell(\la)-1}{p}_q \Frobq\left(R_{n,\la\cdot (1^p), \ell(\la)+p}\right)\right),
\end{multline}
which we may rewrite as
\begin{multline}
\Frobq(R_{n,\la, s}) = q^{(s-\ell(\la))(n-k)} \Frobq\left(R_{n,\la, \ell(\la)}\right)\\
 + \sum_{m\geq 1} \left(q^{(s-\ell(\la)-m)(n-k-m)}\multibinom{s-\ell(\la)-1}{m-1}_q \right.\\
 \left.+ q^{(s-1-\ell(\la)-m)(n-k-m) + (n-k)}\multibinom{s-\ell(\la)-1}{m}_q\right)\Frobq\left(R_{n,\la\cdot (1^m), \ell(\la)+m}\right).\label{eq:RemovingZerosAlmostThere}
\end{multline}
The identity $\qbinom{a}{b}_q = \qbinom{a-1}{b-1}_q + q^b\qbinom{a-1}{b}_q$ with $a = s-\ell(\la)$ and $b=m$ shows that the coefficient of $\Frobq\left(R_{n,\la \cdot(1^m), \ell(\la)+m}\right)$ on the right-hand side of \eqref{eq:RemovingZerosAlmostThere} is equal to
\begin{align}
q^{(s-\ell(\la)-m)(n-k-m)}\multibinom{s-\ell(\la)}{m}_q.
\end{align}
Hence, the right-hand side of \eqref{eq:RemovingZerosAlmostThere} is equal to the right-hand side of \eqref{eq:RemovingZerosFormula}, which completes the induction.
\end{proof}

	\section{Inversions and diagonal inversions}\label{sec:Statistics}
	
	In this section, we define inversion and diagonal inversion statistics on labeled objects which are in bijection with $\OP_{n,\la, s}$ and use them to give formulas for the Hilbert series and graded Frobenius characteristic of $R_{n,\la, s}$. In Subsection~\ref{subsec:Fcoinv}, we define extended column-increasing fillings and the statistics $\finv$ and $\fdinv$. In Subsection~\ref{subsec:Equidistribution}, we prove that the statistics $\finv$ and $\fdinv$ are equidistributed and their monomial generating functions are equal.  In Subsection~\ref{subsec:ProofOfCoinvQSym}, we then prove our Hilbert series and graded Frobenius characteristic formulas for $R_{n,\la, s}$ in terms of these statistics.

	\subsection{The statistics $\finv$ and $\fdinv$}\label{subsec:Fcoinv}

Given $(B_1|\cdots |B_s)\in \OP_{n,\la, s}$, define a \emph{standard extended column-increasing filling of $\la'$} as follows. First, define a column-increasing filling of the Young diagram of $\la'$ by labeling the $i$th column with the $\la_i$ smallest elements of $B_i$. Then, for each $i$, place the rest of the elements of $B_i$ in their own cells vertically below the $i$th column of the diagram in increasing order from top to bottom. We call the labels below the diagram the \emph{basement labels} and the cells containing them the \emph{basement cells}.

Let $\FCI_{n, \la, s}$ be the set of standard extended column-increasing fillings of $\la'$. See Figure~\ref{fig:FloatingDiagram} for the standard extended column-increasing filling associated to the ordered set partition $(1,3,5,7,8\,|\,2,9,10\,|\,4,6)\in \OP_{10,(3,2), 3}$, where $4, 6, 7, 8$, and $10$  are basement labels.

\begin{figure}
\centering
\includegraphics[scale=0.75]{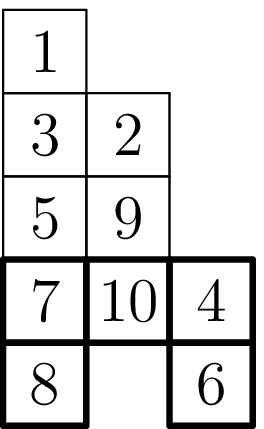}
\caption{The standard extended column-increasing filling associated to $(1,3,5,7,8\,|\,2,9,10|\,4,6)\in \OP_{10,(3,2), 3}$, where the basement cells are bolded.\label{fig:FloatingDiagram}}
\end{figure}

Given a composition $\alpha\in \Comp(k, s)$, we define an \emph{extended column-increasing filling $\float$ of $\dgprime(\alpha)$} to consist of
\begin{itemize}
\item A diagram $\diagram(\float) = \dgprime(\alpha) \cup \basement(\float)$, where $\basement(\float)$ is a possibly empty collection of \emph{basement cells} in columns $1\leq i \leq s$ and rows $j\leq 0$, such that in each column $i$ the basement cells are top justified so that the top basement cell is at coordinates $(i,0)$,
\item A labeling of the cells of $\diagram(\float)$ with positive integers which weakly increases down each column.
\end{itemize}
We denote by $\sigma(\float)$ the column-increasing filling obtained by restricting $\float$ to $\dgprime(\alpha)$. Given a cell $(i,j)\in \diagram(\float)$, we denote by $\float_{i,j}$ the label of $\float$ in the cell $(i,j)$.
Let $\fci_{n, \alpha, s}$ be the set of extended column-increasing fillings $\float$ of $\dgprime(\alpha)$ with $n$ cells. Let $\FCI_{n, \alpha, s}$ be the subset of $\fci_{n, \alpha, s}$ consisting of \emph{standard} extended column-increasing fillings which use the letters in $[n]$ without repetition. See Figure~\ref{fig:ExtendedCI} for an example of an extended column-increasing filling in $\fci_{11,(2,3,0,1),4}$.

\begin{figure}
\centering
\includegraphics[scale=0.75]{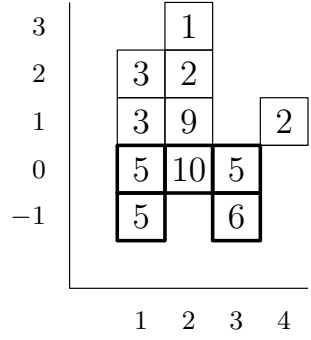}
\caption{An extended column-increasing filling in $\fci_{11,(2,3,0,1),4}$. We have labeled the rows and columns for the reader's aid.\label{fig:ExtendedCI}}
\end{figure}

 The \emph{reading word} of $\float$ is the concatenation $\rw(\float) \coloneqq \rw(\sigma(\float))\, v$, where $v$ is the word obtained by reading the labels of $\basement(\float)$ left to right across each row, from top to bottom. The \emph{inversion reading word} of $\float$ is the concatenation $\irw(\float) \coloneqq \rw(\sigma(\float))\,w$, where $w$ is the word obtained by reading the labels of $\basement(\float)$ down each column, starting with column $1$ and ending with column $s$. The ordering of the cells of $\float$ corresponding to $\rw(\float)$ and $\irw(\float)$ are the \emph{reading order} and the \emph{inversion reading order} of $\float$, respectively. For the standard extended column-increasing filling $\float$ in Figure~\ref{fig:FloatingDiagram}, we have $\rw(\float) = 1\,3\,2\,5\,9\,7\,10\,4\,8\,6$ and $\irw(\float) = 1\, 3\, 2\, 5\, 9\, 7\, 8\, 10\, 4\, 6$.

\begin{definition}\label{def:DefOfInv}
Given $\float\in \fci_{n, \alpha, s}$, an \emph{inversion} of $\float$ is one of the following,
\begin{itemize}
\item[(I1)] A diagonal inversion of $\sigma(\float)$, as defined in Subsection~\ref{subsec:HLFunctions},
\item[(I2)] A pair $((i,1), (i',j'))$ where $(i,1)\in \dgprime(\alpha)$ and $(i',j')\in \basement(\float)$, and such that $i > i'$ and $\float_{i,1} > \float_{i',j'}$,
\item[(I3)] A pair $(i, (i',j'))$, where $(i',j')\in \basement(\float)$ and $i$ is an integer such that ${1\leq i < i'}$.
\end{itemize}
Let $\finv(\float)$ be the number of inversions of $\float$.
\end{definition}
For $\float$ in Figure~\ref{fig:ExtendedCI}, we have the following inversions,
\begin{align*}
\text{Type (I1): } &  ((1,2),(2,2)),\, ((1,1),(4,1)), \text{ and } ((2,1),(4,1))\\
\text{Type (I2): } & ((2,1),(1,0)) \text{ and }((2,1),(1,-1))\\
\text{Type (I3): } & (1,(2,0)),\, (1,(3,0)),\, (2,(3,0)),\, (1,(3,-1)),\text{ and }(2,(3,-1)).
\end{align*}
In total, we have $\finv(\float) = 10$.

We also introduce a diagonal inversion statistic on extended column-increasing fillings.  Let an \emph{$\alpha$-attacking pair} be a pair of coordinates $((i,j), (i',j'))$ such that $1\leq i\leq s$, $1\leq i' \leq s$, $ j \leq \alpha_i$, and $j'\leq \alpha_{i'}$, and such that one of the following holds,
\begin{itemize}
\item We have $j=j'$ and $i < i'$,
\item We have $j = j'+1$ and $i>i'$. 
\end{itemize}
\begin{definition}\label{def:DefOfDinv}
A \emph{diagonal inversion} of $\float$ is an $\alpha$-attacking pair $((i,j),(i',j'))$ such that one of the following holds,
\begin{itemize}
\item[(D1)] We have $(i,j),(i',j')\in \diagram(\float)$ such that $\float_{i,j} > \float_{i',j'}$,
\item[(D2)] We have $j,j'\leq 0$, $(i,j)\notin \diagram(\float)$, and $(i',j') \in \diagram(\float)$.
\end{itemize}
Let $\fdinv(\float)$ be the number of diagonal inversions of $\float$.
\end{definition}
For $\float$ in Figure~\ref{fig:ExtendedCI}, we have the following diagonal inversions,
\begin{align*}
\text{Type (D1): } & ((1,2),(2,2)),\, ((1,1),(4,1)),\, ((2,1),(4,1)),\text{ and } ((2,1),(1,0)),\\
 &   ((2,0),(3,0))\text{ and }((2,0),(1,-1)) \\
\text{Type (D2): } & ((4,0),(1,-1)),\, ((4,0),(3,-1)),\text{ and } ((2,-1),(3,-1)).
\end{align*}
In total, we have $\fdinv(\float) = 9$. 

\begin{remark}
The notion of an extended column-increasing filling is a variation of the fillings introduced in~\cite{Rhoades-Yu-Zhao} during the preparation of this article. To translate between our conventions and theirs, simply flip our labelings across the horizontal axis and convert each basement label into a floating number. Under this identification between $\OP_{n,\la, s}$ and $\FCI_{n,\la, s}$, the inversion statistic in Definition~\ref{def:DefOfInv} is similar to, but not the same as, the $\coinv$ statistic in~\cite{Rhoades-Yu-Zhao}. In  particular, $\coinv$ uses a different definition of attacking pair, where $((i,j),(i',j'))$ forms an attacking pair in $\la'$ if $j = j'$ and $i < i'$, or $j' = j + 1$ and $i > i'$. The condition (I1) in Definition~\ref{def:DefOfInv} is then replaced with the condition that this alternate type of attacking pair contributes to the number of coinversions if $\float_{i,j} < \float_{i',j'}$.
\end{remark}

\subsection{Equidistribution of $\inv$ and $\dinv$}\label{subsec:Equidistribution}

We need the following equidistribution theorem for the statistics $\finv$ and $\fdinv$. Before proving the theorem, we prove several lemmata.
\begin{theorem}\label{thm:Equidistrib}
For $\la\in \Par(k, s)$, we have the identity of multivariate generating functions,
\begin{align}\label{eq:Equidistrib}
\sum_{\float\in \fci_{n, \la, s}} q^{\finv(\float)}\bx^\float = \sum_{\float\in \fci_{n, \la, s}} q^{\fdinv(\float)} \bx^\float,
\end{align}
where $\bx^\float\coloneqq \prod_{i\geq 1} x_i^{\# i\text{'s in }\float}$.
\end{theorem}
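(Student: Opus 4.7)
The plan is to prove (\ref{eq:Equidistrib}) by encoding both sides in terms of LLT polynomials and invoking their symmetry (Theorem~\ref{thm:LLTSymmetry}). The natural encoding associates to each extended column-increasing filling a column-increasing filling of a tuple of single-column shapes, one for each column of the extended diagram.

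First I would partition $\fci_{n,\la,s}$ according to the basement shape $h = (h_1,\dots,h_s)$, where $h_i$ is the number of basement cells in column $i$ and $\sum h_i = n-k$. For each fixed $h$, a filling $\float$ with this basement shape is determined by a column-increasing labeling of a tuple of single-column shapes of heights $\la_i' + h_i$, so it suffices to prove the identity for each fixed $h$. Within this class, the (I3) contributions to $\finv$ and the (D2) contributions to $\fdinv$ depend only on $h$, not on the labeling; they contribute constant $q$-factors that can be peeled off and separately reconciled.

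Next, I would re-express the remaining ``filling-dependent'' contributions — (I1)+(I2) for $\finv$ and the non-ghost part of (D1) for $\fdinv$ — as LLT inversion statistics on appropriate tuples of single-column skew shapes. For (D1) this is essentially the standard Haglund-Haiman-Loehr identification of the $\dinv$ statistic on column-increasing fillings of a Young diagram with the inversion statistic on a tuple of columns whose contents are set by the original column order. For (I1)+(I2), the shift is subtler: the (I2) inversions compare a bottom-row cell with a basement cell in the opposite direction from the analogous (D1) pair, which can be absorbed by either reversing the order of the component columns in the tuple or shifting their contents. Once both filling-dependent parts become LLT inversion counts, Theorem~\ref{thm:LLTSymmetry} produces two symmetric functions in $\bx$, and the desired monomial-by-monomial equality reduces to checking that these two LLT polynomials agree after the appropriate $q$-shift matching $c_{I3}(h)$ with $c_{D2}(h)$.

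The main obstacle I anticipate is the precise bookkeeping of contents and column orderings in the LLT encoding of $\finv$: the ``inversion reading word'' and the asymmetric treatment of the bottom row in (I2) mean that the natural LLT tuple for $\finv$ is \emph{not} literally the same as that for $\fdinv$, but rather a related one obtained by a content shift and possibly a re-ordering of the component shapes. Verifying that these two encodings yield the same symmetric function, with the discrepancy exactly accounted for by $c_{I3}(h) - c_{D2}(h)$, is the heart of the argument. If this direct route proves too rigid, a natural fallback is an inductive argument peeling off the largest label $n$ and checking that inserting it into each legal cell contributes the same $q$-power to both statistics, or a direct Bender-Knuth-type involution on $\fci_{n,\la,s}$ preserving content and exchanging $\finv$ with $\fdinv$.
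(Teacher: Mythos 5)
Your proposal takes a genuinely different route from the paper, and it has a gap at exactly the step you flag as ``the heart of the argument.'' The paper does not prove equidistribution by comparing two LLT expansions; it uses a Carlitz-style coding argument. It defines an inversion code $\invcode^\alpha$ and a diagonal inversion code $\dinvcode^\alpha$ on $\fci_{n,\alpha,s}(\gamma)$, shows that both are bijections onto the same set $\cC_{n,\sort(\alpha),s}(\rev(\gamma))$ via explicit insertion maps $\iota^\alpha$ and $\iota^\alpha_d$ (Lemmas~\ref{lem:InvcodeWellDefined}--\ref{lem:InvcodeBij}), and then observes that $\iota^\la\circ\dinvcode^\la$ is a content-preserving bijection on $\fci_{n,\la,s}(\gamma)$ carrying $\fdinv$ to $\finv$; summing over contents and passing to quasisymmetric expansions gives \eqref{eq:Equidistrib}. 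The LLT machinery (Theorem~\ref{thm:LLTSymmetry}) enters only afterwards, in Theorem~\ref{thm:LLTThm}, to show the \emph{already-established} common value $D_{n,\la,s}$ is symmetric — and only via the $\fdinv$ side, since that is the statistic that admits a clean LLT encoding.

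The obstruction to your LLT realization of $\finv$ is concrete. In Definition~\ref{def:DefOfInv}, a type-(I2) inversion pairs a bottom-row cell $(i,1)$ with \emph{every} basement cell $(i',j')$ in a column $i'<i$ with a smaller label, with no constraint on the depth $j'$. An LLT inversion between components $a>b$ requires $c(u)=c(v)+1$ exactly, so after any fixed content shift a content-$1$ cell compares against at most one cell per component, not an entire basement column; reversing the component order changes the condition to $c(u)=c(v)$, which has the same problem. There is also a second mismatch in the other direction: a tuple with one single-column component per column of the extended diagram would produce LLT inversions between basement cells in distinct columns at equal or adjacent contents, but no such pairs appear in $\finv$ at all (only in $\fdinv$ via (D1)). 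So $\finv$ is not a shifted LLT inversion statistic on the tuple you describe, and the proposed reduction to an equality of two LLT polynomials does not go through. Your inductive / insertion-style fallback is closer to what the paper actually does — the codes $\invcode$, $\dinvcode$ and the insertion maps $\iota^\alpha$, $\iota^\alpha_d$ are precisely the bookkeeping that makes that idea succeed.
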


Our strategy for proving Theorem~\ref{thm:Equidistrib} is inspired by generalizations of the Carlitz bijection constructed by Gillespie~\cite{GillespieThesis}, Rhoades-Wilson~\cite{Rhoades-Wilson-Stirling}, Rhoades-Yu-Zhao~\cite{Rhoades-Yu-Zhao}, and Wilson~\cite{Wilson}. Since~\cite{Rhoades-Yu-Zhao} uses our theorems to prove their results, we give independent proofs in order to avoid creating a cycle in the logical flow of the proofs.

Let $\gamma = (\gamma_1,\dots, \gamma_m)$ be a composition of $n$ with $\gamma_i >0$ for all $i$. A sequence $c = (c_1,\dots, c_n)$ of nonnegative integers is \emph{$\gamma$-weakly decreasing} if every subsequence
\begin{align*}
c_{\gamma_1+\cdots + \gamma_i}, c_{\gamma_1+\cdots +\gamma_i + 1},\dots, c_{\gamma_1 + \cdots +\gamma_i+\gamma_{i+1}-1}
\end{align*}
is weakly decreasing. For example, if $\gamma = (3,2,1,1)$, then the sequence $c = (5,5,2,6,3,1,2)$ is $\gamma$-weakly decreasing.
Let 
\begin{align}
\cC_{n, \la, s}(\gamma) \coloneqq \{c \in \cC_{n,\la, s} \st c \text{ is }\gamma\text{-weakly decreasing}\}.
\end{align}
Observe that when $\gamma = (1^n)$, then $\cC_{n,\la, s} (1^n) = \cC_{n, \la, s}$. For $0\leq i \leq s-1$ and any $\la$, let
\begin{align}
\cC^{(i)}_{n-1, \la, s}(\gamma)\coloneqq \{(c_1,\dots, c_{n-1})\in \cC_{n-1, \la, s}(\gamma) \st c_{n-1}\geq i \}.
\end{align}

\begin{lemma}\label{lem:CompDecomp}
Let $\gamma = (\gamma_1,\dots, \gamma_m)$ be a composition of $n$ with $\gamma_i >0$ for all $i$. If $\gamma_m > 1$, we have
\begin{align}\label{eq:CompEq1}
\cC_{n,\la, s}(\gamma) = \bigcupdot_{i = 0}^{\ell(\la)-1}  \cC^{(i)}_{n-1, \la^{(i)}, s}(\gamma_1,\dots, \gamma_m - 1)\cdot (i) \cupdot \bigcupdot_{i=\ell(\la)}^{s-1} \cC^{(i)}_{n-1,\la, s}(\gamma_1,\dots, \gamma_m - 1)\cdot (i).
\end{align}
If $\gamma_m = 1$, we have
\begin{align}\label{eq:CompEq2}
\cC_{n, \la, s}(\gamma) = \bigcupdot_{i=0}^{\ell(\la)-1} \cC_{n-1,\la^{(i)}, s}(\gamma_1,\dots, \gamma_{m-1})\cdot (i) \cupdot \bigcupdot_{i=\ell(\la)}^{s-1} \cC_{n-1, \la, s}(\gamma_1,\dots, \gamma_{m-1}) \cdot (i).
\end{align}
\end{lemma}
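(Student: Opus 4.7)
The plan is to derive Lemma~\ref{lem:CompDecomp} by intersecting the disjoint union decomposition of $\cC_{n,\la,s}$ from Lemma~\ref{lem:ShuffleRecursion} with the $\gamma$-weakly decreasing condition. Recall that Lemma~\ref{lem:ShuffleRecursion} partitions $\cC_{n,\la,s}$ according to the value $i$ of the last entry $c_n$: if $i < \ell(\la)$ then the truncation lies in $\cC_{n-1,\la^{(i)},s}$, while if $\ell(\la)\leq i\leq s-1$ the truncation lies in $\cC_{n-1,\la,s}$. Since the right-hand sides of both~\eqref{eq:CompEq1} and~\eqref{eq:CompEq2} are indexed by the value of the last coordinate $c_n=i$, they are automatically disjoint unions. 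It therefore suffices to check, in each of the two cases ($\gamma_m>1$ or $\gamma_m=1$), that the $\gamma$-weakly decreasing condition on $c$ translates correctly into conditions on the truncation $(c_1,\dots,c_{n-1})$.

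First I would handle the case $\gamma_m>1$. Here the last block of positions indexed by $\gamma$ is $\{n-\gamma_m+1,\dots,n\}$, and the $\gamma$-weakly decreasing condition on $c$ says in particular that $c_{n-\gamma_m+1}\geq\cdots\geq c_{n-1}\geq c_n=i$. Removing the last coordinate, the remaining sequence $(c_1,\dots,c_{n-1})$ is then $(\gamma_1,\dots,\gamma_m-1)$-weakly decreasing, and additionally we need $c_{n-1}\geq i$, which is exactly the condition defining $\cC^{(i)}_{n-1,\la^{(i)},s}(\gamma_1,\dots,\gamma_m-1)$ or $\cC^{(i)}_{n-1,\la,s}(\gamma_1,\dots,\gamma_m-1)$ depending on whether $i<\ell(\la)$ or $i\geq\ell(\la)$. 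Conversely, if $(c_1,\dots,c_{n-1})$ lies in one of these sets and we append $c_n=i$, the result is $\gamma$-weakly decreasing since the newly completed last block still weakly decreases. Combining this equivalence with Lemma~\ref{lem:ShuffleRecursion} gives~\eqref{eq:CompEq1}.

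Next I would handle the case $\gamma_m=1$. Here the final block of $\gamma$ consists of the single position $n$, so the $\gamma$-weakly decreasing condition imposes no relation between $c_{n-1}$ and $c_n$: $c$ is $\gamma$-weakly decreasing if and only if $(c_1,\dots,c_{n-1})$ is $(\gamma_1,\dots,\gamma_{m-1})$-weakly decreasing. Again combining with Lemma~\ref{lem:ShuffleRecursion} by splitting on the value $c_n=i$ gives~\eqref{eq:CompEq2}.

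The argument is essentially bookkeeping, so I do not anticipate a serious obstacle; the only subtlety worth double-checking is the edge behavior of the last block, namely that writing $(\gamma_1,\dots,\gamma_m-1)$ is the correct new composition in the case $\gamma_m>1$ (so that the new final block has length $\gamma_m-1\geq 1$ and still contains $c_{n-1}$), which forces the extra inequality $c_{n-1}\geq i$ encoded by the superscript $(i)$ in $\cC^{(i)}_{n-1,\bullet,s}$. Once this is verified, each case is a direct consequence of Lemma~\ref{lem:ShuffleRecursion}.
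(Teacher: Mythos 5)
Your proposal is correct and follows exactly the paper's approach: restrict both sides of Lemma~\ref{lem:ShuffleRecursion} to $\gamma$-weakly decreasing sequences, noting that when $\gamma_m>1$ the superscript $(i)$ in $\cC^{(i)}_{n-1,\bullet,s}$ encodes precisely the constraint $c_{n-1}\geq c_n=i$ that survives truncation, while when $\gamma_m=1$ no constraint relates $c_{n-1}$ to $c_n$. Your write-up is actually somewhat more explicit about the bookkeeping than the paper's terse proof.
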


\begin{proof}
In the case where $\gamma_m > 1$, then for $i < \ell(\la)$, the set $\cC^{(i)}_{n-1,\la^{(i)}, s}(\gamma_1,\dots, \gamma_m-1)\cdot (i)$ is the subset of $\cC_{n-1, \la^{(i)}, s}\cdot (i)$ of $\gamma$-weakly decreasing sequences, by definition. Similarly, for $i\geq \ell(\la)$, the set $\cC^{(i)}_{n-1,\la, s}(\gamma_1,\dots, \gamma_m-1)\cdot (i)$ is the subset of $\cC_{n-1, \la, s}\cdot (i)$ of $\gamma$-weakly decreasing sequences, by definition.
Therefore, the partition \eqref{eq:CompEq1} follows immediately from Lemma~\ref{lem:ShuffleRecursion} by restricting both sides to $\gamma$-weakly decreasing sequences. The case where $\gamma_m=1$ follows from Lemma~\ref{lem:ShuffleRecursion} by similar reasoning.
\end{proof}

Given a word $w$ on positive integers, we say that it has \emph{content} $\gamma$ if $i$ appears as a letter in $w$ exactly $\gamma_{i}$ many times for $1\leq i\leq m$. Let $\fci_{n, \alpha, s}(\gamma)$ be the set of $\float\in \fci_{n, \alpha, s}$ such that $\rw(\float)$ has content $\gamma$.

To each $\float\in \fci_{n,\alpha, s}(\gamma)$, we assign an \emph{inversion code} and a \emph{diagonal inversion code}, defined next. These codes are inspired by the generalized Carlitz codes in~\cite{GillespieThesis} and the coinversion codes in~\cite{Rhoades-Wilson-Stirling,Rhoades-Yu-Zhao}. 

Given $\float\in \fci_{n, \alpha, s}(\gamma)$, let $(i_1,j_1),(i_2,j_2),\dots, (i_n, j_n)$ be the cells of $\diagram(\float)$, listed so that ${\float_{i_1,j_1}\leq \float_{i_2,j_2}\leq \cdots \leq \float_{i_n, j_n}}$, and breaking ties in \textbf{inversion} reading order. For example, for $\float$ in Figure~\ref{fig:ExtendedCI}, the cells of $\diagram(\float)$ are listed in the order
\begin{align}\label{eq:CellOrderInversion}
(2,3),\, (2,2),\, (4,1),\, (1,2),\, (1,1),\, (1,0),\, (1,-1),\, (3,0),\, (3,-1),\, (2,1),\, (2,0).
\end{align}

Let $c_p$ be the total number of inversions of $\float$ of the following types,
\begin{itemize}
\item Type (I1) inversions of the form $((i,j),(i_p, j_p))$ for some $i$ and $j$,
\item Type (I2) inversions of the form $((i,1),(i_p, j_p))$ for some $i$,
\item Type (I3) inversions of the form $(i, (i_p, j_p))$ for some $i$.
\end{itemize} 
Define the \emph{inversion code} of $\float$ to be $\invcode^\alpha(\float) \coloneqq (c_n, c_{n-1},\dots, c_1)$. Observe that the sum of the entries of $\invcode^\alpha(\float)$ is equal to $\finv(\float)$. For $\float$ in Figure~\ref{fig:ExtendedCI}, we have 
\begin{align}
\invcode^\alpha(\float) = (0,1,2,0,0,1,1,2,2,0,1),
\end{align}
where $\alpha = (2,3,0,1)$. 

Given $\float\in \fci_{n, \alpha, s}(\gamma)$, let $(i_1,j_1), (i_2,j_2), \cdots, (i_n, j_n)$ be the cells of $\diagram(\float)$, listed so that ${\float_{i_1,j_1}\leq \float_{i_2,j_2}\leq \cdots \leq \float_{i_n, j_n}}$, and breaking ties in reading order. For example, for $\float$ in Figure~\ref{fig:ExtendedCI}, the cells of $\diagram(\float)$ are listed in the same order as in \eqref{eq:CellOrderInversion}, except with the three cells labeled by $5$ listed in the order $(1,0), (3,0), (1,-1)$. Let $d_p$ be the total number of diagonal inversions of type (D1) and (D2) of the form $((i,j),(i_p, j_p))$ for some $i$ and $j$.
Define the \emph{diagonal inversion code} to be $\dinvcode^\alpha(\float) \coloneqq (d_n, d_{n-1},\dots, d_1)$. For $\float$ in Figure~\ref{fig:ExtendedCI}, we have 
\begin{align}
\dinvcode^\alpha(\float) = (0,1,2,0,0,1,1,2,2,0,0),
\end{align}
where $\alpha = (2,3,0,1)$.

\begin{lemma}\label{lem:InvcodeWellDefined}
Sending $\float\in \fci_{n,\alpha, s}(\gamma)$ to its inversion code gives a map
\begin{align}
\invcode^\alpha : \fci_{n, \alpha, s}(\gamma) \to \cC_{n, \sort(\alpha), s}(\rev(\gamma)).
\end{align}
\end{lemma}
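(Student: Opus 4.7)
The plan is to induct on $n$, using the recursive decomposition of $\cC_{n,\sort(\alpha),s}(\rev(\gamma))$ provided by Lemma~\ref{lem:CompDecomp}. The two properties to verify for $\invcode^\alpha(\float) = (c_n,c_{n-1},\ldots,c_1)$ are componentwise domination by some $(n,\sort(\alpha),s)$-staircase and the $\rev(\gamma)$-weakly decreasing condition. The base case $n=1$ is routine: the code is a single entry, bounded by $\ell(\sort(\alpha))-1$ or $s-1$ depending on whether the unique cell lies in the diagram or the basement.

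For the inductive step, I would identify the cell $(i_1,j_1)$ carrying the smallest label in $\float$, breaking ties so that $(i_1,j_1)$ is earliest in inversion reading order. Deleting this cell produces a filling $\float' \in \fci_{n-1,\alpha',s}(\gamma')$ where $\gamma'$ is obtained from $\gamma$ by decrementing $\gamma_1$ (possibly forcing the content to have one fewer part). The first key observation is that the truncation $(c_n,\ldots,c_2)$ equals $\invcode^{\alpha'}(\float')$: since $\float_{i_1,j_1}$ is the global minimum label, $(i_1,j_1)$ cannot be the starting element of a type~(I1) or (I2) inversion of $\float$, and type~(I3) inversions use integers rather than cells as first components, so deleting $(i_1,j_1)$ affects only the inversions counted by $c_1$.

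The heart of the argument is the explicit identification of $c_1$. Writing $\la = \sort(\alpha)$, in the diagram case $(i_1,j_1) \in \dgprime(\alpha)$ column-increasingness forces $\alpha_{i_1} = j_1$ (that is, $(i_1,j_1)$ is the top of its column), and by the tiebreak convention every attacking cell earlier in reading order carries label strictly greater than $\float_{i_1,j_1}$; enumerating these attacking cells yields
\[
c_1 \;=\; \la'_{j_1+1} \,+\, |\{i < i_1 \st \alpha_i = j_1\}|,
\]
so $\la'_{j_1+1} \leq c_1 \leq \la'_{j_1}-1 < \ell(\la)$ and $\sort(\alpha') = \la^{(c_1)}$. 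In the basement case $(i_1,j_1) \in \basement(\float)$, column-increasingness further forces $\alpha_{i_1}=0$, and counting inversions of types~(I2) and (I3) ending at $(i_1,j_1)$ yields $c_1 = |\{i > i_1 \st \alpha_i \geq 1\}| + (i_1-1)$, which satisfies $\ell(\la) \leq c_1 \leq s-1$ and $\sort(\alpha')=\la$. These values for $c_1$ precisely match the indices $i$ appearing in the disjoint unions of Lemma~\ref{lem:CompDecomp}.

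When $\gamma_1 > 1$ one additionally requires $c_2 \geq c_1$, where $(i_2,j_2)$ is the next cell in inversion reading order among those bearing the minimum label; this follows by constructing an explicit injection from inversions ending at $(i_1,j_1)$ into those ending at $(i_2,j_2)$, using that the two cells share the same label and that attacking and type~(I3) relations depend only on positions, so starting cells of inversions at $(i_1,j_1)$ can be relocated to attacking positions at $(i_2,j_2)$, with $(i_1,j_1)$ itself providing the final inversion when needed. The inductive hypothesis $\invcode^{\alpha'}(\float') \in \cC_{n-1,\sort(\alpha'),s}(\rev(\gamma'))$ combined with~\eqref{eq:CompEq1} (when $\gamma_1>1$) or~\eqref{eq:CompEq2} (when $\gamma_1=1$) then concludes the induction. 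The main obstacle will be the careful case analysis behind the formulas for $c_1$ and the injection witnessing $c_1 \leq c_2$, both of which hinge on the tiebreak convention used to select $(i_1,j_1)$ and on the column-increasing property of the fillings.
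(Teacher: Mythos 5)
Your proof is correct, but it takes a genuinely different route from the paper's. The paper proves the lemma directly: it first shows $(c_n,\ldots,c_1)\in\cC_{n,\sort(\alpha),s}$ by bounding the entries of the code row by row, and then proves the $\rev(\gamma)$-weakly decreasing condition by constructing, for every pair of consecutive cells carrying the same label, an injection from inversions ending at the earlier cell into those ending at the later one. You instead run an induction on $n$, peeling off the globally minimal cell $(i_1,j_1)$ (with the inversion-reading-order tiebreak), observing that its removal leaves the rest of the code intact, computing $c_1$ explicitly, and invoking Lemma~\ref{lem:CompDecomp}. I verified your two closed forms for $c_1$ and the resulting containments $\la'_{j_1+1}\le c_1\le\la'_{j_1}-1<\ell(\la)$ (diagram case) and $\ell(\la)\le c_1\le s-1$ (basement case), as well as the identifications $\sort(\alpha')=\la^{(c_1)}$ and $\sort(\alpha')=\la$ respectively; all are correct. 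The claim that the truncated code equals $\invcode^{\alpha'}(\float')$ does require noting that in the basement case the remaining basement cells in column $i_1$ shift up, but since type (I2) and (I3) inversions depend only on the column index of the basement cell and not on its row, this shift is harmless, as you implicitly use. Your argument that $c_2\ge c_1$ when $\gamma_1>1$ is a special case of the paper's injection argument for consecutive equal-label cells; the paper's case analysis (both diagram; diagram/basement; both basement) carries over. I would flag, though, that the phrase ``with $(i_1,j_1)$ itself providing the final inversion when needed'' is misleading: since $(i_1,j_1)$ carries the minimum label it cannot be the first component of any type (I1) or (I2) inversion, and the injections never need to use it, so this remark is unnecessary and should be dropped. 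On balance, your inductive route parallels the paper's proof of Lemma~\ref{lem:IotaWellDefined} (well-definedness of $\iota^\alpha$), and the explicit formula for $c_1$ you establish is precisely the ingredient needed later in Lemma~\ref{lem:InvcodeBij} to show $\invcode^\alpha\circ\iota^\alpha=\mathrm{Id}$, so your approach has the advantage of preparing that subsequent argument; the paper's direct proof, by contrast, is shorter and avoids the $c_1$ computation altogether at this stage.
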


\begin{proof}
Let $\la = \sort(\alpha)$ for convenience. It suffices to show that if $\float\in \fci_{n, \alpha, s}(\gamma)$, then $(c_n, c_{n-1},\dots, c_1)\in \cC_{n, \la, s}(\rev(\gamma))$. We first show that $(c_n, c_{n-1},\dots, c_1)\in \cC_{n,\la, s}$. 

Let $(i_1,j_1), (i_2, j_2), \dots,(i_n, j_n)$ be the cells of $\float$, listed so that $\float_{i_1,j_1}\leq \cdots\leq \float_{i_n, j_n}$ and then breaking ties in inversion reading order.  Suppose the $\la_m'$ cells in the $m$th row of $\dgprime(\alpha)$ are the $p_1$th, $p_2$th,\dots, $p_{\la_m'}$th cells in the list, with $p_1 < p_2 < \cdots < p_{\la_m'}$. Then we have $c_{p_t} \leq \la'_m - t$ for $1\leq t\leq \la_m'$.  Therefore, $(c_{p_1},\dots, c_{p_{\la_m'}}) \subseteq \beta^m(\la) = (0,\dots, \la_m'-1)$. Furthermore, for each $p$ such that $(i_p, j_p)\in \basement(\float)$, we have $c_p\leq s-1$. Therefore, $(c_n, c_{n-1}, \dots, c_1)$ is contained in a $(n,\la, s)$-shuffle, hence it is in $\cC_{n,\la, s}$.

To complete the proof, it suffices to show that $(c_n, c_{n-1},\dots, c_1)$ is $\rev(\gamma)$-weakly decreasing. Equivalently, we show that for $1\leq p\leq n-1$ such that $\float_{i_p, j_p}=\float_{i_{p+1}, j_{p+1}}$, then we have $c_p\leq c_{p+1}$. Given $p$ such that $\float_{i_p, j_p}=\float_{i_{p+1}, j_{p+1}}$, define an injection from the set of inversions counted toward $c_p$ to the set of inversions counted toward $c_{p+1}$ as follows. 

First, suppose that both $(i_p, j_p)$ and $(i_{p+1},j_{p+1})$ are cells of $\dgprime(\alpha)$. In this case, all inversions counting toward $c_p$ and $c_{p+1}$ are among labels in $\sigma(\float)$. Let $((a, b),(i_p, j_p))$ be an inversion of $\sigma(\float)$, so $\float_{a, b} > \float_{i_p, j_p}=\float_{i_{p+1}, j_{p+1}}$. Since $(a, b)$ appears before $(i_{p+1}, j_{p+1})$ in reading order, then we have $b\geq j_{p+1}$. Therefore, we have $a\neq i_{p+1}$ since $\float$ is column-increasing. 

If $a < i_{p+1}$, then map $((a, b),(i_p, j_p))$ to $((a, j_{p+1}),(i_{p+1},j_{p+1}))$, which is an inversion since $\float_{a, j_{p+1}} \geq \float_{a, b} > \float_{i_{p+1},j_{p+1}}$ by the fact that $\float$ is column-increasing. Otherwise, if $a > i_{p+1}$, then $b\geq j_{p+1} + 1$. Map $((a, b),(i_p, j_p))$ to $((a, j_{p+1}+1),(i_{p+1}, j_{p+1}))$, which is an inversion since $\float_{a, j_{p+1}+1}\geq \float_{a, b} > \float_{i_{p+1},j_{p+1}}$. It is clear that this map is an injection, since each such inversion $((a, b),(i_p, j_p))$ has a unique $a$ value. Hence, $c_p\leq c_{p+1}$.

Second, suppose that $(i_p, j_p)\in \dgprime(\alpha)$ and $(i_{p+1},j_{p+1})\in \basement(\float)$. Let $((a, b),(i_p, j_p))$ be an inversion of $\float$. We have that $a\neq i_{p+1}$ by the fact that $\float$ is column-increasing. If $a > i_{p+1}$, then map $((a, b), (i_p, j_p))$ to the inversion $((a,1), (i_{p+1},j_{p+1}))$ of type (I2). This is indeed an inversion since $\float_{a,1} \geq \float_{a, b} > \float_{i_p, j_p} = \float_{i_{p+1},j_{p+1}}$. Otherwise, if $a < i_{p+1}$, then map $((a, b),(i_p, j_p))$ to the inversion $(a, (i_{p+1}, j_{p+1}))$ of type (I3). By the same reasoning as the first case, this map is an injection, hence $c_p\leq c_{p+1}$.

Finally, suppose that $(i_p, j_p), (i_{p+1},j_{p+1})\in \basement(\float)$. Since $(i_p, j_p)$ precedes $(i_{p+1}, j_{p+1})$ in inversion reading order, we have $i_p\leq i_{p+1}$. Let $((a,1),(i_p, j_p))$ be an inversion of type (I2), so that $\float_{a,1} > \float_{i_p, j_p} = \float_{i_{p+1}, j_{p+1}}$. By the fact that $\float$ is column-increasing, we have that $a\neq i_{p+1}$. If $a>i_{p+1}$, map $((a,1),(i_p, j_p))$ to the inversion $((a,1),(i_{p+1}, j_{p+1}))$ of type (I2). Otherwise, if $a< i_{p+1}$, then map $((a,1),(i_p, j_p))$ to the inversion $(a, (i_{p+1},j_{p+1}))$ of type (I3). If $(a, (i_p, j_p))$ is an inversion of type (I3) with $a < i_p$, then map it to the inversion $(a, (i_{p+1}, j_{p+1}))$ of type (I3), which is an inversion since $a < i_p\leq i_{p+1}$. By the same reasoning as the previous two cases, this map is an injection. Hence, we have $c_p\leq c_{p+1}$ in all cases.
\end{proof}

\begin{lemma}\label{lem:DinvcodeWellDefined}
Sending $\float\in \fci_{n,\alpha, s}(\gamma)$ to its diagonal inversion code gives a map
\begin{align}
\dinvcode^\alpha : \fci_{n, \alpha, s}(\gamma) \to \cC_{n, \sort(\alpha), s}(\rev(\gamma)).
\end{align}
\end{lemma}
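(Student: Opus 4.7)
The plan is to mirror the proof of Lemma~\ref{lem:InvcodeWellDefined} with two modifications: replace $\finv$ with $\fdinv$, and use reading order (rather than inversion reading order) as the tie-breaker in the ordering of cells. Writing $\la = \sort(\alpha)$, the proof has two parts: first show that $\dinvcode^\alpha(\float) \in \cC_{n,\la, s}$, and second show that the resulting sequence is $\rev(\gamma)$-weakly decreasing.

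For the first part, list the cells $(i_1,j_1),\ldots,(i_n,j_n)$ of $\diagram(\float)$ so that $\float_{i_1,j_1}\leq\cdots\leq \float_{i_n,j_n}$, breaking ties in reading order. If the cells of row $m$ of $\dgprime(\alpha)$ occur at positions $p_1<\cdots<p_{\la'_m}$, I would show that $d_{p_t}\leq \la'_m-t$ by injectively matching each attacking position contributing to $d_{p_t}$ (either a filled attacker with strictly larger label, or, in the type~(D2) case, an empty attacker) with a row-$m$ cell appearing strictly earlier in the ordering. This forces the row-$m$ subsequence of $\dinvcode^\alpha(\float)$ to be contained in $\beta^m(\la)=(0,1,\ldots,\la'_m-1)$. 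For basement cells $(i_p,j_p)$, the attacking positions occupy at most two rows of $s$ columns each, bounding $d_p\leq s-1$. Assembling the bounds gives containment in $\cC_{n,\la,s}$.

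For the second part, suppose $\float_{i_p,j_p}=\float_{i_{p+1},j_{p+1}}$, so $(i_p,j_p)$ precedes $(i_{p+1},j_{p+1})$ in reading order. I would construct an injection from the diagonal inversions of the form $((a,b),(i_p,j_p))$ to those of the form $((a',b'),(i_{p+1},j_{p+1}))$ via $(a,b)\mapsto(a,j_{p+1})$ when $a<i_{p+1}$ and $(a,b)\mapsto(a,j_{p+1}+1)$ when $a>i_{p+1}$, with $a=i_{p+1}$ ruled out by column-increasingness. Verifying that the image remains a diagonal inversion decomposes into subcases according to whether $(i_p,j_p)$ and $(i_{p+1},j_{p+1})$ lie in $\dgprime(\alpha)$ or $\basement(\float)$ and whether the original pair is of type~(D1) or (D2); column-increasingness propagates the inequality on labels, and membership in $\diagram(\float)$ transfers appropriately under the shift.

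The main obstacle is the treatment of type~(D2) pairs, which have no analogue in the $\finv$ setting. The attacking position is an empty lattice point in the basement, so both the bounding argument in the first part and the shift argument in the second part must track whether the (shifted) attacker is empty or filled, and whether the inversion type changes from (D2) to (D1) or vice versa. The choice of reading order for tie-breaking (rather than inversion reading order) is what makes the second-step injection succeed across the basement cases, since it places basement cells of a fixed label in the correct relative order for the shift to preserve the inversion property.
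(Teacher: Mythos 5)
Your proposal is correct and follows essentially the same route as the paper: reduce to the two-part claim (containment in $\cC_{n,\sort(\alpha),s}$ and $\rev(\gamma)$-weak decreasingness), handle the second via a shift-map injection from the inversions attacking $(i_p,j_p)$ to those attacking $(i_{p+1},j_{p+1})$, and carefully track the type (D1) versus (D2) distinction when the shift moves the attacker in or out of $\diagram(\float)$. One minor streamlining the paper uses that you do not mention: when both $(i_p,j_p)$ and $(i_{p+1},j_{p+1})$ lie in $\dgprime(\alpha)$, all their diagonal inversions are among entries of $\sigma(\float)$, so $d_p=c_p$ and $d_{p+1}=c_{p+1}$ and the inequality follows directly from Lemma~\ref{lem:InvcodeWellDefined} without re-running the shift argument.
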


\begin{proof}
 It suffices to show that if $\float\in \fci_{n, \alpha, s}(\gamma)$, then $(d_n ,d_{n-1},\dots, d_1)\in \cC_{n,\sort(\alpha), s}(\rev(\gamma))$. The fact that $(d_n,\dots, d_1)\in \cC_{n,\sort(\alpha), s}$ follows by the same reasoning as in the proof of Lemma~\ref{lem:InvcodeWellDefined}, so it suffices to prove that $(d_n, d_{n-1},\dots, d_1)$ is $\rev(\gamma)$-weakly decreasing. Let $(i_1,j_1), (i_2, j_2), \dots,(i_n, j_n)$ be the cells of $\float$, listed so that $\float_{i_1,j_1}\leq \cdots\leq \float_{i_n, j_n}$ and then breaking ties in reading order. It suffices to prove that for $1\leq p\leq n-1$ such that $\float_{i_p, j_p}=\float_{i_{p+1}, j_{p+1}}$, then we have $d_p\leq d_{p+1}$. Given $p$ such that $\float_{i_p, j_p} = \float_{i_{p+1},j_{p+1}}$, define an injection from the set of diagonal inversions counted toward $d_p$ to the set of diagonal inversions counted toward $d_{p+1}$ as follows.

First, suppose that $(i_p, j_p),(i_{p+1}, j_{p+1})\in \dgprime(\alpha)$. Then $d_p = c_p$ and $d_{p+1}=c_{p+1}$, where $(c_n,\dots, c_1) = \invcode^\alpha(\float)$, since all diagonal inversions counting toward $d_p$ and $d_{p+1}$ are among entries in $\sigma(\float)$, hence we have $d_p\leq d_{p+1}$ by Lemma~\ref{lem:InvcodeWellDefined}. 

Second, suppose $(i_p, j_p)\in \dgprime(\alpha)$ and $(i_{p+1},j_{p+1})\in \basement(\float)$. Let $((a, b),(i_p, j_p))$ be a diagonal inversion of $\float$. Just as in the first case of the proof of Lemma~\ref{lem:InvcodeWellDefined}, we have $b \geq j_p\geq j_{p+1}$ and $a\neq i_{p+1}$. If $a <i_{p+1}$, map $((a, b),(i_p, j_p))$ to the diagonal inversion $((a, j_{p+1}),(i_{p+1}, j_{p+1}))$, which is either a diagonal inversion of type (D1) or of type (D2), depending on whether $(a, j_{p+1})$ is in $\diagram(\float)$ or not, respectively. If $a>i_{p+1}$, map $((a, b),(i_p, j_p))$ to $((a, j_{p+1}+1),(i_{p+1}, j_{p+1}))$, which is similarly either a diagonal inversion of type (D1) or (D2) depending on whether $(a, j_{p+1}+1)$ is in $\diagram(\float)$ or not, respectively. The details of the fact that this is a well-defined injection are similar to the second case of the proof of Lemma~\ref{lem:InvcodeWellDefined}, hence we omit them. We conclude that $d_p\leq d_{p+1}$.

Finally, suppose that $(i_p, j_p), (i_{p+1}, j_{p+1})\in \diagram(\float)$. The map on diagonal inversions $((a, b),(i_p, j_p))$ of type (D1) is the same as in the second case above, and we omit the details. Let $((a, b),(i_p, j_p))$ be a diagonal inversion of type (D2), so that $(a, b)\notin \diagram(\float)$. Since $(i_{p+1}, j_{p+1})\in \diagram(\float)$ and $b\geq j_p\geq j_{p+1}$, we have $a\neq i_{p+1}$. Observe that since $(a, b)\notin\diagram(\float)$, then $(a, j_{p+1})\notin \diagram(\float)$. If $a < i_{p+1}$, map $((a, b),(i_p, j_p))$ to the diagonal inversion $((a, j_{p+1}),(i_{p+1}, j_{p+1}))$ of type (D2). Otherwise, if $a>i_{p+1}$, then $(a, j_{p+1}+1)\notin\diagram(\float)$, and we map $((a, b),(i_p, j_p))$ to the diagonal inversion $((a, j_{p+1}+1),(i_{p+1},j_{p+1}))$ of type (D2). Hence, we have $d_p\leq d_{p+1}$ in all cases.
\end{proof}

We define a map inverse to $\invcode^\alpha$ via an insertion algorithm based on~\cite{Rhoades-Wilson-Stirling,Rhoades-Yu-Zhao}. Given $(c_n,\dots, c_1)\in \cC_{n,\sort(\alpha), s}(\rev(\gamma))$, construct an element $\iota^\alpha(c_n,\dots, c_1)\in \fci_{n, \alpha, s}(\gamma)$ by the following procedure. 
At each step in the algorithm, label the columns of the partial labeling of $\dgprime(\alpha)$ with $0,1,2,\dots, s-1$ inductively as follows. Suppose we have already used the column labels $0,1,\dots, j-1$. We say that a column $1\leq i \leq s$ is \emph{unfilled} if there is a cell of $\dgprime(\alpha)$ in column $i$ which is unfilled. If there is an unfilled column of $\dgprime(\alpha)$ which does not have a label, scan through the diagram in reading order until an unfilled cell of the diagram is reached whose column is unlabeled, and label that column with $j$. Otherwise, label the leftmost unlabeled column with $j$.

At the $0$th step in the algorithm, start with the unfilled diagram $\dgprime(\alpha)$, and let $a_1a_2 \cdots a_n$ be the unique word with content $\gamma$ such that $a_1\leq a_2\leq \cdots \leq a_n$. At the $i$th step for $1\leq i\leq n$, assume we have already inserted $a_1,\dots, a_{i-1}$ into the filling. Label the columns of the partially filled diagram according to the procedure above. Let column $j$ be the unique column labeled $c_i$. If column $j$ is unfilled, label the highest unfilled cell in that column with $a_i$. Otherwise, add a basement cell to column $j$ and label it with $a_i$. Let $\iota^\alpha(c_n,\dots, c_1)$ be the filling obtained after step $n$ of the algorithm. Although $\iota^\alpha(c_n,\dots, c_1)$ also depends on $\gamma$, we suppress $\gamma$ from the notation for convenience. 

See Figure~\ref{fig:InvInsertion} for an example of the insertion algorithm when $n=9$, $\alpha = (3,2,0)$, $s=3$, $(c_n,\dots, c_1) = (0,0,2,1,1,0,0,1,0)$, and $\gamma = (2,2,1,2,1,1)$. Observe that the final output is a filling with inversion code $(c_n,\dots, c_1)$.

We also define a map inverse to $\dinvcode^\alpha$ via an insertion algorithm. The insertion algorithm is the same, except that at each step in the algorithm, we label the columns by a different procedure. 

Let $(d_n,\dots, d_1)\in \cC_{n, \sort(\alpha), s}(\rev(\gamma))$.
At step $1\leq i\leq n$ in the algorithm, suppose we have a partial filling of $\dgprime(\alpha)$. Label the columns of the diagram with $0,1,2,\dots, s-1$ inductively as follows. Suppose we have already used the labels $0,1,\dots, j-1$. Scan through the coordinates $(a, b)$ with $1\leq a\leq s$ and $b\leq \alpha_a$ left to right across each row, starting with the top row. When an unfilled coordinate $(a, b)$ is reached in a column which is unlabeled, label that column with $j$. 

After all columns are labeled, let column $j$ be the unique column labeled $d_i$. If column $j$ is unfilled, label the highest unfilled cell in column $j$ with $a_i$. Otherwise, add a basement cell to column $j$ and label it with $a_i$. Let $\iota^\alpha_d(d_n,\dots, d_1)$ be the filling obtained after step $n$ of the algorithm.

See Figure~\ref{fig:DinvInsertion} for an example of the insertion algorithm for $\iota_d^\alpha$ with $n=9$, $\alpha = (3,2,0)$, $s=3$, $(d_n,\dots, d_1) = (0,0,2,1,1,0,0,1,0)$, and $\gamma = (2,2,1,2,1,1)$. Observe that the final output is a filling with diagonal inversion code $(d_n,\dots, d_1)$. Further observe that the two insertion algorithms for $\iota^\alpha$ and $\iota^\alpha_d$ output different fillings for the same sequence.

\begin{figure}
\centering
\includegraphics[scale=0.7]{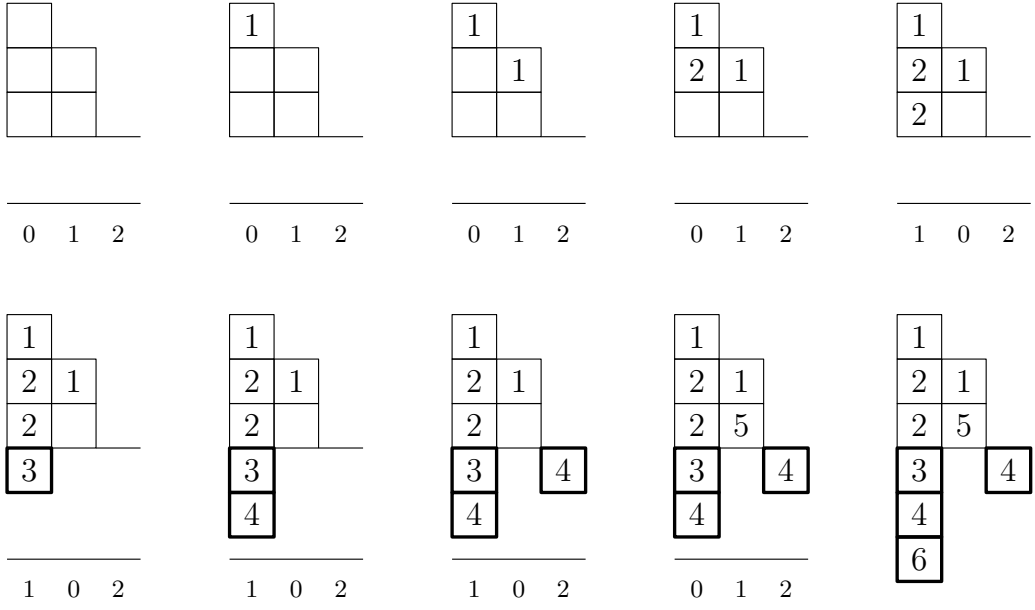}
\caption{An example of the insertion algorithm for $\iota^\alpha(c_n,\dots, c_1)$ for $n=9$, $\alpha = (3,2,0)$, $s=3$, $(c_n,\dots, c_1) = (0,0,2,1,1,0,0,1,0)$, and $\gamma=(2,2,1,2,1,1)$.\label{fig:InvInsertion}}
\end{figure}

\begin{lemma}\label{lem:IotaWellDefined}
Sending $(c_n,\dots, c_1)$ to $\iota^\alpha(c_n,\dots, c_1)$ gives a map
\begin{align}
\iota^\alpha : \cC_{n,\sort(\alpha), s}(\rev(\gamma))\to \fci_{n,\alpha, s}(\gamma).
\end{align}
Sending $(d_n,\dots, d_1)$ to $\iota^\alpha_d(d_n,\dots, d_1)$ gives a map
\begin{align}
\iota^\alpha_d : \cC_{n,\sort(\alpha), s}(\rev(\gamma))\to \fci_{n,\alpha, s}(\gamma).
\end{align}
\end{lemma}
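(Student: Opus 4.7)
The plan is to prove both $\iota^\alpha$ and $\iota^\alpha_d$ are well-defined by induction on $n$, using the recursive decomposition of $\cC_{n,\sort(\alpha),s}(\rev(\gamma))$ provided by Lemma~\ref{lem:CompDecomp}. For the base case $n=1$, I would check directly that the single letter is either inserted into the unique cell of $\dgprime(\alpha)$ (when $|\alpha|=1$) or placed as a basement cell below the column labeled $c_1$ (when $\alpha$ is empty). For the inductive step, I would identify the insertion of $a_1$ at step~$1$ of the algorithm as a ``small'' modification: removing $a_1$ after the first insertion leaves a partially filled diagram on which the remaining code $(c_n,\dots,c_2)$ acts. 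I would then match this to the insertion algorithm on the recursive piece in~\eqref{eq:CompEq1} or~\eqref{eq:CompEq2}, depending on whether $\gamma_1 > 1$ or $\gamma_1 = 1$, and whether $c_1$ routes $a_1$ into $\dgprime(\alpha)$ or into the basement.

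The first key step is to show that at each step $i$, the column-labeling subroutine assigns each of $0,1,\dots,s-1$ to exactly one of the $s$ columns, so that the instruction ``place $a_i$ in the column labeled $c_i$'' (respectively $d_i$) is unambiguous. For $\iota^\alpha$, the reading-order scan labels those columns still containing unfilled cells of $\dgprime(\alpha)$, and the leftmost-unlabeled fallback labels any columns that are either fully filled or initially empty; together these exhaust all $s$ columns. A parallel argument handles $\iota^\alpha_d$, where the analogous scan from the top of the diagram downward handles nonempty columns and a leftmost-unlabeled fallback is understood to assign the remaining labels. The second key step is to verify that each insertion preserves the extended column-increasing structure: since $a_1 \le a_2 \le \cdots \le a_n$ are inserted in weakly increasing order, placing each in the highest unfilled cell of its column—or as a new basement cell appended immediately below any existing basement cell—automatically maintains both the weakly-increasing condition down each column and the top-justified shape of $\basement(\float)$. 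The reading word has content $\gamma$ by construction.

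The main obstacle will be showing that the total number of basement cells produced equals $n - |\alpha|$ exactly, so that the output has $n$ cells arranged as the shape $\alpha$ (not some other composition sorting to $\sort(\alpha)$). This is precisely where the hypothesis $(c_n,\dots,c_1) \in \cC_{n,\sort(\alpha),s}(\rev(\gamma))$ is essential: by the recursive decomposition in Lemma~\ref{lem:ShuffleRecursion} and the definition of $\cC_{n,\sort(\alpha),s}$, containment in an $(n,\sort(\alpha),s)$-staircase supplies exactly $n-|\alpha|$ entries routed to basement slots (those corresponding to the $((s-1)^{n-|\alpha|})$-part of the shuffle) and distributes the remaining entries column-by-column according to the $\beta^j(\sort(\alpha))$-parts. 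Since the column-labeling subroutine is sensitive to the actual shape $\alpha$ of the partial diagram and not merely to $\sort(\alpha)$, the inductive correspondence between positions in the staircase and steps of the insertion algorithm carries through, giving $\iota^\alpha(c_n,\dots,c_1) \in \fci_{n,\alpha,s}(\gamma)$, and analogously for $\iota^\alpha_d$.
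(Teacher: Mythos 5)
Your plan for $\iota^\alpha$ is essentially the paper's: induction on $n$ via Lemma~\ref{lem:CompDecomp}, deleting the cell labeled $a_1$ after the first insertion and matching the remainder to $\iota^{\alpha^{(c_1)}}(c_n,\dots,c_2)$ (or $\iota^{\alpha}(c_n,\dots,c_2)$ when $c_1$ routes $a_1$ to the basement). The crucial fact that makes this work is that the column-labeling subroutine for $\iota^\alpha$ depends only on the state of $\dgprime(\alpha)$, never on which basement cells are occupied, so deleting $a_1$ from $\iota^\alpha(c_n,\dots,c_1)$ literally yields the output of $\iota^{\alpha^{(c_1)}}(c_n,\dots,c_2)$.

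For $\iota^\alpha_d$ your sketch has a gap. First, your description of the labeling is inaccurate: there is no ``leftmost-unlabeled fallback.'' In the paper's algorithm, after the unfilled columns of $\dgprime(\alpha)$ have been labeled during the $b\geq 1$ part of the scan, the remaining columns are labeled by scanning rows $b=0,-1,-2,\dots$ over all coordinates $(a,b)$; this orders those columns by the depth of their current basement (fewest basement cells first), breaking ties left to right — not simply leftmost unlabeled. This matters for your proposed ``parallel argument'': because the $\iota^\alpha_d$ labeling \emph{does} depend on the basement state, if $a_1$ is inserted as a basement cell then $\iota^\alpha_d(c_n,\dots,c_1)$ with $a_1$ removed is in general \emph{not} equal to $\iota^\alpha_d(c_n,\dots,c_2)$ — the two runs will differ in where later basement cells land. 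So the clean recursive match you have for $\iota^\alpha$ does not transfer verbatim. What remains true (and what you would need to prove) is that the two runs agree after restricting to $\dgprime(\alpha)$, because the labeling of \emph{unfilled} columns of $\dgprime(\alpha)$ is still determined entirely by the $b\geq 1$ scan. The paper sidesteps the whole issue: it observes that, step by step, the unfilled columns of $\dgprime(\alpha)$ receive the same labels in the $\iota^\alpha$ and $\iota^\alpha_d$ subroutines, so the restrictions of $\iota^\alpha(c_n,\dots,c_1)$ and $\iota^\alpha_d(c_n,\dots,c_1)$ to $\dgprime(\alpha)$ coincide; well-definedness of $\iota^\alpha_d$ then follows immediately from well-definedness of $\iota^\alpha$, with no second induction needed. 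You should either adopt this shortcut or explicitly state and prove the restricted-to-$\dgprime(\alpha)$ agreement before running a parallel induction for $\iota^\alpha_d$.
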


\begin{proof}
Given $(c_n,\dots, c_1)\in \cC_{n,\sort(\alpha), s}(\rev(\gamma))$, it is immediate that the reading word of $\iota^\alpha(c_n,\dots, c_1)$ has content $\gamma$ and that the resulting filling is column-increasing, since $a_1\leq \dots\leq a_{n}$ are inserted in increasing order. Therefore, in order to prove that $\iota^\alpha(c_n,\dots, c_1)\in \fci_{n,\alpha, s}(\gamma)$, it suffices to prove that all cells of $\dgprime(\alpha)$ are filled at the end of the algorithm. We proceed by induction on $n$. In the base case $n=1$, if $\dgprime(\alpha)$ is empty then the conclusion is immediate. Otherwise, $\dgprime(\alpha)$ consists of one cell, whose column is labeled $0$ at the beginning of the algorithm. The only $(n,\sort(\alpha), s)$-staircase is $(0)$, hence the unique cell of $\dgprime(\alpha)$ is filled after step $1$.

In the inductive step, let $n>1$ and assume that $\iota^\alpha$ gives a map $\iota^\alpha : \cC_{m,\sort(\alpha), s}(\rev(\gamma))\to \fci_{m, \alpha, s}(\gamma)$ for all $m<n$ and all $\alpha$, $s$, and $\gamma$. Fix $\alpha$, $s$, and $\gamma$, and let $\la = \sort(\alpha)$. We claim that $\iota^\alpha$ gives a map $\iota^\alpha : \cC_{n,\la, s}(\rev(\gamma))\to \fci_{n,\alpha, s}(\gamma)$. Let $(c_n,\dots, c_1)\in \cC_{n,\la, s}(\rev(\gamma))$, and initialize the unfilled diagram $\dgprime(\alpha)$. 

At the first step of the algorithm, the label $a_1=1$ is inserted into the column labeled $c_1$. Suppose this column is the $j$th column from the left.
If $\alpha_{j} > 0$, then the remaining unfilled cells of $\dgprime(\alpha)$ form the conjugate diagram of the composition $\alpha^{(c_1)} \coloneqq (\alpha_1,\dots, \alpha_{j}-1,\dots, \alpha_s)$. By the way we have labeled the columns of $\dgprime(\alpha)$, we have $\sort(\alpha^{(c_1)}) = \la^{(c_1)}$. 

If $\gamma_1>1$, then by Lemma~\ref{lem:CompDecomp}, we have $(c_n,\dots, c_2)\in \cC^{(c_1)}_{n-1,\la^{(c_1)}, s}(\gamma_m,\dots, \gamma_2,\gamma_1-1)$. 
By our inductive hypothesis, we have the map
\begin{align}
\iota^{\alpha^{(c_1)}} : \cC_{n-1,\la^{(c_1)},s}(\gamma_m,\dots, \gamma_2, \gamma_1-1) \to \fci_{n-1, \alpha^{(c_1)}, s}(\gamma_1-1,\gamma_2,\dots, \gamma_m),
\end{align}
so there are no unfilled cells of $\dgprime(\alpha^{(c_1)})$ in $\iota^{\alpha^{(c_1)}}(c_n,\dots, c_2)$. Observe that by the construction of the insertion algorithm, $\iota^{\alpha^{(c_1)}}(c_n,\dots, c_2)$ is obtained from $\iota^\alpha(c_n,\dots, c_1)$ by deleting the cell labeled $a_1$. Hence, there are no unfilled cells of $\dgprime(\alpha)$ in $\iota^\alpha(c_n,\dots, c_1)$, so $\iota^\alpha(c_n,\dots, c_1)\in \fci_{n,\alpha, s}(\gamma)$.
The case when $\alpha_i > 0$ and $\gamma_1=1$, and the cases when $\alpha_i = 0$ and $\gamma_1=1$ or $\gamma_1>1$, follow by similar applications of the inductive hypothesis and Lemma~\ref{lem:CompDecomp}. Hence, $\iota^\alpha$ gives a map to $\fci_{n,\alpha, s}(\gamma)$ in all cases, and the induction is complete.

The fact that $\iota^\alpha_d$ is well-defined follows from the fact that $\iota^\alpha$ is well-defined. Indeed, it is immediate from the construction of $\iota_d^\alpha$ that the reading word of $\iota_d^\alpha(c_n,\dots, c_1)$ has content $\gamma$ and that the columns are weakly increasing. Furthermore, since unfilled columns of $\dgprime(\alpha)$ are labeled in the same way in both insertion algorithms, then the fillings $\iota^\alpha(c_n,\dots, c_1)$ and $\iota^\alpha_d(c_n,\dots, c_1)$ are the same when restricted to $\dgprime(\alpha)$. Hence, $\iota^\alpha_d(c_n,\dots, c_1)$ is also an element of $\fci_{n,\alpha, s}(\gamma)$, so $\iota_d^\alpha$ is a map to $\fci_{n,\alpha, s}(\gamma)$.
\end{proof}

\begin{lemma}\label{lem:InvcodeBij}
The map $\invcode^\alpha$ is a bijection with inverse $\iota^\alpha$, and the map $\dinvcode^\alpha$ is a bijection with inverse $\iota_d^\alpha$.
\end{lemma}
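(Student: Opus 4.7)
The plan is to establish that both compositions $\invcode^\alpha \circ \iota^\alpha$ and $\iota^\alpha \circ \invcode^\alpha$ are the identity, by simultaneous induction on $n$. The same argument will handle the pair $(\dinvcode^\alpha, \iota^\alpha_d)$, with the only change being that the column-labeling in the insertion uses a different scanning order, matching the structural difference between inversions and diagonal inversions.

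For the base case $n = 1$, direct verification suffices. For the inductive step, fix $(c_n, \ldots, c_1) \in \cC_{n, \sort(\alpha), s}(\rev(\gamma))$ and set $\float = \iota^\alpha(c_n, \ldots, c_1)$. The cell inserted at step $1$ carries the smallest label $a_1$ and is placed in the column assigned label $c_1$. Because the column-labeling at step $1$ scans in reading order (unfilled columns of $\dgprime(\alpha)$ first, then the remaining columns left-to-right), this cell is first in inversion reading order among all cells of $\float$ labeled $a_1$; hence it is precisely $(i_1, j_1)$ in the ordering used to define $\invcode^\alpha$. Once one verifies that exactly $c_1$ inversions of $\float$ are counted toward $(i_1, j_1)$, removing this cell reduces $\float$ to $\iota^{\alpha'}(c_n, \ldots, c_2)$, where $\alpha'$ is $\alpha$ with the relevant column shortened (or $\alpha$ itself if a basement cell was added); the inductive hypothesis then yields $\invcode^\alpha(\float) = (c_n, \ldots, c_1)$. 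The opposite composition follows by the mirror argument: peel off $(i_1, j_1)$ from an arbitrary $\float \in \fci_{n, \alpha, s}(\gamma)$, apply the inductive hypothesis to its $\invcode$, and observe that the corresponding insertion step on $(c_n, \ldots, c_1)$ reproduces $(i_1, j_1)$ in exactly the right spot.

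The main obstacle is counting the inversions contributed by the single cell $(i_1, j_1)$ at the top of the induction. One must match the $s$ distinct column labels $0, 1, \ldots, s-1$ at step $1$ bijectively with the $s$ possible inversion counts that a cell carrying the minimal label $a_1$ can produce in the final filling. This requires a case analysis distinguishing (i) unfilled columns of $\dgprime(\alpha)$, whose reading-order labeling corresponds to contributions of types (I1) and (I2) once the other cells in those columns and in attacking positions are populated with strictly larger labels, and (ii) fully filled or empty columns, labeled left-to-right, corresponding to type (I3) contributions from basement cells and integers $i$ appearing in inversions of the form $(i, (i', j'))$. For $\dinvcode^\alpha$, the analogous case analysis replaces reading-order scanning with left-to-right scanning across rows, which matches the $\alpha$-attacking pair definition and the types (D1), (D2) of diagonal inversions.
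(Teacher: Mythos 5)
Your strategy is essentially the paper's: both compositions are checked by the same per-step count, and the pair $(\dinvcode^\alpha,\iota^\alpha_d)$ is handled by replacing inversion reading order with reading order. The paper packages the argument slightly differently — it verifies $\invcode^\alpha\circ\iota^\alpha=\mathrm{Id}$ by checking the $(n-i+1)$th entry of the code at \emph{every} step $i$ directly, without induction, and uses induction only for $\iota^\alpha\circ\invcode^\alpha=\mathrm{Id}$ — but the counting at a single step is identical to what your inductive step requires, so this is a repackaging rather than a different route. One bookkeeping slip in your sketch of that counting: when $a_1$ is inserted into an \emph{unfilled} column of $\dgprime(\alpha)$, all $c_1$ contributing inversions are of type (I1) (there are no (I2) inversions here, since (I2) requires the target cell to be a basement cell); when $a_1$ is inserted as a basement cell, \emph{both} (I2) and (I3) contribute — the columns $q < i_1$ give (I3), the unfilled columns $q > i_1$ give (I2), and only together do they total $c_1$. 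You have placed (I2) in the wrong case; in the basement case the type-(I3) tally alone would fall short of $c_1$. This is a correctable bookkeeping error, not a flaw in the architecture, but it is exactly the place where a detailed write-up would go wrong if carried out as stated.
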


\begin{proof}
First, we prove that $\invcode^\alpha(\iota^\alpha(c_n,\dots, c_1)) = (c_n,\dots, c_1)$ for all $(c_n,\dots, c_1)\in \cC_{n,\sort(\alpha), s}(\rev(\gamma))$. At step $i$ in the algorithm constructing $\iota^\alpha(c_n,\dots, c_1)$, let $(a, b)$ be the coordinates at which the label $a_i$ is inserted.

First, suppose the column labeled $c_i$ is unfilled, so that $b\geq 1$. Since $(c_n,\dots, c_1)$ is $\rev(\gamma)$-weakly decreasing, then all other labels $a_j$ with $j > i$ and $a_j=a_i$ will be inserted in a position which is after $(a, b)$ in inversion reading order. Therefore, for every $q < a$ such that column $q$ has label less than $c_i$ then the cell $(q, b)$ is unfilled at step $i$. Therefore, the cell $(q, b)$ is filled with a label strictly larger than $a_i$ by the end of the algorithm. Furthermore, for every $q > a$ such that the column $q$ has label less than $c_i$, the cell $(q, b+1)$ is unfilled at step $i$, hence $(q, b+1)$ must be filled with a label strictly larger than $a_i$ by the end of the algorithm. Since there are $c_i$ many labels less than $c_i$, then the number of inversions of type (I1) of the form $((i,j),(a, b))$ in $\iota^\alpha(c_n,\dots, c_1)$ is $c_i$. Hence, the $(n-i+1)$th entry of $\invcode^\alpha(\iota^\alpha(c_n,\dots, c_1))$ is $c_i$, as desired.

Second, suppose column $a$ is filled, so that $a_i$ is inserted into a basement cell in column $a$. By construction of the algorithm, for each column $q>a$ which is unfilled at step $i$, the entry $(q,1)$ will be filled with a number strictly greater than $a_i$. Furthermore, each column $q < a$ has a label which is smaller than $c_i$. Hence, the number of inversions in $\iota^\alpha(c_n,\dots, c_1)$ of type (I2) of the form $((q,1),(a, b))$ plus the number of type (I3) of the form $(q,(a, b))$ is $c_i$, so the $(n-i+1)$th entry of $\invcode^\alpha(\iota^\alpha(c_n,\dots, c_1))$ is $c_i$. We conclude that $\invcode^\alpha(\iota^\alpha(c_n,\dots, c_1)) = (c_n,\dots, c_1)$, so $\invcode^\alpha\circ\iota^\alpha = \mathrm{Id}$. 

We claim that $\iota^\alpha\circ \invcode^\alpha = \mathrm{Id}$. Let $\float\in \fci_{n,\alpha, s}(\gamma)$ with $(c_n,\dots, c_1) = \invcode^\alpha(\float)$. Suppose column $a$ is the unique column labeled $c_1$ at step $1$ of the insertion algorithm for $\iota^\alpha$. By the definition of $c_1$, the first $1$ in $\float$ in inversion reading order is in the top-most cell of column $a$. Therefore, the location of the first $1$ in $\float$ is the same as the location of the label $a_1$ in $\iota^\alpha(c_n,\dots, c_1)$. Since $\iota^{\alpha^{(c_1)}}(c_n,\dots, c_2)$ is obtained from $\iota^\alpha(c_n,\dots, c_1)$ by deleting the first cell labeled $1$ (and shifting labels in the case that $\gamma_1=1$), then a straightforward induction on $n$ shows that $\iota^\alpha(c_n,\dots, c_1) = \float$, so the claim follows. 

We conclude that $\iota^\alpha$ and $\invcode^\alpha$ are mutually inverse, and hence they are bijections. The fact that $\iota^\alpha_d$ and $\dinvcode^\alpha$ are bijections which are mutually inverse follows by a similar argument using reading order in place of inversion reading order.
\end{proof}

\begin{figure}
\centering
\includegraphics[scale=0.7]{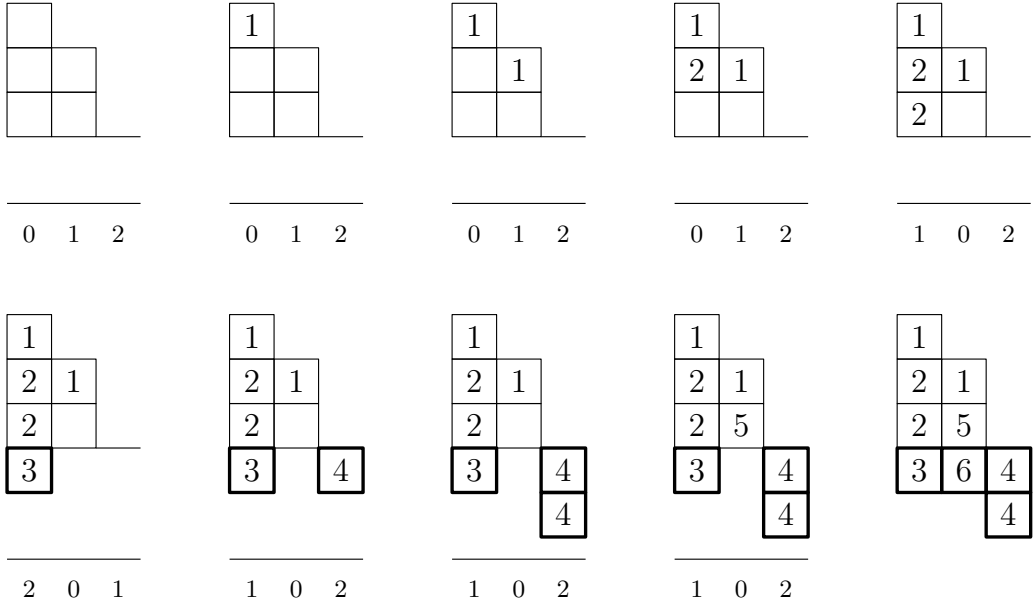}
\caption{An example of the insertion algorithm for $\iota^\alpha_d(c_n,\dots, c_1)$ when $n=9$, $\alpha = (3,2,0)$, $s=3$, and $(d_n,\dots, d_1) = (0,0,2,1,1,0,0,1,0)$.\label{fig:DinvInsertion}}
\end{figure}

\begin{proof}[Proof of Theorem~\ref{thm:Equidistrib}]
We have the bijection
\begin{align}
\iota^\la \circ \dinvcode^\la : \fci_{n, \la, s}(\gamma) \to \fci_{n,\la, s}(\gamma),
\end{align}
which maps $\float\in \fci_{n,\la, s}(\gamma)$ to $\float'\in \fci_{n,\la, s}(\gamma)$ with $\dinvcode^\la(\float) = \invcode^\la(\float')$. Therefore, we have $\fdinv(\float) = \finv(\float')$. Hence, we have
\begin{align}\label{eq:CoeffOfM}
\sum_{\float\in \fci_{n,\la, s}(\gamma)} q^{\finv(\float)} = \sum_{\float\in \fci_{n,\la, s}(\gamma)} q^{\fdinv(\float)},
\end{align}
for all $\gamma = (\gamma_1,\dots, \gamma_m)$ a composition of $n$ into positive parts. Let $\mathrm{set}(\gamma) \coloneqq \{\gamma_1,\gamma_1+\gamma_2,\dots, \gamma_1+\cdots +\gamma_{m-1}\}$. Since both sides of~\eqref{eq:Equidistrib} are quasisymmetric, we have the following expansions into monomial quasisymmetric functions,
\begin{align}
\sum_{\float\in \fci_{n,\la, s}} q^{\finv(\float)}\bx^\float &= \sum_{\gamma} \left(\sum_{\float\in \fci_{n,\la, s}(\gamma)} q^{\finv(\float)}\right) M_{n, \mathrm{set}(\gamma)}(\bx),\label{eq:M1}\\
 \sum_{\float\in \fci_{n,\la, s}} q^{\fdinv(\float)}\bx^\float &=\sum_{\gamma} \left(\sum_{\float\in \fci_{n, \la, s}(\gamma)} q^{\fdinv(\float)}\right) M_{n, \mathrm{set}(\gamma)}(\bx),\label{eq:M2}
\end{align}
where the sums are over compositions $\gamma$ of $n$ such that $\gamma_i > 0$ for all $i$. By \eqref{eq:CoeffOfM}, the right-hand sides of \eqref{eq:M1} and \eqref{eq:M2} are equal. This completes the proof.
\end{proof}

\begin{definition}\label{def:DDef}
Let $D_{n,\la, s}$ be the multivariate generating function in Theorem~\ref{thm:Equidistrib},
\begin{align}
D_{n,\la, s} \coloneqq \sum_{\float\in \fci_{n,\la, s}} q^{\finv(\float)}\bx^\float = \sum_{\float\in \fci_{n,\la, s}} q^{\fdinv(\float)} \bx^\float.\label{eq:DFormula}
\end{align}
\end{definition}

\begin{corollary}\label{cor:FBasisExpansion}
We have
\begin{align}
D_{n, \la, s} = \sum_{\float \in \FCI_{n, \la, s}} q^{\finv(\float)}F_{n,\mathrm{iDes(\rw(\float))}}(\bx) = \sum_{\float\in \FCI_{n,\la, s}} q^{\dinv(\float)} F_{n,\iDes(\rw(\float))}(\bx),
\end{align}
where for any permutation $\pi\in S_n$,  $\iDes(\pi) \coloneqq \Des(\pi^{-1})$.
\end{corollary}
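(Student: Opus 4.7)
The strategy is a standard standardization argument: group the sum in Definition~\ref{def:DDef} by standardization and invoke the alternate expression~\eqref{eq:GesselAlt} for Gessel's fundamental quasisymmetric functions.

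Given $\mathbf{f}\in \fci_{n,\la,s}$, define its standardization $\std(\mathbf{f})\in \FCI_{n,\la,s}$ to be the unique standard filling of the same shape obtained by replacing the labels of $\mathbf{f}$ with $1,\dots,n$ preserving relative order, with ties between equal labels broken according to reading order (the earlier cell receives the smaller standard value). One first verifies that $\std(\mathbf{f})$ is again column-increasing: since reading order traverses each column from top to bottom, equal labels in a single column are assigned increasing standard values from top to bottom, so (weak) column-increase is upgraded to strict column-increase. By construction $\rw(\std(\mathbf{f}))=\std(\rw(\mathbf{f}))$, and the map $\mathbf{f}\mapsto \rw(\mathbf{f})$ is a bijection between fillings $\mathbf{f}\in \fci_{n,\la,s}$ with $\std(\mathbf{f})=\varphi$ and words $w\in \mathbb{N}^n$ with $\std(w)=\rw(\varphi)$. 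Hence, by~\eqref{eq:GesselAlt},
\[
\sum_{\substack{\mathbf{f}\in \fci_{n,\la,s}\\ \std(\mathbf{f})=\varphi}} \bx^{\mathbf{f}} \;=\; \sum_{\substack{w\in \mathbb{N}^n\\ \std(w)=\rw(\varphi)}} \prod_{i\geq 1} x_i^{\#\,i\text{'s in }w} \;=\; F_{n,\iDes(\rw(\varphi))}(\bx).
\]

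Next, I would check that $\finv$ and $\fdinv$ are each invariant under standardization: $\finv(\mathbf{f})=\finv(\std(\mathbf{f}))$ and $\fdinv(\mathbf{f})=\fdinv(\std(\mathbf{f}))$. This is a case analysis on the inversion types in Definitions~\ref{def:DefOfInv} and~\ref{def:DefOfDinv}. In every type that compares two labels of $\mathbf{f}$, namely (I1), (I2), and (D1), the ordered pair $((i,j),(i',j'))$ is arranged so that $(i,j)$ precedes $(i',j')$ in reading order: this is built into (I1); for (I2) all diagram cells precede all basement cells in reading order; and for (D1) one directly checks in each of the two subcases of the $\alpha$-attacking condition that $(i,j)$ reads before $(i',j')$. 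Because standardization preserves strict inequalities and breaks ties by assigning the smaller value to the earlier reading position, each strict inequality $\mathbf{f}_{i,j}>\mathbf{f}_{i',j'}$ becomes $\std(\mathbf{f})_{i,j}>\std(\mathbf{f})_{i',j'}$, and each equality becomes a non-inversion in $\std(\mathbf{f})$. The remaining types (I3) and (D2) reference no labels of $\mathbf{f}$, so they are trivially invariant.

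Combining these two ingredients gives
\[
D_{n,\la,s} \;=\; \sum_{\mathbf{f}\in \fci_{n,\la,s}} q^{\finv(\mathbf{f})}\bx^{\mathbf{f}} \;=\; \sum_{\varphi\in \FCI_{n,\la,s}} q^{\finv(\varphi)} \!\!\sum_{\std(\mathbf{f})=\varphi}\!\! \bx^{\mathbf{f}} \;=\; \sum_{\varphi\in \FCI_{n,\la,s}} q^{\finv(\varphi)} F_{n,\iDes(\rw(\varphi))}(\bx),
\]
and the identical argument with $\fdinv$ replacing $\finv$ yields the second equality in the corollary. The main obstacle is the standardization-invariance of $\finv$ and $\fdinv$; everything else is routine bookkeeping with the bijection and the definition of $F_{n,D}$.
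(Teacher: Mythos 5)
Your proposal is correct and follows essentially the same route as the paper: the paper's proof simply states that $\finv$ depends only on the shape and the relative order of the labels (the standardization of the reading word) and then invokes~\eqref{eq:GesselAlt}, whereas you spell out the standardization map, verify that it lands in $\FCI_{n,\la,s}$, check the bijection with words of a fixed standardization, and carry out the case analysis showing each inversion type is preserved (because in types (I1), (I2), (D1) the first cell always precedes the second in reading order, making strict inequalities and the tie-breaking convention compatible). This is the same argument, just with the details that the paper leaves implicit written out in full.
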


\begin{proof}
Recall from~\eqref{eq:GesselAlt} that
\begin{align}
F_{n,D}(\bx) = \sum_{\substack{w\in \mathbb{N}^n,\\ \standard(w) = \pi}} \prod_{i\geq 1} x_i^{\# i\text{'s in }w},
\end{align}
where $\pi\in S_n$ is a fixed permutation such that $\iDes(\pi) = D$.
Since the statistic $\finv$ is only dependent on $\la$, the positions of the labels in $\float$, and $\rw(\float)$, then the result follows immediately by Theorem~\ref{thm:Equidistrib} and Definition~\ref{def:DDef}.
\end{proof}

\begin{theorem}\label{thm:LLTThm}
The generating function $D_{n,\la, s}$ is a symmetric function which expands as a positive sum of LLT symmetric functions, each shifted by some power of $q$.
\end{theorem}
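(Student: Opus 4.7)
The strategy is to decompose $D_{n,\la,s}$ according to the basement shape of each filling and identify each piece, up to a power of $q$, with an LLT polynomial. For a tuple $\mathbf{b} = (b_1,\dots,b_s)$ of nonnegative integers satisfying $b_1+\cdots+b_s = n - |\la|$, let $\fci^{\mathbf{b}}_{n,\la,s}$ denote the set of $\float \in \fci_{n,\la,s}$ with exactly $b_i$ basement cells in column $i$. Using the $\fdinv$ expression of Definition~\ref{def:DDef},
\[
D_{n,\la,s} = \sum_{\mathbf{b}} \sum_{\float\in\fci^{\mathbf{b}}_{n,\la,s}} q^{\fdinv(\float)} \bx^\float.
\]
A type (D2) inversion, by Definition~\ref{def:DefOfDinv}, is determined solely by which cells do and do not belong to $\diagram(\float)$; hence its count $d_2(\mathbf{b})$ is constant on $\fci^{\mathbf{b}}_{n,\la,s}$ and factors out as $q^{d_2(\mathbf{b})}$.

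Next, for each $\mathbf{b}$ I would construct a tuple of skew shapes $\boldsymbol{\nu}(\mathbf{b}) = (\nu^{(1)},\dots,\nu^{(s)})$, where $\nu^{(i)} = \mu^{(i)}/\eta^{(i)}$ is a single-column skew shape with $\la_i + b_i$ cells, placed in the plane so that its cells have contents $1-b_i,\, 2-b_i,\, \dots,\, \la_i$. Any such single column can be realized as a skew shape by choosing $\mu^{(i)}$ and $\eta^{(i)}$ with appropriate leading entries. Then I would build a bijection between $\fci^{\mathbf{b}}_{n,\la,s}$ and $\SSYT(\boldsymbol{\nu}(\mathbf{b}))$ (or its row-shape analogue, depending on convention) that sends the label of $\float$ at position $(i,j)$ to the cell of $\nu^{(i)}$ of content $j$. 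By the choice of contents, for $i < i'$ the LLT coincidence $c(u)=c(v)$ corresponds to the cells $(i,j)$ and $(i',j)$ sitting in the same row of the extended diagram, and for $i > i'$ the LLT relation $c(u)=c(v)+1$ corresponds to the cells $(i,j)$ and $(i',j-1)$; these are exactly the two forms of $\la$-attacking pair in Definition~\ref{def:DefOfDinv}. Consequently the set of (D1) diagonal inversions of $\float$ is carried bijectively to the set of LLT inversions of the corresponding tableau tuple, the monomial weights agree, and
\[
\sum_{\float\in\fci^{\mathbf{b}}_{n,\la,s}} q^{\fdinv(\float)} \bx^\float \;=\; q^{d_2(\mathbf{b})}\, G_{\boldsymbol{\nu}(\mathbf{b})}(\bx;q).
\]

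Summing over $\mathbf{b}$ exhibits $D_{n,\la,s}$ as a nonnegative $q$-shifted sum of LLT polynomials, and its symmetry then follows immediately from Theorem~\ref{thm:LLTSymmetry}. The main obstacle will be reconciling orientation and strictness conventions, since SSYT of single columns are strictly increasing while column-increasing fillings are only weakly increasing: one must decide whether to represent each column of the extended diagram as a single column or a single row (equivalently, a vertical-strip ribbon) of the LLT tuple, and verify that both the monomial weight and the attacking structure of Definition~\ref{def:DefOfDinv} — in particular the handling of basement cells — are preserved under that choice. Once the correct indexing shapes are fixed, the identification above, together with the label-independence of (D2) inversions, yields the claimed positive LLT expansion.
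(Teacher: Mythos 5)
Your strategy coincides with the paper's proof: decompose $D_{n,\la,s}$ by basement shape, observe that the type~(D2) diagonal inversions contribute a fixed power $q^{d_2(\mathbf{b})}$ for each basement shape, match cell $(i,j)$ of $\float$ to the cell of content $j$ in $\nu^i$, check that the two types of $\alpha$-attacking pair correspond exactly to the two content relations $c(u)=c(v)$ (when $i<i'$) and $c(u)=c(v)+1$ (when $i>i'$) in the LLT inversion definition, and then invoke Theorem~\ref{thm:LLTSymmetry}. The one issue you flag but leave unresolved is the orientation of the shapes $\nu^{(i)}$, and this is not an arbitrary choice: each $\nu^{(i)}$ must be a \emph{single row}, not a single column. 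The filling $\float$ is only weakly increasing down each column of $\diagram(\float)$, so the corresponding cells in $\nu^{(i)}$ must sit in a shape whose SSYT condition is weak; a single row of a skew shape carries the weak left-to-right increase, whereas a single column would force strict increase and the bijection to $\SSYT(\boldsymbol{\nu}(\mathbf{b}))$ would fail (indeed $\fci^{\mathbf{b}}_{n,\la,s}$ contains fillings with repeated entries in a column). Your parenthetical ``equivalently, a vertical-strip ribbon'' is also misleading, since a single row is a horizontal strip. Once you fix $\nu^{(i)}$ to be the single row of size $\la_i + b_i$ with cells of contents $\la_i, \la_i-1,\dots, -b_i+1$ from left to right (and for $i > \ell(\la)$ the row of size $b_i$ with contents $0,-1,\dots,-b_i+1$), the weak-increase conditions match, the attacking correspondence you outline goes through verbatim, and the identity
\[
\sum_{\float\in\fci^{\mathbf{b}}_{n,\la,s}} q^{\fdinv(\float)}\bx^\float = q^{d_2(\mathbf{b})}\,G_{\boldsymbol{\nu}(\mathbf{b})}(\bx;q)
\]
follows, completing the argument exactly as you intend.
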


\begin{proof}
The right-most side of \eqref{eq:DFormula} is the sum over all $\beta\in \Comp(n-k, s)$ of
\begin{align}\label{eq:RestrictedSum}
\sum_{\float} q^{\fdinv(\float)} \bx^\float,
\end{align}
where the sum ranges over column-increasing fillings $\float\in \fci_{n,\la, s}$ which have $\beta_i$ many basement cells in column $i$ for $1\leq i\leq s$. Fix $\beta$, and let $m$ be the number of diagonal inversions of type (D2) in a filling $\float\in \fci_{n,\la, s}$ which has $\beta_i$ many basement cells in column $i$ for $1\leq i\leq s$. Observe that $m$ is only dependent on $\beta$ and $s$. 

Recall the LLT polynomial $G_{\boldsymbol{\nu}}(\bx;q)$ defined in Subsection~\ref{subsec:HLFunctions}. For $1\leq i\leq \ell(\la)$, let $\nu^i$ be the single row of size $\beta_i+\la_i$, shifted so that the cells have contents 
\begin{align}
\la_i, \la_i-1,\dots, 1,0, -1,\dots, -\beta_i+1
\end{align}
 from left to right. For $\ell(\la) < i \leq s$, let $\nu^i$ be the single row of size $\beta_i$, shifted so that the cells have contents $0,-1,\dots, -\beta_i+1$. Let $\boldsymbol{\nu} = (\nu^1,\dots,\nu^s)$. 

We claim that
\begin{align}\label{eq:ShiftedLLT}
\sum_{\float} q^{\fdinv(\float)} \bx^\float = q^m G_{\boldsymbol{\nu}}(\bx;q),
\end{align} 
where the sum is over column-increasing fillings $\float\in \fci_{n,\la, s}$ which have $\beta_i$ many basement cells in column $i$ for $1\leq i\leq s$.
Indeed, to each $T = (T^{1}, \dots, T^{s})\in \SSYT(\boldsymbol{\nu})$ associate the column-increasing filling $\float\in \fci_{n,\la, s}$ whose $i$th column is filled with the same multiset of labels as $T^{i}$. It can then be checked that inversions in $T$ correspond to diagonal inversions of type (D1) in $\float$, which proves our claim. Since $D_{n,\la, s}$ is a sum over shifted LLT polynomials of the form~\eqref{eq:ShiftedLLT}, and each LLT polynomial is symmetric in $\bx$ by Theorem~\ref{thm:LLTSymmetry}, then $D_{n,\la, s}$ is symmetric in $\bx$.
\end{proof}

	\subsection{Monomial and quasisymmetric function formulas for $\Frobq(R_{n,\lambda, s})$}\label{subsec:ProofOfCoinvQSym}
		
In this subsection, we prove an expansion of $\Frobq(R_{n,\la, s})$ into Gessel's fundamental quasisymmetric functions in terms of the coinversion statistic. Our main tool to prove this result is the skewing operator $e_j(\bx)^\perp$. We obtain both the monomial expansion of $\Frobq(R_{n,\la, s})$ stated in the introduction as \eqref{eq:MonomialExpansion} and the formula for the Hilbert series of $R_{n,\la, s}$ stated as \eqref{eq:HilbEquation2}.

We first state the main theorem in this section, Theorem~\ref{thm:MonomialTheorem}, and an immediate corollary. The remainder of this subsection is dedicated to proving Theorem~\ref{thm:MonomialTheorem}.

\begin{theorem} \label{thm:MonomialTheorem}
We have
\begin{align}\label{eq:MonomialFrobFormula}
\Frobq(R_{n,\la, s}) = D_{n, \la, s} = \sum_{\float\in \fci_{n,\la, s}} q^{\finv(\float)} \bx^{\float}  = \sum_{\float\in\fci_{n,\la, s}} q^{\fdinv(\float)} \bx^{\float},
\end{align}
\end{theorem}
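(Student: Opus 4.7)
The strategy is to induct on $n$ and apply the skewing criterion of Lemma~\ref{lem:AntisymmIdentification}, paralleling the approach of Garsia--Procesi and Haglund--Rhoades--Shimozono. By Theorem~\ref{thm:Equidistrib} the two equalities in \eqref{eq:MonomialFrobFormula} are equivalent, so it suffices to prove just one; I will work with $\fdinv$, since its combinatorics meshes most cleanly with the LLT expansion from the proof of Theorem~\ref{thm:LLTThm}.

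For the base case $n=0$, one has $R_{0,\emptyset,s}=\bQ$ and $D_{0,\emptyset,s}=1$, so the identity holds. For the inductive step, assume $\Frobq(R_{m,\mu,s'})=D_{m,\mu,s'}$ for all $m<n$, all $s'$, and all $\mu$. Both $\Frobq(R_{n,\la,s})$ and $D_{n,\la,s}$ are symmetric functions: the former by general theory of graded $S_n$-modules, the latter by Theorem~\ref{thm:LLTThm}. Both are homogeneous of $\bx$-degree $n\geq 1$, hence have vanishing constant term. Lemma~\ref{lem:AntisymmIdentification} therefore reduces the theorem to the claim that
\[
e_j(\bx)^{\perp}\Frobq(R_{n,\la,s}) \;=\; e_j(\bx)^{\perp}D_{n,\la,s}
\]
for every $j\geq 1$. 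By Theorem~\ref{thm:AntisymmFrobqRecursion} combined with the inductive hypothesis, the left-hand side equals $\sum_{I\in\cI_s^j} q^{\Sigma(I)} D_{n-j,\la^{(I)},s}$. Thus the entire proof reduces to the purely combinatorial skewing identity
\[
e_j(\bx)^{\perp}D_{n,\la,s} \;=\; \sum_{I\in \cI_s^j} q^{\Sigma(I)} \, D_{n-j,\la^{(I)},s}. \qquad(\star)
\]

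The main obstacle is $(\star)$. My plan is to prove it by a weight-preserving combinatorial argument on extended column-increasing fillings. Concretely, recall that for a symmetric function $F(\bx)$, the operator $e_j(\bx)^{\perp}$ can be extracted from $F$ by antisymmetrizing in $j$ fresh auxiliary variables $z_1,\dots,z_j$ and reading off the coefficient of $z_1^0 z_2^1 \cdots z_j^{j-1}$; I will adjoin $j$ ``virtual'' columns corresponding to these variables to the diagrams carrying $\float\in\fci_{n,\la,s}$. Given a filling $\float\in\fci_{n,\la,s}$, the fillings that survive the antisymmetrization are those in which the $j$ marked cells have strictly increasing labels whose total contribution to the exponent of the chosen columns is some tuple $I=(i_1<\cdots<i_j)\in\cI_s^j$. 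Removing the $j$ marked cells produces a filling of the conjugate diagram of $\la^{(I)}$ (here one must check, exactly as in Lemma~\ref{lem:RemovingBoxes}, that it is the sorted shape $\la^{(I)}$ that arises), and the displaced diagonal inversions among the removed cells contribute precisely the factor $q^{\Sigma(I)}$. The remaining diagonal inversions reorganize into $\fdinv$ of the smaller filling.

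The bookkeeping for this bijection, namely proving that $\fdinv$ splits exactly as $\Sigma(I)$ plus the $\fdinv$ of the reduced filling in $\fci_{n-j,\la^{(I)},s}$, is the technical heart of the argument. A cleaner alternative, which I would use as a cross-check, is to expand $D_{n,\la,s}$ into LLT polynomials via the formula $\sum_\beta q^{m_\beta} G_{\boldsymbol{\nu}_\beta}(\bx;q)$ from the proof of Theorem~\ref{thm:LLTThm}, apply $e_j(\bx)^{\perp}$ via the adjoint-to-multiplication-by-$e_j$ rule on each LLT factor (which peels off vertical strips of size $j$ from the tuple of ribbons), and check that the resulting terms reorganize into the LLT expansion of $\sum_{I\in\cI_s^j} q^{\Sigma(I)} D_{n-j,\la^{(I)},s}$. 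Once $(\star)$ is established, the induction closes and Theorem~\ref{thm:MonomialTheorem} follows; the Hilbert series formula \eqref{eq:HilbEquation2} then drops out by extracting the coefficient of $x_1\cdots x_n$ via \eqref{eq:HilbFromFrobq}.
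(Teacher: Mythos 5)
Your high-level strategy matches the paper: reduce to the skewing identity
\[
e_j(\bx)^{\perp}D_{n,\la,s} \;=\; \sum_{I\in \cI_s^j} q^{\Sigma(I)} \, D_{n-j,\la^{(I)},s}
\]
via Lemma~\ref{lem:AntisymmIdentification}, Theorem~\ref{thm:AntisymmFrobqRecursion}, Theorem~\ref{thm:LLTThm}, and induction on $n$. But the heart of the argument --- proving that skewing identity --- you leave as a sketch, and the specific choices you make there create genuine trouble.

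First, you elect to work with $\fdinv$ ``since its combinatorics meshes most cleanly with the LLT expansion.'' The paper's bijective proof of the skewing identity works with $\finv$, not $\fdinv$, and this choice is essential, not cosmetic. In the paper, one takes the inner product with $e_{\beta_2}\cdots e_{\beta_p}$ and considers $\beta$-shuffle fillings; removing the $j$ cells labeled $1,\dots,j$ produces a filling of a \emph{composition} shape $\dgprime(\alpha^I)$ (not directly of the partition $\la^{(I)}$), and one verifies the clean splitting $\finv(\float)=\Sigma(I)+\finv(\float')$. This splitting works because the type~(I3) inversions attached to a basement cell depend only on its column index, so each removed label contributes exactly $i_{j-h+1}$ and removing it does not disturb the contributions of the remaining cells. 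For $\fdinv$, the basement cells contribute through type~(D2) diagonal inversions, which count $\alpha$-attacking pairs where one coordinate is a ``missing'' cell --- a quantity sensitive to the full basement shape. Removing a cell labeled $h\le j$ alters which coordinates are missing and forces remaining basement cells to shift, so the analogous factorization $\fdinv(\float)=\Sigma(I)+\fdinv(\float')$ does not follow by the same count and I doubt it holds in a usable form. (Section~6 of the paper records a related asymmetry: $\finv$ is independent of $s$ while $\fdinv$ is not; they are equidistributed but far from interchangeable in structural arguments.) The safe repair is simply to run the bijection with $\finv$, as the paper does, and deduce the $\fdinv$ version afterward from Theorem~\ref{thm:Equidistrib}.

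Second, the passage from the composition shape $\alpha^I$ back to the partition shape $\la^{(I)}$ is handled by Lemma~\ref{lem:SortingLemma} (the $\invcode/\iota$ sorting bijection), not by Lemma~\ref{lem:RemovingBoxes}, which you cite; the latter concerns dominance comparisons $\la^{(I)}\dominatedby\la^{(H)}$ used elsewhere (in the proof of Lemma~\ref{lem:AltBasis}). And note Lemma~\ref{lem:SortingLemma} as stated is only proved for $\finv$, which is another reason to stick with $\finv$ in the bijection. Third, your ``cleaner alternative'' via LLT is considerably harder than you acknowledge: there is no simple closed formula for $e_j(\bx)^{\perp}$ of an LLT polynomial, and the combinatorics of peeling vertical strips off a tuple of ribbons while tracking inversion statistics is at least as intricate as the direct bijection, not a short-cut. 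With the switch from $\fdinv$ to $\finv$ and the full bijective bookkeeping supplied, your outline does become a correct proof along the paper's lines.
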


\begin{corollary}\label{cor:FakeInvFormulaHilb}
We have
\begin{align}
\Hilbq(R_{n,\la, s}) = \sum_{\float \in \FCI_{n,\la, s}} q^{\finv(\float)} = \sum_{\float\in \FCI_{n,\la, s}} q^{\fdinv(\float)}.
\end{align}
\end{corollary}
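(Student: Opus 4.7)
The proof will be a short deduction from Theorem~\ref{thm:MonomialTheorem}, using the standard identity~\eqref{eq:HilbFromFrobq} which gives
\[
\Hilbq(R_{n,\la, s}) = [x_1\cdots x_n]\,\Frobq(R_{n,\la, s}).
\]
The plan is to take the monomial expansions of $\Frobq(R_{n,\la, s})$ provided by Theorem~\ref{thm:MonomialTheorem} and extract the coefficient of $x_1 x_2\cdots x_n$ term-by-term.

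Concretely, I would argue as follows. Recall that for $\float\in \fci_{n,\la, s}$, the monomial $\bx^\float = \prod_{i\geq 1}x_i^{\#\,i\text{'s in }\float}$ equals $x_1 x_2\cdots x_n$ precisely when each of the labels $1,2,\dots, n$ appears exactly once in $\float$, i.e.\ when $\float$ is a \emph{standard} extended column-increasing filling. Thus the set of $\float\in \fci_{n,\la, s}$ contributing a nonzero term to $[x_1\cdots x_n]\bx^\float$ is exactly $\FCI_{n,\la, s}$, and each such $\float$ contributes $1$ to this coefficient. Applying this to both monomial formulas in Theorem~\ref{thm:MonomialTheorem} yields
\[
\Hilbq(R_{n,\la, s})
\,=\,[x_1\cdots x_n]\sum_{\float\in \fci_{n,\la, s}} q^{\finv(\float)}\bx^\float
\,=\,\sum_{\float\in \FCI_{n,\la, s}} q^{\finv(\float)},
\]
and analogously for the $\fdinv$ statistic.

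There is essentially no obstacle here; the only point worth noting is that the two equalities of the corollary correspond separately to the two monomial expansions in Theorem~\ref{thm:MonomialTheorem}, so one may also observe that their equality is already an immediate consequence of Theorem~\ref{thm:Equidistrib} restricted to standard fillings. Either way, the argument is a one-line extraction of the coefficient of $x_1\cdots x_n$.
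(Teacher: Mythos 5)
Your proof is correct and is precisely the paper's argument: extract the coefficient of $x_1\cdots x_n$ from the monomial expansions in Theorem~\ref{thm:MonomialTheorem} via \eqref{eq:HilbFromFrobq}, noting that only standard fillings contribute. You have merely spelled out what the paper leaves to the reader, and your observation that the second equality also follows directly from Theorem~\ref{thm:Equidistrib} is a valid (if redundant) remark.
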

\begin{proof}
Apply \eqref{eq:HilbFromFrobq} to $V=R_{n,\la, s}$ and the monomial expansions in~\eqref{eq:MonomialFrobFormula}.
\end{proof}

\begin{lemma}\label{lem:SortingLemma}
Given $\alpha\in \Comp(k, s)$, we have 
\begin{align}\label{eq:SortingLemma}
\sum_{\float\in \fci_{n,\alpha, s}} q^{\inv(\float)} \bx^\float = D_{n, \sort(\alpha), s}.
\end{align}
\end{lemma}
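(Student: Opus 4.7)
The plan is to exploit the insertion-code machinery already developed in Subsection~\ref{subsec:Equidistribution}, which was set up for arbitrary compositions $\alpha$ and not just partitions. The key observation is that the codomain of the map $\invcode^\alpha$ depends on $\alpha$ only through $\sort(\alpha)$: by Lemma~\ref{lem:InvcodeWellDefined}, for every composition $\gamma$ of $n$ into positive parts we have bijections (via Lemma~\ref{lem:InvcodeBij})
\begin{align*}
\invcode^\alpha &: \fci_{n,\alpha,s}(\gamma) \xrightarrow{\sim} \cC_{n,\sort(\alpha),s}(\rev(\gamma)),\\
\invcode^{\sort(\alpha)} &: \fci_{n,\sort(\alpha),s}(\gamma) \xrightarrow{\sim} \cC_{n,\sort(\alpha),s}(\rev(\gamma)),
\end{align*}
with inverses $\iota^\alpha$ and $\iota^{\sort(\alpha)}$ respectively.

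First I would compose these to obtain a bijection
\[
\iota^{\sort(\alpha)}\circ \invcode^\alpha : \fci_{n,\alpha,s}(\gamma) \xrightarrow{\sim} \fci_{n,\sort(\alpha),s}(\gamma).
\]
By the defining property of the inversion code (the sum of its entries equals $\finv$), this composite preserves $\finv$. It also preserves content $\gamma$ by construction, hence preserves the monomial weight $\bx^{\float}$, since $\bx^{\float} = \prod_{i\geq 1} x_i^{\#i\text{'s in }\rw(\float)}$ depends only on the content of the reading word.

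Then I would sum over all compositions $\gamma$ of $n$ into positive parts. Both $\fci_{n,\alpha,s}$ and $\fci_{n,\sort(\alpha),s}$ decompose as disjoint unions over $\gamma$ of the corresponding content classes, so assembling the bijections yields
\[
\sum_{\float\in \fci_{n,\alpha,s}} q^{\finv(\float)}\bx^{\float} = \sum_{\float\in \fci_{n,\sort(\alpha),s}} q^{\finv(\float)}\bx^{\float} = D_{n,\sort(\alpha),s},
\]
the last equality being Definition~\ref{def:DDef}. This is exactly~\eqref{eq:SortingLemma}.

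There is essentially no obstacle here: all of the work was done in proving that $\invcode^\alpha$ is a well-defined bijection for arbitrary $\alpha$ (Lemmas~\ref{lem:InvcodeWellDefined} and~\ref{lem:InvcodeBij}), and the statement $\sum_i \invcode^\alpha(\float)_i = \finv(\float)$ was recorded in Subsection~\ref{subsec:Fcoinv}. The only subtlety worth flagging in the write-up is to verify that every $\float\in \fci_{n,\alpha,s}$ really lies in some $\fci_{n,\alpha,s}(\gamma)$ with $\gamma$ a composition into strictly positive parts; this is automatic because $\gamma$ is defined as the content of the reading word, which by definition only records letters that actually appear.
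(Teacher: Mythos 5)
Your proof uses the same central ingredient as the paper's: the bijection $\iota^{\sort(\alpha)}\circ\invcode^\alpha:\fci_{n,\alpha,s}(\gamma)\to\fci_{n,\sort(\alpha),s}(\gamma)$ from Lemma~\ref{lem:InvcodeBij}, which preserves $\finv$ and the monomial weight. However, your final assembly step contains a genuine gap, and the ``subtlety'' you flag at the end is in fact the crux of the matter — and your proposed resolution of it is incorrect. You claim that $\fci_{n,\alpha,s}$ is the disjoint union of the content classes $\fci_{n,\alpha,s}(\gamma)$ over compositions $\gamma$ of $n$ into \emph{positive} parts, on the grounds that ``content only records letters that actually appear.'' That is not the paper's definition: the content of a word $w$ is the vector $(\gamma_1,\dots,\gamma_m)$ where $\gamma_i$ counts occurrences of $i$ for \emph{all} $1\le i\le m$. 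A filling $\float$ whose reading word is, say, $1\,3\,3\,5$ has content $(1,0,2,0,1)$, not $(1,2,1)$; it lies in no $\fci_{n,\alpha,s}(\gamma)$ with $\gamma$ having strictly positive parts. So the positive-$\gamma$ classes do not cover $\fci_{n,\alpha,s}$, and summing the fixed-content identities over only positive $\gamma$ does not directly yield \eqref{eq:SortingLemma}. (Note also that Lemmas~\ref{lem:InvcodeWellDefined} and~\ref{lem:InvcodeBij} are stated only for positive $\gamma$, so you cannot simply enlarge the range of $\gamma$.)

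The paper closes this gap differently: it observes that both sides of \eqref{eq:SortingLemma} are quasisymmetric in $\bx$ (an order-preserving relabeling of the entries of $\float$ preserves column-increasingness, basement structure, and $\finv$), and hence each side is determined by its coefficients on the monomial quasisymmetric functions $M_{n,\mathrm{set}(\gamma)}(\bx)$ as $\gamma$ ranges over \emph{positive} compositions of $n$ — this is exactly ``the same reasoning as in the proof of Theorem~\ref{thm:Equidistrib}.'' For each such $\gamma$ the coefficient is $\sum_{\float\in\fci_{n,\alpha,s}(\gamma)}q^{\finv(\float)}$, and the $\finv$-preserving bijection then gives equality of these coefficients. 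Replacing your false decomposition claim with this quasisymmetry step makes your argument correct and in fact identical to the paper's proof.
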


\begin{proof}
Let $\la = \sort(\alpha)$ for convenience. By Lemma~\ref{lem:InvcodeBij}, we have the bijection
\begin{align}
\iota^\la \circ \invcode^\alpha : \fci_{n, \alpha, s}(\gamma) \to \fci_{n, \la, s}(\gamma)
\end{align}
which maps $\float \in \fci_{n, \alpha, s}(\gamma)$ to $\float'\in \fci_{n, \la, s}(\gamma)$ with $\invcode^\alpha(\float) = \invcode^\la(\float')$. Hence, we have $\inv(\float) = \inv(\float')$. Therefore,
\begin{align}
\sum_{\float\in \fci_{n,\alpha, s}(\gamma)} q^{\inv(\float)} = \sum_{\float\in \fci_{n, \la, s}(\gamma)} q^{\inv(\float)}.
\end{align}
Since both sides of \eqref{eq:SortingLemma} are quasisymmetric, the proof follows by the same reasoning as in the proof of Theorem~\ref{thm:Equidistrib}.
\end{proof}

Observe that $D_{n,\la, s}$ and $\Frobq(R_{n,\la, s})$ are homogeneous in $\bx$ of positive degree, so their constant terms are both equal to $0$. Further observe that $\Frobq(R_{1,\la, s}) = D_{1,\la, s}$ for all $\la$ and $s$.
Recall that $D_{n,\la, s}$ is a symmetric function by Theorem~\ref{thm:LLTThm}. Furthermore, by Lemma~\ref{lem:AntisymmIdentification} any symmetric function with zero constant term is uniquely determined by its images under the $e_j^\perp$ skewing operators. By Theorem~\ref{thm:AntisymmFrobqRecursion} and induction on $n$, in order to prove Theorem~\ref{thm:MonomialTheorem} it suffices to show that $D_{n,\la, s}$ satisfies the same identity under the skewing operators as $\Frobq(R_{n,\la, s})$, namely that
\begin{align}\label{eq:D-perp1}
e_j^\perp D_{n,\la, s} = \sum_{I \in \cI_\ell^j} q^{\Sigma(I)}\, D_{n-j,\la^{(I)}, s}
\end{align}
for all $j\geq 1$.

Fix a composition $\beta = (\beta_1,\dots, \beta_p)$ of $n$ into positive parts. A \emph{$\beta$-shuffle} is a shuffle of the decreasing sequences
\[
(\beta_1,\dots, 2,1), \, (\beta_1 + \beta_2,\dots, \beta_1 + 1),\dots, (n,n-1,\dots, n-\beta_p + 1),
\]
of lengths $\beta_1, \beta_2,\dots,\beta_p$, respectively. 
 Let $\FCI_{n,\la, s}^\beta$ be the set of $\float\in \FCI_{n,\la, s}$ such that $\rw(\float)$ is a $\beta$-shuffle. 
 For convenience of notation, let $j\coloneqq \beta_1$ so that  $\beta = (j,\beta_2,\dots, \beta_p)$. Observe that if $\float\in \FCI_{n,\la, s}^\beta$, then the cells labeled with $1,2, \dots, j$ are in distinct columns, and hence each of the labels $1, 2, \dots, j$ is either in the top-most cell in its column or a basement label. 

\begin{proof}[Proof of Theorem~\ref{thm:MonomialTheorem}]
Following the strategy in \cite[pp. 878]{HRS1}, $D_{n,\la, s}$ is the unique symmetric function such that for any composition $\beta = (j, \beta_2, \beta_3,\dots, \beta_p)$ of $n$ into positive parts, 
\begin{align}\label{eq:inner-prod1}
\langle D_{n,\la, s}, \, e_j(\bx)e_{\beta_2}(\mathbf{x})e_{\beta_3}(\mathbf{x})\cdots e_{\beta_p}(\mathbf{x})\rangle = \sum_{\float\in\FCI^\beta_{n, \la, s}} q^{\finv(\float)}.
\end{align}
By the definition of $e_j^\perp$, \eqref{eq:inner-prod1} is equivalent to
\begin{align}
\langle e_j^\perp D_{n,\la, s},\, e_{\beta_2}(\mathbf{x})\cdots e_{\beta_{p}}(\mathbf{x})\rangle =  \sum_{\float \in \FCI^\beta_{n, \la, s}} q^{\finv(\float)}.
\end{align}

Since elementary symmetric functions form a basis of symmetric functions, then \eqref{eq:D-perp1} is equivalent to the identity
\begin{align}
\langle e_j^\perp D_{n,\la, s}, \, e_{\beta_2}(\mathbf{x}_n)\cdots e_{\beta_{p}}(\mathbf{x}_n)\rangle = \sum_{I\in \cI_s^j} q^{\Sigma(I)} \,\langle D_{n-j,\la^{(I)}, s}, \, e_{\beta_2}(\mathbf{x}_n)\cdots e_{\beta_{p}}(\mathbf{x}_n)\rangle,
\end{align}
which can be rewritten as the identity
\begin{align}\label{eq:THEequation1}
\sum_{\float \in \FCI^\beta_{n,\la, s}} q^{\finv(\float)} 
= \sum_{I \in\cI_s^j}  \sum_{\float \in \FCI^{\beta'}_{n-j,\la^{(I)}, s}} q^{\Sigma(I) + \finv(\float)},
\end{align}
where $\beta' \coloneqq (\beta_2,\dots,\beta_{p})$.

The identity~\eqref{eq:THEequation1} has a simple bijective proof, as follows. Let $\float\in \FCI^\beta_{n,\la, s}$, and let $i_1+1, \dots, i_j+1$ be the columns of $\float$ containing $j, j-1,\dots, 1$, respectively, and let $I = (i_1,\dots, i_j)\in \cI_s^j$. Let $\alpha^I$ be the composition defined by
\begin{align}
\alpha^I_i = \begin{cases} \la_i -1  & \text{if } i+1 \in I\\ \la_i & \text{if }i+1\notin I\end{cases}.
\end{align}
Let $\float' \in \FCI_{n-j, \alpha^I, s}^{\beta'}$ be the extended column-increasing filling obtained from $\float$ by removing cells labeled $1,\dots, j$ and then standardizing the remaining labels to the set $[n-j]$. The map sending $\float$ to $(I, \float')$ is a bijection between $\FCI_{n, \la, s}^\beta$ and $\{(I, \float') \st I\in \cI_s^j,\, \float'\in \FCI_{n-j,\alpha^I, s}^{\beta'}\}$.

Recall that $1,\dots, j$ appear in $\rw(\float)$ in decreasing order, so each label $h\leq j$ is in column $i_{j-h+1}+1$. Each $h\leq j$ which is a label in $\sigma(\float)$ forms a diagonal inversion with each of the other cells in its row to its left, hence it contributes $i_{j-h+1}$ to $\inv(\float)$. For each label $h\leq j$ which is the label of a basement cell $(a, b)$ of $\float$, $h$ does not form any inversions of type (I2). Therefore, the contribution of $h$ to $\finv(\float)$ is the number of inversions of type (I3) of the form $(i,(a, b))$, which is $i_{j-h+1}$. Hence, we have $\inv(\float) = \Sigma(I) + \inv(\float')$.  

Using the bijection above, we have
\begin{align}\label{eq:AlmostToMainTheorem}
\sum_{\float\in \FCI_{n,\la, s}^\beta} q^{\finv(\float)} = \sum_{I\in \cI_s^j}\sum_{\float\in \FCI_{n-j, \alpha^I, s}^{\beta'}} q^{\Sigma(I) + \finv(\float)}.
\end{align}
For each $I\in \cI_s^j$, we have $\sort(\alpha^I) = \la^{(I)}$ by the definitions of $\alpha^I$ and $\la^{(I)}$. By Lemma~\ref{lem:SortingLemma}, 
\begin{align}\label{eq:AlllmooostThere}
\sum_{\float\in \FCI_{n-j,\alpha^I, s}} q^{\finv(\float)} \bx^\float = D_{n-j,\la^{(I)}, s}.
\end{align}
Taking the inner product of both sides of \eqref{eq:AlllmooostThere} with $e_{\beta_2}(\bx)\cdots e_{\beta_p}(\bx)$, we have
\begin{align}\label{eq:SwapBetaAlphaPrime}
\sum_{\float\in \FCI_{n-j,\alpha^I, s}^{\beta'}} q^{\finv(\float)} = \sum_{\float\in \FCI_{n-j, \la^{(I)}, s}^{\beta'}} q^{\finv(\float)}.
\end{align}
Hence, \eqref{eq:THEequation1} follows by combining \eqref{eq:AlmostToMainTheorem} with~\eqref{eq:SwapBetaAlphaPrime}, which completes the proof.
\end{proof}

\section{Applications to rank varieties}\label{sec:Geometry}

In this section, we apply our results on the rings $R_{n,\la, s}$ to the geometry of rank varieties. In particular, we show that the ring $R_{n,\la}$ defined in the Introduction is the coordinate ring of the scheme-theoretic intersection of a rank variety with diagonal matrices. We then find a monomial basis for $R_{n,\la}$. Furthermore, we compute the Hilbert series and Frobenius characteristic of $R_{n,\la}$ in terms of the inversion statistic on extended column-increasing fillings. We will show that each of these formulas for $R_{n,\la}$ is a ``limit'' as $s\to \infty$ of the corresponding formula for $R_{n,\la, s}$.

Let $\fgl_n$ be the space of $n\times n$ matrices over $\bQ$. Let $x_{i,j}$  for $1\leq i,j \leq n$ be the coordinate functions corresponding to the entries of an $n\times n$ matrix. Then the coordinate ring of $\fgl_n$ is $\bQ[\fgl_n] = \bQ[x_{i,j}]$.  

For $\la\vdash n$, let $\cO_\la\subseteq \fgl_n$ be the conjugacy class of nilpotent $n\times n$ matrices over $\bQ$ whose Jordan canonical form has block sizes recorded by $\la$. Let $\overline{\cO}_\la$ be the closure of $\cO_\la$ in $\fgl_n$ in the Zariski topology. The set of diagonal matrices $\ft$ is the variety defined by the ideal
\begin{align}
I(\ft) = \langle x_{i,j} \st i\neq j\rangle.
\end{align}
The \emph{scheme-theoretic intersection} of the varieties $\overline{\cO}_\la$ and $\ft$ is the affine scheme whose coordinate ring is defined by the sum of the defining ideals of $\overline{\cO}_\la$ and $\ft$,
\begin{align}\label{eq:CoordinateRing}
\bQ[\overline{\cO}_\la \cap \ft] \coloneqq \frac{\bQ[x_{i,j}]}{I(\overline{\cO}_\la) + I(\ft)}.
\end{align}
The symmetric group $S_n$ of permutation matrices acts by conjugation on $\ft$, which descends to an action of $S_n$ on $\bQ[\overline{\cO}_\la\cap \ft]$. Observe that the variables $x_{i,i}$ generate this coordinate ring. Reindexing the generators $x_{i,i}$ of $\bQ[\overline{\cO}_\la \cap \ft]$ by $x_i$, then $S_n$ acts by permuting the $x_i$ variables.

Motivated by work of Kostant~\cite{Kostant} on the coinvariant algebra, Kraft~\cite{Kraft} conjectured that the coordinate ring~\eqref{eq:CoordinateRing} is isomorphic to the cohomology ring of a Springer fiber. De Concini and Procesi~\cite{dCP} proved Kraft's conjecture. Tanisaki~\cite{Tanisaki} then simplified the arguments of De Concini and Procesi and further proved that these rings have the explicit presentation as the quotient ring $R_{\la} = R_{n, \la, \ell(\la)}$, as defined in Section~\ref{sec:Intro}.

Let $\Fl(n)$ be the \emph{complete flag variety} of flags $V_\bullet = (V_1\subseteq V_2\subseteq \cdots \subseteq V_n)$, where $V_i$ is an $i$-dimensional complex vector subspace of $\bC^n$ for each $i$.
Given a matrix $X\in \cO_\la$, the \emph{Springer fiber} of $X$ is
\begin{align}\label{eq:SpringerFiber}
\cF_X \coloneqq \{V_\bullet\in \Fl(n)\st XV_i\subseteq V_i, 1\leq i \leq n\}.
\end{align}
The Springer fiber gets its name from the fact that it is the fiber over $X$ of the Springer resolution of the nilpotent cone, see for example \cite{CG}. If $X, X'\in \cO_\la$, then we have an isomorphism of varieties $\cF_X \cong \cF_{X'}$. We denote by $\cF_\la$ the Springer fiber of any $X\in \cO_\la$. 

In~\cite{Springer-WeylGrpReps}, Springer proved that there is an action of $S_n$ on the cohomology ring $H^*(\cF_\la;\bQ)$, even though $S_n$ does not act directly on the space $\cF_\la$. Furthermore, Springer proved that these graded representations have the remarkable property that the top nonvanishing cohomology group $H^{2n(\la)}(\cF_\la;\bQ)$ is isomorphic to an irreducible representation of $S_n$, and that all irreducible representations of $S_n$ appear in this manner.
 It is well known that $\cF_\la$ has no nontrivial odd cohomology groups~\cite{Hotta-Springer}, so we consider $H^*(\cF_\la;\bQ)$ as a graded ring by declaring that the $i$th graded piece is $H^{2i}(\cF_\la;\bQ)$. Therefore, $H^*(\cF_\la;\bQ)$ has the structure of a graded $S_n$-module. Since Springer's construction of this symmetric group module action, much work has been done to understand the combinatorics and geometry of Springer fibers and their generalizations, the Hessenberg varieties~\cite{Fresse-Melnikov,Fung,Spaltenstein,Tymoczko,Tymoczko-LinearConditions,Vargas}.

\begin{theorem}[\cite{dCP,Tanisaki}]
We have isomorphisms of graded rings and $S_n$-modules
\begin{align}
R_\la \cong \bQ[\overline{\cO}_{\la'}\cap \ft] \cong H^*(\cF_\la;\bQ),
\end{align}
where $S_n$ acts on $H^*(\cF_\la;\bQ)$ via Springer's representation tensored with the sign representation.
\end{theorem}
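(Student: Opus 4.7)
The plan is to establish the two isomorphisms separately, each by classical arguments that are essentially independent of one another.

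For the first isomorphism $R_\la \cong \bQ[\overline{\cO}_{\la'}\cap \ft]$, the strategy is to give an explicit presentation of the defining ideal $I(\overline{\cO}_{\la'})$ in $\bQ[\fgl_n]$ and then restrict to the diagonal. By Remark~\ref{rmk:Specialization} we have $R_\la = R_{n,\la,\ell(\la)}$, and by Theorem~\ref{thm:TopIdealEquality} this equals $\bQ[\bx_n]/\top(X_{n,\la,\ell(\la)})$. I would first verify that, as a set, $X_{n,\la,\ell(\la)}$ is in $S_n$-equivariant bijection with the set of diagonal matrices lying in $\cO_{\la'}$ (choosing distinct scalars $\alpha_1,\dots,\alpha_{\ell(\la)}$): each such diagonal matrix has each $\alpha_i$ appearing with multiplicity exactly $\la_i$, making its Jordan type equal to $\la'$. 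Next, following the approach of De Concini--Procesi and Tanisaki, one exhibits generators for $I(\overline{\cO}_{\la'})$ coming from rank conditions $\rk(X^k) \leq r_k(\la')$, expressed via minors of powers of the generic matrix. Tanisaki's key simplification is that these rank conditions can be replaced by conditions involving partial elementary symmetric polynomials of matrix entries, which restrict on $\ft$ precisely to the generators $e_d(S)$ with $S\subseteq \bx_n$ and $d > |S|-p^n_{|S|}(\la)$ appearing in $I_\la$. Combining this with the associated-graded identification of Theorem~\ref{thm:TopIdealEquality} and a dimension check via $|X_{n,\la,\ell(\la)}| = \binom{n}{\la_1,\dots,\la_{\ell(\la)}}$ completes the first isomorphism.

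For the second isomorphism $\bQ[\overline{\cO}_{\la'}\cap \ft] \cong H^*(\cF_\la;\bQ)$, the plan is to invoke the Grothendieck--Springer setup. Consider the Springer resolution $\mu : \widetilde{\cN} \to \cN$ of the nilpotent cone, and its flat Steinberg-type deformation obtained by pulling back the adjoint quotient $\fgl_n \to \ft/S_n$. Restricting to $\overline{\cO}_{\la'}$ and taking the fiber over $0$, one obtains the Springer fiber $\cF_\la$. A standard deformation-to-the-normal-cone argument, together with the fact that $\cF_\la$ has no odd cohomology (so the spectral sequence of the fibration collapses), identifies $H^*(\cF_\la;\bQ)$ with the coordinate ring of the scheme-theoretic fiber at the origin of the map $\overline{\cO}_{\la'}\cap\ft \to \ft/S_n$; the sign twist arises from the comparison between the Springer action and the natural action on the coordinate ring. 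De Concini--Procesi prove this by establishing flatness of the relevant family and then comparing graded characters via the Borho--MacPherson description.

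The main obstacle is the second isomorphism, which requires genuine geometric input: flatness of the Grothendieck--Springer family, the collapsing of the Leray spectral sequence (which rests on Hotta--Springer's vanishing of odd cohomology), and the correct identification of the Springer $S_n$-action with the natural one up to the sign twist. In contrast, the first isomorphism is essentially algebraic once Tanisaki's generators are in hand and one invokes Theorem~\ref{thm:TopIdealEquality}. A secondary subtlety is verifying that $\overline{\cO}_{\la'}\cap\ft$ really is the scheme-theoretic intersection (not merely the reduced one): since $\overline{\cO}_{\la'}$ consists of nilpotent matrices, the underlying reduced intersection is just the origin, so the nontrivial ring structure comes entirely from the non-reduced scheme structure coming from $I(\overline{\cO}_{\la'})+I(\ft)$, which is precisely what is captured by the Tanisaki presentation.
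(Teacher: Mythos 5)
This theorem is cited to De Concini--Procesi and Tanisaki; the paper itself gives no proof, so there is no in-paper argument to compare against. Your sketch correctly identifies the two classical ingredients: a Kraft/De Concini--Procesi degeneration argument on the algebraic side, and Grothendieck--Springer machinery (flatness, vanishing of odd cohomology, Borho--MacPherson) on the topological side. However, there is a concrete error in your first paragraph that would derail the argument as written.

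You claim that each diagonal matrix corresponding to a point of $X_{n,\la,\ell(\la)}$ has ``Jordan type $\la'$'' and hence lies in $\cO_{\la'}$. This is false: such a matrix is already diagonal with eigenvalues $\alpha_1,\dots,\alpha_{\ell(\la)}$, hence semisimple, so all its Jordan blocks are $1\times 1$ and its Jordan type is $(1^n)$, not $\la'$. Since $\cO_{\la'}$ consists of \emph{nilpotent} matrices, the only diagonal matrix in $\overline{\cO}_{\la'}$ is the zero matrix, so there is no $S_n$-equivariant bijection of $X_{n,\la,\ell(\la)}$ with ``diagonal matrices in $\cO_{\la'}$'' --- the latter set is essentially empty. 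Your own final paragraph, which correctly notes that $\overline{\cO}_{\la'}\cap\ft$ is supported only at the origin and carries a fat non-reduced scheme structure, already contradicts that bijection. The correct statement in Kraft's argument is that $X_{n,\la,\ell(\la)}$ is the intersection of $\ft$ with the conjugacy class of a \emph{semisimple} element whose eigenvalue multiplicities are given by $\la$; the closure of that conjugacy class degenerates to $\overline{\cO}_{\la'}$ under scaling, and Theorem~\ref{thm:TopIdealEquality} captures precisely the matching algebraic degeneration, passing from $\ideal(X_{n,\la,\ell(\la)})$ to its associated graded ideal, which is $I_\la$ and also cuts out $\overline{\cO}_{\la'}\cap\ft$. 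With that confusion between eigenvalue multiplicities and Jordan block sizes corrected, and accepting the geometric inputs you invoke for the second isomorphism, the sketch is a fair account of the classical argument.
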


In~\cite{Eisenbud-Saltman}, Eisenbud and Saltman study varieties generalizing the varieties $\overline{\cO}_{\la}$. These varieties are the main focus of this section.

\begin{definition}
Let $k\leq n$, and let $\la\vdash k$. The \emph{Eisenbud-Saltman rank variety} is the variety
\begin{align}
\overline{\cO}_{n,\la} &\coloneqq \{ X\in \fgl_n\st \dim\ker X^d \geq \la_1' + \cdots +\la_d', d=1,2,\dots, n\}\\
&= \{X\in \fgl_n \st \rk(X^d)\leq (n-k)+p_{n-d}^n(\la), d=1,2,\dots, n\}.
\end{align}
\end{definition}
The variety $\overline{\cO}_{n,\la}$ is the same as $X_r$ defined in \cite{Eisenbud-Saltman}, where $r$ is the rank function $r(d) = (n-k)+p_{n-d}^n(\la)$. Eisenbud and Saltman proved that rank varieties are Gorenstein and normal with rational singularities~\cite[Theorem 1]{Eisenbud-Saltman}. When $n=k$, we have $\overline{\cO}_{n,\la} = \overline{\cO}_\la$. When $n>k$, then $\overline{\cO}_{n,\la}$ contains matrices which are not nilpotent.  In particular, the variety $\overline{\cO}_{n,\la}$ contains all block diagonal matrices of the form
\begin{align}\label{eq:JordanForm}
X_\la \oplus A_{n-k} = \left[\begin{array}{c|c} X_{\la} & 0 \\ \hline 0 & A_{n-k}\end{array} \right]
\end{align}
where $X_\la\in \overline{\cO}_\la\subseteq \fgl_k$ and $A_{n-k}\in \fgl_{n-k}$, as well as any matrix which is conjugate to a matrix in this form.

One can compute the defining ideal of $\overline{\cO}_{n,\la}$ as the radical of the ideal generated by the ($(n-k) + p^n_{n-d}(\la)+1$)-minors of $X^d$ for $d\geq 1$.  A more explicit description of the defining ideal was conjectured by Eisenbud-Saltman~\cite{Eisenbud-Saltman} and proven by Weyman~\cite{Weyman}, which we state next.
For an integer $m$, let $\bigwedge^m (tI-X)$ be the exterior power of the matrix $tI-X$, where $I$ is the identity matrix. Recall that the entries of the exterior power are the $m\times m$ minors of $tI-X$, each of which is a polynomial in $t$ and the variables $x_{i,j}$. Let $f_d^m$ be the matrix of coefficients of $t^{m-d}$ in $\bigwedge^m(tI-X)$, and let $I(f_d^m)$ be the ideal generated by the entries of $f_d^m$. Note that $f_d^m$ is the same as $\la_d^m$ in \cite{Eisenbud-Saltman}, which is the same as $V_{n-m, d}$ in \cite{Weyman}.
\begin{theorem}[\cite{Weyman}]\label{thm:WeymanTheorem}
The ideal $I(\overline{\cO}_{n,\la'})$ is the sum of the ideals $I(f_d^m)$ for $d$ and $m$ which satisfy $d > m-p_m^n(\la)$.
\end{theorem}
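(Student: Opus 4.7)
The plan is to establish the theorem by proving two containments. The containment $\sum_{d,m} I(f_d^m) \subseteq I(\overline{\cO}_{n,\la'})$ is the straightforward direction, and the reverse containment is where the real work lies. Since $\overline{\cO}_{n,\la'}$ is defined as a closure, it suffices to verify the first inclusion on a dense open subset. I would take this open subset to be the set of conjugates of block-diagonal matrices of the form $X_{\la'} \oplus A_{n-k}$ with $X_{\la'}$ having Jordan type $\la'$ and $A_{n-k}$ generic. On such matrices, $\bigwedge^m(tI - X)$ decomposes compatibly with the block structure, and the claim that the coefficient of $t^{m-d}$ vanishes whenever $d > m - p_m^n(\la)$ reduces to a Jordan-block calculation governed by the sequence $\dim\ker X^d = \la_1' + \cdots + \la_d'$, together with Cayley-Hamilton-type identities for the generic summand $A_{n-k}$.

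For the reverse containment, my plan is to apply the Kempf-Lascoux-Weyman geometric method. Construct a desingularization $\pi \colon Z \to \overline{\cO}_{n,\la'}$, where $Z$ is the incidence variety parametrizing pairs $(X, V_\bullet)$ of a matrix $X$ together with a partial flag of subspaces preserved by $X$ whose successive quotients have dimensions prescribed by the conjugate partition $\la$. By construction $Z$ is a tower of Grassmann bundles over a base flag variety, so its total coordinate ring and the higher direct images $R^i\pi_*\cO_Z$ are computable via relative Bott vanishing. I would then verify in sequence that (i) $\pi$ is proper, surjective, and birational onto $\overline{\cO}_{n,\la'}$; (ii) $R^i\pi_*\cO_Z = 0$ for $i > 0$, so that $\overline{\cO}_{n,\la'}$ has rational singularities and the natural map $\cO_{\overline{\cO}_{n,\la'}} \to \pi_*\cO_Z$ is an isomorphism; and (iii) the defining ideal of $\overline{\cO}_{n,\la'}$ in $\bQ[\fgl_n]$ agrees with the kernel of the induced surjection $\bQ[\fgl_n] \twoheadrightarrow H^0(Z, \cO_Z)$. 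The final step of matching generators of this kernel to the explicit entries of the matrices $f_d^m$ is a $GL_n$-representation-theoretic bookkeeping, using that the entries of $\bigwedge^m(tI - X)$ transform as the tautological representations on exterior powers.

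The main obstacle I expect is showing that the scheme cut out by $\sum_{d,m} I(f_d^m)$ is already reduced, since this is what turns the containment into an equality of ideals. The natural strategy is to simultaneously establish Cohen-Macaulayness of this scheme, by pushing forward a Koszul-type resolution built from the tautological bundles on $Z$ and applying Grauert-Riemenschneider vanishing, and generic reducedness, by verifying smoothness at a point in the dense orbit. Cohen-Macaulay together with generic reducedness implies reducedness, completing the identification and yielding the theorem; as a byproduct, this would recover the Gorenstein and normality statements of Eisenbud-Saltman from within the same framework.
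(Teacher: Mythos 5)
The paper does not prove this theorem; it is stated with a citation and attributed entirely to Weyman's work, as the surrounding text says (``\emph{conjectured by Eisenbud--Saltman and proven by Weyman, which we state next}''). So there is no proof in this paper to compare your proposal against. What you have written is a sketch of how one might recreate Weyman's argument, and it is broadly the right kind of strategy: the Kempf--Lascoux--Weyman geometric technique via a partial-flag incidence desingularization is indeed the method used in~\cite{Weyman}, and the structure you outline (easy containment by checking on the dense stratum of conjugates of $X_{\la'}\oplus A_{n-k}$; desingularize; vanishing of higher direct images; identify the kernel of the pushforward map with the explicit ideal; establish reducedness via Cohen--Macaulayness plus generic smoothness) is the correct shape of the argument. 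Note, however, that this is a substantial Inventiones-scale proof, and the present paper deliberately does not reproduce it.

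Two small inaccuracies in the sketch worth flagging. First, the incidence variety should impose $XV_i\subseteq V_{i-1}$, not that $X$ \emph{preserves} the flag (which would be $XV_i\subseteq V_i$); the shifted condition is what makes $Z$ a sub-vector-bundle of the trivial bundle $\fgl_n$ over the partial flag variety and hence smooth, and it is also what forces $V_d\subseteq\ker X^d$ so that the image lands in the rank variety. With only $XV_i\subseteq V_i$ the fibers would be linear but the image would not be the rank variety. Second, $\overline{\cO}_{n,\la'}$ in this paper is defined directly by rank inequalities, not as an orbit closure; your density argument for the easy containment still goes through (the variety is irreducible with a dense open stratum of the form you describe), but the justification should be irreducibility rather than the word ``closure.''
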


Recall the ideals $I_{n,\la}$ and rings $R_{n,\la}$ introduced in Section~\ref{sec:Intro},
\begin{align}
I_{n,\la} &\coloneqq \langle e_d(S) \st S\subseteq \bx_n,\, d>|S| - p_{|S|}^n(\la)\rangle\\
R_{n,\la} &\coloneqq \bQ[\bx_n]/I_{n,\la}.
\end{align}
We have the following corollary of Theorem~\ref{thm:WeymanTheorem} 

\begin{corollary}\label{cor:WeymanCorollary}
We have an isomorphism of graded rings,
\begin{align}
R_{n,\la} \cong \bQ[\overline{\cO}_{n,\la'}\cap \ft],
\end{align}
where the right-hand side is the coordinate ring of the scheme-theoretic intersection, and the isomorphism is given by mapping $x_{i}$ to $x_{i,i}$.
\end{corollary}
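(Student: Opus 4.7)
The plan is to combine Weyman's explicit description of $I(\overline{\cO}_{n,\la'})$ from Theorem~\ref{thm:WeymanTheorem} with a direct calculation of what the generators become after restriction to the diagonal matrices. Concretely, the coordinate ring of the scheme-theoretic intersection is
\[
\bQ[\overline{\cO}_{n,\la'}\cap \ft] = \frac{\bQ[x_{i,j}]}{I(\overline{\cO}_{n,\la'}) + I(\ft)},
\]
so the canonical surjection $\bQ[x_{i,j}] \twoheadrightarrow \bQ[\bx_n]$ sending $x_{i,i} \mapsto x_i$ and $x_{i,j} \mapsto 0$ for $i\neq j$ identifies the left-hand side with $\bQ[\bx_n]/J$, where $J$ is the image of $I(\overline{\cO}_{n,\la'})$. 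The entire proof reduces to showing $J = I_{n,\la}$.

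First, I would invoke Theorem~\ref{thm:WeymanTheorem} to reduce the problem to analyzing the images in $\bQ[\bx_n]$ of the entries of the matrices $f_d^m$ for pairs $(d,m)$ with $d > m - p_m^n(\la)$. The key observation is that when we restrict $X = (x_{i,j})$ to diagonal matrices by setting $x_{i,j}=0$ for $i\neq j$, the matrix $tI - X$ becomes diagonal with entries $t - x_i$. Consequently, the $m$-th exterior power $\bigwedge^m(tI - X)$ becomes a diagonal $\binom{n}{m}\times\binom{n}{m}$ matrix indexed by $m$-element subsets $S\subseteq [n]$, whose $(S,S)$-entry is
\[
\prod_{i\in S}(t - x_i) = \sum_{d=0}^m (-1)^d\, e_d(\{x_i : i \in S\})\, t^{m-d},
\]
while all off-diagonal entries vanish.

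Extracting the coefficient of $t^{m-d}$ shows that the image of the entries of $f_d^m$ in $\bQ[\bx_n]$ is precisely the set $\{(-1)^d e_d(S) : S\subseteq \bx_n,\ |S|=m\}$ together with zeroes from the off-diagonal positions. Summing over all $(d,m)$ satisfying $d > m - p_m^n(\la)$, we obtain
\[
J = \Bigl\langle e_d(S) : S\subseteq \bx_n,\ d > |S| - p_{|S|}^n(\la) \Bigr\rangle = I_{n,\la},
\]
which is exactly Definition~\ref{def:TheIdeal} of $I_{n,\la}$ in the introduction. This yields the desired isomorphism $R_{n,\la} \cong \bQ[\overline{\cO}_{n,\la'}\cap \ft]$ of graded rings, since the map $x_{i,i}\mapsto x_i$ clearly preserves the grading by total degree.

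There is no serious obstacle: the only nontrivial input is Theorem~\ref{thm:WeymanTheorem}, and once that is in hand the restriction-to-diagonals computation is a short direct evaluation of a diagonal exterior power. The main thing to be careful about is bookkeeping: confirming that reducing the matrix entries of $f_d^m$ modulo $I(\ft)$ before taking the ideal yields the same quotient ring (which is standard for ideal sums), and that the indexing condition $d > m - p_m^n(\la)$ in Weyman's theorem matches the condition $d > |S| - p_{|S|}^n(\la)$ with $|S|=m$ in Definition~\ref{def:TheIdeal}.
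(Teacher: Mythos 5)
Your proposal is correct and follows essentially the same route as the paper: invoke Weyman's description of $I(\overline{\cO}_{n,\la'})$, observe that modulo $I(\ft)$ only the principal minors of $tI-X$ survive and each becomes $\prod_{i\in S}(t-x_i)$, and read off the $t^{m-d}$ coefficient to land on $\pm e_d(S)$. The only cosmetic difference is that the paper works modulo $I(\ft)$ in place while you phrase it via the surjection $\bQ[x_{i,j}]\twoheadrightarrow\bQ[\bx_n]$; these are the same computation.
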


\begin{proof}
By Theorem~\ref{thm:WeymanTheorem},
\begin{align}
\bQ[\overline{\cO}_{n,\la'}\cap \ft] = \frac{\bQ[x_{i,j}]}{I(f_d^m \st d>m-p_m^n(\la)) + I(\ft)}.
\end{align}
Recall that $f_d^m$ is the matrix of coefficients of $t^{m-d}$ in $\bigwedge^m(tI-X)$. Observe that any $m\times m$ minor of $tI-X$ which is not a principal minor is contained in $I(\ft)$. Furthermore, given $A\subseteq [n]$ of size $m$, the principal minor of $tI-X$ with row and column set $A$ is equivalent modulo $I(\ft)$ to 
\begin{align}
\sum_{d = 0}^m (-1)^d \,e_d(\{x_{i,i} \st i\in A\})\, t^{m-d}.
\end{align}
Hence, we have
\begin{align}
\bQ[\overline{\cO}_{n,\la'}\cap \ft] \cong \frac{\bQ[x_{i,i}]}{\langle e_d(\{x_{i,i}\st i\in A\})\st A\subseteq [n], \,d > |A|-p_{|A|}^n(\la)\rangle}.
\end{align}
Identifying $x_{i,i}$ with $x_i$ completes the proof.
\end{proof}

Recall that $\beta^j(\la) = (0,1,\dots, \la_j'-1)$ for $1\leq j\leq \la_1$. Define an \emph{$(n,\la)$-staircase} to be a shuffle of $\beta^1(\la)$, $\beta^2(\la)$, \dots, $\beta^{\la_1}(\la)$, and $(\infty^{n-k})$, which is the sequence consisting of $\infty$ repeated $n-k$ many times. Given $\alpha$ a weak composition of length $n$ and $\beta$ an $(n,\la)$-staircase, we say $\alpha$ \emph{is contained in} $\beta$  if $\alpha_i\leq \beta_i$ for all $i$. Let $\cC_{n,\la}$ be the set of weak compositions with finite integer parts
\begin{align}
\cC_{n,\la} \coloneqq \{\alpha = (\alpha_1,\dots, \alpha_n) \st \alpha \subseteq \beta \text{ for some }(n,\la)\text{-staircase }\beta\}.
\end{align}
For example if $n= 3$ and $\la = (1,1)$, then $\cC_{3,(1,1)}$ is the set of compositions of the form
\begin{align}
(0,0,a), (0,a,0), (a,0,0), (0,1,a),(0,a,1),(a,0,1),
\end{align}
ranging over all integers $0\leq a < \infty$. Let
\begin{align}
\cA_{n,\la} \coloneqq \{\bx_n^\alpha \st \alpha \in \cC_{n,\la}\}.
\end{align}

\begin{theorem}\label{thm:RankBasis}
The set $\cA_{n,\la}$ represents a basis of $R_{n,\la} \cong \bQ[\overline{\cO}_{n,\la'}\cap \ft]$.
\end{theorem}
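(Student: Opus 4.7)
The plan is to reduce the statement to the finite-dimensional case (Theorem~\ref{thm:MonomialBasis}) by a degree-wise stabilization argument. The basic point is that when $s$ is large relative to the degree $d$, the ideals $I_{n,\la,s}$ and $I_{n,\la}$ agree in degree $d$, and the same is true of the spanning sets $\cA_{n,\la,s}$ and $\cA_{n,\la}$.

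First I would verify two compatibility statements at the level of compositions. For $s\geq \ell(\la)$, any $(n,\la,s)$-staircase is an $(n,\la)$-staircase after reinterpreting the entries equal to $s-1$ in the $((s-1)^{n-k})$-block as $\infty$; therefore $\cA_{n,\la,s}\subseteq \cA_{n,\la}$ for every $s$. Conversely, if $\bx_n^\alpha\in \cA_{n,\la}$ has degree $d$, then $\alpha\subseteq \gamma$ for some $(n,\la)$-staircase $\gamma$, and for any $s\geq d+1$ the composition obtained from $\gamma$ by replacing each $\infty$ with $s-1$ is an $(n,\la,s)$-staircase containing $\alpha$. Hence $(\cA_{n,\la})_d=(\cA_{n,\la,s})_d$ whenever $s\geq d+1$.

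Next I would compare the ideals. By Definition~\ref{def:TheIdeal}, we have $I_{n,\la,s}=I_{n,\la}+\langle x_1^s,\dots,x_n^s\rangle$. Since every nonzero element of $\langle x_1^s,\dots,x_n^s\rangle$ has degree at least $s$, we obtain the equality of degree-$d$ pieces
\begin{align}
(I_{n,\la,s})_d = (I_{n,\la})_d \qquad\text{whenever } s > d.
\end{align}
Consequently the surjection $R_{n,\la}\twoheadrightarrow R_{n,\la,s}$ induced by the containment $I_{n,\la}\subseteq I_{n,\la,s}$ restricts to an isomorphism $(R_{n,\la})_d \xrightarrow{\cong} (R_{n,\la,s})_d$ of graded $S_n$-modules for $s>d$.

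To conclude, fix $d\geq 0$ and choose any $s>d$. Theorem~\ref{thm:MonomialBasis} says that $\cA_{n,\la,s}$ represents a basis of $R_{n,\la,s}$, so in particular its degree-$d$ component $(\cA_{n,\la,s})_d$ is a basis of $(R_{n,\la,s})_d$. By the previous two paragraphs this coincides with $(\cA_{n,\la})_d$ and with a basis of $(R_{n,\la})_d$. Taking the union over $d\geq 0$ shows that $\cA_{n,\la}$ represents a basis of $R_{n,\la}$, and the identification $R_{n,\la}\cong \bQ[\overline{\cO}_{n,\la'}\cap\ft]$ is Corollary~\ref{cor:WeymanCorollary}. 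There is no real obstacle here beyond carefully checking the stabilization of staircases and of the ideals in each fixed degree; the heavy lifting has already been done in proving Theorem~\ref{thm:MonomialBasis}.
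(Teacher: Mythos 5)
Your proof is correct and follows essentially the same route as the paper: both arguments rest on the degree-wise stabilization observation that $(I_{n,\la,s})_d=(I_{n,\la})_d$ and $(\cA_{n,\la,s})_d=(\cA_{n,\la})_d$ when $s>d$, and then invoke Theorem~\ref{thm:MonomialBasis} in each degree. The paper works specifically with $s=d+1$ while you allow any $s>d$, and you spell out the matching of staircases slightly more explicitly, but the substance is the same.
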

\begin{proof}
Observe that for $d\geq 0$, the degree $d$ components of $R_{n,\la, d+1}$ and $R_{n,\la}$ coincide, since the $x_i^{d+1}$ generators of $I_{n,\la, d+1}$ are in degree $d+1$. By Theorem~\ref{thm:MonomialBasis}, the set
\begin{align}\label{eq:SubDPlusOne}
\{ \bx_n^\alpha \st \alpha\in \cC_{n,\la, d+1}, \,\alpha_1+\cdots + \alpha_n = d\}
\end{align}
represents a basis of the degree $d$ component of $R_{n,\la, d+1}$, and hence also for the degree $d$ component of $R_{n,\la}$. Observe that the set~\eqref{eq:SubDPlusOne} is equal to
\begin{align}
\{\bx_n^\alpha \st \alpha\in \cC_{n,\la},\, \alpha_1+\cdots + \alpha_n = d\},
\end{align}
which is the subset of $\cA_{n,\la}$ consisting of degree $d$ monomials. Hence, we have that $\cA_{n,\la}$ represents a basis for $R_{n,\la}$.
\end{proof}

Let 
\begin{align}
\fci_{n,\la} &\coloneqq \bigcup_{s\geq \ell(\la)} \fci_{n,\la, s},\\
\FCI_{n,\la} &\coloneqq \bigcup_{s \geq \ell(\la)} \FCI_{n,\la, s},
\end{align}
where we identify $\float\in \fci_{n,\la, s}$ with the extended column-increasing filling in $\fci_{n,\la, s+1}$ obtained by appending an empty $(s+1)$th column to $\float$. We similarly identify each element of $\FCI_{n,\la, s}$ with its counterpart in $\FCI_{n,\la, s+1}$. Observe that for each $\float \in \fci_{n,\la, s}$, the statistic $\inv(\float)$ does not depend on the parameter $s$. Hence, we may consider $\inv$ to be a statistic on elements of $\fci_{n,\la}$. Note that this is not true for the statistic $\dinv$, which is why our formulas below are only stated in terms of $\inv$.

\begin{theorem}\label{thm:RankVarFrob}
For any $k\leq n$ and $\la\vdash k$,
\begin{align}\label{eq:RankMonomialFormula}
\Frobq(R_{n,\la}) = \Frobq(\bQ[\overline{\cO}_{n,\la'} \cap \ft]) &= \sum_{\float \in \fci_{n,\la}} q^{\finv(\float)} \bx^\float.
\end{align}
\end{theorem}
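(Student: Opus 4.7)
\medskip

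\noindent\textbf{Proof proposal.} The first equality $\Frobq(R_{n,\la}) = \Frobq(\bQ[\overline{\cO}_{n,\la'}\cap \ft])$ is immediate from Corollary~\ref{cor:WeymanCorollary}, which identifies the two rings as graded $S_n$-modules. The plan for the second equality is to realize both sides as ``limits as $s\to\infty$'' of quantities attached to $R_{n,\la, s}$, and then invoke Theorem~\ref{thm:MonomialTheorem}.

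The first step is to verify stabilization on the algebraic side: for each $d\geq 0$ and each $s > d$, the degree $d$ components of $R_{n,\la, s}$ and $R_{n,\la}$ coincide as graded $S_n$-modules. The generating sets of $I_{n,\la, s}$ and $I_{n,\la}$ differ only in the monomials $x_1^s,\dots, x_n^s$, which are homogeneous of degree $s$. Any degree $d < s$ polynomial lying in $I_{n,\la, s}$ is therefore a $\bQ[\bx_n]$-linear combination of the $e_d(S)$ generators alone, so $(I_{n,\la, s})_d = (I_{n,\la})_d$. Consequently, the coefficient of $q^d$ in $\Frobq(R_{n,\la})$ equals the coefficient of $q^d$ in $\Frobq(R_{n,\la, s})$ whenever $s > d$.

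The second step is the analogous stabilization on the combinatorial side: for each $d\geq 0$ and each $s>d$,
\begin{align*}
\{\float\in\fci_{n,\la} \st \finv(\float) = d\} = \{\float\in \fci_{n,\la,s} \st \finv(\float)=d\}.
\end{align*}
The containment $\supseteq$ is immediate from the definition $\fci_{n,\la} = \bigcup_{s\geq \ell(\la)} \fci_{n,\la, s}$. For the reverse, the key observation is that every basement cell in column $i'$ produces, via type (I3) inversions alone, exactly $i'-1$ inversions (one for each integer $1\leq i < i'$). Hence if $\float\in\fci_{n,\la}$ has a basement cell in column $i'$ then $\finv(\float)\geq i'-1$. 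Thus, if $\finv(\float)=d$, then every basement cell of $\float$ lies in a column $\leq d+1\leq s$, so $\float\in\fci_{n,\la, s}$. (Note that $\finv(\float)$ is intrinsic to $\float$ and does not depend on the ambient $s$, a minor point worth recording.)

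The third and final step combines the two stabilizations with Theorem~\ref{thm:MonomialTheorem}. For $s > d$, the coefficient of $q^d$ on the right-hand side of \eqref{eq:RankMonomialFormula} is a finite sum (by Step 2) equal to
\begin{align*}
\sum_{\substack{\float \in \fci_{n,\la, s}\\ \finv(\float)=d}} \bx^\float = [q^d]\,\Frobq(R_{n,\la, s}) = [q^d]\,\Frobq(R_{n,\la}),
\end{align*}
where the first equality is Theorem~\ref{thm:MonomialTheorem} and the second is Step 1. Since this holds for all $d$, the formal power series on the two sides of \eqref{eq:RankMonomialFormula} agree coefficient-by-coefficient in $q$. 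The main conceptual obstacle is not the stabilization itself but justifying that $\Frobq(R_{n,\la})$ is well-defined as an element of $\Lambda[[q]]$, which requires confirming that each graded component of $R_{n,\la}$ is finite-dimensional; this follows from Theorem~\ref{thm:RankBasis}, since the degree $d$ part of the monomial basis $\cA_{n,\la}$ agrees with the degree $d$ part of the finite basis $\cA_{n,\la, s}$ for any $s>d$.
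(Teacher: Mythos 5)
Your proposal is correct and follows essentially the same approach as the paper's proof: establish that both the algebraic graded pieces and the combinatorial generating function stabilize degree-by-degree as $s\to\infty$, then apply Theorem~\ref{thm:MonomialTheorem}. The only cosmetic difference is that you spell out the low-degree stabilization bounds slightly more explicitly (e.g., that a basement cell in column $i'$ forces at least $i'-1$ type~(I3) inversions), which the paper states in a more compressed form.
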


\begin{proof}
The first equality in~\eqref{eq:RankMonomialFormula} follows by Corollary~\ref{cor:WeymanCorollary}, so it suffices to show the left-most side and right-most side of~\eqref{eq:RankMonomialFormula} are equal. Recall that for $d\geq 0$, the degree $d$ components of $R_{n,\la, d+1}$ and $R_{n,\la}$ coincide. Combining this with Theorem~\ref{thm:MonomialTheorem}, we have
\begin{align}\label{eq:DegDRankMonomialFormula}
[q^d] \Frobq(R_{n,\la}) = [q^d] \Frobq(R_{n,\la, d+1}) = \sum_{\substack{\float\in \fci_{n,\la, d+1},\\ \inv(\float)=d}} \bx^\float.
\end{align}

Observe that for $s > d+1$, each element $\float\in \fci_{n,\la, s}\subseteq \fci_{n,\la}$ which is not identified with an element of $\fci_{n,\la, d+1}$ must have a basement cell in some column to the right of column $d+1$. Hence, each of these fillings $\float$ has $\inv(\float) > d$. Therefore, 
\begin{align}\label{eq:OtherDegDFormula}
\sum_{\substack{\float\in \fci_{n,\la, d+1},\\ \inv(\float)=d}} \bx^\float= \sum_{\substack{\float \in \fci_{n,\la},\\ \inv(\float) = d}}  \bx^\float = [q^d]\sum_{\float \in \fci_{n,\la}} q^{\finv(\float)} \bx^\float.
\end{align}
Combining \eqref{eq:DegDRankMonomialFormula} and~\eqref{eq:OtherDegDFormula}, the $q^d$ coefficients of the left-most side and the right-most side of~\eqref{eq:RankMonomialFormula} are equal for all $d\geq 0$, hence~\eqref{eq:RankMonomialFormula} follows.
\end{proof}

We have the following two corollaries of Theorem~\ref{thm:RankVarFrob}. The first follows from Theorem~\ref{thm:RankVarFrob} by the same argument as in the proof of Corollary~\ref{cor:FBasisExpansion}, and the second follows by applying~\eqref{eq:HilbFromFrobq} to $V=R_{n,\la}$.

\begin{corollary}\label{cor:RankVarFrobCor}
We have
\begin{align}
\Frobq(R_{n,\la}) = \Frobq(\bQ[\overline{\cO}_{n,\la'}\cap \ft]) = \sum_{\float\in \FCI_{n,\la}} q^{\finv(\float)} F_{n,\iDes(\rw(\float))}(\bx).
\end{align}
\end{corollary}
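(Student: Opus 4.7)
\medskip
\noindent\textbf{Proof proposal.} The plan is to start from the monomial formula in Theorem~\ref{thm:RankVarFrob},
\begin{align*}
\Frobq(R_{n,\la}) \;=\; \sum_{\float \in \fci_{n,\la}} q^{\finv(\float)} \bx^{\float},
\end{align*}
and group the terms according to standardization, exactly mimicking the proof of Corollary~\ref{cor:FBasisExpansion}. Specifically, for each $\float \in \fci_{n,\la}$ define $\standard(\float) \in \FCI_{n,\la}$ to be the standard extended column-increasing filling on the same diagram obtained by replacing the multiset of labels by $1,2,\dots,n$ in such a way that equal labels are broken in reading order of $\rw(\float)$, so that $\standard(\rw(\float)) = \rw(\standard(\float))$.

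Next I would observe that the statistic $\finv$ depends only on $\la$ and on the shape and reading order of $\float$, not on the actual values of the labels beyond their relative order (this is immediate from the definition of inversions of types (I1), (I2), (I3), all of which are comparisons of labels in fixed positions). Consequently $\finv(\float) = \finv(\standard(\float))$ for every $\float \in \fci_{n,\la}$. Thus the sum may be reorganized as
\begin{align*}
\sum_{\float \in \fci_{n,\la}} q^{\finv(\float)} \bx^{\float}
\;=\; \sum_{\phi \in \FCI_{n,\la}} q^{\finv(\phi)} \sum_{\substack{\float \in \fci_{n,\la} \\ \standard(\float) = \phi}} \bx^{\float}.
\end{align*}

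Finally I would identify the inner sum with Gessel's fundamental quasisymmetric function via the alternative presentation~\eqref{eq:GesselAlt}. Fillings $\float$ with $\standard(\float) = \phi$ are in bijection with weakly increasing-when-standardized words $w = w_1 w_2 \cdots w_n \in \mathbb{N}^n$ satisfying $\standard(w) = \rw(\phi)$, simply by reading off the labels of $\float$ in the reading order of $\phi$; under this bijection $\bx^{\float}$ corresponds to the monomial $\prod_i x_i^{\#\, i\text{'s in }w}$. Therefore by~\eqref{eq:GesselAlt},
\begin{align*}
\sum_{\substack{\float \in \fci_{n,\la} \\ \standard(\float) = \phi}} \bx^{\float} \;=\; F_{n,\iDes(\rw(\phi))}(\bx),
\end{align*}
and substituting back yields the claimed formula.

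There is really no obstacle here; the only thing that needs a brief sanity check is that as $\float$ ranges over all of $\fci_{n,\la}$ (with unbounded basement size in any column), its standardization indeed ranges over $\FCI_{n,\la} = \bigcup_{s \geq \ell(\la)} \FCI_{n,\la,s}$, and conversely every $\phi \in \FCI_{n,\la}$ occurs. This is immediate from the definition of $\FCI_{n,\la}$ and of the standardization. The first equality $\Frobq(R_{n,\la}) = \Frobq(\bQ[\overline{\cO}_{n,\la'} \cap \ft])$ is already supplied by Corollary~\ref{cor:WeymanCorollary}.
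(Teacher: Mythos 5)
Your proposal is correct and takes essentially the same route as the paper, which simply cites that Corollary~\ref{cor:RankVarFrobCor} follows from Theorem~\ref{thm:RankVarFrob} by the same standardization argument used for Corollary~\ref{cor:FBasisExpansion}; you have just spelled out those details (standardizing by reading order, invariance of $\finv$ under standardization, and identifying the fiber sum with $F_{n,\iDes}$ via~\eqref{eq:GesselAlt}) explicitly.
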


\begin{corollary}\label{cor:RankHilb}
We have
\begin{align}
\Hilbq(R_{n,\la}) = \Hilbq(\bQ[\overline{\cO}_{n,\la'}\cap \ft]) = \sum_{\float \in \FCI_{n,\la}} q^{\finv(\float)}.
\end{align}
\end{corollary}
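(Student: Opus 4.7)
The plan is to derive Corollary~\ref{cor:RankHilb} directly from the results that immediately precede it, namely the ring isomorphism of Corollary~\ref{cor:WeymanCorollary} and the fundamental quasisymmetric expansion of Corollary~\ref{cor:RankVarFrobCor}. The first equality $\Hilbq(R_{n,\la}) = \Hilbq(\bQ[\overline{\cO}_{n,\la'}\cap \ft])$ is immediate from Corollary~\ref{cor:WeymanCorollary}, since an isomorphism of graded rings preserves the Hilbert series. So the content is the second equality.

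For the second equality, I would invoke the identity~\eqref{eq:HilbFromFrobq},
\begin{align*}
\Hilbq(V) = [x_1\cdots x_n]\Frobq(V),
\end{align*}
applied to $V=R_{n,\la}$. Strictly speaking, \eqref{eq:HilbFromFrobq} is stated in Subsection~\ref{subsec:SymmFunctions} for graded modules with finite-dimensional homogeneous components, which is the case for $R_{n,\la}$ degree-by-degree, so the identity holds in each graded piece and hence as an identity of formal power series in $q$. Combining this with Corollary~\ref{cor:RankVarFrobCor} gives
\begin{align*}
\Hilbq(R_{n,\la}) = [x_1\cdots x_n]\sum_{\float\in \FCI_{n,\la}} q^{\finv(\float)} F_{n,\iDes(\rw(\float))}(\bx).
\end{align*}

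The remaining step is the elementary observation that $[x_1\cdots x_n] F_{n,D}(\bx) = 1$ for every $D\subseteq [n-1]$. Indeed, by the definition~\eqref{eq:DefOfF}, the coefficient of $x_1 x_2\cdots x_n$ in $F_{n,D}(\bx)$ counts sequences $1\leq a_1\leq \cdots \leq a_n$ with strict inequality at positions in $D$ and such that each element of $[n]$ appears exactly once; the latter condition forces $a_i = i$, which automatically satisfies every inequality, giving coefficient one regardless of $D$. Substituting this in yields the claimed formula. I do not anticipate any obstacle here: the corollary is essentially a mechanical consequence of already-proved results, and the only ingredient beyond citation is the coefficient extraction, which is a one-line check.
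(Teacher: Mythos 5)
Your proof is correct and follows the same approach as the paper: applying identity~\eqref{eq:HilbFromFrobq} to $V=R_{n,\la}$ and extracting the coefficient of $x_1\cdots x_n$ from one of the expansions of $\Frobq(R_{n,\la})$. The paper just cites~\eqref{eq:HilbFromFrobq} without spelling out the coefficient extraction; your use of Corollary~\ref{cor:RankVarFrobCor} and the observation that $[x_1\cdots x_n]F_{n,D}(\bx)=1$ is one standard way to finish, and passing instead through the monomial expansion in Theorem~\ref{thm:RankVarFrob} would work just as well.
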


\section{Final remarks}\label{sec:FinalRemarks}

Many of our formulas for $\Frobq(R_{n,\la, s})$ are not stable under taking the limit as $s\to\infty$. This includes the formula in Theorem~\ref{thm:MonomialTheorem} in terms of the $\dinv$ statistic, our Hall-Littlewood formula~\eqref{eq:IntroGradedFrobFormula}, and the formula in Theorem~\ref{thm:RemovingZerosFrobChar}. In fact, the only formulas in this article which are stable under taking the limit $s\to\infty$ are in terms of the $\finv$ statistic on extended column-increasing fillings.

\begin{problem}
Find other formulas for $\Frobq(R_{n,\la, s})$ which yield formulas for $\Frobq(R_{n,\la})$ upon letting $s$ approach infinity.
\end{problem}

As we mentioned in the introduction, a Gr\"obner basis of the ideal $I_{n,k}$ in terms of Demazure characters was given by Haglund, Rhoades, and Shimozono~\cite{HRS1}. To our knowledge, a complete description of Gr\"obner basis of the ideals $I_\la$ is not known. 
\begin{problem}
Find a Gr\"obner basis for the ideals $I_{n,\la, s}$ in terms of a generalization of Demazure characters. Computer computations suggest that a Gr\"obner basis for $I_{n,\la, s}$ in terms of Demazure characters exists for $n\leq 6$ and $s=\ell(\la)$. However, this does not appear to hold for $n>6$.
\end{problem}

Recall that the family of rings $R_{n,\la, s}$ generalize the cohomology rings of Springer fibers $R_\la$ and the generalized coinvariant rings $R_{n,k}$ of Haglund-Rhoades-Shimozono~\cite{HRS1}. Pawlowski and Rhoades~\cite{Pawlowski-Rhoades} proved that $R_{n,k}$ is isomorphic to the rational cohomology ring of a space of spanning line arrangements in $\bC^k$.
\begin{problem}
Find a variety $X_{n,\la, s}$ whose rational cohomology ring is isomorphic to $R_{n,\la, s}$.
\end{problem}

Gillespie and Rhoades~\cite{Gillespie-Rhoades} have recently found a basis of the ring $R_{n,(n-1),s}$ that respects the decomposition of $R_{n,(n-1),s}$ into irreducible representations of $S_n$. Such a basis is called a \emph{higher Specht basis}.
\begin{problem}
Find \emph{higher Specht bases} for the rings $R_{n,\la,s}$ and $R_{n,\la}$ in general.
\end{problem}

	\section{Acknowledgements.}
The author would like to thank Sara Billey for many helpful comments which greatly improved the paper. Special thanks to Brendon Rhoades, Tianyi Yu, and Zehong Zhao for sharing notes on their coinversion statistic during the preparation of this article, which led to simpler proofs and simpler formulas for our results on rank varieties. Special thanks also to Alex Woo for interesting conversations on generalizing Springer fibers. Thanks to Jarod Alper, Sergi Elizalde, Erik Carlsson, Adriano Garsia, Maria Monks Gillespie, James Haglund, Zachary Hamaker, Monty McGovern, Brendan Pawlowski, Martha Precup, Mark Shimozono, Nathan Williams, and Andrew Wilson for helpful conversations and useful references. Finally, thanks to an anonymous referee for helpful comments.

\bibliographystyle{amsplain}
\bibliography{Springer}

\end{document}